\numberwithin{equation}{section}
\providecommand{\abs}[1]{\left\vert#1\right\vert}
\providecommand{\norm}[1]{\left\Vert#1\right\Vert}
\providecommand{\Rn}[1]{\mathbb{R}^{#1}}
\providecommand{\sd}[1]{\mathcal{D}_{#1}}
\providecommand{\se}[1]{\mathcal{E}_{#1}}
\providecommand{\sdb}[1]{\bar{\mathcal{D}}_{#1}}
\providecommand{\seb}[1]{\bar{\mathcal{E}}_{#1}}
\providecommand{\feb}[1]{\bar{\mathfrak{E}}_{#1}}
\providecommand{\fdb}[1]{\bar{\mathfrak{D}}_{#1}}
\providecommand{\ns}[1]{\norm{#1}^2}
\providecommand{\as}[1]{\abs{#1}^2}
\def\Lbrack{\left \llbracket}
\def\Rbrack{\right \rrbracket}
\DeclareMathOperator{\diverge}{div}
\providecommand{\Rn}[1]{\mathbb{R}^{#1}}
\providecommand{\norm}[1]{\left\Vert#1\right\Vert}
\def\ls{\lesssim}
\def\gss{\gtrsim}
\def\dt{\partial_t}
\def\H{H^1_0(\Omega)}
\def\Hs{H^1_{0,\sigma}(\Omega)}
\def\Hsd{(\Hs)^\ast}
\def\pa{\partial}
\def\rj{\Lbrack  {\rho} \Rbrack}
\providecommand{\jump}[1]{\left\llbracket #1 \right\rrbracket }
\def\RRvert2{\right \vert\! \right\vert}
\def\Lvert3{\left \vert\!\left\vert\!\left\vert}
\def\Rvert3{\right \vert\!\right\vert\!\right\vert}
\def\nab{\nabla}
\def\al{\alpha}
\def\dt{\partial_t}
\def\dtt{ \frac{d}{dt}}
\def\hal{\frac{1}{2}}
\def\ls{\lesssim}
\def\p{\partial}
\def\sg{\mathbb{D}}
\def\da{\Delta_{\mathcal{A}}}
\def\naba{\nab_{\mathcal{A}}}
\def\diva{\diverge_{\mathcal{A}}}
\def\Sa{S_{\mathcal{A}}}
\def\dis{\displaystyle}
\def\lam{\lambda}
\def\Lam{\lambda}
\def\mc{\mathcal{M}_c}
\def\a{\mathcal{A}}
\def\f{\mathcal{F}_{2N}}
\def\fj1{\mathcal{J}^{-1}}
\def\n{\mathcal{N}}
\def\x{\mathcal{X}}
\def\y{\mathcal{Y}}
\def\S{\mathbb{S}}
\def\fe{\mathfrak{E}_}
\def\fd{\mathfrak{D}_}
\newtheorem{lem}{Lemma}[section]
\newtheorem{prop}[lem]{Proposition}
\newtheorem{thm}[lem]{Theorem}
\newtheorem{remark}[lem]{Remark}
\title[Viscous non-resistive MHD internal waves]{Sharp nonlinear stability criterion of viscous non-resistive MHD internal waves in 3D}
\author{Yanjin Wang}
\address{
School of Mathematical Sciences\\
Xiamen University\\
Xiamen, Fujian 361005, China}
\email[Y. J. Wang]{yanjin$\_$wang@xmu.edu.cn}
\thanks{This work was supported by the Fujian Province
Natural Science Funds for Distinguished Young Scholar (No. 2015J06001), and A Foundation
for the Author of National Excellent Doctoral Dissertation of PR China (No. 201418).}
\subjclass[2010]{35Q30, 76D03, 76N10, 76W05}
\keywords{MHD; Stability; Rayleigh-Taylor instability; Internal waves; Incompressible fluids.}
\begin{document}

\begin{abstract}
We consider the dynamics of two layers of incompressible electrically conducting fluid interacting with the magnetic field, which are confined within a 3D horizontally infinite slab and separated by a free internal interface. We assume that the upper fluid is heavier than the lower fluid so that the fluids are susceptible to the Rayleigh-Taylor instability. Yet, we show that the viscous and non-resistive problem around the equilibrium is nonlinearly stable provided that the strength of the vertical component of the steady magnetic field, $\abs{\bar B_3}$, is greater than the critical value, $\mathcal{M}_c$, which we identify explicitly. We also prove that the problem is nonlinearly unstable if $\abs{\bar B_3}<\mathcal{M}_c$. Our results indicate that the non-horizontal magnetic field has strong stabilizing effect on the Rayleigh-Taylor instability but the horizontal one does not have in 3D.
\end{abstract}

\maketitle


\section{Introduction}

\subsection{Eulerian formulation}

We consider two distinct, immiscible, incompressible electrically conducting fluids interacting with the magnetic field that evolve within the infinite slab $\Omega=\mathbb{R}^2\times(-m,\ell)$ with constants $m,\ell>0$. The fluids are separated by a free internal interface $\Sigma(t)$ that extends to infinity in every horizontal direction. The interface divides $\Omega$ into two time-dependent, disjoint, open subsets $\Omega_\pm(t)$, which are filled by the ``upper fluid" $(+)$ and  the ``lower fluid" $(-)$, respectively. The motions of the fluids are driven by the gravitational force and the induced Lorentz force \cite{Ca,Co,LL,GLL}. The two fluids are described by their velocity, pressure and magnetic field functions, which are given for each $t\ge0$ by, respectively,
\begin{equation}
(\tilde u_\pm, \tilde p_\pm, \tilde B_\pm)(t,\cdot):\Omega_\pm(t)\rightarrow
(\mathbb{R}^3,\mathbb{R},\mathbb{R}^3).
\end{equation}

We assume that the fluids are viscous and non-resistive. Then for each $t>0$ we require that $(\tilde u_\pm, \tilde p_\pm, \tilde B_\pm)$ satisfy the following equations of magnetohydrodynamics (MHD):
\begin{equation}\label{ns_euler0}
\begin{cases}
 {\rho}_\pm   (\partial_t\tilde{u}_\pm  +   \tilde{u}_\pm \cdot \nabla \tilde{u}_\pm ) +  \diverge \mathbb{T}_\pm  =-g {\rho}_\pm e_3 & \text{in } \Omega_\pm(t)
\\ \partial_t\tilde{B}_\pm  +   \tilde{u}_\pm \cdot \nabla \tilde{B}_\pm  =\tilde{B}_\pm \cdot \nabla \tilde{u}_\pm & \text{in } \Omega_\pm(t)
\\\diverge{\tilde u_\pm}=\diverge{\tilde B_\pm}=0 &\text{in } \Omega_\pm(t)
\\ \tilde V= (\tilde u \cdot \tilde n) \tilde n &\hbox{on }\Sigma(t)
\\ \tilde{u}_+=\tilde{u}_-,\quad \mathbb{T}_+ \tilde n=\mathbb{T}_- \tilde n &\hbox{on }\Sigma(t) \\
\tilde{u}_+(t,y_1,y_2,\ell)=\tilde{u}_-(t,y_1,y_2,-m)=0.
\end{cases}
\end{equation}
In the equations $-g  {\rho}_\pm e_3$ is the gravitational force with the constants $ {\rho}_\pm>0$ the densities of the two fluids, $g>0$ the acceleration of gravity and $e_3$ the vertical unit vector. The stress tensors $\mathbb{T}_\pm$ consist of the fluid part and the magnetic part \cite{PSO},
\begin{equation}
\mathbb{T}_\pm=-\mu_{\pm}\mathbb{D}(\tilde u_\pm)+ \tilde p_\pm I + \frac{ {|\tilde B_\pm|}^2}{2}I  -\tilde B_\pm\otimes \tilde B_\pm ,
\end{equation}
where $\mu_\pm$ denote the viscosities coefficients of the respective fluids and we have written $I$ for the $3\times3$ identity matrix and $\mathbb{D}(\tilde u_\pm)_{ij}=\partial_j\tilde u_{i,\pm} +\partial_i\tilde u_{j,\pm}$ for twice the velocity deformation tensor. The fourth equation in \eqref{ns_euler0} is called kinematic boundary condition since it implies that the free interface is advected with the fluids, where $\tilde V(t,y)$ is the normal velocity of the free interface at $y\in\Sigma(t)$ and $\tilde n(t,y)$ is the upward normal vector to $\Sigma(t)$ at $y$. Note that the continuity of velocity on $\Sigma(t)$, $\tilde{u}_+=\tilde{u}_-$, means that it is the common value of $\tilde u_\pm$ that advects the interface. Note that in the dynamic boundary condition on $\Sigma(t)$ the effect of surface tension is not taken into account, which implies the continuity of the normal stress on $\Sigma(t)$ \cite{WL}.

To complete the statement of the problem, we must specify the initial conditions. We suppose that the initial interface $\Sigma(0)$ is given, which yields the open sets $\Omega_\pm(0)$ on which we specify the initial data for the velocity, $\tilde{u}_\pm(0): \Omega_\pm(0) \rightarrow  \mathbb{R}^3$ and the magnetic field, $\tilde B_\pm(0): \Omega_\pm(0) \rightarrow  \mathbb{R}^3$.

\subsection{Lagrangian formulation}
The movement of the free interface $\Sigma(t)$ and the subsequent change of the domains $\Omega_\pm(t)$ create numerous mathematical difficulties. To circumvent these, we will switch to coordinates in which the interface and the domains stay fixed in time. Since we are interested in the nonlinear stability of the equilibrium state, we will use the equilibrium domains. To this end we define the fixed
domains $\Omega_+=\mathbb{R} \times(0,\ell)$ and
$\Omega_-=\mathbb{R} \times(-m,0)$, and we would like $\Omega_\pm$ to be the equilibrium domains. We shall write  $\Sigma:=\{x_3=0\}$ for the equilibrium interface, $\Sigma_{m}:=\{x_3=-m\}$ and $\Sigma_{\ell}:=\{x_3=\ell\}$ for the lower and upper boundaries, respectively.  We will also write $\Sigma_{m,\ell}= \Sigma_m \cup \Sigma_\ell$.

We assume that there exist invertible mappings
\begin{equation}
\eta_{0,\pm} :\Omega_\pm \rightarrow
\Omega_\pm(0)
\end{equation}
which are continuous across $\Sigma$ so that $\Sigma(0)=\eta_{0,\pm}(\Sigma)$, $\Sigma_{m}=\eta_{0,-}(\Sigma_{m})
$ and $\Sigma_{\ell}=\eta_{0,+}(\Sigma_{\ell})
$. The first condition means that $\Sigma(0)$ is parameterized by the
either of the mappings $\eta_\pm$ restricted to $\Sigma$ (which
one is irrelevant since they are continuous across $\Sigma$). Define the flow maps
$\eta_\pm$ as the solutions to
\begin{equation}
\begin{cases}
\partial_t\eta_\pm(t,x)=\tilde u_\pm(t,\eta_\pm(t,x)),\\
\eta_\pm(0,x)=\eta_{0,\pm}(x).
\end{cases}
\end{equation}
We think of the Eulerian coordinates as $(t,y)\in \mathbb{R}^+\times\Omega_\pm(t)$ with $y =
\eta_\pm(t,x)$, whereas we think of Lagrangian coordinates as the fixed
$(t,x)\in \mathbb{R}^+\times\Omega_\pm$. In order to switch back and
forth from Lagrangian to Eulerian coordinates we assume that $\eta_\pm(t,\cdot)$ are invertible and  $\Omega_\pm(t) = \eta_\pm(
t,\Omega_\pm)$. We also assume that $\Sigma(t)=\eta_{\pm}(t,\Sigma)$, $\Sigma_{m}=\eta_{-}(t,\Sigma_m)
$ and $\Sigma_{\ell}=\eta_{+}(t,\Sigma_\ell)$ (the first assumption means that we have assumed the continuity of $\eta_\pm$ across $\Sigma$); these follows by the initial assumption of $\eta_{0,\pm}$ and the boundary conditions of $u_\pm$.

If $\eta_\pm-Id $ are sufficiently small (in an appropriate Sobolev space), then the mappings $\eta_\pm$ are diffeomorphism.  This allows us to transform the problem \eqref{ns_euler0} to one in the fixed spatial domains $\Omega_\pm$. We define the Lagrangian unknowns on $\Omega_\pm$ by the compositions
\begin{equation}
(u_\pm,p_\pm,B_\pm)(t,x)=(\tilde u_\pm,\tilde p_\pm+\frac{ {|\tilde B_\pm|}^2}{2}+ g\rho_\pm y_3,\tilde B_\pm)(t,\eta_\pm(t,x)),\quad (t,x)\in \mathbb{R}^+\times\Omega_\pm.
\end{equation}
Here we have defined the modified pressure $p_\pm$ so that it will be more convenient for our stability analysis. Since the domains $\Omega_\pm$ are now fixed, we henceforth consolidate notation by writing $f$ to refer to $f_\pm$ except when necessary to distinguish the two; when we write an equation for $f$ we assume that the equation holds with the subscripts added on the domains $\Omega_\pm$. If $f$ appears in an equation on the interface $\Sigma$, implicitly it is continuous across $\Sigma$. To write the jump conditions on $\Sigma$, for a quantity $f=f_\pm$, we define the interfacial jump as
\begin{equation}
\jump{f} := f_+ \vert_{\Sigma} - f_- \vert_{\Sigma}.
\end{equation}
Then in Lagrangian coordinates, the PDEs \eqref{ns_euler0} becomes the following system for $(\eta,u,p,B)$:
\begin{equation}\label{lagrangian}
\begin{cases}
\partial_t\eta=u & \text{in }
\Omega
 \\\rho\partial_t u+\diva \S_\a(p,u)=B\cdot\nabla_\a B & \text{in }
\Omega
 \\ \partial_t B=B\cdot\nabla_\a u& \text{in }
\Omega
\\ \diva u=\diva B=0& \text{in }
\Omega
\\  \jump{u}=0,\quad \jump{ \S_\a(p,u)}  \n=\jump{ B\cdot\n B}+g\rj \eta_3 \n &\hbox{on }\Sigma \\
 {u}=0 &\text{on }\Sigma_{m,\ell}
 \\ (\eta,u)\mid_{t=0}=(\eta_0,u_0).
\end{cases}
\end{equation}
Here $\a=((\nabla\eta)^{-1})^T$ and we have written the differential operators $\nabla_\a,\diverge_\a$ with their actions given by
$(\naba f)_i := \a_{ij} \p_j f$, $\diva X := \a_{ij}\p_j X_i $
for appropriate $f$ and $X$.  We have also written
\begin{equation}\label{n_def}
\n=\left.\p_1\eta\times\p_2\eta\right|_\Sigma=\left. J\a e_3 \right|_\Sigma
\end{equation}
for the non-unit normal to $\Sigma(t)$ and
\begin{equation}
\S_\a(p,u): =-\mu  \sg_{\a} u+ p I,\   (\sg_{\a} u)_{ij} =  \a_{ik} \p_k u_j + \a_{jk} \p_k u_i.
\end{equation}
Note that if we extend $\diva$ to act on symmetric tensors in the natural way, then $\diva \Sa(p,u) = \naba p -\mu \da u$ for vector fields satisfying $\diva u=0$, where $\da f := \diva \naba f$.
Note that the kinematic boundary condition, the fourth equation in \eqref{ns_euler0}, is automatically satisfied by the first equation in \eqref{lagrangian}.
Recall that $\a$ is determined by $\eta$. This means that all of the differential operators are connected to $\eta$, and hence to the geometry of the free interface.

\subsection{Steady states, conserved quantities and reformulation}
The system \eqref{lagrangian} admits the steady solution with $\eta=Id, \ u=0,\ p=\hbox{const.}$ and $B=\bar B$, where $\bar B$ is a uniform magnetic field. Notice that the densities of the two fluids generally are different. The jump of the density $\rj=\rho_+-\rho_- $ is of fundamental importance in the analysis of solutions to \eqref{lagrangian} (equivalently, \eqref{ns_euler0}) near the equilibrium.  Indeed, if $\rj > 0$, then the upper fluid is heavier than the lower fluid, and the fluids are susceptible to the well-known Rayleigh-Taylor gravitational instability \cite{Ra,Ta}. We assume that $\rj > 0$ in this paper.

It is a key to find out the conserved quantities for the system \eqref{lagrangian}; these will help us reformulate the system in a proper way, and the reformulation will be more suitable for our nonlinear stability analysis. Indeed, these conserved quantities indicate the conditions which are needed to be imposed on the initial data if one wants to show the asymptotic stability of the equilibrium. To begin with, we denote $J={\rm det} (\nabla\eta)$, the Jacobian of the coordinate transformation. First, direct computation, together with the incompressiblity of the fluids, yields that
\begin{equation}\label{jequ}
\partial_t J =J\diverge_\a    u=0 .
\end{equation}
Next, applying $\a^T $ to the magnetic equation, we obtain
\begin{equation}
\a_{ji}\partial_tB_j=\a_{ji}B_k\a_{kl} \partial_lu_j =\a_{ji}B_k\a_{kl}\partial_t(\partial_l\eta_j)
=- \partial_t\a_{ji}B_k\a_{kl}\partial_l\eta_j=-B_j\partial_t\a_{ji}.
\end{equation}
This implies that
\begin{equation}\label{re00}
\partial_t (\a^T B)=0.
\end{equation}
It then follows from \eqref{jequ}, \eqref{re00} and \eqref{n_def} that
\begin{equation}
\partial_t (\diva B)= J^{-1}\partial_t \diverge(J\a^T B)=0
\end{equation}
and
\begin{equation}
\partial_t(B_\pm\cdot\n) =\partial_t(B_\pm\cdot  J_\pm \a_\pm e_3 )=\partial_t(J_\pm B_\pm^T  \a_\pm e_3 )=0\text{ on }\Sigma.
\end{equation}
Here we have used the well-known geometric identity $\partial_j(J\a_{ij})=0$.
Finally,
\begin{equation}\label{etaeq}
\dt \eta =0 \text{ on } \Sigma_{m,\ell}\text{ and }\dt \jump{\eta} =0 \text{ on } \Sigma.
\end{equation}
Hence, by \eqref{jequ}, \eqref{re00} and \eqref{etaeq}, we have
\begin{equation}\label{conserv}
 J =1,\ \a^T B= \bar B \text{ in }\Omega, \ \eta =Id \text{ on } \Sigma_{m,\ell}\text{ and }\jump{\eta} =0 \text{ on } \Sigma,
\end{equation}
if we have assumed that the initial data has the same quantities as the equilibrium. Note then that $\diva B = \diverge\bar B=0$ in $\Omega$ and $B_\pm\cdot\n=\bar B_3$ on $\Sigma$.

The conservation analysis above reveals that in order to have our nonlinear stability, the magnetic field $B$ should have certain relations with the flow map $\eta$. In turn, this motivates us to eliminate the magnetic field $B$ from the system \eqref{lagrangian}. Indeed, since $B=\bar B\cdot\nabla\eta$ by the second identity in \eqref{conserv}, we may rewrite the Lorentz force term:
\begin{equation}
B\cdot\nabla_\a B=B_j\a_{jk}\partial_kB=\bar{B}_k\partial_k(\bar{B}_m\partial_m\eta)
 =(\bar B\cdot\nabla)^2\eta.
\end{equation}
Then the system \eqref{lagrangian} can be reformulated as a Navier-Stokes system with forcing terms induced by the flow map:
\begin{equation}\label{reformulationic}
\begin{cases}
\partial_t\eta=u & \text{in }
\Omega
 \\\rho\partial_t u+\diva S_\a(p,u)=(\bar B\cdot\nabla)^2\eta & \text{in }
\Omega
\\ \diva u=0 & \text{in }
\Omega
\\ \jump{u}= 0,\quad \jump{ S_\a(p,u)}  \n=\jump{ \bar B_3 (\bar B\cdot\nabla) \eta}+g\rj \eta_3 \n &\hbox{on }\Sigma \\
u= 0 &\text{on }\Sigma_{m,\ell}
 \\ (\eta,u)\mid_{t=0}=(\eta_0,u_0).
\end{cases}
\end{equation}
Here we have shifted $\eta\rightarrow Id+\eta$, and hence $\a=((I+\nabla\eta)^{-1})^T$, etc.. We may also record those conserved quantities from \eqref{conserv} correspondingly:
\begin{equation}\label{imp1}
{\rm det} (I+\nabla\eta) =1\text{ in }\Omega, \ \eta =0 \text{ on } \Sigma_{m,\ell}\text{ and }\jump{\eta} =0 \text{ on } \Sigma.
\end{equation}

\subsection{Previous works of Rayleigh-Taylor problems}
The Rayleigh-Taylor instability, one of the classic examples of hydrodynamic instability, is an interfacial instability between two fluids of different densities that occurs when a heavy fluid lies above a lighter one in a gravitational field. The instability is well-known since the classical work of Rayleigh \cite{Ra} and of Taylor \cite{Ta}, and it is one of the fundamental problems in fluid dynamics. A general discussion of the physics related to this topic can be found, for example, in \cite{3K}.

The Rayleigh-Taylor problem has received a lot of attention in the mathematics community due both to its physical importance and to the mathematical challenges it offers.  The linear stability and instability of the Rayleigh-Taylor problem is extensively studied (see, for instance, Chandrasekhar's book \cite{3C}). We are mainly concerned with the nonlinear theory. For the Euler Rayleigh-Taylor problem without surface tension, Ebin \cite{3E} proved the nonlinear ill-posedness of the problem for incompressible fluids, and Guo and Tice \cite{3GT1} showed an analogous result for compressible fluids as \cite{3E}.  For the Navier-Stokes Rayleigh-Taylor problem, Pr$\ddot{\text{u}}$ss and Simonett \cite{PS2} proved the nonlinear instability for incompressible fluids with surface tension in the whole space, Wang and Tice \cite{WT} and Wang, Tice and Kim \cite{WTK} established the sharp nonlinear stability criteria for incompressible fluids with or without surface tension in the horizontally periodic slab, and Jang, Tice and Wang \cite{JTW_NRT,JTW_GWP} showed an analogous result for compressible fluids as \cite{WT,WTK} (the linear instability was previously shown by Guo and Tice \cite{3GT2}).

Note that for the horizontally infinite setting, the surface tension, no matter how large, cannot prevent the Rayleigh-Taylor instability \cite{PS2,3GT2}. It is very interesting to find stabilizing mechanisms that can prevent the Rayleigh-Taylor instability. In \cite{W}, we studied the stabilizing effect of the magnetic field on the Rayleigh-Taylor instability for the linearized system of \eqref{reformulationic} when $\bar B$ is either vertical or horizontal. We identified the critical magnetic number $|B|_c$ so that for the case $|\bar{B}|\ge |B|_c$ and when the magnetic field $\bar{B}$ is vertical in 2D or 3D or when $\bar{B}$ is horizontal in 2D we proved that the problem is linearly stable; for the rest cases the problem is linearly unstable. This shows the remarkable stabilizing effect of the magnetic field on the Rayleigh-Taylor instability in the linear sense, however, the nonlinear theory is left open. The main difficulty of the nonlinear problem lies in the derivation of the energy estimates, which is essentially due to the weak dissipation of $\eta$ in \eqref{reformulationic} when $\bar B\neq 0$ or lack of dissipation when $\bar B=0$ \cite{Lin}. In this paper we consider the nonlinear problem \eqref{reformulationic} in 3D, and the results are twofold. First, we characterize a stability criterion in terms of $\bar B_3$ so that it allows for $\bar B$ in any direction. Second, we completely solve the nonlinear stability and instability.  We remark that a weaker nonlinear instability result for $\bar B_3=0$ was obtained by Jiang, Jiang and Wang \cite{JJW}.

Finally, we mention that for the incompressible inhomogeneous problem in a fixed domain, one can construct the steady density which is continuous and increasing with height in certain region; this also leads to the Rayleigh-Taylor instability phenomenon. Hwang and Guo \cite{hw_guo} proved the nonlinear instability for the Euler equations, Jiang and Jiang \cite{JJ} proved the nonlinear instability for the Navier-Stokes equations in a bounded domain, and Jiang and Jiang \cite{JJ2} proved the linear stability and instability for the viscous non-resistive MHD equations in a bounded domain.

\section{Main results}

Before we state our results, let us first mention the issue of compatibility conditions for the initial data $(\eta_0,u_0)$ since our problem \eqref{reformulationic} is considered in a domain with boundary. We will work in a high-regularity context, essentially with regularity up to $2N$ temporal derivatives for $N \ge 4$ an integer. This requires us to use $(\eta_0,u_0)$ to construct the initial data $\partial_t^j \eta(0)$ and $\partial_t^j u(0)$ for $j=1,\dotsc,2N$ and $\partial_t^j  p(0)$ for $j = 0,\dotsc, 2N-1$, inductively. These data must satisfy various conditions (essentially what one gets by applying $\partial_t^j $ to \eqref{reformulationic} and then setting $t=0$), which in turn require $(\eta_0,u_0)$ to satisfy $2N$ compatibility conditions; these are natural for solutions to \eqref{reformulationic}
in our functional framework. Since they are similar as those of \cite{GT_lwp,WTK}, we shall neglect to record them in this
paper and refer to them as the necessary compatibility conditions.

We denote $H^k(\Omega_\pm)$ with $k\ge 0$ and $H^s(\Sigma)$ with $s \in \Rn{}$ for the usual Sobolev spaces. If we write $f \in H^k(\Omega)$, the understanding is that $f$ represents the pair $f_\pm$ defined on $\Omega_\pm$ respectively, and that $f_\pm \in H^k(\Omega_\pm)$. We will avoid writing $H^k(\Omega)$ or $H^s(\Sigma)$ in our norms and write
\begin{equation}
 \ns{f}_k = \ns{f_+}_{H^k(\Omega_+)} + \ns{f_-}_{H^k(\Omega_-)}   \text{ and }  \abs{f}_s = \ns{f }_{H^s(\Sigma )} .
\end{equation}
We introduce the following anisotropic Sobolev norm defined on $\Omega$:
\begin{equation}
\norm{f}_{k,l}:=\sum_{ \al_1+\al_2 \le l}\norm{\p_1^{\al_1}\p_2^{\al_2} f}_k.
\end{equation}

For a given jump value in the density $\rj>0$, we define the critical value
\begin{equation}\label{mc}
\mathcal{M}_c:= \sqrt{\frac{\rj g}{\frac{1}{\ell} +\frac{1}{m}  } }.
\end{equation}
Note that the definition of $\mc$ is independent of $\mu$.
We first state our stability result for the system \eqref{reformulationic} when $\abs{\bar B_3}> \mathcal{M}_c$. For this, we define some energy functionals. For a generic integer $n\ge 3$, we define the energy as
\begin{equation}\label{p_energy_def}
 \se{n} := \sum_{j=0}^{n}  \ns{\dt^j u}_{2n-2j} + \sum_{j=0}^{n-1}  \ns{\nabla \dt^j p}_{2n-2j-2}+ \sum_{j=0}^{n-1}  \as{  \jump{ \dt^jp}}_{2n-2j-3/2}
+\ns{ \eta}_{1,2n }+\ns{ \eta}_{2n }
\end{equation}
and the dissipation as
\begin{equation}\label{p_dissipation_def}
\begin{split}
 \sd{n} :=& \ns{ u}_{1,2n }+ \ns{ u}_{2n }+ \sum_{j=1}^{n}  \ns{\dt^j u}_{2n-2j+1}
+\ns{ \nabla p}_{2n-2 }+ \sum_{j=1}^{n-1}  \ns{\nabla \dt^j p}_{2n-2j-1}
\\&+\as{ \jump{ p}}_{2n-3/2 }+ \sum_{j=1}^{n-1}  \as{  \jump{ \dt^j p}}_{2n-2j-1/2}+\ns{(\bar B\cdot\nabla)\eta}_{0,2n }+\ns{ \eta}_{2n }.
\end{split}
\end{equation}
We will consider both $n=2N$ and $n=N+2$ for the integer $N\ge 4$.
We also define
\begin{equation}\label{fff}
\f:=  \ns{\eta}_{4N+1}\text{ and }\mathcal{J}_{2N}:=\ns{u}_{4N+1}+\ns{ \nabla p}_{4N-1}+\as{ \jump{ p}}_{4N-1/2 }.
\end{equation}
Finally, we define
\begin{equation}\label{G_def}
\begin{split}
\mathcal{G}_{2N}(t) :=& \sup_{0 \le r \le t} \se{2N} (r) + \int_0^t \sd{2N} (r) dr + \sup_{0 \le r \le t} (1+r)^{2N-4} \se{N+2} (r)
\\&+ \sup_{0 \le r \le t} \f(r)  + \int_0^t \frac{  \mathcal{J}_{2N}(r)}{(1+r)^{1+\vartheta}}dr
\end{split}
\end{equation}
for any fixed $0<\vartheta\le N-3$ (this requires that $N\ge 4$). Our global well-posedness result of \eqref{reformulationic}, which in particular implies the stability, is stated as follows.
\begin{thm}\label{thic}
Assume $\abs{\bar B_3}> \mathcal{M}_c$. Let $N\ge 4$ be an integer. Assume that $u_0\in H^{4N}(\Omega)$ and $\eta_0\in H^{4N+1}(\Omega)$ satisfy the necessary compatibility conditions of \eqref{reformulationic} and that $\eta_0$ satisfies
\begin{equation}\label{eta00}
 {\rm det} (I+\nabla\eta_0)=1  \text{ in }\Omega,\
\eta_0=0 \text{ on }\Sigma_{m,\ell}\text{ and }
\jump{\eta_0}=0 \text{ on }\Sigma.
\end{equation}
There exists a universal constant $\varepsilon_0>0$ such that if $\se{2N} (0) + \f(0) \le \varepsilon_0$, then there exists a global unique solution $(\eta,u,p)$ solving  \eqref{reformulationic} on $[0,\infty)$. Moreover, there exists a universal constant $C>0$ such that
\begin{equation}
\mathcal{G}_{2N}(\infty) \le C( \se{2N} (0) + \f(0)).
\end{equation}
\end{thm}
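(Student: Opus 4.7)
The plan is a continuation argument coupling local well-posedness of \eqref{reformulationic} in the functional class of $\se{2N}$ and $\f$ with nonlinear a priori bounds that close $\mathcal{G}_{2N}$ under the smallness hypothesis. Local existence proceeds as in \cite{WTK,GT_lwp}: the Lorentz forcing $(\bar B\cdot\nabla)^2\eta$ and the magnetic interfacial term $\jump{\bar B_3(\bar B\cdot\nabla)\eta}$ are treated perturbatively in a scheme built on the geometric Stokes system, while the identities \eqref{conserv} propagate the structural constraints \eqref{imp1}. The bootstrap statement is that $\mathcal{G}_{2N}(T)\le\delta$ small implies $\mathcal{G}_{2N}(T)\lesssim\se{2N}(0)+\f(0)$; together with local theory and standard continuation this yields the global solution.

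The core estimates come from applying $\dt^j$ to \eqref{reformulationic} for $j=0,\ldots,n$ (with $n\in\{N+2,2N\}$) and testing against $\dt^j u$. Integration by parts on the Lorentz term produces $-\tfrac12\dtt\int_\Omega|(\bar B\cdot\nabla)\dt^j\eta|^2$ plus a boundary contribution on $\Sigma$ that exactly cancels the magnetic jump on the right-hand side of the stress boundary condition, while the gravitational jump contributes $-\tfrac{g\rj}{2}\dtt\int_\Sigma|\dt^j\eta_3|^2$. The resulting linearized identity is
\begin{equation*}
\tfrac12\dtt\Big(\int_\Omega\rho|\dt^j u|^2+\int_\Omega|(\bar B\cdot\nabla)\dt^j\eta|^2-g\rj\int_\Sigma|\dt^j\eta_3|^2\Big)+\tfrac\mu2\int_\Omega|\sg\dt^j u|^2=\text{cubic errors}.
\end{equation*}
Positive-definiteness of the enclosed quadratic form reduces, after a horizontal Fourier diagonalization, to the worst mode $\xi\perp\bar B_h$, where $(\bar B\cdot\nabla)\eta$ collapses to $\bar B_3\partial_3\eta$. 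The sharp one-dimensional trace inequality
\begin{equation*}
\int_\Sigma|\eta_3|^2\le\frac{1}{\tfrac1\ell+\tfrac1m}\int_\Omega|\partial_3\eta_3|^2,
\end{equation*}
coming from $\eta_3=0$ on $\Sigma_{m,\ell}$ and Cauchy--Schwarz, then yields coercivity precisely when $|\bar B_3|^2>g\rj/(\tfrac1\ell+\tfrac1m)=\mc^2$, matching \eqref{mc}.

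To upgrade the partial energy identity to the full $\se{n}$ and $\sd{n}$, I would next (i) apply horizontal derivatives $\p_1^{\al_1}\p_2^{\al_2}$, which commute with all boundary conditions and inherit the same coercive structure, producing $\ns{u}_{1,2n}$ and $\ns{(\bar B\cdot\nabla)\eta}_{0,2n}$; (ii) invoke Stokes-type elliptic regularity on the $(u,p)$-system at fixed time, treating $\rho\dt u-(\bar B\cdot\nabla)^2\eta$ as a source, to trade temporal for spatial regularity and recover $\ns{u}_{2n}$, $\ns{\nabla p}_{2n-2}$ and $\as{\jump{p}}_{2n-3/2}$; (iii) use $\dt\eta=u$ together with \eqref{imp1} to close $\ns{\eta}_{2n}$. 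The decay $\se{N+2}(t)\lesssim(1+t)^{-(2N-4)}$ then follows from a Guo--Tice-type interpolation between the bounded high norm $\se{2N}$ and the dissipation $\sd{N+2}$, producing an ODE inequality of the form $\dtt\se{N+2}+C\se{N+2}^{1+1/(2N-5)}\lesssim 0$. The top regularity $\f=\ns{\eta}_{4N+1}$ is propagated by integrating $\dt\eta=u$ against the $\mathcal{J}_{2N}$-norm and absorbing the growth through the integrable time weight $(1+r)^{-1-\vartheta}$ in the definition \eqref{G_def}.

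The main obstacle is the weakness and anisotropy of the $\eta$-dissipation in $\sd{n}$: only $\ns{(\bar B\cdot\nabla)\eta}_{0,2n}$ is dissipated, which lacks normal-direction regularity and degenerates in directions transverse to $\bar B$. Nonlinear commutators from $\a$, $J$, $\naba$ and $\diva$ therefore produce remainders that cannot be absorbed by dissipation alone; they must be closed using (a) the algebraic identity $\det(I+\nabla\eta)=1$ from \eqref{imp1} to express normal derivatives of $\eta$ in terms of horizontal ones plus quadratic corrections, and (b) the decay of $\se{N+2}$ to supply integrable-in-time $L^\infty$ factors for the nonlinearities appearing in the $\se{2N}$ estimate. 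Preserving the exact Poincaré constant $(\tfrac1\ell+\tfrac1m)^{-1}$ through all of these manipulations is what makes the threshold $\mc$ sharp.
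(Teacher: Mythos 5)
Your overall architecture matches the paper's: local well-posedness as in \cite{WTK,GT_lwp}, a bootstrap a priori bound on $\mathcal{G}_{2N}$, energy identities for $\dt^j$ and horizontal derivatives, Stokes regularity to fill out the remaining norms, and a two-tier energy scheme with Guo--Tice interpolation for the decay of $\se{N+2}$. The coercivity argument via Fourier diagonalization in the horizontal variables is a legitimate alternative to the paper's change-of-variables proof of Lemma \ref{criticalb} (though every horizontal mode, not just $\xi\perp\bar B_\ast$, gives the same one-dimensional infimum after the gauge change $g\mapsto e^{-i(\bar B_\ast\cdot\xi) x_3/\bar B_3}g$).

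There is, however, a genuine gap in step (ii)--(iii). You propose to recover $\ns{u}_{2n}$, $\ns{\nabla p}_{2n-2}$, $\as{\jump p}_{2n-3/2}$, and then $\ns{\eta}_{2n}$, by applying two-phase Stokes regularity to $(u,p)$ with $\rho\dt u-(\bar B\cdot\nabla)^2\eta$ as a source, followed by $\dt\eta=u$. This fails at $j=0$: the source requires control of $\ns{(\bar B\cdot\nabla)^2\eta}_{2n-2}$, hence essentially $\ns{\eta}_{2n}$, but the dissipation only contains $\ns{(\bar B\cdot\nabla)\eta}_{0,2n}$, which through \eqref{eses2290} gives at most $\ns{\eta}_{1,2n-1}$ --- one derivative short and only horizontal. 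The algebraic constraint $\det(I+\nabla\eta)=1$ supplies $\diverge\eta=\Phi$ (one scalar combination of first derivatives), not the full normal regularity needed, and $\dt\eta=u$ does not convert regularity of $u$ into fixed-time spatial regularity of $\eta$. The paper closes this circle differently: it decomposes $(\bar B\cdot\nabla)^2\eta=\bar B_3^2\Delta\eta-\bar B_3^2\Delta_\ast\eta+(\bar B_\ast\cdot\nabla_\ast)^2\eta+2(\bar B_3\p_3)(\bar B_\ast\cdot\nabla_\ast)\eta$, absorbs the leading $\bar B_3^2\Delta\eta$ into the Laplacian by introducing $w=u+\frac{\bar B_3^2}{\mu}\eta$, and applies one-phase (not two-phase) Dirichlet--Stokes estimates on $\Omega_\pm$ separately --- one-phase because the mismatched $\mu_\pm$ ruin the jump condition for $w$, and the boundary data for $w$ on the flat $\Sigma$ is recoverable from $\sdb n$. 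The crucial payoff is the identity
\begin{equation*}
\norm{w}_{j,2n-j}^2=\norm{u}_{j,2n-j}^2+\frac{\bar B_3^4}{\mu^2}\norm{\eta}_{j,2n-j}^2+\frac{\bar B_3^2}{\mu}\frac{d}{dt}\norm{\eta}_{j,2n-j}^2,
\end{equation*}
which simultaneously yields the dissipation of $\eta,u$ and, via the $d/dt$ term, the \emph{energy} bound $\mathrm E_n\simeq\ns{\eta}_{2n}$; only then can the $j=0$ two-phase Stokes estimate for $(u,p)$ be run to get the energy bound on $(u,p)$. Your proposal also has a small arithmetic slip in the decay exponent ($1+1/(2N-4)$, not $1+1/(2N-5)$, from $\theta=(2N-4)/(2N-3)$). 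Without the $w$-trick and the $j$-induction between vertical and horizontal derivatives, the dissipation improvement does not close, so the a priori bound is not established.
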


We now state our instability result for the system \eqref{reformulationic} when $\abs{\bar B_3}< \mathcal{M}_c$.
\begin{thm}\label{maintheorem}
Assume $\abs{\bar B_3}< \mathcal{M}_c$. Let $N\ge 4$ be an integer. There exist universal constants $\theta_0>0$  and $C>0$ such that for any sufficiently small $0< \iota<\theta_0$ there exist solutions $(\eta^\iota, u^\iota, p^\iota  )$ to \eqref{reformulationic} such that
 \begin{equation}
(\se{2N}  + \f) ( \eta^\iota  , u^\iota  , p^\iota)(0)  \le C\iota,\hbox{ but }  \abs{ \eta_3^\iota(T^\iota)}_{0}\ge \frac{\theta_0}{2}.
 \end{equation}
Here the escape time $T^\iota>0$ is
 \begin{equation}\label{escape_time}
T^\iota:=\frac{1}{\Lam}\log\frac{\theta_0}{\iota},
\end{equation}
 where $\Lam$ is the sharp linear growth rate.
\end{thm}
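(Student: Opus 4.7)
The plan is to implement the Guo--Strauss-type nonlinear instability bootstrap, in the spirit of \cite{WT,JTW_NRT,JJW}. The starting point is the linear instability. When $|\bar B_3|<\mc$, the linearization of \eqref{reformulationic} about the equilibrium admits an unstable normal mode: inserting the ansatz $e^{\Lam t}e^{i\xi\cdot x'}(\tilde\eta(x_3),\tilde u(x_3),\tilde p(x_3))$ and eliminating $\tilde u,\tilde p$ via $\diverge u=0$ and the momentum equation reduces the problem to an ODE on $(-m,\ell)$ with a jump condition on $\Sigma$, whose variational formulation pits the destabilizing gravitational term $g\rj|\xi|^2$ against the stabilizing magnetic term $|\bar B\cdot\xi|^2$ and the depth factor $1/\ell+1/m$ appearing in \eqref{mc}. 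The assumption $|\bar B_3|<\mc$ guarantees that for a suitable horizontal frequency $\xi$ (aligned with the horizontal part of $\bar B$ when $\bar B_h\neq 0$, arbitrary otherwise) this form is negative, yielding a smooth, rapidly decaying eigenprofile $(\tilde\eta,\tilde u,\tilde p)$ with $|\tilde\eta_3|_0>0$ and sharp growth rate $\Lam:=\sup_\xi \Lam(\xi)>0$; this is essentially the content of \cite{W}, extended to oblique $\bar B$.

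Given the eigenpair, for each $0<\iota<\theta_0$ I would construct the initial data as
\begin{equation*}
(\eta_0^\iota,u_0^\iota):=\iota(\tilde\eta,\tilde u)|_{t=0}+\iota^2(\eta_0^{(2)},u_0^{(2)})+\dotsm,
\end{equation*}
choosing higher-order correctors by a Lyapunov--Schmidt-type procedure (as in \cite{WT,JTW_NRT}) to enforce the nonlinear constraints \eqref{eta00} and the $2N$ compatibility conditions of Theorem \ref{thic}; this yields $(\se{2N}+\f)(0)\le C\iota$. Local well-posedness of \eqref{reformulationic} produces a unique maximal solution $(\eta^\iota,u^\iota,p^\iota)$ on some interval $[0,T_{\max}^\iota)$. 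Writing $(\eta^L,u^L,p^L)(t):=\iota e^{\Lam t}(\tilde\eta,\tilde u,\tilde p)$ for the linear solution, $(\eta^d,u^d,p^d)$ for the nonlinear remainder, and $\mathcal{Q}(\eta^\iota,u^\iota,p^\iota)$ for the quadratic source collecting all $O(\ge 2)$ terms from the $\a$-dependence in $\diva\Sa(p,u)$, $\naba$, the normal $\n$, and in $\jump{\Sa(p,u)}\n-\jump{\bar B_3(\bar B\cdot\na)\eta}$, Duhamel against the linear semigroup provides
\begin{equation*}
|\eta_3^d(t)|_0\le C\int_0^t e^{\Lam(t-s)}\|\mathcal{Q}(\eta^\iota,u^\iota,p^\iota)(s)\|_\ast\,ds\le C\int_0^t e^{\Lam(t-s)}\se{2N}(s)\,ds.
\end{equation*}

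The bootstrap is then set up on $[0,T^\iota]$ with $T^\iota$ given by \eqref{escape_time}. Define
\begin{equation*}
T_\ast^\iota:=\sup\bigl\{t\in[0,T_{\max}^\iota):\,\se{2N}(t)\le C_\ast\,\iota^2 e^{2\Lam t}\bigr\},
\end{equation*}
with $C_\ast$ chosen large depending on the eigenpair. A short-time a priori energy inequality modelled on the scheme behind \eqref{p_energy_def}--\eqref{p_dissipation_def}, but carried out \emph{without} invoking the dissipative coercivity available in Theorem \ref{thic} (which requires $|\bar B_3|>\mc$), yields $\frac{d}{dt}\se{2N}\le 2\Lam\,\se{2N}+C\se{2N}^{3/2}$; on $[0,T_\ast^\iota]$ this improves the bootstrap bound strictly once $\theta_0$ is small, so $T^\iota<T_\ast^\iota\wedge T_{\max}^\iota$. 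At $t=T^\iota$ the linear part contributes $\iota e^{\Lam T^\iota}|\tilde\eta_3|_0=\theta_0|\tilde\eta_3|_0$ while $|\eta_3^d(T^\iota)|_0\le C\theta_0^2$, so $|\eta_3^\iota(T^\iota)|_0\ge\theta_0/2$ after an initial choice of $\theta_0$ small depending only on $|\tilde\eta_3|_0$ and the constants above.

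The main obstacle is the short-time a priori estimate just invoked: in the unstable regime there is no coercive Lyapunov functional, the dissipation of $(\bar B\cdot\na)\eta$ is too weak, and the global analysis of Theorem \ref{thic} cannot be transplanted directly. I would instead rerun the high-order energy/dissipation scheme underlying \eqref{p_energy_def}--\eqref{p_dissipation_def} on the bounded interval $[0,T^\iota]$, exploiting the bootstrap smallness $\se{2N}\le C_\ast\theta_0^2$ to absorb all nonlinear errors; particular care is needed for the interfacial contributions $\jump{\bar B_3(\bar B\cdot\na)\eta}$ and $\jump{\Sa(p,u)}\n$, which must be rewritten through the Piola identity $\p_j(J\a_{ij})=0$ and integrated against $\dt^{2N}u$ as in the stability analysis, so that loss of regularity in $\eta$ is avoided and the residual is genuinely of size $\se{2N}^{3/2}$.
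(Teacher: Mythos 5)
Your overall architecture (linear growing mode, initial-data construction enforcing the compatibility conditions, a Guo--Strauss bootstrap closed on the escape time scale) matches the paper's skeleton, but two of your key technical steps are not available in the form you invoke them, and the paper has to do substantial work precisely to replace them.

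First, the Duhamel step. You write
\begin{equation*}
|\eta_3^d(t)|_0\le C\int_0^t e^{\Lam(t-s)}\|\mathcal{Q}(\eta^\iota,u^\iota,p^\iota)(s)\|_\ast\,ds,
\end{equation*}
which presupposes an $e^{\Lam t}$ bound for the linear solution operator acting on general sources. The paper explicitly points out that no such semigroup bound is at hand: the sharp growth rate $\Lam$ is obtained only through energy estimates and the variational characterization of $\alpha(s)$ in \eqref{mu_def}, and this applies to \emph{strong} solutions whose data satisfy the linear compatibility conditions. The nonlinear residual $\mathcal Q$ does not satisfy these, so Duhamel cannot be applied directly. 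The paper's substitute is Theorem \ref{lineargrownth}, derived by first adjusting the divergence (Lemma \ref{div11}), passing to the second-order formulation \eqref{second_order}, and running a careful energy argument together with Lemma \ref{lin_en_bound}. The resulting bound \eqref{result123} does not have the convolution form you wrote: it contains a $\sup$-in-time term and a square-root-of-convolution term, and this structure is what actually drives the bootstrap in the proof of Proposition \ref{prop1}(2).

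Second, the short-time energy inequality. You acknowledge this as the main obstacle and assert that rerunning the high-order scheme ``yields $\frac{d}{dt}\se{2N}\le 2\Lam\,\se{2N}+C\se{2N}^{3/2}$,'' but this is not what the scheme gives and the path to such a differential inequality is unclear. The tamed version of the energy evolution (Proposition \ref{conclusioni}) produces the surface term $\int_0^t\as{\eta_3}_{4N}$ on the right, a high-order boundary norm that cannot be reabsorbed without the elliptic improvement and interpolation steps in Theorems \ref{engver} and \ref{enghor}; the outcome is the integral inequality of Proposition \ref{prop1}(3), with the carefully tuned coefficient $\lambda$ (not $2\lambda$) in front of $\int_0^t\Lvert3 U\Rvert3_{00}^2$ and the forcing $\int_0^t\as{\eta_3}_0$ rather than a cubic power of the full energy. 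Getting that coefficient to be $\lambda$ rather than $2\lambda$ is essential for the Gronwall step to produce a clean $e^{2\lambda t}$ without a polynomial factor. Moreover, the case $\bar B_3=0$ is genuinely different: there is no quantity $w=u+(\bar B_3^2/\mu)\eta$ to feed into the Stokes regularity, and Theorem \ref{enghor} develops an entirely separate interweaving between vertical and horizontal derivatives of $\eta$ via the Stokes system for $(u,p)$ and $\dt\eta=u$, with $\varepsilon$-weighted sums over $j$. Your sketch does not address this bifurcation.

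Two smaller points. Your normal-mode ansatz $e^{\Lam t}e^{i\xi\cdot x'}$ produces a reduced eigenvalue problem in which $\Lam$ appears both quadratically (inertia) and linearly (viscosity), destroying the variational structure; the paper's Section \ref{linear theory} handles this by introducing the artificial parameter $s$ in \eqref{perturb_linear steady s} and a fixed-point argument (Lemma \ref{lambdas}), and you would need a comparable device. Also, when $\bar B_*\neq 0$, the destabilizing frequencies are those with $\xi$ nearly \emph{perpendicular} to $\bar B_*$ (so that $\bar B_*\cdot\xi$ is small), not aligned with it — compare the choice of $\phi_n$ concentrating in the $x_2$-direction in the proof of Lemma \ref{signsign}(2).

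So your proposal is correct in outline but leaves genuine gaps at the two steps — the replacement of Duhamel by Theorem \ref{lineargrownth} and the derivation of the coercive-free energy estimate, especially when $\bar B_3=0$ — that constitute the actual technical content of the paper's proof.
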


\begin{remark}
Theorems \ref{thic} and \ref{maintheorem} establish the sharp nonlinear stability criteria for the equilibrium in the reformulated problem \eqref{reformulationic}. Note that in Theorem \ref{thic} the bound of $\mathcal{G}_{2N}(\infty)$ implies that $\se{N+2} (t) \ls (1+t)^{-2N+4} $; since $N$ may be taken to be arbitrarily large, this decay result can be regarded as an ``almost exponential" decay rate. Theorem \ref{maintheorem} shows that the onset of the instability occurs in $\eta_3$ at the internal interface. Note that our results do not cover the critical case: $\abs{\bar B_3}=\mc$; we only know that the problem is locally well-posed, but it is not clear to us what the stability of the system should be.
\end{remark}

\begin{remark}
With the solution $(\eta,u,p)$ to \eqref{reformulationic} in hand, by shifting $Id+\eta\rightarrow\eta$ and then defining $B:=\bar B\cdot\nabla\eta$, we have that $(\eta,u,p, B)$ solve the original problem \eqref{lagrangian}. Hence, Theorem \ref{thic} produces a global-in-time, decaying solution to \eqref{lagrangian} for the initial data near the equilibrium that satisfies \eqref{conserv} initially; Theorem \ref{maintheorem} implies the nonlinear instability of the equilibrium for \eqref{lagrangian}. By further changing coordinates back to $y\in\Omega_\pm(t)$, we conclude the nonlinear stability and instability of the incompressible viscous non-resistive MHD internal wave problem \eqref{ns_euler0}.
\end{remark}

\begin{remark}
One of crucial points in our analysis is the finite depth of the two fluids. Formally, if $\ell=m=\infty$, then the critical value $\mc$, defined by \eqref{mc}, is infinite; this would suggest that the incompressible viscous non-resistive MHD internal wave problem in the whole space is unstable for any $\bar{B}$. Indeed, the linear growing normal mode solution was constructed in \cite{3C}.
\end{remark}

Since within a local time interval the $\eta$ terms can be easily controlled by the viscosity term, the local well-posedness of the system \eqref{reformulationic} in our functional framework can be established similarly as \cite{WTK} for the incompressible viscous surface-internal wave problem, which is motivated by \cite{GT_lwp} for the incompressible viscous surface wave problem. So we may omit the proof and refer to \cite{WTK,GT_lwp} for the construction of local solutions. Therefore, the main part of proving Theorems \ref{thic} and \ref{maintheorem} is to derive the a priori estimates. Our basic ingredient of showing the stability and instability of \eqref{reformulationic} is the natural energy identity:
\begin{equation} \label{identity000}
 \hal  \frac{d}{dt} \left(\int_\Omega \left( \rho\abs{  u}^2+\abs{(\bar B\cdot\nabla) \eta}^2  \right)- \int_{\Sigma} \rj g \abs{ \eta_3}^2 \right)
+  \int_\Omega  \frac{\mu}{2} \abs{ \sg u}^2=h.o.t..
\end{equation}
Here $h.o.t.$ denotes the higher order terms. Since $\rj>0$, the energy may not be positively definite, which is the cause of the instability. Our observation is that when $\bar B_3\neq 0$, there is a competition between the magnetic energy and the gravity potential energy. This leads to our definition \eqref{mc} of the critical value $\mc$. Indeed, by a variational argument, we will show that when $\abs{\bar B_3}\ge \mc$ the energy is non-negative and that when $\abs{\bar B_3}<\mc$ we can construct functions so that the energy is negative. This should suggest, at least, the linear stability and instability. Indeed, if $\abs{\bar B_3}<\mc$, then we will construct growing mode solutions, which grow as $e^{\lambda t}$ with $\lambda>0$, to the linearized system of \eqref{reformulationic} (cf. Theorem \ref{growingmode}). It is known that the construction of growing mode solutions is reduced to the solvability of an eigenvalue problem with the eigenvalue $\lambda$. However, the viscosity destroys the variational structure of the reduced eigenvalue problem; we will resort to the framework developed by \cite{3GT2} for the compressible viscous internal wave problem to restore the ability to use variational methods. These analysis of the linear stability and instability will be carried out in Section \ref{linear theory}.

In Section \ref{nonlinear stability}, we will prove the nonlinear stability as stated in Theorem \ref{thic} when $\abs{\bar B_3}>\mc$. The basic strategy in the energy method is to use first the energy-dissipation structure of \eqref{reformulationic}. Note that besides \eqref{identity000} there is another interactive energy-dissipation structure resulting from testing \eqref{reformulationic} by $\eta$ (guaranteed by the boundary conditions of $\eta$):
 \begin{equation}\label{intt}
\int_\Omega \rho \dt      u \cdot  \eta
  +\frac{1}{2}\frac{d}{dt} \int_\Omega \frac{\mu}{2}   \abs{\sg   \eta}^2  +\int_{\Omega}  \abs{(\bar B\cdot \nabla)  \eta }^2- \int_{\Sigma} \rj g \abs{  \eta_3}^2
= h.o.t..
\end{equation}
The novelty of \eqref{intt} is the recovery of the dissipation estimates of $\eta$ from the magnetic effect (and gravity) and the energy estimates of $\eta$ from the viscous effect. Note that by Poincar\'e's inequality in $\Omega$, which does not hold for the whole space, the first term in \eqref{intt} can be handled by the integration by parts in time. As we need to work with the higher order energy functionals to control the nonlinear terms, we apply derivatives to \eqref{reformulationic} and then use the energy-dissipation structures \eqref{identity000} and \eqref{intt} to get the energy evolution estimates of $(\eta,u)$ as well its temporal and horizontal spatial derivatives that preserve the boundary conditions. Note that it is crucial to employ the structure of the nonlinear terms of $\diverge u$ and $\diverge \eta$ (guaranteed by \eqref{imp1}) since we can not get any estimates of the pressure $p$ without spatial derivatives. The main conclusion of the energy evolution is that
\begin{equation}\label{energyevolution}
\dtt \seb{n}+\sdb{n}\le \mathcal{N}_n,
\end{equation}
where $\seb{n}$ and $\sdb{n}$ represent the ``horizontal" counterparts of $\se{n}$ and $\sd{n}$, respectively, and $\mathcal{N}_n$ represents the nonlinear estimates.

The next step is to use the structure of the equations to improve the energy evolution estimates \eqref{energyevolution}. Due to the presence of the pressure $p$, the only way to improve the estimates is to use the elliptic regularity theory of the Stokes system. However, due to the presence of $\eta$ terms, the procedure is much more delicate than that of the incompressible Navier-Stokes equations in a fixed domain \cite{L,T}. First, as we will see, we can not complete the improvement of energy estimates untill we have completed the improvement of dissipation estimates. Second, to improve the dissipation estimates, by $\dt\eta=u$, we may use the two-phase Stokes regularity for $(u,p)$ by regarding $\eta$ terms as forcing terms to estimate the time derivatives of $(u,p)$ in $\sd{n}$ by $\norm{u}_{2n-1}^2+\sdb{n}+\mathcal{N}_n$. Note that the term $\norm{u}_{2n-1}^2$ can be absorbed by the Sobolev interpolation since we will control $\norm{u}_{2n}^2$ in $\sd{n}$. However, we can not estimate $(u,p)$ without time derivatives in the same way since $\sdb{n}$ only controls $\norm{\eta}_{1,2n-1}^2$ (since $\bar B_3\neq0$), which is not regular enough to control the $\eta$ terms. We have two observations to get around this obstacle. The first observation is that we may write $(\bar B\cdot \nabla)^2\eta=\bar B_3^2\Delta\eta-\bar B_3^2\Delta_\ast\eta+(\bar B_\ast\cdot\nabla_\ast)^2\eta+2  (\bar B_3\pa_3) (\bar B_\ast\cdot\nabla_\ast)\eta$ and we have certain control of last three terms in $\sdb{n}$; this motivates us to consider the Stokes system for $(w,p)$ with introducing the quantity $w=  u+   \frac{ \bar B_3^2}{\mu}  \eta$. However, we can not use the two-phase Stokes regularity as before since the two viscosities $\mu_\pm$ generally are different and the difference would prevent us from obtaining a ``good" jump boundary condition for $w$. Our second observation is that $\bar{\mathcal{D}}_{n}$ has the certain control of $w$ on the boundary $\Sigma$ due to the flatness of $\Sigma$. The idea is then to apply the one-phase Stokes regularity with  Dirichlet boundary conditions to the domains $\Omega_\pm$ respectively, interwinding between vertical derivatives of $\eta$ and horizontal derivatives, to deduce the estimates of $\norm{w}_{2n}^2$ bounded by $(\norm{\eta}_{1,2n-1}^2+)\norm{\dt u}_{2n-2}^2+\sdb{n}+\mathcal{N}_n$. The key point is that, since $\dt\eta=u$, $\norm{w}_{2n}^2\simeq \frac{d}{dt} \norm{\eta}_{2n}^2+\norm{\eta}_{2n}^2+\norm{u}_{2n}^2$; this yields not only the dissipation estimates of $(\eta,u,p)$ but also the energy estimates of $\eta$. Finally, note that it is just the energy estimates of $\eta$ that allows us to employ the two-phase Stokes regularity for $(u,p)$ to deduce the energy estimates of $(u,p)$. The conclusion of the improved estimates is that
\begin{equation}
\dtt \se{n}+\sd{n}\le \mathcal{N}_n.
\end{equation}

Now the remaining is to estimate $\mathcal{N}_n$, and the basic goal is $\mathcal{N}_n\ls \sqrt{\se{n}}\sd{n}$; this would then close the estimates in a small-energy regime. Unfortunately, this is not true; we need to resort to $\mathcal{F}_n$ and $\mathcal{J}_n$ to control some troubling terms. The control of $\mathcal{F}_n$ and $\mathcal{J}_{n} $ is through the following, by estimating $\norm{w}_{2n+1}^2$,
\begin{equation}\label{ine}
 \dtt {\mathcal{F}}_{n}
+  {\mathcal{F}}_{n} +\mathcal{J}_{n}  \ls    \norm{\eta}_{1,2n}^2+ \sd{n}+\mathcal{N}_n.
\end{equation}
Note that $\norm{\eta}_{1,2n}^2$ can only be controlled by $\se{n}$ (indeed, $\seb{n}$) but not by $\sd{n}$, and hence ${\mathcal{F}}_{n} $ and $\mathcal{J}_{n} $ are not included in the dissipation. This would be harmful for the energy method. Our solution to this problem is to implement the two-tier energy method \cite{GT_per,GT_inf}.  The idea is to employ two tiers of energies and dissipations, $\se{N+2}$, $\sd{N+2}$, $\se{2N}$, and $\sd{2N}$. We then control the troubling terms in $\mathcal{N}_n$ by $\sqrt{\se{N+2}}(\f+\mathcal{J}_{2N})$ when $n=2N$ and by $\sqrt{\se{2N}}\sd{N+2}$ when $n=N+2$. This leads to
\begin{equation}\label{conclu2}
 \frac{d}{dt} \se{2N} + \sd{2N} \ls  \sqrt{\se{N+2}}(\f+\mathcal{J}_{2N})
\end{equation}
and
\begin{equation}\label{conclu23}
\frac{d}{dt} \se{N+2} + \sd{N+2} \le 0.
\end{equation}
To control the right hand side of \eqref{conclu2}, a time weighted analysis on \eqref{ine} with $n=2N$ leads to the boundedness of $\f$ and $
  \int_0^t \frac{\mathcal{J}_{2N}}{(1+r)^{1+\vartheta}}dr $ for any $\vartheta>0.$ Hence, if $\se{N+2}$ decays at a sufficiently fast rate, then the estimates \eqref{conclu2} close. This can be achieved by using \eqref{conclu23};
although we do not have that $  \se{N+2}\ls \sd{N+2}$, which rules out the exponential decay, we can use an interpolation argument as \cite{RG,GT_per,GT_inf} to bound $\se{N+2} \ls (\se{2N})^{1-\theta} (\sd{N+2})^{\theta}$ for $\theta=(2N-4)/(2N-3)$.  Plugging this in \eqref{conclu23} leads to an algebraic decay estimate for $\se{N+2}$ with the rate $(1+t)^{-2N+4}$. Consequently, this scheme of the a priori estimates closes by choosing $0<\vartheta\le N-3$ for $N\ge 4$, and hence the proof of Theorem \ref{thic} is completed.

In Section \ref{nonlinear instability}, we will prove the nonlinear instability as stated in Theorem \ref{maintheorem} when $\abs{\bar B_3}<\mc$. Since linear instability has been established in Section \ref{linear theory}, the heart of the proof is then to derive the energy estimates, which allows us to employ the bootstrap argument developed by Guo and Strauss \cite{GS} to passage from linear instability to nonlinear instability.
The natural way of showing nonlinear instability is to consider the difference between the solution to the nonlinear problem and the growing mode solution to the linear problem; if the difference is relatively small with respect to the linear growing mode solution (in a small-energy regime), then the solution to the nonlinear problem behaviors as the linear growing mode solution, and hence nonlinear instability follows. In order to estimate the difference, in spirit of Duhamel's principle, we need to estimate the growth of the solution operator for the linearized problem. Because the spectrum of the linear solution operator is complicated, we can only derive the largest growth rate, indeed $\lambda$, for the linearized problem by using careful energy estimates and the variational character of $\lambda$; this is in the context of strong solutions,
which requires the initial data to satisfy the linear compatibility conditions. Such
estimates would not be applicable to the nonlinear problem by directly employing
Duhamel's principle. To get around this issue, we provide the estimates for the
growth in time of arbitrary solutions of the linear inhomogeneous equations (cf. Theorem \ref{lineargrownth}); clearly, the estimates can be applied directly to the nonlinear problem.

When applying Theorem \ref{lineargrownth} to the nonlinear problem, although we estimate the difference in the lower-order regularity norm, it is typical that the control of the nonlinear terms requires the control of the higher-order regularity norm of the solution to the nonlinear problem due to that the unboundedness of the nonlinear part usually yields a loss in derivatives. In order to close the analysis, motivated by Guo-Strauss's approach \cite{GS}, we will derive the energy estimates for the nonlinear problem which shows that on the time scale of the instability the higher-order regularity norm of the solution is actually bounded by the growth of low-order regularity norm. The energy estimates will be recorded in Theorem \ref{engver} for the case $\bar B_3\neq 0$ and Theorem \ref{enghor} for the case $\bar B_3= 0$. We employ a variant of the strategy of the stable case $\abs{\bar B_3}>\mc$. We first derive the modified energy evolution estimates by shifting the negative gravity potential energy onto the right hand side of the estimates, which would imply that the growth of ``horizontal" energy $\seb{2N}$ is bounded by the growth of $\abs{\eta_3}_0^2$. Next, to improve the estimates, we need to employ different arguments for the cases $\bar B_3\neq0$ and $\bar B_3=0$. For the case $\bar B_3\neq 0$, we can mostly follow that of the stable case to conclude that the growth of  $\se{2N}+\f$ is bounded by the growth of $\seb{2N}$ and hence by $\abs{\eta_3}_0^2$. For the case $\bar B_3= 0$, the situation is much more delicate; we can not improve the estimates of the solution without time derivatives by using the Stokes system for $(w,p)$. We will make the interplay  between the control of $u$ through $\eta$ by using the Stokes system for $(u,p)$ and the control of $\eta$ through $u$ by $\dt \eta=u$ and interwind between vertical derivatives of $\eta$ and horizontal derivatives; the conclusion is also that the growth of  $\se{2N}+\f$ is bounded by the growth of $\abs{\eta_3}_0^2$.

Lastly, we need to employ an argument from \cite{JT} that uses the linear growing mode to construct initial data for the nonlinear problem so that the compatibility conditions are satisfied, which are required for the local well-posedness of the nonlinear problem. Hence, starting from this initial data that is close to the linear growing mode, employing the linear growth estimates and the bootstrap nonlinear energy estimates, we can complete the proof of Theorem \ref{maintheorem}.

At the end of the paper we present Appendix A, where we record various analytic
tools that are useful throughout the paper.

\medskip

\hspace{-13pt}{\bf Notation.}
We now set the conventions for our notation. The Einstein convention of summing over repeated indices is used. Throughout the paper $C>0$ will denote a generic constant that does not depend on the initial data and time, but can depend on $N$, $\Omega_\pm$, the steady states, or any of the parameters of the problem (e.g. $g$, $\rho_\pm$, $\mu_\pm$).  We refer to such constants as ``universal."   We employ the notation $A \ls B$ to mean that $A \le C B$ for a universal constant $C>0$, and similarly, $A \gss B$ and $A \simeq B$. We will also write $\dt A+B\ls D$ for $\dt A+C^{-1}B\le CD$. Universal constants are allowed to change from line to line. When a constant depends on a quantity $z$ we will write $C  = C_z$ to indicate this. To indicate some constants in some places so that they can be referred to later, we will denote them in particular by $C_1,C_2$, etc.

We will write $\mathbb{N} = \{ 0,1,2,\dotsc\}$ for the collection of non-negative integers.  When using space-time differential multi-indices, we will write $\mathbb{N}^{1+m} = \{ \alpha = (\alpha_0,\alpha_1,\dotsc,\alpha_m) \}$ to emphasize that the $0-$index term is related to temporal derivatives.  For just spatial derivatives we write $\mathbb{N}^m$.  For $\alpha \in \mathbb{N}^{1+m}$ we write $\partial^\alpha = \dt^{\alpha_0} \p_1^{\alpha_1}\cdots \p_m^{\alpha_m}.$ We define the parabolic counting of such multi-indices by writing $\abs{\alpha} = 2 \alpha_0 + \alpha_1 + \cdots + \alpha_m.$  We will write $\nabla_\ast$ for the horizontal gradient and $\Delta_\ast$ for the horizontal Laplace operator. For a vector $v=(v_1,v_2,v_3)$, we write $v_\ast=(v_1,v_2)$ for the horizontal component. Finally, for a \textit{given norm} $\norm{\cdot}$ and an integer $k\ge 0$, we introduce the following notation for sums of derivatives:
\begin{equation} \nonumber
\norm{\bar{\nab}^k_0 f}^2 := \sum_{\substack{\alpha \in \mathbb{N}^{1+3}, \abs{\alpha}\le k} } \norm{\pa^\al  f}^2 \text{ and }
\norm{\bar{\nab}^{\ k}_{\ast 0} f}^2 := \sum_{\substack{\alpha \in \mathbb{N}^{1+2}, \abs{\alpha}\le k} } \norm{\pa^\al  f}^2.
\end{equation}

\section{Linear theory}\label{linear theory}

In this section, we consider the stability and instability for the linearization of \eqref{reformulationic}:
\begin{equation}\label{perturb_linear}
\begin{cases}
 \dt\eta=u&\hbox{in }\Omega
\\\rho\partial_t u-\mu\Delta u+\nabla p- (\bar B\cdot\nabla)^2 \eta=0\quad&\hbox{in }\Omega
\\ \diverge u=0&\hbox{in }\Omega
\\ \Lbrack u\Rbrack=0,\quad \Lbrack pI-\mu\mathbb{D}u\Rbrack e_3-\jump{\bar B_3(\bar B\cdot\nabla)\eta}= \rj g\eta_3  e_3&\hbox{on }\Sigma
\\ u  =0 &\hbox{on }\Sigma_{m,\ell}.
\end{cases}
\end{equation}

As standard in the stability theory \cite{3C}, we will study the growing mode solutions to \eqref{perturb_linear} of the following form:
\begin{equation}\label{ansatz}
\eta(t,x)=w(x){\rm e}^{\lam t},\
u(t,x)=v(x){\rm e}^{\lam t},\ p(t,x)=q(x){\rm e}^{\lam t},
\end{equation}
for some $\lam>0$ (the same in the upper and lower fluids). Substituting the ansatz \eqref{ansatz} into \eqref{perturb_linear}, we find that
\begin{equation}\label{ansatz1}
v=\lam w.
\end{equation}
By using \eqref{ansatz1}, we can eliminate $v$ and arrive at the following time-invariant system for  $(w,q)$:
\begin{equation}\label{perturb_linear steady}
\begin{cases}
\lam^2\rho w-\lam \mu\Delta w+\nabla q- (\bar B\cdot\nabla)^2 w=0\quad&\hbox{in }\Omega
\\ \diverge w=0&\hbox{in }\Omega
\\ \Lbrack w\Rbrack=0,\quad \Lbrack qI-\lam\mu\mathbb{D}w\Rbrack e_3-\jump{\bar B_3(\bar B\cdot\nabla)w}= \rj gw_3  e_3&\hbox{on }\Sigma
\\ w =0 &\hbox{on }\Sigma_{m,\ell}.
\end{cases}
\end{equation}

It is not trivial at all to construct solutions to \eqref{perturb_linear steady} by utilizing variational methods
since $\lam$ appears both linearly and quadratically. In order to circumvent this problem
and restore the ability to use variational methods, as \cite{3GT2} we artificially remove the linear
dependence on $\lam$ in \eqref{perturb_linear steady} by introducing an arbitrary parameter $s>0$. This results
in a family $(s>0)$ of modified problems:
\begin{equation}\label{perturb_linear steady s}
\begin{cases}
\lam^2\rho w- s \mu\Delta w+\nabla q- (\bar B\cdot\nabla)^2 w=0\quad&\hbox{in }\Omega
\\ \diverge w=0&\hbox{in }\Omega
\\ \Lbrack w\Rbrack=0,\quad \Lbrack qI-s\mu\mathbb{D}w\Rbrack e_3-\jump{\bar B_3(\bar B\cdot\nabla)w}= \rj gw_3  e_3&\hbox{on }\Sigma
\\ w =0 &\hbox{on }\Sigma_{m,\ell}.
\end{cases}
\end{equation}

A solution to the modified problem \eqref{perturb_linear steady s} with $\lam = s$ corresponds to a solution to the original problem \eqref{perturb_linear steady}.
Note that for any fixed $s>0$, \eqref{perturb_linear steady s} can be viewed as an eigenvalue problem for $-\lam^2$, which has a natural variational structure that allows us to use variational methods to construct solutions. In order to understand $\lam$ in a variational framework, we consider the energy functional
\begin{equation}\label{E_def}
E(w;s) := E_0(w) + s E_1(w)
\end{equation}
with
\begin{equation}\label{E0_def}
E_0(w) :=\hal\left(\int_\Omega \abs{(\bar B\cdot \nabla )w}^2- \int_\Sigma  \rj g w_3^2\right)
\end{equation}
and
\begin{equation}\label{E1_def}
E_1(w):=\hal \int_\Omega \frac{\mu}{2}\abs{\sg w}^2,
\end{equation}
which are all well-defined on the space $w\in H_0^1(\Omega)$. Here $\H$ denotes for the usual Sobolev space on $\Omega$. We also introduce $\Hs=\{v\in\H\,\vert\,\diverge w=0\}$.
Consider the admissible set
\begin{equation}
 \mathfrak{S} = \{w\in \Hs\;\vert\;   J(w):=\hal \int_\Omega  \rho  w^2=1  \}.
\end{equation}
Notice that $E_0(w)$ is not positively definite for $\jump{\rho} >0$.

The first proposition asserts the existence of the minimizer of $E$ in \eqref{E_def} over $\mathfrak{S}$ and hence the solvability of the problem \eqref{perturb_linear steady s}.
\begin{prop}\label{propro1} Let $s>0$ be fixed. Then the following hold:
\begin{enumerate}
\item $E$ achieves its infimum over $\mathfrak{S}$.
\item Let $w$ be a minimizer and $q$ be the associated Lagrange multiplier. Then $w\in H^k(\Omega)$ and $\nabla q\in H^{k-2}(\Omega)$ (and $\jump{q}\in H^{k-3/2}(\Sigma)$) for any $k\ge 2$ and solve the problem \eqref{perturb_linear steady s} with $\lam^2$ given by
\begin{equation}\label{mu_def}
-\lam^2=\alpha(s) := \inf_{w\in \mathfrak{S}}E(w;s).
\end{equation}
\end{enumerate}
\end{prop}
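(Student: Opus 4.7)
The plan is to apply the direct method of the calculus of variations for (1), and then to derive the Euler--Lagrange system together with a bootstrap via Stokes elliptic regularity for (2).

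The first step for (1) is coercivity of $E(\cdot;s)$ on $\mathfrak{S}$. The only sign-indefinite piece is the boundary integral $-\hal\int_\Sigma \rj g w_3^2$; by the trace inequality together with the interpolation $\ns{w}_{H^{1/2}(\Omega)} \ls \pnorm{w}{2}\snorm{w}{1}$, this term can be absorbed as $\epsilon\ns{w}_{H^1}+C_\epsilon\pns{w}{2}$. On $\mathfrak{S}$ the $L^2$ norm is fixed by $J(w)=1$, while Korn's inequality in $\H$ gives $\ns{w}_{H^1}\ls E_1(w)$. Choosing $\epsilon$ small enough yields $E(w;s)\ge c_s\ns{w}_{H^1}-C_s$, so $\alpha(s)>-\infty$ and any minimizing sequence $\{w_k\}$ is uniformly bounded in $\H$. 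Passing to a subsequence, $w_k\weak w$ in $H^1$; Rellich compactness gives $w_k\to w$ in $L^2(\Omega)$, and compactness of the trace gives convergence in $L^2(\Sigma)$. The constraints are preserved ($J(w)=1$, $\diverge w=0$, and $w=0$ on $\Sigma_{m,\ell}$). Weak lower semicontinuity of the quadratic forms $w\mapsto\ns{\sg w}$ and $w\mapsto\ns{(\bar B\cdot\nabla)w}$, combined with the strong convergence of the boundary term, gives $E(w;s)\le\liminf E(w_k;s)=\alpha(s)$, so $w$ is a minimizer; we set $-\lam^2:=\alpha(s)$.

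For (2), the first variation of $E$ on $\mathfrak{S}$ subject to $J(w)=1$ (with Lagrange multiplier $\alpha=-\lam^2$) produces
\begin{equation*}
\int_\Omega\left[\lam^2\rho w\cdot\psi + \frac{s\mu}{2}\sg w:\sg\psi+(\bar B\cdot\nabla)w\cdot(\bar B\cdot\nabla)\psi\right]dx-\int_\Sigma \rj g w_3\psi_3\,dx'=0
\end{equation*}
for every $\psi\in\Hs$. A standard de Rham-type argument produces a scalar pressure $q$ that serves as the Lagrange multiplier for $\diverge\psi=0$, so the identity extends to $\psi\in\H$ after adding $\int_\Omega q\diverge\psi$. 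Localizing $\psi$ inside each $\Omega_\pm$ recovers the interior equation $\lam^2\rho w - s\mu\Delta w+\nabla q - (\bar B\cdot\nabla)^2 w=0$ distributionally (using that $\partial_i(\sg w)_{ij}=\Delta w_j$ for divergence-free $w$). Allowing $\psi$ to have nonzero trace on $\Sigma$ and integrating by parts in both the viscous term and the constant-coefficient magnetic term then isolates the jump condition $\jump{qI-s\mu\sg w}e_3-\jump{\bar B_3(\bar B\cdot\nabla)w}=\rj g w_3 e_3$, completing the identification with \eqref{perturb_linear steady s}.

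The regularity is obtained by bootstrapping two-phase Stokes elliptic estimates (one-phase estimates in each $\Omega_\pm$ with Dirichlet data on $\Sigma_{m,\ell}$ glued by the identified jump on $\Sigma$); moving $\lam^2\rho w-(\bar B\cdot\nabla)^2 w$ to the right-hand side gains two derivatives at each stage, yielding $w\in H^k(\Omega)$, $\nabla q\in H^{k-2}(\Omega)$, and $\jump{q}\in H^{k-3/2}(\Sigma)$ for every $k\ge 2$. The main conceptual obstacle is the coercivity step: establishing a lower bound on $E$ in spite of the indefinite sign of $E_0$ forced by $\rj>0$. Everything else is relatively standard Stokes--elliptic machinery adapted to the two-phase setup, provided some care is taken with the interface jump when introducing $q$ via de Rham.
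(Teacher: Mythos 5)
Your coercivity and existence argument for (1), and the derivation of the weak Euler--Lagrange identity with the pressure introduced as a Lagrange multiplier, match the paper's approach (the paper cites ``standard compactness arguments'' for existence and Lemma \ref{Pressure} for the pressure). You also correctly keep the boundary term $-\int_\Sigma\rj g\,w_3\psi_3$ in the first variation. The problem is in the regularity bootstrap.

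You propose ``moving $\lam^2\rho w - (\bar B\cdot\nabla)^2 w$ to the right-hand side gains two derivatives at each stage,'' treating the magnetic term as a forcing for the two-phase Stokes system. This fails, because $(\bar B\cdot\nabla)^2 w$ contains the purely vertical piece $\bar B_3^2\pa_3^2 w$, a genuine second-order operator: if $w\in H^k$, the putative right-hand side only lies in $H^{k-2}$, and the Stokes estimate \eqref{cSresult} returns $w\in H^k$ with no gain, so the induction never starts. The correct route --- and the one the paper sketches --- is to first take horizontal difference quotients in the weak formulation to gain $H^1$ control of $\nabla_\ast w$ (horizontal translation invariance of $\Omega$ makes this legitimate), use $\diverge w=0$ and the flatness of $\Sigma$ to obtain fractional Sobolev control of $w\vert_\Sigma$, and only then invoke one-phase Stokes regularity (Lemma \ref{i_linear_elliptic2}) with Dirichlet data on each $\Omega_\pm$ separately. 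At that stage one decomposes $(\bar B\cdot\nabla)^2 w=\bar B_3^2\Delta w -\bar B_3^2\Delta_\ast w+2\bar B_3\pa_3(\bar B_\ast\cdot\nabla_\ast)w+(\bar B_\ast\cdot\nabla_\ast)^2 w$ (the decomposition the paper uses again in Section 4), absorbs the $\bar B_3^2\Delta w$ into the viscous term (effective viscosity $s\mu_\pm+\bar B_3^2>0$ on each phase), and the remaining pieces all carry at least one horizontal derivative and are therefore under control. A direct application of the two-phase Stokes lemma with the full magnetic term parked on the right cannot close. You flag coercivity as the conceptual obstacle, but that is the one-line trace-plus-Korn computation; the delicate step is precisely the regularity bootstrap you elided.
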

\begin{proof}
By the trace estimates \eqref{tra2} of Remark \ref{traceth} and Korn's inequality \eqref{korneq} of Lemma \ref{korn}, we have that for $w\in \mathfrak{S}$,
\begin{equation}\label{lower bound}
\int_\Sigma     w_3^2 \ls \norm{w}_0 \norm{w}_1 \ls \sqrt{J(w)}\sqrt{E_1(w)}=\sqrt{E_1(w)}.
\end{equation}
Then we see that, by Cauchy's inequality,
\begin{equation}\label{11}
E(w;s)\ge    s E_1(w)- \int_\Sigma  \rj g w_3^2  \ge  s E_1(w)-C\sqrt{E_1(w)}\gss -s^{-1}.
\end{equation}
This shows that $E$ is bounded below on $\mathfrak{S}$. Then the existence of the minimizer of $E$ in \eqref{E_def} follow from  standard compactness arguments, which shows the assertion $(1)$.

To prove $(2)$, we let $w$ be a minimizer, then the variational principle for Euler-Langrange equations shows that
\begin{equation}\label{cS0d1}
 \lam^2\int_\Omega \rho w\cdot \varphi + s \int_\Omega \frac{\mu}{2} \mathbb{D} w: \mathbb{D}\varphi+\int_\Omega (\bar B\cdot\nabla)  w\cdot(\bar B\cdot\nabla)  \varphi=0\hbox{ for all }\varphi\in\Hs.
\end{equation}
We then introduce the pressure $q$ as a Lagrange multiplier. For this, we define $\Lambda \in \Hsd$ so that $\Lambda (\varphi)$ equals the left hand side of \eqref{cS0d1}. Then $\Lambda=0$ on $\Hs$, and hence according to Lemma \ref{Pressure} there exists a unique $q \in L_{loc}^2(\Omega)$ (up to constants) so that $(q ,\diverge{\varphi})=\Lambda(\varphi)$ for all $\varphi\in \H$. This implies that $(w,q)$ is a weak solution to the problem \eqref{perturb_linear steady s} with $\lam^2$ given by \eqref{mu_def}.
The regularity of $(w,q)$ follows by a similar argument in the proof of energy estimates of our main theorems. We may only sketch the strategy here. First, we will use the weak formulation and employ the difference quotient to control the $H^1$-regularity of horizontal derivatives of $w$. By using the trace estimates and the flatness of $\Sigma$, we get also the higher regularity of $w$ on $\Sigma$. Then we will use the classical regularity for the Stokes problem with Dirichlet boundary conditions to get the desired regularity of $w$ and $q$ on $\Omega_\pm$, respectively. Finally, the second jump condition on $\Sigma$ follows by taking test functions which is compactly supported near $\Sigma$. The boundary estimates of $\jump{q}$ also follows.
\end{proof}
\begin{remark}
If we recover the dependence of constants $C$ on $\mu$ and $\bar B$, \eqref{11} shows the upper bound $\lam^2\ls ((\mu_++\mu_-)s)^{-1} $, which is independent of $\bar B$. If $\bar B_3\neq 0$, we can also show an upper bound of $\lam^2$ independent of $\mu$. Indeed, if $\bar B_3\neq 0$,  by the trace estimates \eqref{tra2b} of Lemma \ref{tracethb}, we have that for $w\in \mathfrak{S}$,
\begin{equation}\label{lower bound22}
\int_\Sigma     w_3^2 \ls \frac{1}{\abs{\bar B_3}}\norm{(\bar B\cdot\nabla)w}_0\norm{w}_0 \ls \frac{1}{\abs{\bar B_3}}\sqrt{E_0(w)}\sqrt{J(w)}=\frac{1}{\abs{\bar B_3}}\sqrt{E_0(w)},
\end{equation}
which implies
\begin{equation}\label{1122}
E(w;s)\ge     E_0(w)- \int_\Sigma  \rj g w_3^2  \ge    E_0(w)-\frac{C}{\abs{\bar B_3}}\sqrt{E_0(w)}\gss - \bar B_3^{-2}.
\end{equation}
This yields that $\lam^2\ls  \bar B_3^{-2} $, which is independent of $\mu$. These two imply that in the presence of the viscosity or the non-horizontal magnetic field, the growing modes, if it could be constructed, cannot grow arbitrarily fast. This suggests that the presence of the viscosity or the non-horizontal magnetic field may prevent the ill-posedness for the Euler Rayleigh-Taylor problem (see \cite{3E,3GT1}).
\end{remark}

In order to construct the growing mode solution to the original problem \eqref{perturb_linear} we first need to clarify the sign of the infimum \eqref{mu_def}; if $E(w;s)$ is always non-negative, then no growing mode solutions to \eqref{perturb_linear} can be constructed, which may suggest that the system \eqref{perturb_linear} is linearly stable. The possibility of negativity of $E(w;s)$ is essentially that of $E_0(w)$ since $s>0$ can be chosen arbitrarily small. To clarify the sign of $E_0(w)$, we first show the following variational problem.

\begin{lem}\label{criticalb}
For any constant vector $\bar B \in \mathbb{R}^3$, it holds that
\begin{equation}\label{va}
\inf_{\substack{f\in H^1_0(\Omega)\\  \int_\Sigma   f^2 =1}} \int_\Omega \abs{(\bar B\cdot \nabla )f}^2 = {\bar B_3^2\left(\dis\frac{1}{\ell}+\frac{1}{m}\right)}.
\end{equation}
\end{lem}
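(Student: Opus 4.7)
The plan is to reduce the minimization to a one-parameter family of one-dimensional problems via Fourier transform in the horizontal variable $x_\ast \in \mathbb{R}^2$. Writing $\hat f(\xi, x_3)$ for the partial Fourier transform, Plancherel's theorem gives
\begin{equation*}
\int_\Omega |(\bar B \cdot \nabla) f|^2 \,dx = \int_{\mathbb{R}^2}\!\!\int_{-m}^{\ell} \bigl| i (\bar B_\ast \cdot \xi)\hat f + \bar B_3\, \partial_3 \hat f \bigr|^2 dx_3\, d\xi, \quad \int_\Sigma f^2 = \int_{\mathbb{R}^2}|\hat f(\xi,0)|^2\, d\xi,
\end{equation*}
so the problem decouples in the frequency variable $\xi$ and the infimum will follow from a sharp bound applied fiberwise.

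\textbf{Lower bound via an integrating factor.} Suppose first $\bar B_3 \neq 0$ and set $G(\xi, x_3) := \hat f(\xi, x_3)\exp\bigl(i (\bar B_\ast \cdot \xi)\, x_3/\bar B_3\bigr)$. A direct calculation yields $\bar B_3\, \partial_3 G = \bigl(i(\bar B_\ast\cdot\xi)\hat f + \bar B_3\, \partial_3 \hat f\bigr)e^{i(\bar B_\ast\cdot\xi) x_3/\bar B_3}$, so the integrand in Plancherel's identity equals $\bar B_3^2|\partial_3 G|^2$, while $G(\xi,-m) = G(\xi,\ell) = 0$ and $|G(\xi, 0)| = |\hat f(\xi, 0)|$. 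For each fixed $\xi$, Cauchy--Schwarz applied to $G(\xi,0) = \int_{-m}^0 \partial_3 G\, dt = -\int_0^\ell \partial_3 G\, dt$ gives
\begin{equation*}
|G(\xi,0)|^2 \le m \int_{-m}^0 |\partial_3 G|^2\, dt \quad\text{and}\quad |G(\xi,0)|^2 \le \ell \int_0^\ell |\partial_3 G|^2\, dt,
\end{equation*}
and adding the two yields $\int_{-m}^\ell |\partial_3 G|^2\, dt \ge (1/m + 1/\ell)|G(\xi,0)|^2$, with equality precisely when $G(\xi,\cdot)$ is the piecewise linear tent on $[-m,\ell]$ that vanishes at the endpoints. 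Integrating in $\xi$ produces $\inf \ge \bar B_3^2(1/\ell + 1/m)$. The case $\bar B_3 = 0$ is trivial: both sides of \eqref{va} are nonnegative and the right-hand side is $0$.

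\textbf{Sharpness.} For $\bar B_3 \neq 0$ I would exhibit an explicit minimizer matching the lower bound: pick $\phi \in H^1(\mathbb{R}^2)$ with $\|\phi\|_{L^2}=1$ and adequate decay, let $\psi(x_3)$ be the tent with $\psi(0)=1$ and $\psi(-m)=\psi(\ell)=0$, linear on each subinterval (so $\int_{-m}^\ell|\psi'|^2\, dx_3 = 1/m + 1/\ell$), and define $f$ by $\hat f(\xi,x_3) := \phi(\xi)\psi(x_3)\exp\bigl(-i(\bar B_\ast\cdot\xi) x_3/\bar B_3\bigr)$. Then $f \in H_0^1(\Omega)$, $\int_\Sigma f^2 = 1$, and the fiberwise equality case gives $\int_\Omega |(\bar B\cdot\nabla)f|^2 = \bar B_3^2(1/\ell + 1/m)$. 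If instead $\bar B_3 = 0$, the infimum $0$ is approached but not attained; a minimizing sequence is $f_\epsilon(x) := \psi(x_3)\,\epsilon h(\epsilon x_\ast)$ with $h \in C_c^\infty(\mathbb{R}^2)$ normalized so that $\int_\Sigma f_\epsilon^2 = 1$, since $\int_\Omega |(\bar B_\ast\cdot\nabla_\ast) f_\epsilon|^2 = O(\epsilon^2)$.

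\textbf{Main obstacle.} The integrating-factor substitution is the heart of the argument and makes the reduction to a scalar one-dimensional Poincar\'e-type inequality essentially mechanical. The only subtle point is verifying that the proposed extremizer genuinely lies in $H^1(\Omega)$ rather than merely in $L^2$ with a bounded trace on $\Sigma$; this forces the choice $\phi \in H^1(\mathbb{R}^2)$ (so that $\xi \hat f \in L^2$), which is fully compatible with the normalization $\|\phi\|_{L^2}=1$, and so causes no difficulty.
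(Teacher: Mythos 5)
Your proof is correct and takes a genuinely different route from the paper's. The paper proves the lower bound by integrating directly along the characteristics of $\bar B\cdot\nabla$ (the chain of inequalities culminating in \eqref{pbbb} inside the proof of Lemma \ref{app pb}) and proves sharpness via the untilted sequence $f_k(x)=\varphi_k(x_\ast)\psi(x_3)$, whose horizontal-derivative contribution tends to zero as $k\to\infty$ but never vanishes at finite $k$. You instead pass to the partial Fourier transform in $x_\ast$ and introduce the integrating factor $G = \hat f\, e^{i(\bar B_\ast\cdot\xi)x_3/\bar B_3}$, which converts the whole problem into a fiberwise one-dimensional Poincar\'e inequality on $(-m,\ell)$ whose optimizer (the tent function) is visible at a glance. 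The payoff is that you produce an \emph{exact} minimizer when $\bar B_3\neq0$: in physical space your $\hat f$ is the sheared tent $f(x)=\psi(x_3)\,\Phi\!\left(x_\ast-\tfrac{\bar B_\ast}{\bar B_3}\,x_3\right)$, on which $(\bar B\cdot\nabla)f=\bar B_3\psi'(x_3)\Phi$ identically, whereas the paper's test functions only approach the infimum in the limit. Your integrating factor is precisely the Fourier dual of the paper's parameterization along the lines $s\mapsto(x_\ast+s\bar B_\ast,\,s\bar B_3)$, but you exploit it to build an optimizer rather than merely a minimizing sequence. One small slip of labelling: the condition you want on $\phi$ is not $\phi\in H^1(\mathbb{R}^2)$ (which does not imply $|\xi|\phi\in L^2$) but rather $\langle\xi\rangle\phi\in L^2(\mathbb{R}^2)$, equivalently $\Phi=\check\phi\in H^1(\mathbb{R}^2)$; your own parenthetical remark that the point is to ensure $\xi\hat f\in L^2$ shows you already have the right condition in mind, so this is purely a naming issue, not a gap.
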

\begin{proof}
First, for the horizontal $\bar B$, that is, the case $\bar B_3=0$, by rotation, it suffices to prove \eqref{va} for the case $\bar B=(\bar B_1,0,0)$. We take a sequence of test functions $\varphi_k(x)=\frac{1}{k}\varphi ( \frac{x_1}{k},x_2,x_3  )$ for $\varphi\in C_0^\infty(\Omega)$ with $\int_\Sigma   \varphi^2 =1$. Then $\varphi_k\in C_0^\infty(\Omega)\subset H^1_0(\Omega)$ and
\begin{equation}
\int_\Sigma   \varphi_k^2 =1 \text{ and } \int_\Omega \abs{(\bar B\cdot \nabla )\varphi_k}^2= \frac{\bar B_1^2}{k^2}\int_\Sigma   \varphi_k^2=\frac{\bar B_1^2}{k^2}.
\end{equation}
Letting $k\rightarrow 0$ proves \eqref{va} for the case $\bar B_3= 0$.

Now we assume $\bar B_3\neq 0$. For any $f\in H^1_0(\Omega)$, taking $x_3=0$ in \eqref{pbbb}, we obtain
\begin{equation}
\int_{\Sigma}  f^2=\int_{\mathbb{R}^2}  f(x_\ast,0)^2  \le  \frac{\ell}{\bar B_3^2} \int_{\Omega_+} \abs{(\bar B\cdot\nabla)f }^2,
\end{equation}
and also
\begin{equation}
\int_{\Sigma}  f^2  \le  \frac{m}{\bar B_3^2} \int_{\Omega_-} \abs{(\bar B\cdot\nabla)f }^2.
\end{equation}
We then have
\begin{equation}\label{infbound}
 \int_\Omega \abs{(\bar B\cdot \nabla )f}^2 \ge  {\bar B_3^2\left(\frac{1}{\ell}+\frac{1}{m}\right)}\int_\Sigma   f ^2 \text{ for any }f\in H^1_0(\Omega).
\end{equation}
On the other hand, we take a sequence of test functions $f_k(x )=\varphi_k( x_\ast )\psi(x_3) $ with $\varphi_k( x_\ast)=\frac{1}{k}\varphi ( \frac{x_\ast}{k}  )$ for $\varphi\in C_0^\infty(\mathbb{R}^2)$ and $\psi(x_3)$ defined by
\begin{equation}\label{pside}
\psi(x_3)=\begin{cases}
1-\frac{x_3}{\ell},\quad x_3\in [0,\ell]
\\ 1+\frac{x_3}{m},\quad x_3\in [-m,0).
\end{cases}
\end{equation}
Then $f_k \in H^1_0(\Omega)$ and
\begin{equation}
\begin{split}
&\int_\Omega \abs{(\bar B\cdot \nabla )f_k}^2=\int_\Omega \abs{(\bar B_\ast\cdot \nabla_\ast )f_k}^2+
\bar B_3^2\abs{ \pa_3f_k}^2+2(\bar B_\ast\cdot \nabla_\ast )f_k \bar B_3\pa_3f_k
\\&\quad=\int_{\mathbb{R}^2} \abs{(\bar B_\ast\cdot \nabla_\ast )\varphi_k}^2\int_{-m}^\ell \psi^2+
\int_{\mathbb{R}^2} \bar B_3^2\abs{ \varphi_k}^2\int_{-m}^\ell \psi'^2
+\hal\int_{\mathbb{R}^2} (\bar B_\ast\cdot \nabla_\ast )\varphi_k^2 \bar B_3 \int_{-m}^\ell(\psi^2)'
\\&\quad=\frac{1}{k^2}\int_{\mathbb{R}^2} \abs{(\bar B_\ast\cdot \nabla_\ast )\varphi}^2\int_{-m}^\ell \psi^2+
  \bar B_3^2\int_{\mathbb{R}^2}\abs{ \varphi}^2\int_{-m}^\ell \psi'^2.
\end{split}
\end{equation}
Note that
\begin{equation}
 \int_\Sigma   f_k ^2= \int_{\mathbb{R}^2}\abs{ \varphi}^2\text{ and }\int_{-m}^\ell \psi'^2 =\frac{1}{\ell}+\frac{1}{m}.
\end{equation}
This implies that
\begin{equation}\label{limitvalue}
\lim_{k\rightarrow\infty}\frac{\dis\int_\Omega \abs{(\bar B\cdot \nabla )f_k}^2}{\dis\int_\Sigma   f_k ^2 }={\bar B_3^2\left(\frac{1}{\ell}+\frac{1}{m}\right)}.
\end{equation}
Combining \eqref{infbound} and \eqref{limitvalue} shows \eqref{va}.
\end{proof}

Now we can clarify the sign of $E_0(w)$.
\begin{lem}\label{signsign}
For any constant vector $\bar B \in \mathbb{R}^3$, we have the following assertions:
\begin{enumerate}
\item
If $\abs{\bar B_3}\ge  \mc$, then for any $w\in H_0^1(\Omega)$, $E_0(w)\ge 0$. Moreover,
\begin{equation}
  \int_\Omega  \abs{(\bar B\cdot \nabla) w}^2- \int_{\Sigma} \rj g \abs{  w_3}^2 \gss \left(1-  \frac{\mc^2}{\abs{\bar B_3}^2}\right)\int_\Omega  \abs{(\bar B\cdot \nabla) w}^2 .
\end{equation}

\item If $\abs{\bar B_3}<\mc$, then there exists $w\in \Hs$ such that $E_0(w)< 0$.
\end{enumerate}
\end{lem}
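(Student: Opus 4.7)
For part (1), the inequality follows at once from Lemma~\ref{criticalb} applied to the scalar $w_3\in H_0^1(\Omega)$. Since $|(\bar B\cdot\nabla)w_3|^2\le|(\bar B\cdot\nabla)w|^2$ pointwise, and using the identity $\rj g = \mc^2(1/\ell+1/m)$ from the definition of $\mc$, that lemma gives $\rj g\int_\Sigma w_3^2\le(\mc^2/\bar B_3^2)\int_\Omega|(\bar B\cdot\nabla)w|^2$. Subtracting this from $\int_\Omega|(\bar B\cdot\nabla)w|^2$ yields the claimed quantitative lower bound, which is manifestly nonnegative precisely when $|\bar B_3|\ge\mc$. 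Note that this half of the statement uses only $w\in H_0^1(\Omega)$, not the divergence-free condition.

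For part (2), the task is to construct $w\in\Hs$ with $E_0(w)<0$ whenever $|\bar B_3|<\mc$. The scalar test functions $f=\varphi_k(x_\ast)\psi(x_3)$ built in the proof of Lemma~\ref{criticalb} already satisfy $\int_\Omega|(\bar B\cdot\nabla)f|^2<\rj g\int_\Sigma f^2$ in the limit, since $\bar B_3^2(1/\ell+1/m)<\rj g$ in the regime we consider; the real work is to pass to a divergence-free lift. After a horizontal rotation I may assume $\bar B_\ast=(\bar B_1,0)$ and use the curl ansatz $w=\nabla\times(\phi\,e_1)=(0,\partial_3\phi,-\partial_2\phi)$, which is automatically divergence-free. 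With the separated form $\phi(x)=\alpha(x_1/R)\beta(x_2)b(x_3)$, I take $\alpha,\chi_0\in C_c^\infty(\mathbb{R})$, $\beta(x_2)=\chi_0(x_2)\sin(kx_2)$, and $b\in H^2([-m,\ell])$ obtained from $\psi$ by smoothing the kink at $x_3=0$ and modifying near $x_3=\ell,-m$ so that $b(\ell)=b(-m)=0$ and $b'(\ell)=b'(-m)=0$; these conditions on $b$ and $b'$ ensure $w|_{\Sigma_{m,\ell}}=0$ and $w\in H^1(\Omega)$. The smoothing width $\delta>0$ can be chosen so small that $b(0)$ is arbitrarily close to $1$ and $\int_{-m}^\ell b'^2$ is arbitrarily close to $1/\ell+1/m$, at the unavoidable cost $\int_{-m}^\ell b''^2=O(\delta^{-1})$.

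A direct computation, in which the $\bar B_1\bar B_3$ cross terms vanish via $\int\alpha'\alpha\,dx_1=0$ and $\int_{-m}^\ell b'b''\,dx_3=0$ (the latter by the boundary conditions on $b'$), yields
\begin{equation*}
\frac{\int_\Omega|(\bar B\cdot\nabla)w|^2}{\int_\Sigma w_3^2} = \frac{\bar B_3^2\int_{-m}^\ell b'^2}{b(0)^2} + \frac{\bar B_3^2\int_{-m}^\ell b''^2}{b(0)^2}\cdot\frac{\|\beta\|^2}{\|\beta'\|^2} + O(R^{-2}),
\end{equation*}
where the $O(R^{-2})$ absorbs all contributions coming from the slow $x_1$-derivative of $\alpha$. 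The first term tends to $\bar B_3^2(1/\ell+1/m)<\rj g$ as $\delta\to 0$. The main obstacle is the second term: the boundary smoothing unavoidably forces $\int b''^2\sim\delta^{-1}$. My remedy is precisely the rapid oscillation of $\beta$: for $\beta(x_2)=\chi_0(x_2)\sin(kx_2)$ one has $\|\beta\|^2/\|\beta'\|^2=O(k^{-2})$, so the second term is negligible once $k\gg\delta^{-1/2}$. Taking iterated limits — first $\delta$ small so the leading term strictly beats $\rj g$, then $k$ large enough to suppress the $\delta^{-1}$ contribution, then $R$ large to kill the $\bar B_1$ residue — produces a single $w\in\Hs$ with $\int_\Omega|(\bar B\cdot\nabla)w|^2<\rj g\int_\Sigma w_3^2$, equivalently $E_0(w)<0$.
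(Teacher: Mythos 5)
Your argument is correct, and it actually repairs a genuine gap in the paper's own proof of part~(2). Both you and the paper use a separated, curl-type divergence-free test function with a slowly varying profile in the $x_1$-direction and a tent-like profile in $x_3$; the essential difference is in the handling of the vertical profile. The paper chooses $w_2 \sim \psi'(x_3)$ with $\psi$ the piecewise-linear function \eqref{pside}, so that $\psi'$ jumps at $x_3=0$ and is nonzero at $x_3 = \ell, -m$; consequently the paper's $w_{k,n}$ is neither continuous across $\Sigma$ nor vanishing on $\Sigma_{m,\ell}$, hence is not in $\Hs \subset H_0^1(\Omega)$, even though the paper writes $\int \psi''^2 = 0$ as if the kink were harmless. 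Your replacement of $\psi$ by an $H^2$ profile $b$ with $b(\pm) = b'(\pm) = 0$ at the endpoints fixes both defects and ensures $w = \nabla \times (\phi e_1) \in \Hs$. The price is the term $\int b''^2 \sim \delta^{-1}$, which has no counterpart in the paper's (formally) kinked computation, and your key new ingredient is to suppress it by rapid oscillation $\beta(x_2) = \chi_0(x_2)\sin(kx_2)$, so that this term appears multiplied by $\|\beta\|^2/\|\beta'\|^2 = O(k^{-2})$. (The paper uses a narrowing Gaussian $\phi_n$ for the analogous job, but only to kill the $O(1/k^2)$ horizontal-derivative residues; your $\beta$ does double duty.) The vanishing of the $\bar B_1 \bar B_3$ cross terms follows already from $\int \alpha'\alpha = 0$, so invoking $\int b'b'' = 0$ is not strictly needed, but it is true and harmless. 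The iterated choice $\delta \to 0$, then $k \to \infty$, then $R \to \infty$ closes the argument cleanly. Part~(1) is identical to the paper's argument. Overall this is a more careful and genuinely rigorous version of the paper's construction.
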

\begin{proof}
The assertion $(1)$ follows by applying Lemma \ref{criticalb} to $f=w_3$ and noting the definition \eqref{mc} of $\mc$.

For the assertion $(2)$, by rotation, it suffices to prove it for the case $\bar B=(\bar B_1,0,\bar B_3)$. We then take functions $w_{k,n}=\left(0, \varphi(\frac{x_1}{k}) \phi_n(\frac{x_2}{k})\psi^\prime(x_3), \varphi(\frac{x_1}{k})\frac{1}{k}\phi_n^\prime(\frac{x_2}{k})\psi(x_3) \right)$ with $\varphi\in C_0^\infty(\mathbb{R})$, $\phi_n(z)= n^{-1/4}e^{-n  z^2}$ and $\psi\in H_0^1((-m,\ell))$ defined by \eqref{pside}. Then we have $w_{k,n}\in \Hs$ and
\begin{equation}
\begin{split}
&\int_\Omega \abs{(\bar B\cdot \nabla )w_{k,n}}^2 = \bar B_1 ^2\int_\Omega \abs{\pa_1 w_{k,n}}^2 +
  \bar B_3^2\int_\Omega \abs{\pa_3 w_{k,n}}^2
  \\& \quad=
  \frac{\bar B_1 ^2}{k^2}  \int_{\mathbb{R} } \abs{ \varphi^\prime\left(\frac{x_1}{k}\right)}^2 \int_{\mathbb{R} } \abs{\phi_n\left(\frac{x_2}{k}\right)}^2
    \int_{-m}^\ell \psi'^2+
  \bar B_3^2   \int_{\mathbb{R} } \abs{ \varphi\left(\frac{x_1}{k}\right)}^2 \int_{\mathbb{R} } \abs{\phi_n\left(\frac{x_2}{k}\right)}^2 \int_{-m}^\ell \psi''^2
  \\&\qquad+ \frac{\bar B_1 ^2}{k^4}  \int_{\mathbb{R} } \abs{ \varphi^\prime\left(\frac{x_1}{k}\right)}^2 \int_{\mathbb{R} } \abs{\phi_n^\prime\left(\frac{x_2}{k}\right)}^2\int_{-m}^\ell \psi^2
  +
  \frac{\bar B_3^2}{k^2}\int_{\mathbb{R} } \abs{ \varphi\left(\frac{x_1}{k}\right)}^2 \int_{\mathbb{R} } \abs{\phi_n^\prime\left(\frac{x_2}{k}\right)}^2\int_{-m}^\ell \psi'^2
    \\& \quad=
  {\bar B_1 ^2}   \int_{\mathbb{R} } \abs{ \varphi^\prime }^2 \int_{\mathbb{R} } \abs{\phi_n }^2
    \int_{-m}^\ell \psi'^2+
  k^2\bar B_3^2   \int_{\mathbb{R} } \abs{ \varphi }^2 \int_{\mathbb{R} } \abs{\phi_n }^2 \int_{-m}^\ell \psi''^2
  \\&\qquad+ \frac{\bar B_1 ^2}{k^2}   \int_{\mathbb{R} } \abs{ \varphi^\prime }^2 \int_{\mathbb{R} } \abs{\phi_n^\prime }^2\int_{-m}^\ell \psi^2
  +
  {\bar B_3^2} \int_{\mathbb{R} } \abs{ \varphi }^2 \int_{\mathbb{R} } \abs{\phi_n^\prime }^2\int_{-m}^\ell \psi'^2.
\end{split}
\end{equation}
Note that
\begin{equation}
{\bar B_3^2} \int_{\mathbb{R} } \abs{ \varphi }^2 \int_{\mathbb{R} } \abs{\phi_n^\prime }^2\int_{-m}^\ell \psi'^2
={\bar B_3^2\left(\frac{1}{\ell}+\frac{1}{m}\right)}\int_\Sigma   w_{k,n} ^2
\end{equation}
and
\begin{equation}
\int_{\mathbb{R} } \abs{\phi_n  }^2 =O(n^{-1})\text{ and }\int_{\mathbb{R} } \abs{\phi_n^\prime }^2 =O(1).
\end{equation}
Since $\abs{\bar B_3}<\mc$ and hence ${\bar B_3^2\left(\frac{1}{\ell}+\frac{1}{m}\right)}<\rj g$, then for sufficiently large $k$ and $n$ we have $E_0(w_{k,n})< 0$. This proves the assertion $(2)$.
\end{proof}

\begin{remark}
It follows from Lemmas \ref{criticalb} and \ref{signsign} that for any constant vector $\bar B \in \mathbb{R}^3$,
\begin{equation}\label{va12}
\inf_{\substack{w\in \Hs\\  \int_\Sigma   \abs{w}^2 =1}} \int_\Omega \abs{(\bar B\cdot \nabla )w}^2 = {\bar B_3^2\left(\dis\frac{1}{\ell}+\frac{1}{m}\right)}.
\end{equation}
\end{remark}

By Lemma \ref{signsign}, we see that if $\abs{\bar B_3}\ge  \mc$, then $E(w;s)\ge E_0(w)\ge 0$ and hence no growing mode solutions can be constructed; if $\abs{\bar B_3}<  \mc$, then $E(w;s)$ has the possibility to be negative, and we record this in the following lemma.

\begin{lem}\label{negativein} If $\abs{\bar B_3}<\mc$, then there exists $s_0>0$ depending on $g,\rho,\bar B,m,\ell,\mu$ such that for $0<s\leq s_0$ it holds that $\alpha(s)<0$.
\end{lem}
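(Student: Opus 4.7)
The plan is nearly immediate from Lemma~\ref{signsign}(2), which has already done the hard work: it produces divergence-free test fields on which the potential part $E_0$ is strictly negative whenever $|\bar B_3| < \mc$. Since $\alpha(s) = \inf_{w \in \mathfrak{S}} E(w;s)$ with $E(w;s) = E_0(w) + s E_1(w)$ affine in $s$, showing $\alpha(s) < 0$ for small $s$ reduces to exhibiting a single admissible competitor $w_0 \in \mathfrak{S}$ on which $E_0(w_0) < 0$ and then choosing $s$ small enough that the viscous correction $s E_1(w_0)$ cannot offset the negativity of $E_0(w_0)$.

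Concretely, I would first invoke Lemma~\ref{signsign}(2) to pick some $\tilde w \in \Hs$ with $E_0(\tilde w) < 0$. Necessarily $\tilde w \not\equiv 0$, so $J(\tilde w) > 0$, and the rescaling $w_0 := \tilde w / \sqrt{J(\tilde w)} \in \mathfrak{S}$ still satisfies $E_0(w_0) < 0$ by the quadratic homogeneity of $E_0$. I also claim $E_1(w_0) > 0$: if $E_1(w_0)$ vanished then $\mathbb{D} w_0 \equiv 0$ on $\Omega_\pm$, and Korn's inequality (Lemma~\ref{korn}) together with the Dirichlet condition $w_0 = 0$ on $\Sigma_{m,\ell}$ would force $w_0 \equiv 0$, contradicting $E_0(w_0) < 0$. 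Moreover $E_1(w_0) < \infty$ since $\tilde w \in H^1_0(\Omega)$.

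Finally I would set
$$s_0 := \frac{-E_0(w_0)}{2 E_1(w_0)} > 0,$$
whose dependence on $g,\rho,\bar B,m,\ell,\mu$ is inherited from the parameters entering the construction of $w_0$ in Lemma~\ref{signsign}(2) (via the explicit test functions $w_{k,n}$ there) and from the values of $E_0(w_0)$ and $E_1(w_0)$. For any $0 < s \le s_0$ one then has
$$\alpha(s) \le E(w_0;s) = E_0(w_0) + s E_1(w_0) \le E_0(w_0) + \tfrac{1}{2}(-E_0(w_0)) = \tfrac{1}{2} E_0(w_0) < 0,$$
which is the claim. There is no real obstacle beyond what was already done in Lemma~\ref{signsign}(2); here we merely exploit that $E$ is linear in $s$ and that the Dirichlet boundary condition rules out the degenerate case $E_1(w_0)=0$ via Korn.
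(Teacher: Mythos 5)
Your proof is correct and follows essentially the same route as the paper: invoke Lemma~\ref{signsign}(2) to obtain $\tilde w$ with $E_0(\tilde w)<0$, then use linearity of $E(\cdot\,;s)$ in $s$ to choose $s_0$ so the viscous term $sE_1$ cannot offset the negativity. The explicit normalization into $\mathfrak{S}$ and the Korn-inequality check that $E_1(w_0)>0$ are harmless embellishments (the paper simply notes $E$ and $J$ share homogeneity and bounds $E_1(\tilde w)\le C_1$); if $E_1(w_0)=0$ the claim is trivial anyway.
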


\begin{proof}
Since $E$ and $J$ have the same homogeneity, we may reduce to constructing any function $w\in \Hs$ such that $E(w;s) <0$.  Since $\abs{\bar B_3}<\mc$, we know from the assertion $(2)$ of Lemma \ref{signsign} that
there exists $ \widetilde{w}$ such that
$E_0(\widetilde{w})<0$. Obviously, we have
\begin{equation}\label{ineq12}
E(\widetilde{w};s)= E_0(\widetilde{w})+sE_1(\widetilde{w})\le
E_0(\widetilde{w})+sC_1
\end{equation} for a
constant $C_1$ depending on
$g,\rho,\bar B,m,\ell,\mu$.  Then
there exists $s_0>0$ depending on  these parameters such that for
$s\le s_0$ it holds that
$E(\widetilde{w};s)<0$.  Thus  $\alpha(s)<0$ for $s \le s_0$.
\end{proof}

\begin{remark}
For a  minimizer $w \in \mathfrak{S}$ we have
\begin{equation}
- \int_\Sigma  \rj g w_3^2\le \al(s)  ,
\end{equation}
which in particular requires that $w_3\not\equiv 0$ on $\Sigma$ if $\al(s) <0$.
\end{remark}

We now study the behavior of $\alpha(s)$ as a function of $s\ge 0$.
\begin{lem}\label{behavior}We have the following statements.

\begin{enumerate}
\item

$\alpha(s)$ is strictly increasing and $\alpha\in C_{loc}^{0,1}((0,\infty))\cap C^0((0,\infty))$.

\item There exist constants
 $C_1,C_2,C_3,C_4>0$ depending on $g,\rho,\bar B,m,\ell,\mu$ so that
\begin{equation}
-C_3s^{-1}+sC_4\le \alpha(s)\le-C_2+sC_1.\end{equation}
\end{enumerate}
\end{lem}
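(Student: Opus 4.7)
The plan is to treat the two parts separately, with the core ingredients being (i) concavity of $\alpha$ as an infimum of $s$-affine functions, (ii) a uniform positive lower bound on $E_1$ restricted to $\mathfrak{S}$, and (iii) comparison with a single fixed test function. Throughout I work under the hypothesis $|\bar B_3|<\mc$, which is the setting in which this lemma is used (the upper bound $-C_2+sC_1$ with $C_2>0$ requires $\alpha$ to be negative for small $s$).

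For the upper bound in (2), Lemma \ref{signsign}(2) furnishes some $\widetilde{w}\in\Hs$ with $E_0(\widetilde{w})<0$; after rescaling so that $\widetilde{w}\in\mathfrak{S}$, using it as a test function in the variational problem gives
$$\alpha(s)\le E(\widetilde{w};s)=E_0(\widetilde{w})+s E_1(\widetilde{w}),$$
so we may take $-C_2:=E_0(\widetilde{w})<0$ and $C_1:=E_1(\widetilde{w})$. For the lower bound, the key observation is that $E_1$ admits a positive uniform lower bound on $\mathfrak{S}$: combining Korn's inequality (Lemma \ref{korn}) with Poincar\'e's inequality on $\Omega$ (applicable because $w=0$ on $\Sigma_{m,\ell}$) yields
$$2=\int_\Omega \rho|w|^2\lesssim E_1(w),\qquad w\in\mathfrak{S},$$
so $E_1(w)\ge c>0$ uniformly. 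On the other hand, discarding the non-negative magnetic term in $E_0$ and using the trace/Korn estimate \eqref{lower bound} gives $E_0(w)\ge -C\sqrt{E_1(w)}$ for $w\in\mathfrak{S}$. Combining these with Young's inequality $C\sqrt{E_1(w)}\le \tfrac{s}{2}E_1(w)+\tfrac{C^2}{2s}$ yields
$$E(w;s)=E_0(w)+s E_1(w)\ge \frac{s}{2}E_1(w)-\frac{C^2}{2s}\ge \frac{cs}{2}-\frac{C^2}{2s},$$
and taking the infimum over $w\in\mathfrak{S}$ produces the lower bound with $C_3=C^2/2$ and $C_4=c/2$.

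For (1), observe that $s\mapsto E(w;s)$ is affine, so $\alpha(s)=\inf_{w\in\mathfrak{S}} E(w;s)$ is concave on $(0,\infty)$ as an infimum of affine functions, which automatically gives $\alpha\in C^0((0,\infty))\cap C^{0,1}_{\mathrm{loc}}((0,\infty))$. For strict monotonicity, letting $w_{s_j}$ denote a minimizer for $E(\cdot;s_j)$ and comparing cross-terms in the usual way for $s_1<s_2$ yields the two-sided squeeze
$$(s_2-s_1)E_1(w_{s_2})\le \alpha(s_2)-\alpha(s_1)\le (s_2-s_1)E_1(w_{s_1}),$$
and the uniform lower bound $E_1(w_{s_2})\ge c>0$ then forces $\alpha(s_2)>\alpha(s_1)$.

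The conceptual heart of the argument, and the only point that is not essentially formal, is the uniform lower bound $E_1(w)\ge c$ on $\mathfrak{S}$: it is precisely what upgrades the easy bound $\alpha(s)\ge -C/s$ to the desired $-C_3/s+sC_4$ in (2), and it supplies the strict part of the monotonicity in (1). Its validity rests on the finite-depth Dirichlet geometry that makes Poincar\'e's inequality available on $\Omega$; in the whole-space setting this step (and hence the lemma) would fail, consistent with the instability observations recorded earlier in the paper.
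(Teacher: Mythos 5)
Your proof is correct and, for part (2), follows essentially the same route as the paper: the upper bound by testing the function $\widetilde w$ from Lemma \ref{signsign}(2) (equivalently \eqref{ineq12}), and the lower bound by combining $E_0(w)\ge -C\sqrt{E_1(w)}$ from \eqref{11} with Young's inequality and the uniform positive lower bound $\inf_{\mathfrak{S}}E_1>0$, which the paper also singles out when it writes ``We denote by $C_4$ this positive infimum.'' For part (1), the paper defers entirely to Proposition~3.6 of \cite{3GT2}, while you reconstruct the argument from scratch: concavity of $\alpha$ as an infimum of $s$-affine functions (hence continuity and local Lipschitz regularity), plus the two-sided squeeze $(s_2-s_1)E_1(w_{s_2})\le\alpha(s_2)-\alpha(s_1)\le(s_2-s_1)E_1(w_{s_1})$ obtained by cross-testing minimizers, with strictness again coming from $\inf_{\mathfrak{S}}E_1\ge c>0$. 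This is indeed the content of the cited reference, so the route is the same in substance; the one thing worth noting is that the $L^2$ control you get from Poincar\'e is already built into the paper's statement of Korn's inequality \eqref{korneq} (which bounds the full $H^1$ norm), so you do not need to invoke Poincar\'e separately. Your remark that $C_2>0$ requires $|\bar B_3|<\mc$ is also accurate and matches the implicit hypothesis under which this lemma is applied.
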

\begin{proof} Recall the energy decomposition \eqref{E_def}
along with \eqref{E0_def} and \eqref{E1_def}. It has the same form as in Proposition 3.6 of
\cite{3GT2}, hence the assertion $(1)$ follows in the same way.

For $(2)$, the second inequality follows by taking $C_2=-E_0( \tilde w)>0$ in
\eqref{ineq12}. On the other hand, \eqref{11} implies that
\begin{equation}
\alpha(s)   \ge \inf_{w\in \mathfrak{S}}\left( s E_1(w)-C\sqrt{E_1(w)}\right)\ge s \inf_{w\in \mathfrak{S}} E_1(w) -C_3s^{-1}.
\end{equation}
We denote by $C_4$ this positive infimum, then the first inequality follows.
\end{proof}

We may now define the open set
\begin{equation}\mathcal{S}=\al^{-1}((-\infty,0))\subset(0,\infty).\end{equation}
Note that by Lemma \ref{negativein} $\mathcal{S}$ is non-empty and allows us to define
$\lam(s)=\sqrt{-\alpha(s)}$ for $s\in\mathcal{S}$. We will then make a fixed-point argument to find $s\in \mathcal{S}$
such that $s=\lam(s)$.

\begin{lem}\label{lambdas}
There exists a unique $s\in \mathcal{S}$ so that
$\lam(s)=\sqrt{-\al( s)}>0$ and
\begin{equation}\label{fixedpoint}s=\lam(s).\end{equation}
\end{lem}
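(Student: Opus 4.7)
The plan is to set up a standard fixed-point argument based on monotonicity and continuity, exploiting the quantitative bounds on $\alpha$ from Lemma \ref{behavior}. First I would pin down the structure of $\mathcal{S}$: since $\alpha$ is continuous and strictly increasing on $(0,\infty)$, since the upper bound $\alpha(s)\le -C_2 + sC_1$ gives $\alpha(s)<0$ for all sufficiently small $s>0$, and since the lower bound $\alpha(s)\ge -C_3 s^{-1} + sC_4$ shows $\alpha(s)\to +\infty$ as $s\to \infty$, the intermediate value theorem produces a unique $s^\ast \in (0,\infty)$ with $\alpha(s^\ast)=0$ and $\mathcal{S}=(0,s^\ast)$.

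Next I would define
\begin{equation}
\Phi(s) := \lambda(s) - s = \sqrt{-\alpha(s)} - s \quad\text{for } s\in\mathcal{S}.
\end{equation}
By Lemma \ref{behavior}(1), $\alpha\in C^0((0,\infty))$, so $\Phi$ is continuous on $\mathcal{S}$. Because $\alpha$ is strictly increasing, $-\alpha$ is strictly decreasing on $\mathcal{S}$, and composing with the (strictly increasing) square root keeps $\lambda(s)$ strictly decreasing; since $-s$ is strictly decreasing as well, $\Phi$ is strictly decreasing on $\mathcal{S}$.

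I would then check the sign behavior at the endpoints. As $s\to s^{\ast-}$, $\alpha(s)\to 0$ by continuity, hence $\lambda(s)\to 0$ and $\Phi(s)\to -s^\ast<0$. As $s\to 0^+$, the upper bound gives $-\alpha(s)\ge C_2 - sC_1$, hence $\lambda(s)\ge \sqrt{C_2 - sC_1}\to \sqrt{C_2}>0$, so $\Phi(s)\ge \sqrt{C_2 - sC_1}-s>0$ for $s$ sufficiently small. By continuity and the intermediate value theorem, there exists $s\in\mathcal{S}$ with $\Phi(s)=0$, i.e.\ $s=\lambda(s)=\sqrt{-\alpha(s)}>0$; the strict monotonicity of $\Phi$ forces this fixed point to be unique.

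There is no real obstacle here since Lemma \ref{behavior} already packages all the analytic content required; the only mild subtlety is confirming that $\Phi$ changes sign on $\mathcal{S}$, which is why the \emph{upper} bound on $\alpha$ (giving a positive lower bound $\sqrt{C_2}$ on $\lambda$ for small $s$) is essential — the lower bound on $\alpha$ alone would not guarantee that $\lambda$ dominates $s$ anywhere in $\mathcal{S}$.
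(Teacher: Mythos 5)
Your proof is correct and follows essentially the same route as the paper: Lemma \ref{behavior} gives $\mathcal{S}=(0,s_\ast)$, and a continuity-plus-strict-monotonicity argument via the intermediate value theorem yields a unique fixed point. The only cosmetic difference is that you work with $\Phi(s)=\lambda(s)-s$ (strictly decreasing, changing sign from positive to negative), while the paper uses $\Phi(s)=s/\lambda(s)$ (strictly increasing from $0$ to $+\infty$, hitting $1$ once); the underlying analytic inputs, including your correct observation that the upper bound $\alpha(s)\le -C_2+sC_1$ is what keeps $\lambda(s)$ bounded away from zero near $s=0$, are identical.
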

\begin{proof}
By Lemma \ref{behavior} $(1)$, there exists $s_\ast>0$
such that
\begin{equation}\mathcal{S}=\al^{-1}((-\infty,0))=(0,s_\ast),\end{equation}
and if we define
function $\Phi: \mathcal{S}=(0,s_\ast)\rightarrow(0,\infty)$ by
\begin{equation}\Phi(s)=s/\lam(s),\end{equation}
then $\Phi(s)$ is continuous and strictly increasing in $s$. Moreover, by Lemma \ref{behavior} $(2)$, we have that
$\lim_{s\rightarrow0}\Phi(s)=0$ and $\lim_{s\rightarrow
s_\ast}\Phi(s)=+\infty$. Hence there is unique $s\in (0,s_\ast)$ so
that $\Phi(s)=1$, which gives \eqref{fixedpoint}.
\end{proof}

In conclusion, we now have the existence of solutions to the
original problem \eqref{perturb_linear steady}.

\begin{prop}\label{w_soln_2}
If $\abs{\bar B_3}<\mc$, then there exists a solution $(w,q)$, and $\lam>0$  to \eqref{perturb_linear steady} so that  $w_3\not\equiv 0$.
The solutions are smooth when restricted to $\Omega_+$ or $\Omega_-$.
\end{prop}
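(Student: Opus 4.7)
The proof of Proposition \ref{w_soln_2} is essentially an assembly of the pieces already in place. The plan is to combine the variational construction of Proposition \ref{propro1} with the fixed-point argument of Lemma \ref{lambdas}. Since $\abs{\bar B_3} < \mc$, Lemma \ref{negativein} (via Lemma \ref{signsign}(2)) ensures that the open set $\mathcal{S} = \alpha^{-1}((-\infty, 0))$ is non-empty, and so Lemma \ref{lambdas} produces a (unique) $s_\ast \in \mathcal{S}$ satisfying $s_\ast = \sqrt{-\alpha(s_\ast)}$, equivalently $-\alpha(s_\ast) = s_\ast^2$.

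At this parameter value $s = s_\ast$, I would apply Proposition \ref{propro1}(1) to obtain a minimizer $w \in \mathfrak{S}$ of $E(\,\cdot\,; s_\ast)$, and then Proposition \ref{propro1}(2) to supply an associated Lagrange multiplier $q$ such that $(w, q)$ solves the modified problem \eqref{perturb_linear steady s} with $\lambda^2 = -\alpha(s_\ast) = s_\ast^2$. Setting $\lambda := s_\ast > 0$, the modified system \eqref{perturb_linear steady s} collapses exactly to the original eigenvalue problem \eqref{perturb_linear steady}, yielding the desired triple $(w, q, \lambda)$.

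To verify that $w_3 \not\equiv 0$ on $\Sigma$, I would invoke the Remark following Lemma \ref{negativein}: since $w$ is a minimizer and $\alpha(s_\ast) < 0$, one has $-\int_\Sigma \rj g w_3^2 \leq \alpha(s_\ast) < 0$, which forces $w_3$ to be non-zero on a set of positive measure in $\Sigma$. The smoothness of $w$ and $q$ on each $\Omega_\pm$ is already embedded in Proposition \ref{propro1}(2), which asserts that $w \in H^k(\Omega)$ and $\nabla q \in H^{k-2}(\Omega)$ for every integer $k \geq 2$; iterating this across all $k$ and invoking the standard Sobolev embedding upgrades the regularity to $C^\infty$ on each open phase $\Omega_\pm$ (note that the continuity and jump conditions at $\Sigma$ mean one should only expect smoothness up to each side of the interface, not across it).

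Since all of the substantive technical work (variational existence, elliptic regularity bootstrap for the two-phase Stokes system, and the continuity/monotonicity analysis underlying the fixed-point statement $s = \lambda(s)$) has already been carried out in the cited results, there is no serious obstacle remaining. The only point requiring any genuine creativity was the $(s,\lambda)$-decoupling trick of introducing the family of modified problems \eqref{perturb_linear steady s}, which is precisely what restored the variational structure lost by the simultaneous linear and quadratic dependence on $\lambda$ in \eqref{perturb_linear steady}; that step has already been performed in the set-up above, so the present proposition is a routine conclusion.
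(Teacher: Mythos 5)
Your proposal is correct and follows essentially the same route as the paper's proof: take the fixed point $s_\ast$ from Lemma \ref{lambdas}, feed it into Proposition \ref{propro1} with $s = s_\ast$, and observe that the resulting minimizer/Lagrange-multiplier pair solves the modified system with $\lambda = s_\ast$, which is then identical to \eqref{perturb_linear steady}. You are slightly more explicit than the paper in spelling out the $w_3 \not\equiv 0$ conclusion (which the paper delegates to the remark after Lemma \ref{negativein}) and the bootstrap to smoothness from Proposition \ref{propro1}(2), but these are the same ingredients the paper implicitly relies on.
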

\begin{proof}
For $\lam>0$ given in Lemma \ref{lambdas}, we define $w$, a minimizer of \eqref{mu_def}, and $q$, the associated Lagrange multiplier produced by Proposition \ref{propro1}, which solve \eqref{perturb_linear steady s}, with $s=\lam$.  This gives a solution to \eqref{perturb_linear steady}.
\end{proof}

We may now construct a growing mode solution to the linearized problem \eqref{perturb_linear}.

\begin{thm}\label{growingmode}
Assume $\abs{\bar B_3}<\mc$. Let $\lam>0$ be given in Lemma \ref{lambdas}. Then there is a growing mode solution to \eqref{perturb_linear} so that
\begin{equation}\label{gro1}
\eta(t)=e^{\Lam t} \eta(0),\ u(t)=e^{\Lam t} u(0),\ p(t)=e^{\Lam t} p(0).
\end{equation}
\end{thm}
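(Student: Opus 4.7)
The plan is to read the theorem off from the work already accomplished in this section: the separation-of-variables ansatz \eqref{ansatz} is precisely what reduces the linear problem \eqref{perturb_linear} to the time-invariant system \eqref{perturb_linear steady}, and Proposition \ref{w_soln_2} has already supplied, under the hypothesis $\abs{\bar B_3}<\mc$, a pair $(w,q)$ together with $\Lam>0$ solving \eqref{perturb_linear steady} and satisfying $w_3\not\equiv 0$ on $\Sigma$.

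I would take $\Lam$ as in Lemma \ref{lambdas}, let $(w,q)$ be the solution produced by Proposition \ref{w_soln_2}, set $v:=\Lam w$, and define
\begin{equation}
\eta(t,x):=e^{\Lam t}w(x),\quad u(t,x):=e^{\Lam t}v(x),\quad p(t,x):=e^{\Lam t}q(x),
\end{equation}
so that \eqref{gro1} holds with initial data $\eta(0)=w$, $u(0)=\Lam w$, $p(0)=q$. A direct substitution, factoring out the common $e^{\Lam t}$, should reduce each equation of \eqref{perturb_linear} to the corresponding equation of \eqref{perturb_linear steady}. Concretely, $\partial_t\eta=\Lam e^{\Lam t}w=e^{\Lam t}v=u$; the momentum equation collapses to $e^{\Lam t}\bigl(\Lam^2\rho w-\Lam\mu\Delta w+\nabla q-(\bar B\cdot\nabla)^2 w\bigr)=0$, which is exactly the first equation of \eqref{perturb_linear steady}; the constraint $\diverge u=\Lam e^{\Lam t}\diverge w=0$ is immediate; and the jump condition on $\Sigma$ together with the Dirichlet condition on $\Sigma_{m,\ell}$ transfer by the same factoring, since each is linear in $(w,q)$.

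To confirm that this is a genuine \emph{growing} mode and not the trivial solution, I would invoke $w_3\not\equiv 0$ on $\Sigma$ from Proposition \ref{w_soln_2}, so that the constructed triple is nonzero and the factor $e^{\Lam t}$ with $\Lam>0$ produces the asserted exponential growth in $t$. The main obstacle is essentially nonexistent at this stage: the substantive analytic work---constructing the minimizing pair $(w,q)$ via the modified variational problem \eqref{perturb_linear steady s} and locating the fixed point $\Lam$ with $\Lam^2=-\alpha(\Lam)$---has already been carried out in Propositions \ref{propro1}, \ref{w_soln_2} and Lemmas \ref{behavior}, \ref{lambdas}. The only technicality I would explicitly check is that $w$ and $\nabla q$ have enough regularity to interpret the strong-form equations pointwise on $\Omega_\pm$ and on $\Sigma$, which is supplied directly by the regularity assertion in Proposition \ref{propro1}.
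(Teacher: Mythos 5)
Your proposal is correct and follows essentially the same route as the paper's proof: take $(w,q,\lambda)$ from Proposition \ref{w_soln_2}, define $(\eta,u,p)$ via the ansatz \eqref{ansatz}--\eqref{ansatz1}, and verify by direct substitution. The paper states this more tersely, but the content is identical.
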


\begin{proof}
 Let $(w,q)$ be the solution to \eqref{perturb_linear steady} with $\lam>0$ as stated in Proposition \ref{w_soln_2}. We then define $\eta$, $u$, and $p$ according to \eqref{ansatz} and \eqref{ansatz1}. Then we have that $(\eta, u, p)$ solve the linearized problem \eqref{perturb_linear}.  Moreover, $(\eta, u, p)$ satisfy \eqref{gro1}.
\end{proof}

\section{Nonlinear stability for $\abs{\bar B_3}>\mathcal{M}_c$}\label{nonlinear stability}

In this section, we will prove Theorem \ref{thic} for $\abs{\bar B_3}>\mathcal{M}_c$. The main part of the proof is to derive a priori estimates for solutions $(\eta,u,p)$ to \eqref{reformulationic} in our functional framework, i.e. for solutions satisfying $\se{2N}$, $\sd{2N}$, $\mathcal{J}_{2N}$, $\f<\infty$.  Throughout this section we will assume that
\begin{equation}
\mathcal{E}_{2N}(t)\le \mathcal{G}_{2N}(T)\le \delta^2\le 1
\end{equation}
for some sufficiently small $\delta>0$ and for all $t \in [0,T]$ where $T>0$ is given. We will implicitly allow $\delta$ to be made smaller in each result, but we will reiterate the smallness of $\delta$ in our main result. Here is the main result of this section.
\begin{thm}\label{Ap}
There exists a universal constant $0 < \delta < 1$ so that if $
\mathcal{G}_{2N} (T) \le \delta^2$, then
\begin{equation}\label{Apriori}
 \mathcal{G}_{2N} (t) \ls\mathcal{E}_{2N} (0)+ \mathcal{F}_{2N}(0) \text{ for all }0 \le t \le
 T.
\end{equation}
\end{thm}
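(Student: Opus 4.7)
The plan is to obtain \eqref{Apriori} by a two-tier energy method at the high tier $n=2N$ and the low tier $n=N+2$, following the roadmap sketched in the introduction. At each tier I would (i) derive a horizontal evolution inequality for $(\seb{n},\sdb{n})$, (ii) upgrade it to the full $(\se{n},\sd{n})$ via Stokes elliptic regularity, and (iii) bound the nonlinearities. The two tiers are then coupled through an interpolation-based decay estimate and a time-weighted analysis of the auxiliary quantity $\f$.

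For step (i), I would start from the natural energy identity \eqref{identity000} and the interactive identity \eqref{intt}, whose combination provides viscous dissipation for $u$ and, more delicately, dissipation for $\eta$ out of the magnetic and gravitational terms. The key is that the assumption $|\bar B_3|>\mc$ together with Lemma \ref{signsign} produces the coercive bound
\[
\int_\Omega|(\bar B\cdot\nabla)\eta|^2 - \int_\Sigma \rj g\,\eta_3^2 \gss \Big(1-\tfrac{\mc^2}{\bar B_3^2}\Big)\int_\Omega|(\bar B\cdot\nabla)\eta|^2,
\]
so that the nominally indefinite energy is positive definite up to a uniform fraction. Applying $\partial^\alpha$ to \eqref{reformulationic} with $\alpha$ involving only $\dt$ and horizontal derivatives (so the boundary conditions are preserved) and summing in $|\alpha|\le 2n$ then yields $\dtt\seb{n}+\sdb{n}\ls\mathcal{N}_n$. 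The algebraic constraints $\diverge_\a u=0$ and $\det(I+\nabla\eta)=1$ from \eqref{imp1} are essential in organizing $\mathcal{N}_n$ so that no uncontrolled pressure factors appear.

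For step (ii), temporal derivatives $\dt^j(u,p)$ with $j\ge 1$ in $\sd{n}$ are captured by two-phase Stokes regularity applied to \eqref{reformulationic} with the $\eta$-terms treated as source, the subordinate $\|u\|_{2n-1}^2$ being absorbed by Sobolev interpolation against $\|u\|_{2n}^2$. The hard case is $(u,p)$ with no time derivatives, since $\sdb{n}$ delivers only $\|\eta\|_{1,2n-1}^2$, one horizontal derivative short. I would then introduce $w:=u+(\bar B_3^2/\mu)\eta$ so that the Lorentz term, modulo contributions carrying a horizontal derivative of $\eta$ (which are absorbable in $\sdb{n}$), becomes $-\bar B_3^2\Delta\eta$ and combines with $-\mu\Delta u$ into a clean $-\mu\Delta w$. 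Since $\mu_+\neq\mu_-$ obstructs a usable jump condition for the two-phase problem, I apply one-phase Stokes regularity with Dirichlet data separately on $\Omega_\pm$, using the trace control of $w|_\Sigma$ inside $\sdb{n}$ (where the flatness of $\Sigma$ is crucial) and interwining vertical and horizontal derivatives. Because $\dt\eta=u$, the bound $\|w\|_{2n}^2\simeq\dtt\|\eta\|_{2n}^2+\|\eta\|_{2n}^2+\|u\|_{2n}^2$ simultaneously recovers the energy of $\eta$ and the dissipation of $(u,p)$, after which a second application of two-phase Stokes returns the energy of $(u,p)$. The conclusion is $\dtt\se{n}+\sd{n}\ls\mathcal{N}_n$.

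For step (iii) and closure, I would split the nonlinearities by tier using Sobolev product estimates together with the smallness of $\mathcal{G}_{2N}(T)$: at the low tier, $\mathcal{N}_{N+2}\ls\sqrt{\se{2N}}\,\sd{N+2}$, which is absorbed to give $\dtt\se{N+2}+\sd{N+2}\le 0$; at the high tier, $\mathcal{N}_{2N}\ls\sqrt{\se{N+2}}(\f+\mathcal{J}_{2N})$. Repeating the $w$-argument one derivative higher produces \eqref{ine}, namely $\dtt\f+\f+\mathcal{J}_{2N}\ls\|\eta\|_{1,4N}^2+\sd{2N}+\mathcal{N}_{2N}$; the main obstacle is precisely that $\|\eta\|_{1,4N}^2$ lives in the energy rather than the dissipation, which rules out exponential decay of $\f$. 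I would then apply the interpolation of \cite{RG,GT_per,GT_inf},
\[
\se{N+2}\ls \se{2N}^{1-\theta}\sd{N+2}^{\theta}, \qquad \theta=\frac{2N-4}{2N-3},
\]
and combine with the (bootstrapped) boundedness of $\se{2N}$ to deduce $\se{N+2}(t)\ls(1+t)^{-(2N-4)}$ from $\dtt\se{N+2}+\sd{N+2}\le 0$. Multiplying \eqref{ine} by $(1+t)^{-(1+\vartheta)}$ with $0<\vartheta\le N-3$ and integrating then yields uniform control of $\f$ and of $\int_0^t\mathcal{J}_{2N}(r)/(1+r)^{1+\vartheta}\,dr$; the restriction $\vartheta\le N-3$ (which requires $N\ge 4$) is exactly what makes $\int_0^t(1+r)^{-(N-2)}(\f+\mathcal{J}_{2N})\,dr$ finite when fed back into $\dtt\se{2N}+\sd{2N}\ls\sqrt{\se{N+2}}(\f+\mathcal{J}_{2N})$. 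Smallness absorbs the remaining $\sqrt{\se{N+2}}$ factors and the bootstrap closes, yielding \eqref{Apriori}.
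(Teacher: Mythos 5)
Your proposal is correct and follows essentially the same route as the paper. The paper's proof of Theorem~\ref{Ap} is a one-line pointer to Propositions~\ref{p_f_bound}--\ref{decaylm}, which in turn rest on the chain you reproduce: the energy evolution for temporal and horizontal derivatives plus the $\eta$-recovery via the interactive identity (Propositions~\ref{i_temporal_evolution  N}, \ref{p_upper_evolution  N12}, \ref{p_upper_evolution  N'132}, \ref{conclusion}), the Stokes-regularity upgrade built around $w=u+\frac{\bar B_3^2}{\mu}\eta$ and the one-phase Dirichlet estimates on $\Omega_\pm$ (Propositions~\ref{p_upper_evolution  N'}, \ref{e2nic}), the synthesis $\frac{d}{dt}\tilde{\mathcal{E}}_n+\mathcal{D}_n\ls\mathcal{N}_n$, the Gronwall/time-weighted control of $\f$ and $\mathcal{J}_{2N}$, the two-tier interpolation with $\theta=(2N-4)/(2N-3)$, and the closing via $\sqrt{\se{N+2}(t)}\ls(1+t)^{-N+2}$ together with $\vartheta\le N-3$. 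The only slight imprecision is writing $\norm{w}_{2n}^2\simeq\frac{d}{dt}\norm{\eta}_{2n}^2+\norm{\eta}_{2n}^2+\norm{u}_{2n}^2$ where the paper's identity is an exact equality (up to the piecewise constants $\bar B_3^2/\mu_\pm$) by expanding the square and using $\dt\eta=u$; this does not affect the argument.
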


We may first present the

\begin{proof}[Proof of Theorem \ref{thic}]
Theorem \ref{thic} follows from the local well-posedness for the initial data satisfying $\se{2N}(0)+\f(0)$ sufficiently small and the a priori estimates of Theorem \ref{Ap}, by a continuity argument.
\end{proof}

The rest of this section is devoted to proving Theorem \ref{Ap}.

\subsection{Energy evolution}\label{stability evolution}

In this subsection we derive energy evolution estimates for temporal and horizontal spatial derivatives by using the energy-dissipation structure of \eqref{reformulationic}.

\subsubsection{Energy evolution of time derivatives}

For the temporal derivatives, it is a key to use the original geometric formulation \eqref{reformulationic}. As well explained by \cite{GT_per} in the study of the incompressible viscous surface wave problem, the reason is that if we attempted to use the linear perturbed formulation \eqref{perturb}, there would be too many time derivatives of $p$ and $ u$ appearing in the nonlinear estimates that are out of control. Applying $\dt^j$ for $j=0,\dots,n$ to \eqref{reformulationic}, we find that
\begin{equation}\label{linear_geometric}
\begin{cases}
  \dt (\dt^j \eta) =\dt^j u  & \text{in } \Omega
\\  \rho\dt (\dt^j u) +\diverge_\a S_{\mathcal{A}}(\dt^j p,\dt^j u)- (\bar B\cdot\nabla)^2(\dt^j \eta) = F^{1,j}  & \text{in } \Omega \\
 \diva (\dt^j u) = F^{2,j} & \text{in } \Omega
\\ \Lbrack \dt^j u\Rbrack=0,\quad \Lbrack S_{\mathcal{A}}(\dt^j p,\dt^j u )\Rbrack \mathcal{N} -\jump{ \bar B_3 (\bar B\cdot\nabla) (\dt^j \eta)}= \rj g\dt^j \eta_{3 }  e_3+F^{3,j} &\text{on }\Sigma \\
\dt^j u  =0 & \text{on } \Sigma_{m,\ell},
\end{cases}
\end{equation}
where
\begin{equation}\label{F_def_start}
\begin{split}
& F_i^{1,j}  = \sum_{0 < \ell \le j}  C_j^\ell\left\{\mu
\mathcal{A}_{lk} \p_k (\dt^\ell  \mathcal{A}_{im} \dt^{j-\ell} \p_m u_l  +\dt^\ell  \mathcal{A}_{lm} \dt^{j-\ell}\p_m u_i)
  \right.
\\
 &\left.\qquad\qquad + \mu  \dt^\ell \mathcal{A}_{lk}\dt^{j-\ell}\p_k (\mathcal{A}_{im} \p_m u_l + \mathcal{A}_{lm}\p_m u_i) -  \dt^\ell  \mathcal{A}_{ik} \dt^{j-\ell}\p_k p\right\},\ i=1,2,3,
 \end{split}
\end{equation}
 \begin{equation}\label{i_F2u_def}
 F^{2,j} = - \sum_{0 < \ell \le j}  C_j^\ell \dt^\ell  \mathcal{A}_{lk} \dt^{j-\ell}\p_k u_l
,
\end{equation}
and
\begin{equation}\label{i_F3_def2}
\begin{split}
&
 F_{i}^{3,j} = \sum_{0 < \ell \le j}  C_j^\ell  \left\{  \jump{\mu
  \dt^\ell ( \n_{l} \mathcal{A}_{ik} ) \dt^{j - \ell} \p_k u_l}+\jump{\mu \dt^\ell  ( \n_{l} \mathcal{A}_{lk} ) \dt^{j - \ell} \p_k u_i}\right.
\\
 &\left.\qquad\qquad-\dt^\ell  \n_{i} \dt^{j - \ell} \jump{ p} \right\}+\rj g\dt^j\left(\eta_3(\mathcal{N}_{i}-\delta_{i3})  \right),\ i=1,2,3.
    \end{split}
 \end{equation}
Since we can not hope to get any estimates of $p$ ($i.e.$ $\partial_t^j p$) without spatial derivatives, we have to pay more attention on the expression of $F^{2,j}$.  We will need some structural
conditions on $F^{2,j}$ which allow us to integrate by parts in the interaction between $\partial_t^j p$ and $F^{2,j}$. Indeed, note that $ \diva u=\p_i(\a_{mi}u_m)=0$. Then we have
\begin{equation}\label{Qstr}
F^{2,j}= {\rm div}Q^{2,j}\text{ with }Q_i^{2,j}=
 -\sum_{0<\ell\le j}C_j^\ell\partial_t^\ell \mathcal{A}_{mi} \partial_t^{j-\ell}u_m,\ i=1,2,3.
 \end{equation}
Since $ \mathcal{A}_{m3}$  and $u$ are continuous across $\Sigma$, and $u=0$ on $\Sigma_{m,\ell}$, we have
\begin{equation}\label{Qstr1}
\Lbrack Q_3^{2,j}\Rbrack=0\hbox{ on }\Sigma\text{ and } Q^{2,j}=0\hbox{ on }\Sigma_{m,\ell}.\end{equation}
These facts are important for handling the pressure term.

We record the estimates of these nonlinear terms $F^{i,j}$ and $Q^{2,j}$ in the following lemma.
\begin{lem}\label{p_F_estimates}
For $n=N+2$ or $n=2N$, it holds that
\begin{equation}\label{p_F_e_01}
 \ns{F^{1,j} }_{0}+ \ns{F^{2,j}}_{2}+ \ns{\dt F^{2,j}}_{0}+\ns{   Q^{2,j} }_{2}+ \as{F^{3,j}}_{3/2}  \ls \se{N+2} \sd{n},
\end{equation}
\begin{equation}\label{p_F_e_011}
 \ns{ \dt Q^{2,n} }_{0} +\as{\dt Q^{2,n}_3 }_{-1/2} \ls \se{N+2} \sd{n}
\end{equation}
and
\begin{equation}\label{p_F_e_02}
\norm{Q^{2,n} }_2^2   \ls \se{N+2}\se{n} .
\end{equation}
\end{lem}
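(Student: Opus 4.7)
The plan is to estimate every quantity in \eqref{p_F_e_01}--\eqref{p_F_e_02} by expanding with Leibniz, applying Sobolev product estimates from the appendix, and then in each summand allocating the derivative budget so that one factor falls into the low-regularity energy $\se{N+2}$ (which is small by the running assumption $\se{2N}\le\delta^2$) while the rest is absorbed by $\sd{n}$ for \eqref{p_F_e_01}--\eqref{p_F_e_011} or by $\se{n}$ for \eqref{p_F_e_02}. The first step is to expand $\dt^\ell \a$ and $\dt^\ell \n$: since $\a=(I+\na\eta)^{-T}$ and $\n=J\a e_3|_\Sigma$ are smooth functions of $\na\eta$ near zero, each $\dt^\ell \a$ is a polynomial in $\{\dt^k\na\eta\}_{k\le \ell}$, and the relation $\dt\eta=u$ converts these into polynomials in $\na u$ and its time derivatives whenever $\ell\ge 1$. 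Observing $\n-e_3=O(\na\eta)$ will also be important for the last piece of $F^{3,j}$.

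For the interior bounds of $F^{1,j},F^{2,j},\dt F^{2,j},Q^{2,j}$, each summand in \eqref{F_def_start}, \eqref{i_F2u_def}, \eqref{Qstr} is a product in which at most one factor carries ``many'' derivatives. For a split parametrized by $\ell$, I would bound the factor with the smaller parabolic weight in $L^\infty$ or an intermediate $H^s$ using the product estimate $\norm{fg}_{H^k}\ls \norm{f}_{H^{k_1}}\norm{g}_{H^{k_2}}$ (with $k_1+k_2\ge k+3/2$ as available in the appendix) and assign it to $\se{N+2}$; the remaining factor, involving $\dt^{j-\ell}u$, $\na\dt^{j-\ell}p$, or one of their derivatives, is put into $\sd{n}$. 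When $\ell$ is large the roles are simply swapped. For $Q^{2,j}$, which has no outer derivative, the budget is looser and one has enough room to reach $H^2$; for $\dt Q^{2,n}$, one additional time derivative is produced either on $\dt^\ell \a$ (still estimated in $\se{N+2}$ since $\ell+1$ temporal derivatives of $\eta$ become $\ell$ temporal derivatives of $u$) or on $\dt^{j-\ell}u$, which stays in $\sd{n}$.

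For the boundary piece $|F^{3,j}|_{3/2}$, I would use the trace estimate $|f|_{3/2}\ls \norm{f}_2$ to reduce to an $H^2(\Omega)$ product bound handled exactly as above, noting that in the first two brackets of \eqref{i_F3_def2} the coefficient $\dt^\ell(\n_l\a_{ik})$ always contains at least one derivative of $\eta$ (hence is genuinely small), and that in the third bracket $\jump{\dt^{j-\ell}p}$ is controlled by the jump terms in $\sd{n}$. The quadratic-in-$\eta$ remainder $\rj g\dt^j(\eta_3(\n_i-\delta_{i3}))$ is harmless: both factors carry at least one derivative of $\eta$ (or $u$, via $\dt\eta=u$), so one of them is always bounded by $\se{N+2}$ while the other fits in $\sd{n}$. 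The negative-order trace $|\dt Q^{2,n}_3|_{-1/2}$ in \eqref{p_F_e_011} follows from the dual trace inequality $|f|_{-1/2}\ls \norm{f}_0$ together with the $L^2$ bound already proved. Finally, \eqref{p_F_e_02} is the same expansion but with the high-regularity factor assigned to $\se{n}$ rather than $\sd{n}$, which is legal because $Q^{2,j}$ only involves $u$ and $\a$, both of whose top-order content is controlled in $\se{n}$.

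The main obstacle is the bookkeeping at the borderline splits $\ell\approx j/2$, where neither factor is obviously ``low'' and one has to confirm that the parabolic weights on both sides still fit into the intended spaces. This is where the assumption $N\ge 4$ enters decisively: the gap $2N-(N+2)=N-2\ge 2$ is just enough so that at the worst split one factor can always be pushed into $\se{N+2}$ with room to spare, leaving the other inside $\sd{n}$ or $\se{n}$. Once this combinatorial check is carried out uniformly in $j\in\{0,\dots,n\}$ and in the Leibniz index $\ell$, the three estimates follow by summation.
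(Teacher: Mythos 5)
Your overall scheme for \eqref{p_F_e_01} and \eqref{p_F_e_02} is essentially the paper's: each summand is quadratic, factored as $XY$ with $X$ the lower-derivative factor measured in $\se{N+2}$ and $Y$ measured in $\sd{n}$ (resp.\ $\se{n}$), combined via the Sobolev product estimates of Lemma~\ref{sobolev} and, for $F^{3,j}$, the trace inequality $\abs{f}_{3/2}\ls\norm{f}_2$. That part is fine.

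There is, however, a genuine gap in your treatment of the term $\as{\dt Q^{2,n}_3}_{-1/2}$ in \eqref{p_F_e_011}. You invoke a ``dual trace inequality $\abs{f}_{-1/2}\ls\norm{f}_0$,'' but no such inequality exists: an $L^2(\Omega)$ function has no boundary trace in any Sobolev space on $\Sigma$, negative order or otherwise. The $H^{-1/2}(\Sigma)$ control of a boundary value starting from only $L^2(\Omega)$ interior information is possible precisely when one also controls the divergence. This is the content of Lemma~\ref{h-1/2}: for a vector field $w$ with $w,\,\diverge w\in L^2(\Omega)$, the normal component has a well-defined trace with $\abs{w\cdot\nu}_{-1/2}\ls\norm{w}_0+\norm{\diverge w}_0$. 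The paper applies this to $w=\dt Q^{2,n}$ using the structural identity $\diverge Q^{2,n}=F^{2,n}$ from \eqref{Qstr}, giving
\begin{equation*}
 \as{\dt Q^{2,n}_3}_{-1/2}\ls \ns{\dt Q^{2,n}}_0+\ns{\dt\,\diverge Q^{2,n}}_0=\ns{\dt Q^{2,n}}_0+\ns{\dt F^{2,n}}_0,
\end{equation*}
and both right-hand quantities are already controlled by \eqref{p_F_e_01}. Without appealing to the divergence structure, the $H^{-1/2}$ estimate cannot be obtained from the $L^2$ bound alone, so your argument for this piece does not close. (The other parts of your write-up, including the observation that $\mathcal{N}-e_3=O(\nabla\eta)$ for the last term of $F^{3,j}$ and the bookkeeping at the borderline splits, are consistent with the paper; the decisive role you ascribe to $N\ge4$ is actually needed for the decay argument later rather than for this lemma, but that is a minor point.)
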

\begin{proof}
Note that all terms in the definitions of $F^{i,j}$ and $Q^{2,j}$ (and hence $\dt F^{2,j}$ and $\dt Q^{2,n}$) are at least quadratic; each term can be written in the form $X Y$, where $X$ involves fewer derivative counts than $Y$. We then estimate $Y$ in $H^k$ ($k = 0, 2$ or $3/2$, respectively) and $X$ in $H^m$ for $m$ depending on $k$, using Lemma \ref{sobolev}, trace
theory along with the definitions of $\se{n}$ and $\sd{n} $, to bound $\norm{X}_{m}^2\ls  \se{N+2} $ and $\norm{Y}_{k}^2\ls   \sd{n} $.
Then $\norm{XY}_k^2\le \norm{X}_{m}^2\norm{Y}_{k}^2\ls   \se{N+2} \sd{n} $, and the estimate \eqref{p_F_e_01} follows.

 The estimate of $\ns{ \dt Q^{2,n} }_{0}$ in \eqref{p_F_e_011} follows in the same way as \eqref{p_F_e_01}. To estimate the other one, we need to use Lemma \ref{h-1/2} and \eqref{Qstr}:
\begin{equation}
  \as{\dt Q^{2,n}_3 }_{-1/2}  \ls \ns{\dt Q^{2,n}  }_{0}+\ns{\dt {\rm div }Q^{2,n}  }_{0}=\ns{\dt Q^{2,n} }_{0}+\ns{\dt F^{n,j} }_{0}   .
\end{equation}
Then the estimate follows by \eqref{p_F_e_01}.

The estimate \eqref{p_F_e_02} follows similarly as \eqref{p_F_e_01} with instead bounding $\norm{Y}_{2}^2\ls   \se{n} $.
\end{proof}

For a generic integer $n\ge 3$, we define the energy involving only temporal derivatives by
\begin{equation}
 \bar{\mathcal{E}}^{t}_{n} =
 \sum_{j=0}^{n} \hal\left( \int_\Omega  \rho\abs{\dt^j u}^2+ \abs{(\bar B\cdot\nabla)\dt^j\eta}^2  - \int_{\Sigma } \rj g \abs{\dt^j\eta_3}^2
 \right)
\end{equation}
and the corresponding dissipation by
\begin{equation}
 \bar{\mathcal{D}}_n^{t} = \sum_{j=0}^{ n}  \int_\Omega\frac{ \mu}{2} \abs{ \sg \dt^j u}^2.
\end{equation}
Then we have the following energy evolution.
\begin{prop}\label{i_temporal_evolution  N}
For $n=N+2$ or $n=2N$, it holds that
\begin{equation} \label{i_te_0}
  \frac{d}{dt}\left(\bar{\mathcal{E}}_{n}^{t}-\mathfrak{B}_n\right)
+ \bar{\mathcal{D}}_{n}^{t}
  \ls   \sqrt{\se{N+2} } \sd{ n}
\end{equation}
and
\begin{equation}\label{bnbn}
 \mathfrak{B}_n:= \int_\Omega  \nabla \dt^{n-1} p  Q^{2,n}+ \int_\Sigma \jump{\dt^{n-1} p} Q^{2,n}_3.
\end{equation}
\end{prop}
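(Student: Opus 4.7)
The plan is to apply $\dt^j$ to the system \eqref{reformulationic} for each $j=0,1,\ldots,n$, arriving at \eqref{linear_geometric}, and to take the $L^2(\Omega)$ inner product of the resulting momentum equation with $\dt^j u$. Three identifications yield the natural energy identity. First, $\int_\Omega \rho \dt(\dt^j u)\cdot\dt^j u$ gives $\tfrac{1}{2}\tfrac{d}{dt}\int_\Omega \rho|\dt^j u|^2$. Second, integrating $\int_\Omega \diva \Sa(\dt^j p,\dt^j u)\cdot \dt^j u$ by parts, using the Piola identity $\p_j(J\a_{ij})=0$ together with $J\equiv 1$ from \eqref{imp1} to eliminate the $\p_j \a_{ij}$ correction, produces the dissipation $\tfrac{\mu}{2}\int_\Omega|\sg_{\a} \dt^j u|^2$, a pressure pairing $-\int_\Omega \dt^j p\cdot F^{2,j}$ (nonzero for $j\ge 1$ because $\diva(\dt^j u)=F^{2,j}$), and a boundary contribution $-\int_\Sigma \jump{\Sa(\dt^j p,\dt^j u)\n}\cdot \dt^j u$ on $\Sigma$ (the $\Sigma_{m,\ell}$ part vanishes since $\dt^j u=0$). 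Third, integrating $-\int_\Omega (\bar B\cdot\nabla)^2\dt^j \eta\cdot \dt^j u$ by parts in the $\bar B$ direction and invoking the kinematic identity $\dt\eta=u$---so that $(\bar B\cdot\nabla)\dt^j u = \dt[(\bar B\cdot\nabla)\dt^j \eta]$ on $\Omega$ and $\dt^j u_3=\dt\dt^j \eta_3$ on $\Sigma$---delivers $\tfrac{1}{2}\tfrac{d}{dt}\int_\Omega|(\bar B\cdot\nabla)\dt^j \eta|^2$ together with a boundary term $\int_\Sigma \jump{\bar B_3(\bar B\cdot\nabla)\dt^j \eta}\cdot \dt^j u$. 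The two boundary contributions combine through the jump condition in \eqref{linear_geometric} to produce $-\tfrac{\rj g}{2}\tfrac{d}{dt}\int_\Sigma|\dt^j \eta_3|^2$ plus an $F^{3,j}$ remainder. Summing over $j=0,\ldots,n$ assembles $\tfrac{d}{dt}\bar{\mathcal E}_n^t+\bar{\mathcal D}_n^t$ on the left, up to a quadratic $|\sg_{\a}\dt^j u|^2-|\sg\dt^j u|^2$ discrepancy that is absorbable in the small-$\se{N+2}$ regime.

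The principal obstacle is the pressure pairing $\int_\Omega \dt^j p\cdot F^{2,j}$. For $j<n$, the pressure $\dt^j p$ is controlled at the $L^2$ level via the $\sd{n}$-bounds on $\nabla \dt^j p$ and $\jump{\dt^j p}$, so a Cauchy--Schwarz against $\|F^{2,j}\|_0^2\ls\se{N+2}\sd{n}$ from Lemma \ref{p_F_estimates} gives the desired $\sqrt{\se{N+2}}\sd{n}$. For $j=n$, however, $\dt^n p$ is not controlled by any of $\se{n}$, $\sd{n}$. The essential trick is to exploit the structural representation $F^{2,n}=\diverge Q^{2,n}$ from \eqref{Qstr} together with the boundary properties \eqref{Qstr1}---$Q^{2,n}=0$ on $\Sigma_{m,\ell}$ and $\jump{Q^{2,n}_3}=0$ on $\Sigma$---so that integration by parts produces
\[
\int_\Omega \dt^n p\cdot F^{2,n} = -\int_\Omega \nabla \dt^n p\cdot Q^{2,n} - \int_\Sigma \jump{\dt^n p}\, Q^{2,n}_3.
\]
Shifting the outermost time derivative via $\dt^n=\dt\dt^{n-1}$ rewrites the right-hand side as a total time derivative of the quantity $\mathfrak{B}_n$ defined in \eqref{bnbn}, plus two remainders $\int_\Omega \nabla \dt^{n-1}p\cdot \dt Q^{2,n}$ and $\int_\Sigma \jump{\dt^{n-1}p}\, \dt Q^{2,n}_3$. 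These remainders are controlled by $\sqrt{\se{N+2}}\sd{n}$ by pairing the $\sd{n}$-control of $\nabla \dt^{n-1}p$ and $|\jump{\dt^{n-1}p}|_{1/2}$ against the $\dt Q^{2,n}$ bounds \eqref{p_F_e_011}. Absorbing the $\dt\mathfrak{B}_n$-correction onto the left-hand side with the sign dictated by the bookkeeping produces exactly $\tfrac{d}{dt}(\bar{\mathcal E}_n^t-\mathfrak{B}_n)$.

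Finally, the remaining right-hand-side pieces $\int_\Omega F^{1,j}\cdot \dt^j u$ and $\int_\Sigma F^{3,j}\cdot \dt^j u$ are handled by Cauchy--Schwarz using Lemma \ref{p_F_estimates} together with the embedding $\|\dt^j u\|_0^2\ls\sd{n}$ for all $0\le j\le n$ (which follows immediately from the definition of $\sd{n}$, noting that for $j\ge 1$ one has $\|\dt^j u\|_{2n-2j+1}^2\le \sd{n}$ and for $j=0$ one has $\|u\|_{2n}^2\le \sd{n}$), supplemented by a standard trace bound for the boundary integral. Collecting all contributions then yields \eqref{i_te_0}. The delicate ingredient throughout is the $j=n$ case, where the absence of control on $\dt^n p$ forces the $Q^{2,n}$ trick; the structural divergence form of $F^{2,n}$ combined with the vanishing of $Q^{2,n}$ on $\Sigma_{m,\ell}$ and the jump-continuity of its normal component on $\Sigma$ is precisely what makes the introduction of the low-order correction $\mathfrak{B}_n$ possible.
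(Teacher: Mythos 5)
Your overall architecture mirrors the paper's: start from \eqref{linear_geometric}, test with $\dt^j u$, convert the $(\bar B\cdot\nabla)$ cross terms and the interface gravity term into total time derivatives via $\dt\eta=u$, estimate $F^{1,j}$ and $F^{3,j}$ by Cauchy--Schwarz, and --- for $j=n$ --- exploit the divergence structure $F^{2,n}=\diverge Q^{2,n}$ together with the boundary properties \eqref{Qstr1} to integrate by parts in space and then in time, producing $\mathfrak{B}_n$ and remainders handled by \eqref{p_F_e_011}. That core machinery is correct and is exactly what the paper does.

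However, there is a genuine gap in your handling of $\int_\Omega \dt^j p\,F^{2,j}$ for $j<n$. You assert that $\dt^j p$ is controlled at the $L^2(\Omega)$ level by the $\sd{n}$-bounds on $\nabla\dt^j p$ and $\jump{\dt^j p}$, and then invoke Cauchy--Schwarz directly against $\|F^{2,j}\|_0$. This step is unjustified: $\sd{n}$ only contains $\|\nabla\dt^j p\|$-type and $|\jump{\dt^j p}|$-type quantities, never $\|\dt^j p\|_0$ itself, and on the horizontally infinite slab $\Omega=\mathbb{R}^2\times(-m,\ell)$ there is no Poincar\'e-type inequality recovering $\|p\|_{L^2(\Omega)}$ from $\|\nabla p\|_{L^2(\Omega)}$ and $|\jump{p}|_{L^2(\Sigma)}$ (already a nonzero constant violates it). The paper is explicit about this: ``we can not get any estimates of $\partial_t^j p$ without spatial derivatives.'' For that reason, the paper applies the \emph{same} divergence-form trick for every $j$, not just $j=n$: one writes $\int_\Omega \dt^j p\,F^{2,j}=-\int_\Omega \nabla\dt^j p\cdot Q^{2,j}-\int_\Sigma\jump{\dt^j p}\,Q^{2,j}_3$ using \eqref{Qstr}--\eqref{Qstr1}, and then estimates against $\|\nabla\dt^j p\|_0$ and $|\jump{\dt^j p}|_0$ (both genuinely in $\sd{n}$) paired with $\|Q^{2,j}\|_0$ and $|Q^{2,j}_3|_0$ via \eqref{p_F_e_01}. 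For $j<n$ no integration by parts in time is needed; only for $j=n$ does the absence of $\nabla\dt^n p$ in $\sd{n}$ force the additional $\dt^{n}=\dt\dt^{n-1}$ shift that produces $\mathfrak{B}_n$. Your treatment of $j=n$ is right; your treatment of $j<n$ needs to be replaced with the same integration-by-parts argument rather than a direct Cauchy--Schwarz on $\dt^j p$.
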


\begin{proof}
We let $n$ denote either $2N$ or $N+2$ throughout the proof. Taking the dot product of the second equation of $\eqref{linear_geometric}$ with $\dt^j u$, $j=0,\dots,n,$ and then integrating by parts, using the third to fifth equations, we obtain
\begin{equation}\label{i_ge_ev_0}
\begin{split}
&\hal  \frac{d}{dt} \int_\Omega  \rho \abs{\dt^j u}^2
+ \int_\Omega \frac{\mu}{2} \abs{ \sg_{\mathcal{A}} \dt^j u}^2+\int_\Omega (\bar B\cdot\nabla) \dt^j\eta\cdot (\bar B\cdot\nabla) \dt^j u
\\&\quad= \int_\Omega   (  \dt^j u\cdot F^{1,j}+  \dt^j p  \diverge_\a (\dt^j u))
 +\int_\Sigma \left(\jump{\S_{\a}(\dt^j p,\dt^j u)}\n -\jump{\bar B_3(\bar B\cdot\nabla)(\dt^j\eta)}\right)\cdot \dt^j u
\\&\quad= \int_\Omega   (   \dt^j u\cdot F^{1,j}+  \dt^j p  F^{2,j})
+\int_{\Sigma}  F^{3,j}\cdot \dt^j u
+\int_{\Sigma} \rj g\dt^j\eta_{3 }    \dt^j u_{3}.
\end{split}
\end{equation}
By the first equation, we get
\begin{equation}
 \int_\Omega (\bar B\cdot\nabla)  \dt^j\eta\cdot (\bar B\cdot\nabla) \dt^j u =  \int_\Omega (\bar B\cdot\nabla)\dt^j\eta\cdot (\bar B\cdot\nabla) \dt^{j+1}\eta=\hal\frac{d}{dt}\int_\Omega \abs{(\bar B\cdot\nabla) \dt^j\eta}^2
\end{equation}
and
\begin{equation}
 \int_{\Sigma} \rj g\dt^j\eta_{3 }  \cdot \dt^j u_3
 = \hal\frac{d}{dt}\int_{\Sigma}  \rj g\abs{\dt^j\eta_3}^2 .
\end{equation}
Hence, we have
\begin{equation} \label{identity1}
\begin{split}
&\hal  \frac{d}{dt} \left(\int_\Omega  \rho\abs{\dt^j u}^2+\abs{(\bar B\cdot\nabla)\dt^j\eta}^2 - \int_{\Sigma} \rj g \abs{\dt^j\eta_3}^2 \right)
+   \int_\Omega  \frac{\mu}{2} \abs{ \sg_{\mathcal{A}} \dt^j u}^2
\\&\quad= \int_\Omega   (   \dt^j u\cdot F^{1,j}+  \dt^j p  F^{2,j})
+\int_\Sigma  \dt^j u\cdot F^{3,j} .
\end{split}
\end{equation}

We now estimate the right hand side of \eqref{identity1}. For the $F^{1,j}$ term, by \eqref{p_F_e_01}, we may bound
\begin{equation}\label{i_te_2}
\int_\Omega  \dt^j u\cdot F^{1,j} \le   \norm{\dt^j u}_{0}   \norm{F^{1,j}}_0 \ls  \sqrt{\sd{n} } \sqrt{\se{N+2} \sd{n}}
 .
\end{equation}
For the $F^{3,j}$ term, by \eqref{p_F_e_01} and the trace theory, we have
\begin{equation}\label{i_te_3}
 \int_\Sigma  \dt^j u\cdot F^{3,j} \ls  \abs{\dt^{j} u}_{0}  \abs{F^{3,j}}_{0}
\ls    \norm{\dt^{j} u}_{1} \sqrt{\se{N+2}\sd{n}} \le  \sqrt{\sd{n} } \sqrt{\se{N+2} \sd{n}}
.
\end{equation}
For the $F^{2,j}$ term, we need much more care. First, since we can not get any estimates of $\partial_t^j p$ without spatial derivatives, we need to use the structure of $F^{2,j}$ and employ an integration by parts in space. Indeed, by \eqref{Qstr} and \eqref{Qstr1}, we deduce
\begin{equation}
\int_\Omega  \dt^j p  F^{2,j} =\int_\Omega  \dt^j p  {\rm div}Q^{2,j}
=-\int_\Sigma \jump{\dt^j p} Q^{2,j}_3-\int_\Omega  \nabla \dt^j p   Q^{2,j}
\end{equation}
Second, we need to consider the case $j< n$ and $j= n$ separately. We first deal with the integral in the domain $\Omega$. For $j< n$, by \eqref{p_F_e_01} we have
\begin{equation}\label{i_te_4}
-\int_\Omega  \nabla \dt^j p  Q^{2,j}   \le   \norm{\nabla \dt^j p}_{0}   \norm{Q^{2,j} }_0 \ls  \sqrt{\sd{n}} \sqrt{\se{N+2} \sd{n}}.
\end{equation}
The case $j= n$ is much more involved since we can not control $\nabla\dt^{ n}p$. We are then forced to integrate by parts in time:
\begin{equation}
 -\int_\Omega   \nabla \dt^n p  Q^{2,n}= -\frac{d}{dt}\int_\Omega  \nabla \dt^{n-1} p  Q^{2,n}+\int_\Omega     \nabla \dt^{n-1} p \dt Q^{2,n}   .
\end{equation}
By \eqref{p_F_e_011}, we may bound
\begin{equation}\label{i_te_5}
 \int_\Omega  \nabla \dt^{n-1} p \dt Q^{2,n} \ls   \norm{\nabla \dt^{n-1} p }_{0} \norm{\dt   Q^{2,n}}_{0} \ls   \sqrt{\sd{n}} \sqrt{\se{N+2} \sd{n}} .
\end{equation}
We next deal with the integral on the boundary $\Sigma$. For $j< n$, by \eqref{p_F_e_01} and the trace theory we have
\begin{equation}\label{i_te_400}
-\int_\Sigma \jump{\dt^j p} Q^{2,j}_3 \le   \abs{\jump{\dt^j p}}_{0}   \abs{Q^{2,j}_3 }_{0} \ls  \abs{\jump{\dt^j p}}_{0}   \norm{Q^{2,j}_3 }_{1} \ls\sqrt{\sd{n}} \sqrt{\se{N+2} \sd{n}}.
\end{equation}
For $j= n$, we integrate by parts in time:
\begin{equation}
-\int_\Sigma \jump{\dt^n p} Q^{2,n}_3=- \frac{d}{dt}\int_\Sigma \jump{\dt^{n-1} p} Q^{2,n}_3+\int_\Sigma     \jump{\dt^{n-1} p}\dt Q^{2,n}_3   .
\end{equation}
By \eqref{p_F_e_011}, we may bound
\begin{equation}\label{i_te_500}
\int_\Sigma     \jump{\dt^{n-1} p}\dt Q^{2,n}_3 \ls   \abs{ \jump{\dt^{n-1} p}}_{1/2} \abs{\dt   Q^{2,n}_3}_{-1/2} \ls   \sqrt{\sd{n}} \sqrt{\se{N+2} \sd{n}} .
\end{equation}

Now we combine \eqref{i_te_2}--\eqref{i_te_500} to deduce from \eqref{identity1} that, summing over $j$,
\begin{equation} \label{iiii}
  \frac{d}{dt}\left(\bar{\mathcal{E}}_{n}^{t}- \mathfrak{B}_n\right)
+  \sum_{j=0}^{ n} \int_\Omega \frac{\mu}{2} \abs{ \sg_{\mathcal{A}} \dt^j u}^2
  \ls   \sqrt{\se{N+2} } \sd{ n},
\end{equation}
where $\mathfrak{B}_n$ is defined by \eqref{bnbn}.
We then seek to replace $ \abs{ \sg_{\mathcal{A}} \dt^{j} u}^2$ with $\abs{\sg \dt^{j} u}^2$
in \eqref{iiii}.  To this end we write
\begin{equation}\label{i_te_8}
 \abs{ \sg_{\mathcal{A}} \dt^{j} u}^2 = \abs{\sg \dt^{j} u}^2 +   \left(\sg_{\mathcal{A}} \dt^{j} u + \sg \dt^{j} u\right): \left(\sg_{\mathcal{A}} \dt^{j} u - \sg \dt^{j} u\right)
\end{equation}
and estimate the last three terms on the right side.  Note that
\begin{equation}
 \sg_{\mathcal{A}} \dt^{j} u \pm \sg \dt^{j} u  = (\mathcal{A}_{ik} \pm \delta_{ik})\p_k \dt^{j} u_l + (\mathcal{A}_{lk} \pm \delta_{lk})\p_k \dt^{j} u_i.
\end{equation}
Sobolev embeddings provide the bounds
\begin{equation}
 \abs{ \sg_{\mathcal{A}} \dt^{j} u - \sg \dt^{j} u } \ls \sqrt{\se{N+2}} \abs{\nab \dt^{j} u}
\text{ and }
 \abs{ \sg_{\mathcal{A}} \dt^{j} u + \sg \dt^{j} u } \ls (1+ \sqrt{\se{N+2}}) \abs{\nab \dt^{j} u}.
\end{equation}
We then get
\begin{equation}\label{i_te_9}
 \int_\Omega \mu \abs{ \left(\sg_{\mathcal{A}} \dt^{j} u + \sg \dt^{j} u\right): \left(\sg_{\mathcal{A}} \dt^{j} u - \sg \dt^{j} u\right)}
\ls  (\sqrt{\se{N+2}} + \se{N+2}) \int_\Omega \abs{\nab \dt^{j} u}^2  \ls  \sqrt{\se{N+2}} \sd{n}.
\end{equation}
We may then use \eqref{i_te_8} and \eqref{i_te_9} to replace in \eqref{iiii} and derive \eqref{i_te_0}.
\end{proof}

\subsubsection{Energy evolution of horizontal spatial derivatives}

For the horizontal spatial derivatives, it turns out to be convenient to rewrite the system \eqref{reformulationic} in a linear form
such that the coefficients get fixed; the elliptic regularity can be also readily adapted
in later sections. We shall use the following linear perturbed formulation of \eqref{reformulationic}:
\begin{equation}\label{perturb}
\begin{cases}
 \dt\eta=u&\hbox{in }\Omega
\\\rho\partial_t u-\mu\Delta u+\nabla p- (\bar B\cdot\nabla)^2 \eta=G^1\quad&\hbox{in }\Omega
\\ \diverge u=G^2&\hbox{in }\Omega
\\ \Lbrack u\Rbrack=0,\quad \Lbrack pI-\mu\mathbb{D}u\Rbrack e_3-\jump{\bar B_3(\bar B\cdot\nabla)\eta}= \rj g\eta_3  e_3+G^3 &\hbox{on }\Sigma
\\ u  =0 &\hbox{on }\Sigma_{m,\ell},
\end{cases}
\end{equation}
where
\begin{equation}\label{G1_def}
 G^1 =   \mu(\Delta_\a-\Delta)u-(\nabla_\a-\nabla)p,
 \end{equation}
\begin{equation}\label{G2_def}
G^2=(\diverge_\a-\diverge)u
 \end{equation}
 and
\begin{equation}\label{G3_def}
G^3= \jump{\mu\left(\mathbb{D}_{\mathcal{A}}u \mathcal{N}-\mathbb{D} u e_3 \right)}+(g\rj \eta_3-\jump{p})(\mathcal{N}-e_3).
 \end{equation}

We record the estimates of the nonlinear terms $G^1, G^2$ and $G^3$ in the following lemma.
\begin{lem}\label{p_G_estimates}
For $n=N+2$ or $n=2N$, it holds that
 \begin{equation}\label{p_G_e_0}
  \ns{ \bar{\nab}_0^{2n-2} G^1}_{0}  +  \ns{ \bar{\nab}_0^{2n-2}  G^2}_{1} +
 \as{ \bar{\nab}_{\ast 0}^{\ 2n-2}  G^3}_{1/2}    \ls  \se{N+2}\se{n} .
 \end{equation}
We also have
\begin{equation}\label{p_G_e_001}
\ns{ \bar{\nab}_0^{ 4N-1} G^1}_{0} +  \ns{ \bar{\nab}_0^{ 4N-1}  G^2}_{1}  +
 \as{\bar{\nab}_{\ast 0}^{\   4N-1} G^3}_{1/2}  \ls  \se{N+2}(\sd{2N}+\mathcal{J}_{2N}+ \f )
\end{equation}
and
\begin{equation}\label{p_G_e_002}
\ns{ \bar{\nab}_0^{ 2(N+2)-1} G^1}_{1} +  \ns{ \bar{\nab}_0^{ 2(N+2)-1}  G^2}_{2} +
 \as{\bar{\nab}_{\ast 0}^{\   2(N+2)-1} G^3}_{3/2}  \ls \se{2N}\sd{N+2} .
\end{equation}
\end{lem}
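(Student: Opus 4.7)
The plan is to mirror the strategy of Lemma \ref{p_F_estimates}: each of $G^1$, $G^2$, $G^3$ is at least quadratic, with one factor involving $\a - I$ (or equivalently $\mathcal{N} - e_3$ on $\Sigma$), which is a smooth nonlinear function of $\nabla\eta$ vanishing at $\eta=0$, and a complementary factor involving a derivative of $u$, $p$, or $\eta$ itself. After applying Leibniz to $\bar\nabla_0^k$, every resulting term can be written as $X \cdot Y$, where $X$ is a product of $\eta$-type factors and $Y$ is a derivative of $u$, $p$, or $\eta$. I will split according to which factor carries the bulk of the derivatives and use Lemma \ref{sobolev}, the trace theory, and interpolation to bound $X$ in $L^\infty$-type norms controlled by $\se{N+2}$ and $Y$ in $L^2$-type norms controlled by the appropriate high-order functional.

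For \eqref{p_G_e_0}, $\bar\nabla_0^{2n-2}$ lands on the quadratic expression with at most $2n-2$ derivatives. In each split $X\cdot Y$ either $X$ carries $\le n-1\le N+1$ parabolic derivatives (and is bounded in $L^\infty$ by Sobolev embedding through $\se{N+2}$, using that $\a-I$ and $\mathcal{N}-e_3$ are smooth functions of $\nabla\eta$ vanishing at $0$, together with $\norm{\eta}_{2n}^2 \le \se{n}$), or $X$ carries more than $n-1$ derivatives and hence $Y$ carries at most $n-1$ and may be put in $L^\infty$. In either case the $L^2$-factor is controlled by $\sqrt{\se{n}}$; for $G^2$ we get one extra spatial derivative since the quadratic structure $\diva u - \diverge u$ contains only first-order derivatives of $u$; for $G^3$ the required $H^{1/2}(\Sigma)$ bound follows from trace theory applied in $\Omega_\pm$ after noting that $\bar\nabla_{\ast 0}$ involves only tangential derivatives, which commute with the restriction to $\Sigma$.

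For \eqref{p_G_e_001} the total parabolic derivative count is $4N-1$, which exceeds what $\se{2N}$ controls, so the high-derivative factor must be absorbed into $\f$ (when it is $\eta$) or into $\mathcal{J}_{2N}$ (when it is $u$, $p$, or $\jump{p}$). Concretely, whenever a factor $\nabla\eta$ receives more than $2N$ derivatives it is bounded in $L^2$ by $\sqrt{\f}$; whenever $u$ receives more than $2N$ derivatives it is bounded by $\sqrt{\mathcal{J}_{2N}}$; and whenever $p$ (or its jump) receives more than $2N-2$ derivatives it is bounded using the $\norm{\nabla p}_{4N-1}$ and $\abs{\jump{p}}_{4N-1/2}$ components of $\mathcal{J}_{2N}$. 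The remaining factor then carries at most $2N-1$ parabolic derivatives and is estimated in $L^\infty$ by $\sqrt{\se{N+2}}$ via Sobolev embedding, since $2N-1 \le 4N - (2N+2)$ leaves enough room (using $N\ge 4$). Summing the resulting products yields \eqref{p_G_e_001}.

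For \eqref{p_G_e_002} the derivative count $2(N+2)-1 = 2N+3$ is intermediate, so every factor stays within the range controlled by $\se{2N}$ or $\sd{N+2}$. I would apportion derivatives so that the high-derivative factor sits in $L^2$ bounded by $\sqrt{\sd{N+2}}$, and the low-derivative factor is bounded in $L^\infty$ by $\sqrt{\se{2N}}$ through Sobolev embedding; the extra spatial derivative needed to reach $H^1$ (interior) or $H^2$ ($G^2$) or $H^{3/2}$ ($G^3$) is absorbed by noting that the full norms $\norm{u}_{2(N+2)}$, $\norm{\nabla p}_{2(N+2)-2}$ and $\abs{\jump{p}}_{2(N+2)-3/2}$ are available inside $\sd{N+2}$. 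Trace theory handles $G^3$ in the usual way, and the smoothness of the nonlinear coefficients $\a-I$, $\mathcal{N}-e_3$ in $\nabla\eta$ is used under the smallness assumption $\delta\ll 1$ to justify their Taylor expansion. The main bookkeeping obstacle throughout is ensuring that every split leaves enough derivatives on the low-order factor for Sobolev embedding to put it in $L^\infty$; this is precisely where the choice $N\ge 4$ enters, exactly as in Lemma \ref{p_F_estimates}.
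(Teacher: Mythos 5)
Your treatment of \eqref{p_G_e_0} and \eqref{p_G_e_001} matches the paper's approach (Leibniz, $X\cdot Y$ split, low-derivative factor in $L^\infty$ via Sobolev bounded by $\se{N+2}$, high-derivative factor in $L^2$ bounded by $\se{n}$ for \eqref{p_G_e_0} and by $\sd{2N}+\mathcal{J}_{2N}+\f$ for \eqref{p_G_e_001}). However, for \eqref{p_G_e_002} you have the assignment backwards, and as written it fails. You put the \emph{high}-derivative factor in $L^2$ bounded by $\sqrt{\sd{N+2}}$ and the \emph{low}-derivative factor in $L^\infty$ bounded by $\sqrt{\se{2N}}$. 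The problem is that $\sd{N+2}$ caps out at $\norm{u}_{2(N+2)}=\norm{u}_{2N+4}$, $\norm{\eta}_{2N+4}$ and $\norm{\nabla p}_{2N+2}$, whereas the high-derivative factor in $G^1$ can absorb all $2N+3$ parabolic derivatives from $\bar\nabla_0^{2(N+2)-1}$ plus the extra spatial derivative from the $H^1$ norm: a term of the form $(\a\a-I)\partial^\alpha\nabla^2 u$ with $|\alpha|=2N+3$ requires control of $\norm{u}_{2N+6}$, which is simply not present in $\sd{N+2}$ (and no Sobolev interpolation trick inside $\sd{N+2}$ recovers it, since the extra one derivative that $\sd{n}$ has over $\se{n}$ is at the same index $n$, not across $N+2$ versus $2N$).

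The paper does the opposite: it bounds the factor $Y$ carrying \emph{more} derivatives by $\se{2N}$ (which controls $\norm{u}_{4N}$, $\norm{\nabla p}_{4N-2}$, $\norm{\eta}_{4N}$, and $4N\ge 2N+6$ for $N\ge 3$) and the factor $X$ carrying \emph{fewer} derivatives in $L^\infty$ by $\sd{N+2}$ (which has more than enough low-order regularity for the Sobolev embedding). Both choices give the product $\se{2N}\sd{N+2}$ on paper, but only the paper's assignment is actually achievable by the functionals in hand. You need to swap which factor is placed in $L^2$ and which is placed in $L^\infty$, or equivalently swap which of $\se{2N}$ and $\sd{N+2}$ controls the high-order part.
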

\begin{proof}
Note that all terms in the definitions of $G^i$ are at least quadratic. We apply these
space-time differential operators to $G^i$ and then expand using the Leibniz rule; each product in
the resulting sum is also at least quadratic. Then the estimate \eqref{p_G_e_0} follows in the same way as Lemma \ref{p_F_estimates}.

The last two terms in the right hand side of \eqref{p_G_e_001} is due to the control of the highest spatial derivatives in some products, which is not controlled by $\sd{2N}$ but rather by $\mathcal{J}_{2N}+ \f$. Note that the other factors in such products are of low derivatives and hence can be easily controlled by $\se{N+2}$. Then the estimate \eqref{p_G_e_001} follows.

The proof of the estimate \eqref{p_G_e_002} is somewhat easier. Indeed, we may write each term in the form $X Y$, where $X$ involves fewer derivative counts than $Y$; then we simply bound the various norms of $Y$ by $ \se{2N} $ and the various norms of $X$ by $\sd{N+2}$. Then the estimate \eqref{p_G_e_002} follows.
\end{proof}

For a generic integer $n\ge 3$, we define the energy involving only horizontal spatial derivatives by
\begin{equation}
 \bar{\mathcal{E}}^{\ast}_{n} =
 \sum_{\substack{{\al\in\mathbb{N}^{2}}\\ {\al_1+\al_2\ge 1,|\al|\le 2n}}} \hal\left(  \int_\Omega \rho \abs{\p^\al  u }^2  +\abs{(\bar B\cdot \nabla)\partial^\alpha  \eta }^2- \int_{\Sigma} \rj g \abs{\partial^\alpha  \eta_3}^2\right)
\end{equation}
and the corresponding dissipation by
\begin{equation}
 \bar{\mathcal{D}}_n^{\ast} =
 \sum_{\substack{\al\in\mathbb{N}^{2}\\ \al_1+\al_2\ge 1,|\al|\le 2n}} \int_\Omega\frac{\mu}{2} \abs{\sg \p^\al  u }^2.
\end{equation}
Then we have the following energy evolution.

\begin{prop}\label{p_upper_evolution  N12}
It holds that
\begin{equation}\label{p_u_e_00}
\frac{d}{dt}\bar{\mathcal{E}}^{\ast}_{2N}+\bar{\mathcal{D}}_{2N}^{\ast}\ls\sqrt{ \se{N+2}  } ( \sd{2N}+\mathcal{J}_{2N} +\f)
\end{equation}
and
\begin{equation}\label{p_u_e_00''}
\frac{d}{dt}\bar{\mathcal{E}}^\ast_{N+2}+\bar{\mathcal{D}}_{N+2}^\ast\ls\sqrt{ \se{2N}   } \sd{N+2}.
\end{equation}
\end{prop}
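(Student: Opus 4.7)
The strategy is to differentiate the linear perturbed system \eqref{perturb} by $\partial^\alpha$ with $\alpha\in\mathbb{N}^2$, $\alpha_1+\alpha_2\ge 1$, $|\alpha|\le 2n$, and carry out the natural $L^2$-energy estimate, exactly mirroring the temporal argument that led to \eqref{identity1} and \eqref{i_te_0}. Because $\Sigma$ and $\Sigma_{m,\ell}$ are horizontal hyperplanes, every horizontal derivative commutes with the operators in \eqref{perturb} and preserves all of the boundary conditions, so $(\partial^\alpha\eta,\partial^\alpha u,\partial^\alpha p)$ satisfies the same linear system as $(\eta,u,p)$ but with the forcing $(\partial^\alpha G^1,\partial^\alpha G^2,\partial^\alpha G^3)$. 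Dotting the momentum equation with $\partial^\alpha u$, integrating by parts, and using $\partial_t\partial^\alpha\eta=\partial^\alpha u$ to convert the magnetic interior contribution and the gravity boundary contribution into exact time derivatives yields
\[
\tfrac12\tfrac{d}{dt}\Bigl(\int_\Omega\rho|\partial^\alpha u|^2+|(\bar B\cdot\nabla)\partial^\alpha\eta|^2-\int_\Sigma\rj g|\partial^\alpha\eta_3|^2\Bigr)+\int_\Omega\tfrac{\mu}{2}|\mathbb D\partial^\alpha u|^2=\mathcal I_1^\alpha+\mathcal I_2^\alpha+\mathcal I_3^\alpha,
\]
where $\mathcal I_1^\alpha=\int_\Omega\partial^\alpha u\cdot\partial^\alpha G^1$, $\mathcal I_2^\alpha=\int_\Omega\partial^\alpha p\,\partial^\alpha G^2$, and $\mathcal I_3^\alpha=\int_\Sigma\partial^\alpha u\cdot\partial^\alpha G^3$. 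Summing over the admissible $\alpha$ and replacing $|\mathbb D\partial^\alpha u|^2$ by $|\nabla\partial^\alpha u|^2$ through the pointwise identity used in \eqref{i_te_8}--\eqref{i_te_9} produces $\frac{d}{dt}\bar{\mathcal E}^{\ast}_n+\bar{\mathcal D}^{\ast}_n$ on the left.

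In the sub-top range $|\alpha|\le 2n-1$, a direct Cauchy--Schwarz pairing combined with the trace theorem and Lemma \ref{p_G_estimates} handles $\mathcal I_1^\alpha$ and $\mathcal I_3^\alpha$: $\|\partial^\alpha u\|_0$ and $|\partial^\alpha u|_0$ are absorbed by $\sqrt{\sd n}$, while $\|\partial^\alpha G^1\|_0$ and $|\partial^\alpha G^3|_{1/2}$ are controlled by \eqref{p_G_e_001} for $n=2N$ and \eqref{p_G_e_002} for $n=N+2$. The pressure piece $\mathcal I_2^\alpha$ is more delicate because $\partial^\alpha p$ without a normal derivative is not in the dissipation; I would exploit the divergence structure $G^2=\diverge\tilde G^2$ arising from the geometric identity $\partial_j\mathcal A_{ij}=0$ (the horizontal-derivative analogue of \eqref{Qstr}--\eqref{Qstr1}, with $\jump{\tilde G^2_3}=0$ on $\Sigma$ and $\tilde G^2=0$ on $\Sigma_{m,\ell}$) to rewrite $\mathcal I_2^\alpha=-\int_\Omega\nabla\partial^\alpha p\cdot\partial^\alpha\tilde G^2+\int_\Sigma\jump{\partial^\alpha p}\,\partial^\alpha\tilde G^2_3$, so the unbounded $\partial^\alpha p$ is replaced by $\nabla\partial^\alpha p$ and $\jump{\partial^\alpha p}$, both sitting inside $\sd n$.

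The main obstacle is the endpoint $|\alpha|=2n$, where $\partial^\alpha G^i$ lies outside the range of Lemma \ref{p_G_estimates}. The remedy is a single horizontal integration by parts: writing $\alpha=\tilde\alpha+\beta$ with $|\beta|=1$ horizontal and using that $\int_\Omega\partial^\beta f\cdot\partial^\beta g=-\int_\Omega f\cdot\partial^{2\beta}g$ produces no boundary contribution in a horizontal variable, each $\mathcal I_j^\alpha$ becomes a pairing in which the $G^i$-factor carries only $2n-1$ horizontal derivatives (hence is controlled by Lemma \ref{p_G_estimates}) against a factor $\partial^{\alpha+\beta}u$ or $\nabla\partial^\alpha p$ that is absorbed by the anisotropic dissipation $\norm{u}_{1,2n}^2$ or $\norm{\nabla p}_{2n-2}^2$ inside $\sd n$. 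The $\mathcal J_{2N}+\f$ correction on the right of \eqref{p_u_e_00} appears precisely because some of the top-order factors in the expansions of $G^1,G^2,G^3$ consist of $\nabla\eta$ at order $4N+1$, or of $(u,\nabla p,\jump p)$ at the regularity level of $\mathcal J_{2N}$, which by \eqref{p_G_e_001} are only controlled by $\se{N+2}(\sd{2N}+\mathcal J_{2N}+\f)$ rather than by $\sd{2N}$ alone. For the $n=N+2$ case the stronger bound \eqref{p_G_e_002} makes no such deficit arise, and one closes directly with $\sqrt{\se{2N}}\sd{N+2}$.
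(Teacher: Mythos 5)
Your framework --- applying $\partial^\alpha$ with $\alpha\in\mathbb{N}^2$, $\alpha_1+\alpha_2\ge1$, $|\alpha|\le 2n$ to \eqref{perturb}, testing with $\partial^\alpha u$, and integrating by parts horizontally at the top order for $\mathcal{I}_1^\alpha$ and $\mathcal{I}_3^\alpha$ --- matches the paper's. (A small bookkeeping slip: symmetrizing $-\mu\Delta u$ against $\diverge u=G^2$ produces a correction $-\mu\nabla G^2$ in the forcing, which your $\mathcal{I}_1^\alpha$ omits; compare \eqref{p1p11110}.)

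The genuine gap is in $\mathcal{I}_2^\alpha=\int_\Omega\partial^\alpha p\,\partial^\alpha G^2$. Importing the divergence structure $G^2=\diverge\tilde G^2$ with $\tilde G^2_j=(\a_{ij}-\delta_{ij})u_i$ is the wrong tool here: the IBP it enables replaces $\partial^\alpha p$ by $\nabla\partial^\alpha p$, and for $|\alpha|=2n$ this needs $\nabla p\in H^{2n}$, two orders beyond $\norm{\nabla p}_{2n-2}^2$ in $\sd{n}$ and one order beyond even $\mathcal{J}_{2N}$. Nor does $\tilde G^2$ buy anything: $\norm{\tilde G^2}_{k+1}$ drags in $\norm{\eta}_{k+2}$, so at the endpoint you would need $\norm{\eta}_{4N+2}$, which even $\f$ cannot supply. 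The divergence form was indispensable in the \emph{temporal} estimates of Proposition \ref{i_temporal_evolution  N} precisely because $\partial_t^n p$ carries no spatial derivative and is otherwise uncontrollable; here $\alpha_1+\alpha_2\ge1$ already gives $\partial^\alpha p$ a horizontal derivative, and the paper exploits exactly that: write $\partial^\alpha p=\partial^{\alpha-\gamma}(\partial^\gamma p)$ for a horizontal unit multi-index $\gamma$, so $\norm{\partial^\alpha p}_0\le\norm{\partial^\gamma p}_{|\alpha|-1}$, bounded by $\mathcal{J}_{2N}$ for $n=2N$ with \emph{no} integration by parts at all (see \eqref{i_de_15}), and, for $n=N+2$ only at the endpoint $|\alpha|=2(N+2)$, by one direct horizontal IBP that moves a derivative onto $G^2$ --- not a gradient onto $p$ --- so the pressure factor lands at $\partial^\gamma p\in H^{2(N+2)-2}$ inside $\sd{N+2}$ (see \eqref{i_de_5'}). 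Your assertion that $\nabla\partial^\alpha p$ is ``absorbed by $\norm{\nabla p}_{2n-2}^2$'' is off by two derivatives and is exactly where the proposal fails to close.
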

\begin{proof}
We let $n$ denote either $2N$ or $N+2$ throughout the proof. We take $\al\in \mathbb{N}^{2}$ so that $\al_1+\al_2\ge 1$ and $|\al|\le 2n$. Applying $\p^\al $ to the second equation of \eqref{perturb} and then taking the dot product with $\p^\al u$, using the other equations as in Proposition \ref{i_temporal_evolution  N}, we find that
\begin{equation} \label{p1p11110}
\begin{split}
 &\hal \frac{d}{dt}\left(  \int_\Omega \rho \abs{\p^\al  u }^2  +\abs{(\bar B\cdot \nabla)\partial^\alpha  \eta }^2- \int_{\Sigma} \rj g \abs{\partial^\alpha  \eta_3}^2\right)+ \int_\Omega \frac{\mu }{2} \abs{\sg \p^\al  u }^2
 \\&\quad= \int_\Omega   \p^\al  u  \cdot \p^\al( G^1-\mu\nabla G^2) +\int_\Omega   \p^\al  p  \p^\al  G^2+ \int_\Sigma \p^\al u \cdot \p^\al G^3 .
\end{split}
\end{equation}

We now estimate the terms on the right hand side of \eqref{p1p11110}. Since $\al_1+\al_2\ge 1$, we may write $\alpha = \gamma +(\alpha-\gamma)$ for some $\gamma \in \mathbb{N}^2$ with $\abs{\gamma}=1$. We first consider the case $n=2N$. We can then integrate by parts and use \eqref{p_G_e_001} to have
\begin{equation}\label{i_de_14}
\begin{split}
  &\int_\Omega     \p^\al   u \cdot \p^\al( G^1-\mu\nabla G^2) =- \int_\Omega     \p^{\alpha+\gamma}   u \cdot   \p^{\alpha-\gamma}  ( G^1-\mu\nabla G^2)
 \\&\quad\le \norm{\p^{\alpha+\gamma}   u}_{0} \left( \norm{ \p^{\alpha-\gamma}   G^1 }_{0}+ \norm{ \p^{\alpha-\gamma} \nabla  G^2 }_{0}\right)
\le \norm{\p^{\alpha}   u}_{1} \left( \norm{ G^1 }_{4N-1}+ \norm{   G^2 }_{4N}\right)
\\&\quad\ls \sqrt{ \sd{2N} } \sqrt{ \se{N+2}(\sd{2N}+\mathcal{J}_{2N} +\f)  } .
\end{split}
\end{equation}
Similarly, by using additionally the trace theory, we have
\begin{equation}\label{i_de_16}
\begin{split}
 &\int_\Sigma     \p^\al  u \cdot   \p^\al G^3   =  -\int_\Sigma  \p^{\alpha+\gamma}  u \cdot   \p^{\alpha-\gamma} G^3
\\& \quad\le \abs{\p^{\alpha+\gamma}  u}_{-1/2}  \abs{ \p^{\alpha-\gamma} G^3 }_{1/2}  \le \abs{\p^{\alpha}  u}_{1/2}   \abs{  G^3 }_{4N-1/2}
 \le \norm{\p^{\alpha}  u}_{1} \abs{  G^3 }_{4N-1/2}
  \\&\quad\ls \sqrt{ \sd{2N} } \sqrt{ \se{N+2}(\sd{2N}+\mathcal{J}_{2N} +\f)  }.
\end{split}
\end{equation}
For the $G^2$ term we do not need to (and we can not) integrate by parts:
\begin{equation}\label{i_de_15}
\begin{split}
  \int_\Omega  \p^\al   p \p^\al  G^2
&\le \norm{\p^{\alpha-\gamma}\p^\gamma  p}_{0}  \norm{\p^{\alpha} G^2 }_{0}
\le \norm{ \p^\gamma p}_{4N-1}  \norm{  G^2 }_{4N}
  \\&\ls \sqrt{\mathcal{J}_{2N} }\sqrt{ \se{N+2}(\sd{2N}+\mathcal{J}_{2N} +\f)  } .
\end{split}
\end{equation}
Hence, by \eqref{i_de_14}--\eqref{i_de_15}, we deduce from \eqref{p1p11110} that for all $\abs{\alpha} \le 4N$ with $\al_1+\al_2\ge 1$,
\begin{equation} \label{7878}
\begin{split}
 &\hal \frac{d}{dt}\left(  \int_\Omega \rho \abs{\p^\al  u }^2  +\abs{(\bar B\cdot \nabla)\partial^\alpha  \eta }^2- \int_{\Sigma} \rj g \abs{\partial^\alpha  \eta_3}^2\right)+ \int_\Omega \frac{\mu}{2}  \abs{\sg \p^\al  u }^2
 \\&\quad\ls  \sqrt{ \se{N+2}  } ( \sd{2N}+\mathcal{J}_{2N} +\f).
\end{split}
\end{equation}
The estimate \eqref{p_u_e_00} then follows from \eqref{7878} by summing over such $\alpha$.

We now consider the case $n=N+2$. By \eqref{p_G_e_002}, we have
\begin{equation} \label{i_de_4'}
\begin{split}
   \int_\Omega   \p^\al u  \cdot  \p^\al ( G^1-\mu\nabla G^2) &\le \norm{\p^\al u}_0\left( \norm{ \p^{\alpha }   G^1 }_{0}+ \norm{ \p^{\alpha } \nabla  G^2 }_{0}\right)
  \\& \le \norm{\p^\al u}_0
   \left(\norm{G^1}_{2(N+2)}+\norm{G^2}_{2(N+2)+1}\right)
 \\&\ls \sqrt{ \sd{N+2} } \sqrt{ \se{2N}\sd{N+2} }
\end{split}
\end{equation}
and
\begin{equation}\label{i_de_16;'}
\begin{split}
 \int_\Sigma     \p^\al  u \cdot   \p^\al G^3   &=  -\int_\Sigma  \p^{\alpha-\gamma}  u \cdot   \p^{\alpha+\gamma} G^3
 \\
&  \le \abs{\p^{\alpha-\gamma}  u}_{1/2}  \abs{ \p^{\alpha+\gamma} G^3 }_{-1/2} \le \norm{\p^{\alpha-\gamma}  u}_{1} \abs{  G^3 }_{4N+1/2}
  \\&\ls \sqrt{ \sd{2N} } \sqrt{ \se{N+2}(\sd{2N}+\mathcal{J}_{2N} +\f)  }.
\end{split}
\end{equation}
For the $G^2$ term we need a bit more care. If $|\al|\le 2(N+2)-1$, then
\begin{equation}\label{i_de_5'0}
\begin{split}
  \int_\Omega  \p^\al p \p^\al G^2
&\le \norm{\p^{\alpha-\gamma}\p^\gamma p}_{0}  \norm{ \p^{\alpha }  G^2 }_{0}
\le \norm{ \p^\gamma p}_{2(N+2)-2}  \norm{   G^2 }_{2(N+2)-1}
  \\&\ls \sqrt{ \sd{N+2} }\sqrt{ \se{2N}\sd{N+2}}  .
\end{split}
\end{equation}
If $|\al|= 2(N+2) $,  then we have that $\al_1+\al_2\ge 2$. We may then write $\alpha-\gamma = \beta +(\alpha-\beta-\gamma)$ for some $\beta \in \mathbb{N}^2$ with $\abs{\beta}=1$. Then we integrate by parts to have
\begin{equation}\label{i_de_5'}
\begin{split}
  \int_\Omega  \p^\al p \p^\al G^2&=-\int_\Omega  \p^{\al-\beta-\gamma}\p^\gamma p \p^{\al+\beta}  G^2
 \\&\le \norm{\p^{\al-\beta-\gamma}\p^\gamma p}_{0}  \norm{ \p^{\al+\beta}  G^2}_{0}
\le \norm{ \p^\gamma p}_{2(N+2)-2} \norm{G^2}_{2(N+2)+1}\\&\ls \sqrt{ \sd{N+2} }\sqrt{ \se{2N}\sd{N+2}}  .
\end{split}
\end{equation}
Hence, by \eqref{i_de_4'}--\eqref{i_de_5'}, we deduce from \eqref{p1p11110} that for all $\abs{\alpha} \le 2(N+2)$ with $\al_1+\al_2\ge 1$,
\begin{equation} \label{7878'}
\hal \frac{d}{dt}\left(  \int_\Omega \rho \abs{\p^\al  u }^2  +\abs{(\bar B\cdot \nabla)\partial^\alpha  \eta }^2- \int_{\Sigma} \rj g \abs{\partial^\alpha  \eta_3}^2\right)+ \int_\Omega \frac{\mu}{2}  \abs{\sg \p^\al  u }^2
\ls  \sqrt{ \se{2N}   } \sd{N+2}.
\end{equation}
The estimate \eqref{p_u_e_00''} then follows from \eqref{7878'} by summing over such $\alpha$.
\end{proof}

\subsubsection{Energy evolution recovering $\eta$}

Note that the dissipation estimates in $\bar{\mathcal{D}}_n^t$ of Proposition \ref{i_temporal_evolution  N} and $\bar{\mathcal{D}}_n^\ast$ of Proposition \ref{p_upper_evolution  N12} only contain $u$, we will now recover dissipation estimates of $\eta$ due to the magnetic effect and gravity; the energy estimates of $\eta$ due to the viscosity will be also recovered. Since $\dt \eta =u$, we then only need to estimate for $\eta$ without time derivatives. We recall the structure of $\eta$ from \eqref{imp1}, which follows from the assumption \eqref{eta00} of $\eta_0$. The estimates result from testing the linear perturbed formulation \eqref{perturb} by $\eta$. Note that the boundary conditions of $
\eta=0\text{ on }\Sigma_{m,\ell}$  and  $\jump{\eta} =0\text{ on }\Sigma
$ guarantee this. Moreover, the Jacobian identity ${\rm det} (I+\nabla\eta)=1$ gives the control of ${\rm div}\eta$;
indeed,
\begin{equation}\label{phhicon}
{\rm div}\eta=\Phi,\quad \Phi=-({\rm det} (I+\nabla\eta)-1-{\rm div}\eta)=O(\nabla\eta\nabla^2\eta).
\end{equation}
Again, as for $F^{2,j}$, we will need some structural
conditions on $\Phi$ which allow us to integrate by parts in the interaction between $p$ and $\Phi$ (without spatial derivatives). This is not apparent, we need to do some lengthy but straightforward computations. Indeed, we expand the expression of $\Phi$ to conclude that
 \begin{equation}\label{Qstr2}
\Phi= {\rm div}\Psi \text{ with }
\Psi= \left(\begin{array}{ccc}\eta_1\p_2\eta_2+\eta_1\p_3\eta_3+\eta_1(\p_2\eta_2\p_3\eta_3-\p_3\eta_2\p_2\eta_3)
\\-\eta_1\p_1\eta_2+\eta_2\p_3\eta_3+\eta_1(\p_3\eta_2\p_1\eta_3-\p_1\eta_2\p_3\eta_3)
\\-\eta_1\p_1\eta_3-\eta_2\p_2\eta_3+\eta_1(\p_1\eta_2\p_2\eta_3-\p_2\eta_2\p_1\eta_3)
\end{array}\right).
 \end{equation}
Moreover, since $
\eta=0\text{ on }\Sigma_{m,\ell}$  and  $\jump{\eta} =0\text{ on }\Sigma
$, we have
\begin{equation}\label{Qstr111}
\Psi=0\text{ on }\Sigma_{m,\ell}\text{ and }\jump{\Psi_3} =0\text{ on }\Sigma
.
\end{equation}

We record some estimates of $\Phi$ and $\Psi$ in the following lemma.
\begin{lem}\label{p_G_estimates''}
It holds that
\begin{equation}\label{p_G_e_001''}
 \ns{ \Phi}_{4N} \ls  \se{N+2}(\sd{2N}+\mathcal{J}_{2N}+ \f ),
\end{equation}
\begin{equation}\label{p_G_e_002''}
 \ns{\Phi}_{2(N+2)+1} \ls \se{2N}\sd{N+2}
\end{equation}
and
\begin{equation}\label{Phe_0}
  \ns{ \Psi}_1 \ls \mathcal{E}_{3} \mathcal{D}_{3} .
\end{equation}
\end{lem}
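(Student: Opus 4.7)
The plan is to reduce all three bounds to routine Leibniz--Sobolev estimates, exactly in the spirit of Lemmas \ref{p_F_estimates} and \ref{p_G_estimates}. The starting observation is algebraic: since the map $M \mapsto \det(I+M) - 1 - \trace M$ has trivial zeroth- and first-order Taylor terms, the function $\Phi$ in \eqref{phhicon} is a sum of monomials of degrees $2$ and $3$ in the first-order quantities $\partial_j \eta_i$, and in particular never involves second-order derivatives of $\eta$. The explicit formula \eqref{Qstr2} similarly displays $\Psi$ as a polynomial of degrees $2$ and $3$ in the variables $\eta_i$ and $\partial_j \eta_i$. Thus $\Phi$ and $\Psi$ are at least quadratic, and each monomial admits the standard ``high factor in $L^2$, remaining factors in $L^\infty$'' splitting.

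For \eqref{p_G_e_001''} I would apply $\partial^\alpha$ with $|\alpha| \le 4N$ to each monomial of $\Phi$ and expand via the Leibniz rule. Up to constants, every resulting product is of the form $(\partial^{\beta_1}\nabla\eta)\cdots(\partial^{\beta_r}\nabla\eta)$ with $r \in \{2,3\}$ and $\sum_i |\beta_i| = |\alpha|$. In each product I designate as the ``high'' factor the one with the largest $|\beta_i|$ and estimate it in $L^2$, while the remaining ``low'' factors are estimated in $L^\infty$ via the embedding $H^2 \hookrightarrow L^\infty$ on $\Omega_\pm$. By a minimum-sum argument the low factors carry at most $2N+1$ derivatives on $\eta$, hence are controlled by $\sqrt{\se{N+2}}$. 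The high factor carries at most $4N+1$ derivatives on $\eta$; this is absorbed by $\sqrt{\sd{2N}}$ when the count is at most $4N$ and by $\sqrt{\f}$ only when it reaches exactly $4N+1$. Summing these contributions yields \eqref{p_G_e_001''}; the term $\mathcal{J}_{2N}$ on the right-hand side is not actually consumed by $\Phi$ (which depends only on spatial derivatives of $\eta$), but it is harmless. Estimate \eqref{p_G_e_002''} is proved by the identical recipe with $|\alpha| \le 2N+5$: the high factor, of order at most $2N+6$ in $\eta$ and hence controlled in $L^2$ by $\sqrt{\se{2N}}$ (using $4N \ge 2N+6$ for $N\ge 4$), multiplies low factors bounded in $L^\infty$ by $\sqrt{\sd{N+2}}$.

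For \eqref{Phe_0}, the $H^1$ norm applies at most one spatial derivative to $\Psi$, so after Leibniz expansion each term involves at most two total derivatives distributed among the two or three $\eta$-factors. Taking the factor with the higher derivative count as the $L^2$-factor (bounded by $\sqrt{\sd{3}}$, since $\ns{\eta}_{6} \subset \sd{3}$) and the remaining factor(s) in $L^\infty$ by $\sqrt{\se{3}}$ via $H^2 \hookrightarrow L^\infty$ gives \eqref{Phe_0} immediately. The only place where any real care is required is the case split in \eqref{p_G_e_001''} that routes the top-derivative factor into either $\sd{2N}$ or $\f$ depending on whether its distributed multi-index hits the maximal order $4N$; this is pure bookkeeping and presents no genuine obstacle.
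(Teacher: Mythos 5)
Your proof is correct and follows the paper's intended route: Leibniz expansion of the quadratic and cubic monomials followed by a high-factor-in-$L^2$, low-factors-in-$L^\infty$ Sobolev splitting, exactly as in Lemmas \ref{p_F_estimates} and \ref{p_G_estimates} to which the paper's one-line proof defers. The one point you make explicit---that $\Phi$ is a genuine degree-$2$/$3$ polynomial in $\nabla\eta$ alone, with no occurrence of $\nabla^2\eta$, so that after $|\alpha|\le 4N$ derivatives the top $L^2$ factor is at worst $\norm{\eta}_{4N+1}\le\sqrt{\f}$---is in fact essential to the bookkeeping, since the shorthand $\Phi=O(\nabla\eta\nabla^2\eta)$ displayed in \eqref{phhicon}, read literally, would send the top factor to $\norm{\eta}_{4N+2}$, which neither $\sd{2N}$ nor $\f$ controls.
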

\begin{proof}
The proof proceeds similarly as Lemma \ref{p_G_estimates}.
\end{proof}

For a generic integer $n\ge 3$, we define the recovering energy by
\begin{equation}
 \bar{\mathcal{E}}^\sharp_{n} =\hal\sum_{\substack{\al\in \mathbb{N}^2\\ |\al|\le 2n}}\int_\Omega \frac{\mu}{2}   \abs{\sg  \p^\al  \eta}^2
\end{equation}
and the corresponding dissipation by
\begin{equation}
 \bar{\mathcal{D}}_n^\sharp =\sum_{\substack{\al\in \mathbb{N}^2\\ |\al|\le 2n}} \left(  \int_\Omega  \abs{(\bar B\cdot \nabla)\partial^\alpha  \eta }^2- \int_{\Sigma} \rj g \abs{\partial^\alpha  \eta_3}^2\right).
\end{equation}
Then we have the following energy evolution.

\begin{prop}\label{p_upper_evolution  N'132}
It holds that
\begin{equation}\label{p_u_e_00'132}
\begin{split}
&\frac{d}{dt}\left(\bar{\mathcal{E}}^\sharp_{2N}+ \sum_{\substack{\al\in \mathbb{N}^2\\  |\al|\le 4N}}\int_\Omega\rho \partial^\alpha    u \cdot \partial^\alpha  \eta\right)+\bar{\mathcal{D}}_{2N}^\sharp
\\&\quad\ls \sqrt{ \se{N+2}  }( \sd{2N} + \mathcal{J}_{2N} +\f)+\bar{\mathcal{D}}_{2N}^t+\bar{\mathcal{D}}_{2N}^{\ast}
\end{split}
\end{equation}
and
\begin{equation}\label{p_u_e_00'12132}
\begin{split}
&\frac{d}{dt}\left(\bar{\mathcal{E}}^\sharp_{N+2}+ \sum_{\substack{\al\in \mathbb{N}^2\\  |\al|\le 2(N+2)}}\int_\Omega \rho\partial^\alpha   u  \cdot \partial^\alpha  \eta\right)+\bar{\mathcal{D}}_{N+2}^\sharp
\\&\quad\ls\sqrt{ \se{2N}   } \sd{N+2} +\bar{\mathcal{D}}_{N+2}^t +\bar{\mathcal{D}}_{N+2}^{\ast}.
\end{split}
\end{equation}
\end{prop}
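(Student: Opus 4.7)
The plan is to derive both estimates by applying $\partial^\alpha$ for $\alpha\in\mathbb{N}^2$ with $|\alpha|\le 2n$ to the second equation of \eqref{perturb} and taking the $L^2$ inner product with $\partial^\alpha\eta$. Purely horizontal derivatives are used since the boundary conditions $\eta=0$ on $\Sigma_{m,\ell}$ and $\jump{\eta}=0$ on $\Sigma$ (recorded in \eqref{imp1}) are preserved under $\partial^\alpha$, so the integration by parts that turns the viscosity into $\tfrac{\mu}{2}|\sg\partial^\alpha\eta|^2$ and the magnetic term into $\int|(\bar B\cdot\nabla)\partial^\alpha\eta|^2-\int_\Sigma\rj g|\partial^\alpha\eta_3|^2$ produces no extra boundary contributions.

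The basic identity I expect to obtain, after integrating by parts and using the third and fourth equations of \eqref{perturb}, looks like
\begin{equation}\nonumber
\int_\Omega \rho\,\dt\partial^\alpha u\cdot\partial^\alpha\eta+\hal\frac{d}{dt}\!\int_\Omega\frac{\mu}{2}|\sg\partial^\alpha\eta|^2+\int_\Omega|(\bar B\cdot\nabla)\partial^\alpha\eta|^2-\int_\Sigma\rj g|\partial^\alpha\eta_3|^2=\int_\Omega\partial^\alpha p\,\partial^\alpha G^2+(\text{nonlinear forcing})-\int_\Omega\partial^\alpha p\,\partial^\alpha\Phi,
\end{equation}
where the last term comes from the identity $\diverge\eta=\Phi$ in \eqref{phhicon}. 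The $\dt\partial^\alpha u\cdot\partial^\alpha\eta$ term is rewritten as $\tfrac{d}{dt}(\partial^\alpha u\cdot\partial^\alpha\eta)-|\partial^\alpha u|^2$ using $\dt\eta=u$, which moves a total time derivative into the energy (producing the term $\sum_\alpha\int\rho\,\partial^\alpha u\cdot\partial^\alpha\eta$ on the left) and leaves a copy of $\sum_\alpha\int\rho|\partial^\alpha u|^2$ on the right that is absorbed into $\bar{\mathcal{D}}^t_n+\bar{\mathcal{D}}^\ast_n$ via Korn–Poincar\'e.

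The $G^1$, $G^2$, $G^3$ contributions are handled by splitting $\alpha=\gamma+(\alpha-\gamma)$ with $|\gamma|=1$ (shifting one derivative back onto $\partial^\alpha\eta$ or its trace), then applying Lemma \ref{p_G_estimates} exactly as in the proof of Proposition \ref{p_upper_evolution N12}; this yields the $\sqrt{\se{N+2}}(\sd{2N}+\mathcal{J}_{2N}+\mathcal{F}_{2N})$ bound for $n=2N$ and $\sqrt{\se{2N}}\sd{N+2}$ for $n=N+2$. The gravity boundary term $-\int_\Sigma\rj g|\partial^\alpha\eta_3|^2$ is kept on the left, since under the standing hypothesis $|\bar B_3|>\mc$ the bilinear form $\int|(\bar B\cdot\nabla)\partial^\alpha\eta|^2-\int_\Sigma\rj g|\partial^\alpha\eta_3|^2$ is coercive by Lemma \ref{signsign}(1), so it reconstructs a good $\bar{\mathcal{D}}_n^\sharp$ after summing in $\alpha$.

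The main obstacle is the pressure interaction $\int_\Omega\partial^\alpha p(\partial^\alpha G^2-\partial^\alpha\Phi)$: since $\sd{n}$ and $\mathcal{J}_{2N}$ only furnish $p$ with at least one spatial derivative, there is no direct bound for $\partial^\alpha p$ alone when $|\alpha|$ is large. For the $G^2$ piece this is treated precisely as in the proof of Proposition \ref{p_upper_evolution N12} (writing $\alpha=\gamma+(\alpha-\gamma)$, or $\alpha-\gamma=\beta+(\alpha-\beta-\gamma)$ with $|\beta|=1$ when $|\alpha|$ is maximal, to move one spatial derivative onto $p$). For the $\Phi$ piece I will exploit the structural form \eqref{Qstr2}, $\Phi=\diverge\Psi$, together with the boundary conditions \eqref{Qstr111} to integrate by parts in space,
\begin{equation}\nonumber
-\int_\Omega\partial^\alpha p\,\partial^\alpha\Phi=-\int_\Omega\partial^\alpha p\,\diverge\partial^\alpha\Psi=\int_\Omega\nabla\partial^\alpha p\cdot\partial^\alpha\Psi-\int_\Sigma\jump{\partial^\alpha p}\,\partial^\alpha\Psi_3,
\end{equation}
which is legitimate because $\partial^\alpha$ is purely horizontal and $\Psi|_{\Sigma_{m,\ell}}=0$, $\jump{\Psi_3}|_\Sigma=0$. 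A further shift of one horizontal derivative back onto $\nabla p$ (respectively $\jump p$) via $\alpha=\gamma+(\alpha-\gamma)$, combined with trace theory and the estimates \eqref{p_G_e_001''}--\eqref{p_G_e_002''} of Lemma \ref{p_G_estimates''}, then yields exactly the same nonlinear bounds as the $G^2$ term. Summing over all $\alpha\in\mathbb{N}^2$ with $|\alpha|\le 2n$ and $\alpha$ allowed to be zero produces \eqref{p_u_e_00'132} and \eqref{p_u_e_00'12132}.
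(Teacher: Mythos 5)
Your overall strategy is the right one: test the $\partial^\alpha$-differentiated equation against $\partial^\alpha\eta$ for purely horizontal $\alpha$, use $\dt\eta=u$ to turn $\int\rho\,\dt\partial^\alpha u\cdot\partial^\alpha\eta$ into a total time derivative plus $-\int\rho|\partial^\alpha u|^2$ (absorbed by $\bar{\mathcal{D}}^t_n+\bar{\mathcal{D}}^\ast_n$), and bound the $G^1,G^2,G^3$ interactions exactly as in Proposition~\ref{p_upper_evolution  N12}. This matches the paper.

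However, your treatment of the pressure--$\Phi$ interaction is flawed when $\alpha\ne 0$. Integrating by parts with $\Phi=\diverge\Psi$ is essential only for $\alpha=0$, because then $\partial^\alpha p=p$ carries \emph{no} spatial derivative and $\|p\|_0$ is simply not controlled by $\se{n}$ or $\sd{n}$ — only $\nabla p$ and $\jump{p}$ are. After the integration by parts you are left with terms in $\Psi$, and the only control on $\Psi$ furnished by Lemma~\ref{p_G_estimates''} is the \emph{low-order} bound $\|\Psi\|_1^2\ls\mathcal{E}_3\mathcal{D}_3$ (estimate \eqref{Phe_0}); there is no $\|\Psi\|_{2n}$-type estimate, so for $|\alpha|$ large the quantities $\partial^\alpha\Psi$ and $\partial^\alpha\Psi_3$ cannot be bounded. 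Shifting a derivative ``back onto $\nabla p$'' then only makes matters worse, and citing \eqref{p_G_e_001''}--\eqref{p_G_e_002''} (which bound $\Phi$, not $\Psi$) after passing to $\Psi$ is inconsistent. The correct treatment for $\alpha\ne 0$ involves \emph{no} spatial integration by parts at all: simply factor $\partial^\alpha p=\partial^\gamma\partial^{\alpha-\gamma}p$ with $|\gamma|=1$, so that $\|\partial^\alpha p\|_0\le\|\nabla_\ast p\|_{2n-1}$, pair with $\|\partial^\alpha\Phi\|_0\le\|\Phi\|_{2n}$, and apply \eqref{p_G_e_001''} or \eqref{p_G_e_002''}. (This is exactly how the $G^2$ term was handled in \eqref{i_de_15} and \eqref{i_de_5'0}.) So: $\Psi$ for $\alpha=0$, direct Cauchy--Schwarz for $\alpha\ne 0$.

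A smaller point: your remark that the bilinear form $\int|(\bar B\cdot\nabla)\partial^\alpha\eta|^2-\int_\Sigma\rj g|\partial^\alpha\eta_3|^2$ ``reconstructs a good $\bar{\mathcal{D}}_n^\sharp$'' by coercivity is out of place here. The quantity $\bar{\mathcal{D}}_n^\sharp$ is \emph{defined} as this (possibly sign-indefinite) sum, and Proposition~\ref{p_upper_evolution  N'132} holds for arbitrary $\bar B$; the coercivity from Lemma~\ref{signsign}(1) under $|\bar B_3|>\mc$ is only invoked later, in Proposition~\ref{conclusion}.
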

\begin{proof}
We let $n$ denote either $2N$ or $N+2$ throughout the proof.
Applying $\partial^\alpha$ with $\al\in \mathbb{N}^{2}$ so that $ |\al|\le 2n$ to the second equation of \eqref{perturb} and then taking the dot product with $\p^\al \eta$, since $
\eta=0\text{ on }\Sigma_{m,\ell}$  and  $\jump{\eta} =0\text{ on }\Sigma
$, by \eqref{phhicon}, we find that
\begin{equation} \label{p1p1111224}
\begin{split}
 &\int_\Omega \rho \dt \partial^\alpha    u \cdot \partial^\alpha \eta
  + \int_{\Omega}\frac{\mu}{2} \mathbb{D} \partial^\alpha u
:\mathbb{D} \partial^\alpha  \eta  +\int_{\Omega}  \abs{(\bar B\cdot \nabla)\partial^\alpha  \eta }^2- \int_{\Sigma} \rj g \abs{\partial^\alpha  \eta_3}^2
 \\&\quad=  \int_\Omega   \p^\al \eta  \cdot  \p^\al (G^1-\mu\nabla G^2) +\int_\Omega   \p^\al p  \p^\al \Phi
 +\int_\Sigma \p^\al \eta \cdot \p^\al G^3   .
\end{split}
\end{equation}
Since $\dt\eta=u$, we have
\begin{equation}
\int_\Omega  \rho \dt(\partial^\alpha  u)\cdot \partial^\alpha  \eta=\frac{d}{dt}\int_\Omega  \rho   \partial^\alpha  u \cdot \partial^\alpha  \eta-\int_\Omega  \rho  \partial^\alpha  u \cdot \dt \partial^\alpha  \eta=\frac{d}{dt}\int_\Omega  \rho  \partial^\alpha  u \cdot \partial^\alpha  \eta-\int_\Omega  \rho  \abs{\partial^\alpha  u}^2
\end{equation}
and
\begin{equation}
   \int_\Omega \frac{\mu }{2}  \sg  \p^\al u:  \sg \p^\al \eta =   \int_\Omega \frac{\mu  }{2} \sg \p^\al \dt\eta:  \sg  \p^\al \eta
 =   \frac{1}{2}\frac{d}{dt} \int_\Omega \frac{\mu }{2}  \abs{\sg  \p^\al  \eta}^2.
\end{equation}
Note that
\begin{equation}
 \int_\Omega  \rho  \abs{\partial^\alpha  u}^2\ls \sdb{n}^t+\sdb{n}^\ast.
\end{equation}

We now estimate the terms on the right hand side of \eqref{p1p1111224}. For $\al=0$, we easily have
\begin{equation}
   \int_\Omega  \eta  \cdot   (G^1-\mu\nabla G^2)+\int_\Sigma  \eta \cdot  G^3
 \ls \sqrt{ \sd{3} } \sqrt{ \se{3}\sd{3}  } .
\end{equation}
The pressure term is needed much more care; by \eqref{Qstr2}, \eqref{Qstr111} and \eqref{Phe_0}, we obtain
\begin{equation}
\begin{split}
\int_\Omega     p   \Phi&= \int_\Omega     p   {\rm div}\Psi  = -\int_\Sigma  \jump{  p}   \Psi_3-\int_\Omega  \nabla  p\cdot   \Psi
\\&\le \abs{\jump{  p}}_0\abs{\Psi_3}_0+\norm{\nabla p}_0\norm{\Psi}_0\ls \sqrt{ \sd{3} } \sqrt{ \se{3}\sd{3}  } .
\end{split}
\end{equation}
We then turn to the case $\al\neq 0$. We first consider the case $n=2N$. Similarly as \eqref{i_de_14}--\eqref{i_de_16},
\begin{equation}
\begin{split}
\int_\Omega   \p^\al \eta \cdot  \p^\al (G^1-\mu\nabla G^2)+\int_\Sigma \p^\al \eta \cdot \p^\al G^3     &\ls  \norm{\p^{\alpha}  \eta}_{1} \left( \norm{ G^1 }_{4N-1}+ \norm{   G^2 }_{4N}+ \abs{   G^3 }_{4N-1/2}\right)
 \\&\ls \sqrt{ \f } \sqrt{ \se{N+2}(\sd{2N}+\mathcal{J}_{2N} +\f)  }.
 \end{split}
\end{equation}
Similarly as \eqref{i_de_15}, by using instead \eqref{p_G_e_001''},
\begin{equation}
  \int_\Omega  \p^\al p \p^\al \Phi
\le \norm{\nabla_\ast p}_{4N-1}  \norm{    \Phi }_{4N}
 \ls \sqrt{ \mathcal{J}_{2N} }\sqrt{ \se{N+2}(\sd{2N}+\mathcal{J}_{2N} +\f)  } .
\end{equation}
We now consider the case $n=N+2$. Similarly as \eqref{i_de_4'}--\eqref{i_de_16;'},
\begin{equation}
\begin{split}
\int_\Omega   \p^\al \eta \cdot  \p^\al (G^1-\mu\nabla G^2)+\int_\Sigma \p^\al \eta \cdot \p^\al G^3     &\ls  \norm{  \eta}_{2(N+2)} \left( \norm{ G^1 }_{4N }+ \norm{   G^2 }_{4N+1}+ \abs{   G^3 }_{4N+1/2}\right)
 \\&\ls \sqrt{ \sd{N+2}} \sqrt{ \se{2N}\sd{N+2} }.
 \end{split}
\end{equation}
For the pressure term, similarly as \eqref{i_de_5'0}--\eqref{i_de_5'}, by using instead \eqref{p_G_e_002''}, we obtain
\begin{equation}
  \int_\Omega  \p^\al p \p^\al G^2
\ls \norm{ \nabla_\ast p}_{2(N+2)-2}  \norm{\Phi }_{2(N+2)+1}
 \ls \sqrt{ \sd{N+2} }\sqrt{ \se{2N}\sd{N+2}}  .
\end{equation}

Consequently, \eqref{p_u_e_00'132} and \eqref{p_u_e_00'12132} follow by collecting the estimates, summing over such $\alpha$.
\end{proof}

\subsubsection{Conclusion}

Note that the previous energy evolution is derived for any $\bar B$. Now assuming $\abs{\bar B_3}>\mc$, we can combine these estimates to conclude the following proposition.
\begin{prop}\label{conclusion}
Assume $\abs{\bar B_3}>\mc$. For $n=N+2$ or $2N$, there exist an energy $\seb{n}$ with
\begin{equation}\label{ebarn}
 \seb{n}\simeq \sum_{j=1}^{n}\left(\norm{\dt^j u}_{0}^2+\norm{(\bar B\cdot\nabla)\dt^j\eta}_{0}^2\right)
+\norm{\nabla_\ast u}_{0,2n-1 }^2+\norm{(\bar B\cdot\nabla) \nabla_\ast \eta}_{0,2n-1 }^2+ \norm{\eta}_{1,2n}^2
\end{equation}
and the corresponding dissipation
\begin{equation}
 \sdb{n}:=\sum_{j=0}^{n}\norm{    \dt^j u}_{1}^2+\norm{\nabla_\ast u}_{1,2n-1 }^2+\norm{(\bar B\cdot \nabla)  \eta }_{0,2n}^2
\end{equation}
such that
\begin{equation}\label{conclusion2N}
 \frac{d}{dt}\left(\bar{\mathcal{E}}_{2N}-  \mathfrak{B}_{2N}\right)+\bar{\mathcal{D}}_{2N}
 \ls \sqrt{ \se{N+2}  }( \sd{2N} + \mathcal{J}_{2N} +\f)
\end{equation}
and
\begin{equation}\label{conclusionN+2}
 \frac{d}{dt}\left(\bar{\mathcal{E}}_{N+2}-  \mathfrak{B}_{N+2}\right)+\bar{\mathcal{D}}_{N+2}
 \ls \sqrt{ \se{2N}  } \sd{N+2}.
\end{equation}
\end{prop}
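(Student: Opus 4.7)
The plan is to define $\seb{n}$ and $\sdb{n}$ as carefully weighted linear combinations of the three pairs of functionals introduced in Propositions \ref{i_temporal_evolution  N}, \ref{p_upper_evolution  N12} and \ref{p_upper_evolution  N'132}. With a small universal constant $\kappa>0$ to be fixed later, I would set
\begin{equation*}
\seb{n}:=\bar{\mathcal{E}}_n^{t}+\bar{\mathcal{E}}_n^{\ast}+\kappa\bar{\mathcal{E}}_n^{\sharp}+\kappa\!\!\sum_{\substack{\al\in\mathbb{N}^2\\|\al|\le 2n}}\!\int_\Omega\rho\,\p^\al u\cdot\p^\al\eta,\qquad \sdb{n}:=\bar{\mathcal{D}}_n^{t}+\bar{\mathcal{D}}_n^{\ast}+\kappa\bar{\mathcal{D}}_n^{\sharp}.
\end{equation*}
Summing \eqref{i_te_0}, \eqref{p_u_e_00}, and $\kappa$ times \eqref{p_u_e_00'132} for the case $n=2N$, and analogously \eqref{i_te_0}, \eqref{p_u_e_00''}, $\kappa$ times \eqref{p_u_e_00'12132} for $n=N+2$, immediately yields \eqref{conclusion2N} and \eqref{conclusionN+2}, provided $\kappa$ is small enough to absorb the $\kappa(\bar{\mathcal{D}}_n^{t}+\bar{\mathcal{D}}_n^{\ast})$ remainder produced by the recovery inequality into the dissipation on the left.

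The substantive step is verifying the equivalence \eqref{ebarn}; this is where the hypothesis $\abs{\bar B_3}>\mc$ finally enters. The indefinite chunks
\[
\int_\Omega\abs{(\bar B\cdot\nabla)f}^2-\int_\Sigma\rj g\,f_3^2
\]
appearing in $\bar{\mathcal{E}}_n^{t}$ with $f=\dt^j\eta$ and in $\bar{\mathcal{E}}_n^{\ast}$ with $f=\p^\al\eta$ (for horizontal $\al$ with $\al_1+\al_2\ge 1$) each involve $f\in H^1_0(\Omega)$: for $j\ge 1$, $\dt^j\eta=\dt^{j-1}u$ vanishes on $\Sigma_{m,\ell}$; for $\eta$ itself and its horizontal derivatives the vanishing is supplied by \eqref{imp1}. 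Lemma \ref{signsign}$(1)$ then gives the coercive lower bound $\gss(1-\mc^2/\bar B_3^2)\pns{(\bar B\cdot\nabla)f}{}$, which furnishes the $\pns{(\bar B\cdot\nabla)\dt^j\eta}{}$ and $\pns{(\bar B\cdot\nabla)\nabla_\ast\eta}{0,2n-1}$ pieces of \eqref{ebarn}. The $\pns{\eta}{1,2n}$ piece is delivered by $\kappa\bar{\mathcal{E}}_n^{\sharp}$ upon applying Korn's inequality to each $\p^\al\eta$ on $\Omega_\pm$ separately, the required boundary and jump conditions being ensured by \eqref{imp1}. Finally, the sign-indefinite cross term $\kappa\sum_\al\int\rho\,\p^\al u\cdot\p^\al\eta$ is estimated by Cauchy-Schwarz as a small multiple of $\pns{\eta}{1,2n}$ plus terms already present in the positive parts, and absorbed for $\kappa$ sufficiently small.

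For the dissipation, $\bar{\mathcal{D}}_n^{t}+\bar{\mathcal{D}}_n^{\ast}$ controls $\sum_{j=0}^n\int_\Omega|\sg\dt^j u|^2$ together with $\int_\Omega|\sg\p^\al u|^2$ for horizontal $\al$ with $1\le|\al|\le 2n$; Korn's inequality, applicable because $u$ vanishes on $\Sigma_{m,\ell}$, upgrades these to the full $H^1$ norms $\sum_{j=0}^n\pns{\dt^j u}{1}+\pns{\nabla_\ast u}{1,2n-1}$. The last piece $\pns{(\bar B\cdot\nabla)\eta}{0,2n}$ drops out of $\kappa\bar{\mathcal{D}}_n^{\sharp}$ after one more invocation of the Lemma \ref{signsign}$(1)$ coercivity bound.

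I expect the main subtlety to be twofold. First, the threshold $\abs{\bar B_3}>\mc$ is used in a precisely sharp way through Lemma \ref{signsign}: it is exactly what guarantees that the magnetic energy dominates the destabilizing gravitational contribution uniformly, and the resulting factor $1-\mc^2/\bar B_3^2$ determines the hidden constants in the equivalence. Second, one must check that the boundary correction $\mathfrak{B}_n$ introduced in \eqref{bnbn} does not corrupt the equivalence: Lemma \ref{p_F_estimates} gives $|\mathfrak{B}_n|\ls(\norm{\nabla\dt^{n-1}p}_0+\abs{\jump{\dt^{n-1}p}}_0)\norm{Q^{2,n}}_2\ls\sqrt{\se{N+2}}\,\se{n}$, so in the small-energy regime $\se{N+2}\le\delta^2$ it is a genuine perturbation that is harmlessly absorbed into $\seb{n}$ when \eqref{conclusion2N}--\eqref{conclusionN+2} are later integrated in time.
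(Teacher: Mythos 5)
Your proposal is correct and follows essentially the same route as the paper's proof: both form the $\kappa$-weighted combination $\bar{\mathcal{E}}_n^t+\bar{\mathcal{E}}_n^\ast+\kappa\bigl(\bar{\mathcal{E}}_n^\sharp+\sum_\alpha\int_\Omega\rho\,\p^\alpha u\cdot\p^\alpha\eta\bigr)$, absorb the spurious $\kappa(\bar{\mathcal{D}}_n^t+\bar{\mathcal{D}}_n^\ast)$ for $\kappa$ small, and then invoke Lemma \ref{signsign}(1) together with Korn's and Poincar\'e's inequalities to obtain the dissipation comparison $\bar{\mathcal{D}}_n^t+\bar{\mathcal{D}}_n^\ast+\kappa\bar{\mathcal{D}}_n^\sharp\simeq\sdb{n}$ and the energy equivalence \eqref{ebarn}. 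Your treatment of $\mathfrak{B}_n$ also agrees with the paper: it is left subtracted inside the time derivative rather than built into $\seb{n}$, and only later controlled via $|\mathfrak{B}_n|\ls\sqrt{\se{N+2}}\se{n}$.
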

\begin{proof}
We let $n$ denote either $2N$ or $N+2$ through the proof, and we use the compact notation
\begin{equation}\label{zn}
\mathcal{Z}_n \text{ with }\mathcal{Z}_{2N}:=\sqrt{ \se{N+2}  }(\sd{ 2N}+  \mathcal{J}_{2N} +\f)\text{ and }\mathcal{Z}_{N+2}:=\sqrt{ \se{2N}  }\sd{ N+2} .
\end{equation}

We deduce from Propositions \ref{i_temporal_evolution  N}, \ref{p_upper_evolution  N12} and \ref{p_upper_evolution  N'132} that for any $\epsilon>0$,
\begin{equation}
\begin{split}
 & \frac{d}{dt}\left( \bar{\mathcal{E}}_{n}^t-  \mathfrak{B}_n+ \bar{\mathcal{E}}_{n}^*+\epsilon\left( \bar{\mathcal{E}}_{n}^\sharp+ \sum_{\substack{\al\in \mathbb{N}^2\\  |\al|\le 2n}}\int_\Omega \rho\partial^\alpha u  \cdot \partial^\alpha  \eta\right) \right) +  \bar{\mathcal{D}}_{n}^t+\bar{\mathcal{D}}_{n}^*+\epsilon \bar{\mathcal{D}}_{n}^\sharp
 \\&\quad \ls     \mathcal{Z}_n +\epsilon(\bar{\mathcal{D}}_{n}^t+\bar{\mathcal{D}}_{n}^{\ast} ).
  \end{split}
\end{equation}
Then for sufficiently small $ \epsilon>0$, the last two terms on the right hand side can be absorbed by the left hand side; if we define
\begin{equation}
 \bar{\mathcal{E}}_{n}:= \bar{\mathcal{E}}_{n}^t + \bar{\mathcal{E}}_{n}^*+\epsilon\left( \bar{\mathcal{E}}_{n}^\sharp+ \sum_{\substack{\al\in \mathbb{N}^2\\  |\al|\le 2n}}\int_\Omega \rho\partial^\alpha u  \cdot \partial^\alpha  \eta\right),
\end{equation}
then we conclude that
\begin{equation} \label{llaf}
\frac{d}{dt}\left( \bar{\mathcal{E}}_{n} -  \mathfrak{B}_n  \right) +  \bar{\mathcal{D}}_{n}^t+\bar{\mathcal{D}}_{n}^*+\epsilon \bar{\mathcal{D}}_{n}^\sharp
  \ls     \mathcal{Z}_n .
\end{equation}
Applying Lemma \ref{signsign} $(1)$ for $\abs{\bar B_3}>\mc$, together with Poincar\'e's and Korn's inequalities, we have that $\bar{\mathcal{D}}_{n}^t+\bar{\mathcal{D}}_{n}^*+\epsilon \bar{\mathcal{D}}_{n}^\sharp\simeq \bar{\mathcal{D}}_{n}$ and that $\bar{\mathcal{E}}_{n}$ satisfies \eqref{ebarn} by further choosing $\epsilon$ smaller if necessary. Consequently, \eqref{llaf} implies \eqref{conclusion2N} for $n=2N$ and \eqref{conclusionN+2} for $n=N+2$ by recalling \eqref{zn}.
\end{proof}
\subsection{Estimates via Stokes regularity}
Now we combine the previous estimates with the elliptic regularity theory of certain Stokes problems to improve the energy-dissipation estimates.

\subsubsection{Dissipation improvement}

We first consider the improvement of the dissipation estimates; the energy estimates of $\eta$ will be improved along the way.
\begin{prop}\label{p_upper_evolution  N'}
For $n\ge 3$, there exists an energy $ \mathrm{E}_n \simeq\norm{\eta}_{2n}^2$ such that
\begin{equation}\label{d2n}
\frac{d}{dt}\mathrm{E}_{2N}+\mathcal{D}_{2N}\ls  { \se{N+2}  }(\sd{2N} +  \mathcal{J}_{2N} +\f)+\bar{\mathcal{D}}_{2N}
\end{equation}
and
\begin{equation}\label{d2n+2}
\frac{d}{dt}\mathrm{E}_{N+2} +\mathcal{D}_{N+2}\ls { \se{2N}   } \sd{N+2}+\bar{\mathcal{D}}_{N+2} .
\end{equation}
\end{prop}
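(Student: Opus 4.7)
The plan is to split the improvement from $\sdb{n}$ to $\sd{n}$ into the time-derivative pieces (indices $j\ge 1$) and the non-time-derivative pieces, and obtain each by a different Stokes-regularity argument. Throughout let $n$ stand for either $N+2$ or $2N$, and let $\mathcal{Z}_n$ denote the target nonlinear right-hand side, i.e.\ $\mathcal{Z}_{2N}:=\se{N+2}(\sd{2N}+\mathcal{J}_{2N}+\f)$ and $\mathcal{Z}_{N+2}:=\se{2N}\sd{N+2}$.

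For the time-derivative pieces, observe that for each $j=1,\ldots,n$ the pair $(\dt^j u,\dt^j p)$ satisfies the two-phase stationary Stokes system \eqref{linear_geometric} with forcing $-\rho\dt^{j+1}u+(\bar B\cdot\nabla)^2\dt^j\eta+F^{1,j}$, divergence $F^{2,j}$, jump $F^{3,j}+\rj g\dt^j\eta_3 e_3+\jump{\bar B_3(\bar B\cdot\nabla)\dt^j\eta}$, and Dirichlet data on $\Sigma_{m,\ell}$. Since $\dt\eta=u$ gives $\dt^j\eta=\dt^{j-1}u$ for $j\ge 1$, the magnetic forcing reduces to lower-order time derivatives of $u$ already present in $\sdb{n}$. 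A downward induction on $j$ using the two-phase Stokes regularity (from the appendix) together with Lemma \ref{p_F_estimates} should yield
$$\sum_{j=1}^{n}\norm{\dt^j u}_{2n-2j+1}^2+\sum_{j=1}^{n-1}\left(\norm{\nabla\dt^j p}_{2n-2j-1}^2+\abs{\jump{\dt^j p}}_{2n-2j-1/2}^2\right)\ls\sdb{n}+\mathcal{Z}_n.$$

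For the non-time-derivative pieces the same strategy applied to $(u,p)$ fails, since $(\bar B\cdot\nabla)^2\eta$ as Stokes forcing would require $\norm{\eta}_{2n}$, which is not yet available. Following the strategy outlined in the introduction I would introduce the shifted velocity $w_\pm:=u_\pm+\tfrac{\bar B_3^2}{\mu_\pm}\eta_\pm$, motivated by the decomposition
$$(\bar B\cdot\nabla)^2\eta=\bar B_3^2\Delta\eta-\bar B_3^2\Delta_\ast\eta+(\bar B_\ast\cdot\nabla_\ast)^2\eta+2\bar B_3\partial_3(\bar B_\ast\cdot\nabla_\ast)\eta,$$
so that the momentum equation becomes a Stokes system for $(w,p)$ whose forcing is $-\rho\dt u$ plus purely horizontally-differentiated $\eta$ terms plus $G^1$, with divergence $\tfrac{\bar B_3^2}{\mu}\diverge\eta+G^2$. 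Since $\mu_+\neq\mu_-$ in general, the induced jump $\jump{w}=\bar B_3^2\eta|_\Sigma(1/\mu_+-1/\mu_-)$ is nonzero, ruling out two-phase Stokes; instead I would apply one-phase Dirichlet Stokes regularity on each $\Omega_\pm$ separately, with boundary data $w|_{\Sigma_{m,\ell}}=0$ and $w_\pm|_\Sigma=(u+\tfrac{\bar B_3^2}{\mu_\pm}\eta)|_\Sigma$. These traces involve only horizontal/tangential derivatives of $u$ and $\eta$ and are thus controlled by $\sdb{n}$ (via $\norm{\nabla_\ast u}_{1,2n-1}$) and $\seb{n}$ (via $\norm{\eta}_{1,2n}$). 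Combined with Lemma \ref{p_G_estimates}, this should produce
$$\norm{w}_{2n}^2+\norm{\nabla p}_{2n-2}^2+\abs{\jump p}_{2n-3/2}^2\ls\norm{\dt u}_{2n-2}^2+\norm{\eta}_{1,2n-1}^2+\sdb{n}+\mathcal{Z}_n.$$
The hard part is to control the $\norm{\eta}_{1,2n-1}^2$ contribution on the right, which mixes one vertical derivative with $2n-1$ horizontal derivatives and is not in $\sdb{n}$; I expect this to be the main technical obstacle, requiring an interwinding between vertical and horizontal derivatives that uses the bulk equation itself to trade $\partial_3\eta$ against quantities already controlled.

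Finally, to recover the energy estimate for $\eta$ I would exploit the algebraic identity, valid because $\dt\eta=u$,
$$|w_\pm|^2=|u_\pm|^2+\tfrac{\bar B_3^2}{\mu_\pm}\dtt|\eta_\pm|^2+\tfrac{\bar B_3^4}{\mu_\pm^2}|\eta_\pm|^2.$$
Applying $\p^\al$ with $|\al|\le 2n$ and integrating over $\Omega$ then gives
$$\norm{w}_{2n}^2\simeq\norm{u}_{2n}^2+\dtt\mathrm{E}_n+\mathrm{E}_n,\quad\mathrm{E}_n:=\tfrac12\sum_{|\al|\le2n}\int_\Omega\tfrac{\bar B_3^2}{\mu}|\p^\al\eta|^2\simeq\norm{\eta}_{2n}^2.$$
Inserting this into the Stokes bound of the previous paragraph, absorbing $\norm{\dt u}_{2n-2}^2$ into the time-derivative estimate from the first step, and handling $\norm{\eta}_{1,2n-1}^2$ by Sobolev interpolation against $\mathrm{E}_n$ (with a small prefactor) and $\seb{n}$, yields \eqref{d2n} for $n=2N$ and \eqref{d2n+2} for $n=N+2$.
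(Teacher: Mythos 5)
Your overall strategy matches the paper's: use two-phase Stokes regularity for the time-differentiated system, and introduce $w = u + \tfrac{\bar B_3^2}{\mu}\eta$ with one-phase Dirichlet Stokes regularity on $\Omega_\pm$ separately for the zero-time-derivative pieces, then recover $\norm{\eta}^2_{2n}$ from the algebraic identity $\norm{\p^\al w}_0^2 = \norm{\p^\al u}_0^2 + \tfrac{\bar B_3^2}{\mu}\dtt\norm{\p^\al\eta}_0^2 + \tfrac{\bar B_3^4}{\mu^2}\norm{\p^\al\eta}_0^2$. You also correctly spotted why the two-phase estimate cannot be used for $w$ (the viscosities differ, so $\jump{w}\neq 0$). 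However, there are two genuine gaps.

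First, the one-shot Stokes bound you claim,
\[
\norm{w}_{2n}^2+\norm{\nabla p}_{2n-2}^2 \ls \norm{\dt u}_{2n-2}^2+\norm{\eta}_{1,2n-1}^2+\sdb{n}+\mathcal{Z}_n,
\]
is not what a single application of Lemma \ref{i_linear_elliptic2} produces. The forcing on the right side of \eqref{stokesp2} contains $2\bar B_3\p_3(\bar B_\ast\cdot\nabla_\ast)\eta$, and when you ask for $\norm{w}_{2n}$ you must put this forcing in $H^{2n-2}$, which costs $\norm{\p_3\eta}_{2n-2}^2\simeq\norm{\eta}_{2n-1}^2$: circular. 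The paper's fix is the iterated estimate \eqref{hihoh}: for each vertical-derivative level $j=2,\dots,2n$, apply $\p^\al$ with $\al\in\mathbb{N}^2$, $|\al|\le 2n-j$, use the elliptic estimate of order $j$, and observe that the forcing then requires only $\norm{\eta}_{j-1,2n-j+1}^2$ (one fewer vertical derivative, one more horizontal). Multiplying by $\epsilon^j$ and summing over $j$ lets each right-side term with $j\ge 2$ be absorbed by the left-side term at level $j-1$, for $\epsilon$ small; only the level-$1$ remainder $\norm{\eta}_{1,2n-1}^2$ survives. This defines $\mathrm{E}_n=\sum_{j=2}^{2n}\epsilon^j\norm{\eta}_{j,2n-j}^2$. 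You wave at ``interwinding between vertical and horizontal derivatives'' but never name this $\epsilon^j$-weighted device, and without it the bound you posit simply cannot be reached.

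Second, your diagnosis of the residual obstacle is wrong, and your proposed fix would break the estimate. You say $\norm{\eta}_{1,2n-1}^2$ ``is not in $\sdb{n}$'' and propose to interpolate it against $\mathrm{E}_n$ and $\seb{n}$. But the right-hand sides of \eqref{d2n} and \eqref{d2n+2} do \emph{not} contain $\seb{n}$ — only $\sdb{n}$ and the small nonlinear remainder. Introducing $\seb{n}$ would produce an uncontrolled contribution and wreck the synthesis step. In fact, under the standing assumption $\bar B_3\neq 0$ the term $\norm{\eta}_{1,2n-1}^2$ \emph{is} controlled by $\sdb{n}$: writing $\bar B_3\p_3\eta=(\bar B\cdot\nabla)\eta-(\bar B_\ast\cdot\nabla_\ast)\eta$ and then applying the Poincar\'e-type inequality \eqref{poB} (to control $\norm{\eta}_{0,2n}^2$ by $\norm{(\bar B\cdot\nabla)\eta}_{0,2n}^2$) gives
\[
\norm{\eta}_{1,2n-1}^2\ls\norm{\eta}_{0,2n}^2+\norm{(\bar B\cdot\nabla)\eta}_{0,2n-1}^2\ls\norm{(\bar B\cdot\nabla)\eta}_{0,2n}^2\ls\sdb{n},
\]
as in \eqref{eses2290}. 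No interpolation is needed. Two smaller points: in Step 1 you invoke \eqref{linear_geometric}, but $\diva S_\a$ is variable-coefficient and Lemma \ref{cStheorem} is for the constant-coefficient problem, so you must first pass to the perturbed form \eqref{perturb} (and use Lemma \ref{p_G_estimates} rather than Lemma \ref{p_F_estimates}, which only bounds $F^{1,j}$ in $L^2$); and the $\abs{\jump{p}}_{2n-3/2}^2$ control does not come from the one-phase Dirichlet Stokes estimate (which sees the pressure only up to a constant per subdomain) but must be read off separately from the dynamic jump condition \eqref{pb2}.
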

\begin{proof}
We let $n$ denote either $2N$ or $N+2$ throughout the proof, and we compactly write
\begin{equation}\label{n1}
 \mathcal{Y}_n =  \ns{ \bar{\nab}^{2n-1}_0 G^1}_{0}  +  \ns{ \bar{\nab}^{2n-1}_0  G^2}_{1}+  \as{ \bar{\nab}_{\ast0}^{\ 2n-1}  G^3}_{1/2} +  \ns{   \Phi}_{2n-1}  .
\end{equation}

We divide the proof into several steps.

{\bf Step 1: control terms with time derivatives}

Applying the time derivatives $\dt^{j},\ j=1,\dots,n-1$ to the equations \eqref{perturb}, we find that
\begin{equation}\label{stokesp1}
\begin{cases}
-\mu\Delta \dt^{j} u+\nabla \dt^{j} p =-\rho\dt^{j+1} u+(\bar B\cdot\nabla)^2 \dt^{j} \eta+\dt^{j} G^1& \text{in }
\Omega
\\ \diverge \dt^{j} u=\dt^{j} G^2 & \text{in }
\Omega
\\ \Lbrack \dt^j u\Rbrack =0,\ \Lbrack \dt^j pI-\mu\mathbb{D}(\dt^j u)\Rbrack e_3=\jump{\bar B_3(\bar B\cdot\nabla)\dt^{j} \eta}+\rj g\dt^{j} \eta_3 e_3+\dt^j G^3 &\hbox{on }\Sigma
\\ \dt^{j} u=0 &\text{on }\Sigma_{m,\ell}.
\end{cases}
\end{equation}
Applying the elliptic estimates \eqref{cSresult} of Lemma \ref{cStheorem} with $r=2n-2j+1 \ge 3$ to the problem \eqref{stokesp1} for $j=1,\dots,n-1$, by the trace theory, we obtain
\begin{equation}\label{ffes1}
\begin{split}
 &\norm{\dt^j  u  }_{2n-2j+1}^2 + \norm{\nab \dt^j  p  }_{2n-2j-1}^2+ \as{  \jump{\dt^j  p } }_{2n-2j-1/2}
 \\  &\quad\ls
\norm{\dt^{j+1} u   }_{2n-2j-1 }^2+\norm{\nabla^2 \dt^{j }\eta}_{2n-2j-1 }^2+ \norm{ \dt^j G^1   }_{2n-2j-1}^2
+ \norm{\dt^j  G^2  }_{2n-2j}^2
\\  &\qquad
+\as{\nabla\dt^{j} \eta    }_{2n-2j-1/2}+\as{\dt^{j}\eta_3   }_{2n-2j-1/2}+ \as{ \dt^j G^3   }_{2n-2j-1/2}
\\&\quad\ls   \norm{\dt^{j+1} u   }_{2n-2(j+1)+1 }^2 +\norm{\dt^{j }\eta  }_{2n-2 j +1 }^2 +  \y_n.
\end{split}
\end{equation}
A simple induction on \eqref{ffes1} yields, since $\dt \eta=u$
\begin{equation}\label{eses110}
\begin{split}
 & \sum_{j=1}^{n}\norm{ \dt^j u  }_{2n-2j+1}^2+  \sum_{j=1}^{n-1}\norm{\nab \dt^j  p  }_{2n-2j-1}^2+\sum_{j=1}^{n-1}\as{ \jump{ \dt^j  p } }_{2n-2j-1/2}
 \\&\quad \ls \ns{\dt^{n} u}_{1}+ \sum_{j=1}^{n-1}\norm{\dt^{j }\eta  }_{2n-2 j +1 }^2    +\y_n
\\& \quad  =  \ns{\dt^{n} u}_{1}+ \sum_{j=1}^{n-1}\norm{\dt^{j-1} u   }_{2n-2(j-1)-1 }^2 +\y_n
\\& \quad = \ns{\dt^{n} u}_{1}+ \sum_{j=0}^{n-2}\norm{\dt^{j } u   }_{2n-2j-1 }^2     + \y_n.
\end{split}
\end{equation}
Using the Sobolev interpolation and Young's inequality, we may improve \eqref{eses110} to be
\begin{equation}\label{eses11}
\begin{split}
 & \sum_{j=1}^{n}\norm{ \dt^j u  }_{2n-2j+1}^2+  \sum_{j=1}^{n-1}\norm{\nab \dt^j  p  }_{2n-2j-1}^2+\sum_{j=1}^{n-1}\as{  \jump{\dt^j  p } }_{2n-2j-1/2}
 \\&\quad \ls   \ns{\dt^{n} u}_{1}+  \norm{  u   }_{2n-1 }^2+ \sum_{j=1}^{n-2}\norm{\dt^{j } u   }_{0 }^2     + \y_n
    \\&\quad\ls\norm{  u   }_{2n-1 }^2+\bar{\mathcal{D}}_{n}  + \y_n .
\end{split}
\end{equation}

{\bf  Step 2: control terms without time derivatives}

Note that we can not use the Stokes problem \eqref{stokesp1} with $j=0$ as above since we have not controlled $(\bar B\cdot\nabla)^2\eta$ yet. Our observation is that we have certain control of the horizontal derivatives of $\eta$ in $\bar{\mathcal{D}}_n^\sharp$; we may write $(\bar B\cdot \nabla)^2\eta=\bar B_3^2\Delta\eta-\bar B_3^2\Delta_\ast\eta+(\bar B_\ast\cdot\nabla_\ast)^2\eta+2  (\bar B_3\pa_3) (\bar B_\ast\cdot\nabla_\ast)\eta$. This motivates us to introduce the quantity $w=  u+  \frac{ \bar B_3^2}{\mu} \eta$ and we deduce from \eqref{perturb} that, using \eqref{phhicon},
\begin{equation}\label{stokesp2}
\begin{cases}
-\mu\Delta w+\nabla p= -\bar B_3^2\Delta_\ast\eta+(\bar B_\ast\cdot\nabla_\ast)^2\eta+2  (\bar B_3\pa_3) (\bar B_\ast\cdot\nabla_\ast)\eta-\rho \partial_t u+G^1& \text{in }
\Omega
\\ \diverge w=  G^2+ \frac{ \bar B_3^2}{\mu}  \Phi& \text{in }
\Omega
\\ w=0 &\text{on }\Sigma_{m,\ell}.
\end{cases}
\end{equation}
However, since the two viscosities $\mu_\pm$ generally are different and the difference would prevent us from obtaining a ``good" jump boundary conditions for $w$ on $\Sigma$. Our second observation is that we can get higher regularity estimates of $u$ and $\eta$ and hence $w$ on the boundary $\Sigma$ from $\bar{\mathcal{D}}_{n}$. Indeed, since $\Sigma$ is flat, we may use the definition of Sobolev norms on $\Sigma$, the trace theory and Korn's inequality to obtain, by the definitions of $\sdb{n}$,
\begin{equation}\label{boure1}
\as{  u}_{2n+1/2}   \lesssim
 \ns{  u   }_{1,2n}
 \lesssim  \bar{\mathcal{D}}_{n} .
\end{equation}
Similarly, we may use instead Poincar\'e's inequality to have
\begin{equation}\label{boure2}
\as{ \eta}_{2n}  \ls \ns{(\bar B\cdot \nabla)   \eta   }_{0,2n} \lesssim \bar{\mathcal{D}}_{n}.
\end{equation}
This motivates us to use the elliptic estimates with Dirichlet boundary conditions.

For $j=2,\dots,2n$, applying $\p^\al$ with $\al\in \mathbb{N}^2$ so that $|\al|\le 2n-j$ to the problem \eqref{stokesp2}, and then applying the elliptic estimates \eqref{stokes es} of Lemma \ref{i_linear_elliptic2} with $j\ge 2$ to the resulting problems in $\Omega_+$ and $\Omega_-$ separately and using \eqref{boure1} and \eqref{boure2}, summing over such $\al$, we obtain
\begin{equation}\label{hihoh}
\begin{split}
&\norm{w}_{j,2n-j}^2+\norm{\nabla p}_{j-2,2n-j}^2
\\&\quad\ls \norm{ \nabla_\ast^2\eta}_{j-2,2n-j }^2+\norm{\pa_3\nabla_\ast\eta}_{j-2,2n-j}^2+\norm{\p_t u}_{j-2,2n-j}^2
\\&\qquad+\norm{G^1}_{j-2,2n-j}^2+\norm{G^2}_{j-1,2n-j}^2+\norm{\Phi}_{j-1,2n-j}^2+\as{w}_{ j-1/2+2n-j}
\\&\quad\ls  \norm{\eta}_{j-1,2n-j+1}^2+\norm{\p_t u}_{2n-2}^2 +\norm{G^1}_{2n-2}^2+\norm{G^2}_{2n-1}^2
 +\norm{\Phi}_{2n-1}^2
+\as{w}_{  2n-1/2}
\\&\quad\ls  \norm{ \eta}_{j-1,2n-j+1}^2+\norm{\p_t u}_{2n-2}^2 +\as{u}_{  2n-1/2} +\as{\eta}_{  2n-1/2}  +\y_n.
\end{split}
\end{equation}
It is a key to note that
\begin{equation}
 \norm{w}_{j,2n-j}^2 = \norm{     u+     \frac{  \bar B_3^2}{\mu} \eta  }_{j,2n-j}^2
 = \norm{      u  }_{j,2n-j}^2+ \frac{ \bar B_3^4}{\mu^2}\norm{       \eta  }_{j,2n-j}^2+ \frac{  \bar B_3^2}{\mu} \frac{d}{dt}\norm{     \eta  }_{j,2n-j}^2.
\end{equation}
Therefore, we deduce that for $j=2,\dots, n$,
\begin{equation}\label{llalfl}
\begin{split}
& \frac{d}{dt}  \norm{     \eta  }_{j,2n-j}^2+ \norm{        \eta  }_{j,2n-j}^2+\norm{      u  }_{j,2n-j}^2+\norm{\nabla p}_{j-2,2n-j}^2
\\&\quad\ls \norm{ \eta}_{j-1,2n-(j-1)}^2+\norm{\p_t u}_{2n-2}^2+\as{u}_{  2n-1/2} +\as{\eta}_{  2n-1/2}   +\y_n.
\end{split}
\end{equation}
Multiplying \eqref{llalfl} by $\epsilon^{j}$ with $0<\epsilon\ll 1$ and summing over  $ j=2,\dots, n$, we obtain
\begin{equation}\label{q1345}
\begin{split}
& \frac{d}{dt}\sum_{j=2}^{2n}\epsilon^{j }\norm{     \eta  }_{j,2n-j}^2+\sum_{j=2}^{2n}\epsilon^{j }\left(\norm{        \eta  }_{j,2n-j}^2+\norm{      u  }_{j,2n-j}^2+\norm{\nabla p}_{j-2,2n-j}^2\right)
\\&\quad\ls \sum_{j=2}^{2n}\epsilon^{j } \norm{ \eta}_{j-1,2n-(j-1)}^2+\norm{\p_t u}_{2n-2}^2+\as{u}_{  2n-1/2} +\as{\eta}_{  2n-1/2}     +\y_n
\\&\quad=\epsilon \sum_{j=2}^{2n}\epsilon^{j-1} \norm{ \eta}_{j-1,2n-(j-1)}^2+\norm{\p_t u}_{2n-2}^2+\as{u}_{  2n-1/2} +\as{\eta}_{  2n-1/2}    +\y_n
\\&\quad=\epsilon \sum_{j=1}^{2n-1}\epsilon^{j } \norm{ \eta}_{j ,2n- j }^2+\norm{\p_t u}_{2n-2}^2+\as{u}_{  2n-1/2} +\as{\eta}_{  2n-1/2}     +\y_n.
\end{split}
\end{equation}
Hence if we define
\begin{equation}
\mathrm{E}_n:=\sum_{j=2}^{2n}\epsilon^{j}\norm{     \eta  }_{j,2n-j}^2,
\end{equation}
then $\mathrm{E}_n\simeq\norm{\eta}_{2n}^2$. For sufficiently small $\epsilon>0$, \eqref{q1345} implies in particular that
\begin{equation}
\begin{split}
& \frac{d}{dt}\mathrm{E}_n
+ \norm{    \eta  }_{2n}^2+\norm{    u  }_{2n}^2+\norm{\nabla p}_{2n-2}^2
\\&\quad \ls \norm{ \eta}_{1,2n-1}^2+\norm{\p_t u}_{2n-2}^2  +\as{u}_{  2n-1/2} +\as{\eta}_{  2n-1/2} +\y_n.
\end{split}
\end{equation}
Note then that we can also get the boundary regularity of $\jump{p}$. For this, we use the third component of the dynamic boundary conditions in \eqref{perturb}:
\begin{equation} \label{pb2}
\Lbrack   p \Rbrack  =2\Lbrack   \mu\p_3u_3\Rbrack+\jump{ \bar B_3  (\bar B\cdot\nabla) \eta_3}+ \rj g  \eta_3 + G^3_{3}  \text{ on } \Sigma.
\end{equation}
Then we have
\begin{equation}\label{pp2}
\begin{split}
\as{   \jump{  p } }_{2n -3/2}
&\lesssim \as{\partial_3  u_3}_{2n -3/2} + \as{\nabla  \eta }_{2n -3/2}+ \as{ \eta_3}_{2n -3/2}  +\as{  G^3_3}_{2n -3/2}
\\
&\lesssim  \norm{   u_3}_{2n }^2+  \norm{   \eta }_{2n }^2 + \y_n .
\end{split}
\end{equation}
Hence, we obtain
\begin{equation} \label{eses22}
\begin{split}
& \frac{d}{dt}\mathrm{E}_n
+ \norm{    \eta  }_{2n}^2+\norm{    u  }_{2n}^2+\norm{\nabla p}_{2n-2}^2+\as{   \jump{  p } }_{2n -3/2}
\\&\quad \ls \norm{ \eta}_{1,2n-1}^2+\norm{\p_t u}_{2n-2}^2  +\as{u}_{  2n-1/2} +\as{\eta}_{  2n-1/2} +\y_n.
\end{split}
\end{equation}

Finally, note that
\begin{equation} \label{eses2290}
\begin{split}
  \norm{ \eta}_{1,2n-1}^2&\ls \norm{ \eta}_{0,2n}^2 +\norm{\pa_3 \eta}_{0,2n-1}^2
  =\norm{ \eta}_{0,2n}^2 +\frac{1}{\bar B_3^2}\norm{\bar B_3\pa_3 \eta}_{0,2n-1}^2
  \\& = \norm{ \eta}_{0,2n}^2 +\frac{1}{\bar B_3^2}\norm{(\bar B\cdot\nabla) \eta-(\bar B_\ast\cdot\nabla_\ast) \eta}_{0,2n-1}^2
  \\& \ls  \norm{ \eta}_{0,2n}^2 + \norm{(\bar B\cdot\nabla) \eta }_{0,2n-1}^2\ls \bar{\mathcal{D}}_n .
    \end{split}
\end{equation}
This together with \eqref{boure1}--\eqref{boure2} allows us to deduce from \eqref{eses22} that
\begin{equation} \label{eses229012}
\frac{d}{dt}\mathrm{E}_n
+ \norm{    \eta  }_{2n}^2+\norm{    u  }_{2n}^2+\norm{\nabla p}_{2n-2}^2
  \ls  \norm{\p_t u}_{2n-2}^2+\bar{\mathcal{D}}_n    +\y_n.
\end{equation}

{\bf  Step 3: combine the estimates}

We may now combine the estimates \eqref{eses11} and \eqref{eses229012} to get
\begin{equation}\label{ffes2}
\begin{split}
&
\frac{d}{dt}\mathrm{E}_n
+ \norm{    \eta  }_{2n}^2+  \norm{    u  }_{2n}^2+\norm{\nabla p}_{2n-2}^2+\as{   \jump{  p } }_{2n -3/2}
\\&\quad+ \sum_{j=1}^{n}\norm{ \dt^j u  }_{2n-2j+1}^2+  \sum_{j=1}^{n-1}\norm{\nab \dt^j  p  }_{2n-2j-1}^2 +\sum_{j=1}^{n-1}\as{  \jump{ \dt^j p } }_{2n-2j-1/2}
  \\&\quad\ls   \norm{  u   }_{2n -1 }^2   +\bar{\mathcal{D}}_n +\y_n.
  \end{split}
\end{equation}
Using the Sobolev interpolation and Young's inequality, we may improve \eqref{ffes2} to be
\begin{equation}\label{claim2}
\begin{split}
&
\frac{d}{dt}\mathrm{E}_n
+   \norm{    u  }_{2n}^2+\norm{    \eta  }_{2n}^2+\norm{\nabla p}_{2n-2}^2+\as{   \jump{  p } }_{2n -3/2}
\\&\quad + \sum_{j=1}^{n}\norm{ \dt^j u  }_{2n-2j+1}^2+  \sum_{j=1}^{n-1}\norm{\nab \dt^j  p  }_{2n-2j-1}^2+\sum_{j=1}^{n-1}\as{  \jump{\dt^j  p } }_{2n-2j-1/2}
 \\&\quad \ls   \norm{  u   }_{0}^2  +\bar{\mathcal{D}}_n  +\y_n\ls  \bar{\mathcal{D}}_n   +\y_n.
  \end{split}
\end{equation}

Adding $\bar{\mathcal{D}}_n$ to both sides of \eqref{claim2} implies that
\begin{equation}\label{dth_7}
\frac{d}{dt}\mathrm{E}_n
+  \mathcal{D}_n   \ls  \bar{\mathcal{D}}_n  +\y_n.
\end{equation}
Using \eqref{p_G_e_001} and \eqref{p_G_e_001''} to estimate
$\mathcal{Y}_{2N}\lesssim  { \se{N+2}  }(\sd{2N} +  \mathcal{J}_{2N} +\f)$, we obtain \eqref{d2n} from \eqref{dth_7} with $n=2N$; using \eqref{p_G_e_002} and \eqref{p_G_e_002''} to estimate $\mathcal{Y}_{N+2}\lesssim   {\mathcal{E}}_{2N}  {\mathcal{D}}_{N+2} $, we obtain \eqref{d2n+2} from \eqref{dth_7} with $n=N+2$.
\end{proof}

\subsubsection{Energy improvement}

Now we improve the energy estimates.

\begin{prop}\label{e2nic}
For $n=2N$ or $N+2$, it holds that
\begin{equation}\label{e2n}
{\mathcal{E}}_{n}  \lesssim
\bar{\mathcal{E}}_n  +\mathrm{E}_{n}+ \se{N+2}\se{n}.
\end{equation}
\end{prop}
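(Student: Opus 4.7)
The plan is to iterate the two-phase Stokes regularity of Lemma \ref{cStheorem}, applied to the $\partial_t^j$-differentiated linearly perturbed system \eqref{stokesp1}, but now at the \emph{even} level $r = 2n-2j$ rather than the odd level used for the dissipation in Proposition \ref{p_upper_evolution  N'}. Setting
\[
X_j := \ns{\partial_t^j u}_{2n-2j} + \ns{\nabla \partial_t^j p}_{2n-2j-2} + \as{\jump{\partial_t^j p}}_{2n-2j-3/2},
\]
this produces, for $0 \le j \le n-1$,
\[
X_j \ls \ns{\partial_t^{j+1} u}_{2n-2j-2} + \ns{\partial_t^j \eta}_{2n-2j} + \mathcal{Q}_n^j,
\]
where $\mathcal{Q}_n^j$ collects the boundary trace of $\partial_t^j \eta$ (each term bounded by $\ns{\partial_t^j \eta}_{2n-2j}$ via the trace theorem) together with the forcing norms $\ns{\partial_t^j G^1}_{2n-2j-2} + \ns{\partial_t^j G^2}_{2n-2j-1} + \as{\partial_t^j G^3}_{2n-2j-3/2}$. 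Crucially, unlike in the dissipation estimate, no one-phase Dirichlet/$w$-trick is required at $j = 0$: the interior force $\ns{\nabla^2 \eta}_{2n-2}$ coming from $(\bar B \cdot \nabla)^2 \eta$ is directly dominated by $\ns{\eta}_{2n} \ls \mathrm{E}_n$.

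The induction is descending in $j$. For $0 \le j \le n-2$ the first term on the right is $\ns{\partial_t^{j+1} u}_{2n-2(j+1)} \le X_{j+1}$, while at the top $j = n-1$ it reduces to $\ns{\partial_t^n u}_0 \ls \seb{n}$. For the $\ns{\partial_t^j \eta}_{2n-2j}$ piece with $j \ge 1$, I use $\partial_t \eta = u$ to rewrite it as $\ns{\partial_t^{j-1} u}_{2n-2j}$; this norm lies strictly between $\ns{\partial_t^{j-1} u}_0 \ls \seb{n}$ (directly for $j \ge 2$; using Poincar\'e and $u=0$ on $\Sigma_{m,\ell}$ for $j = 1$) and $X_{j-1}$ at spatial order $2n-2j+2$, so Sobolev interpolation plus Young's inequality yields, for every $\varepsilon > 0$,
\[
\ns{\partial_t^{j-1} u}_{2n-2j} \le \varepsilon X_{j-1} + C_\varepsilon \seb{n}.
\]

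Substituting produces the three-term recursion $X_j \ls X_{j+1} + \varepsilon X_{j-1} + \seb{n} + \mathrm{E}_n + \mathcal{Q}_n^j$ for $1 \le j \le n-1$, together with $X_0 \ls X_1 + \mathrm{E}_n + \mathcal{Q}_n^0$ and the terminal bound $\ns{\partial_t^n u}_0 \ls \seb{n}$. A geometric weighted sum $\sum_j \lambda^j X_j$ with $\lambda$ slightly larger than the Stokes constant and $\varepsilon$ small enough that $\varepsilon \lambda^2 \ll 1$ absorbs the shift contributions on both sides and yields $\sum_{j=0}^{n-1} X_j \ls \seb{n} + \mathrm{E}_n + \sum_j \mathcal{Q}_n^j$. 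Since the parabolic count of every summand in $\mathcal{Q}_n^j$ is at most $2n-2$ (one extra spatial derivative being tolerated inside the $H^1$ or $H^{1/2}$ norms), Lemma \ref{p_G_estimates} supplies $\sum_j \mathcal{Q}_n^j \ls \se{N+2}\se{n}$ for both $n = 2N$ and $n = N+2$. Adding the direct bounds $\ns{\partial_t^n u}_0 + \ns{\eta}_{1,2n} \ls \seb{n}$ and $\ns{\eta}_{2n} \ls \mathrm{E}_n$ recovers every summand of $\se{n}$ and gives \eqref{e2n}.

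The main obstacle is the $j \ge 1$ term $\ns{\partial_t^{j-1} u}_{2n-2j}$ that appears in the forcing once $\partial_t \eta = u$ is invoked: it lives only one full spatial derivative below the very quantity $X_{j-1}$ one is trying to control, so a naive absorption would be circular. The interpolation-plus-weighted-summation device is precisely what breaks this circularity and closes the energy recovery.
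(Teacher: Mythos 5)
Your proposal is correct and follows essentially the same route as the paper's proof: apply the two-phase Stokes regularity (Lemma \ref{cStheorem}) to \eqref{stokesp1} at level $r=2n-2j$, use $\partial_t\eta=u$ to downgrade the $\partial_t^j\eta$ forcing, absorb the resulting off-by-two norms by Sobolev interpolation and Young's inequality into the $X_{j-1}$ term and $\bar{\mathcal{E}}_n$, note that the $j=0$ case is handled directly because $\mathrm{E}_n\simeq\ns{\eta}_{2n}$ dominates $\ns{(\bar B\cdot\nabla)^2\eta}_{2n-2}$, and invoke Lemma \ref{p_G_estimates} for the nonlinear forcing. The only cosmetic difference is that you organize the absorption as a geometric weighted sum over a two-sided recursion, whereas the paper first does the descending substitution \eqref{ffes4} and then interpolates all the residual $\ns{\partial_t^j u}_{2n-2j-2}$ terms at once in \eqref{claim2'}; also, for $j=1$ the bound $\ns{u}_0\ls\bar{\mathcal{E}}_n$ comes from the $\bar B$-directional Poincar\'e (Lemma \ref{app pb}) applied to $\norm{(\bar B\cdot\nabla)\dt\eta}_0$ rather than the standard vertical Poincar\'e, a point worth making explicit.
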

\begin{proof}
We let $n$ denote either $2N$ or $N+2$ throughout the proof, and we compactly write
\begin{equation}\label{n112}
 \mathcal{X}_n =   \ns{ \bar{\nab}^{2n-2}_0  G^1}_{0} +\ns{ \bar{\nab}^{2n-2}_0  G^2}_{1}   +\as{ \bar{\nab}^{\ 2n-2}_{\ast 0}  G^3}_{1/2} .
\end{equation}

Applying the elliptic estimates \eqref{cSresult} of Lemma \ref{cStheorem} with $r=2n-2j \ge 2$ to the problem \eqref{stokesp1} for $j=0,\dots,n-1$, by the trace theory, we obtain
\begin{equation}\label{ffes3}
\begin{split}
 &\norm{\dt^j  u  }_{2n-2j}^2 + \norm{\nabla \dt^j  p  }_{2n-2j-2}^2+ \abs{  \jump{ \dt^j p } }_{2n-2j-3/2}^2
 \\  &\quad\ls
\norm{\dt^{j+1} u   }_{2n-2j-2 }^2+\norm{  \nabla^2 \dt^{j }\eta}_{2n-2j-2 }^2+ \norm{ \dt^j G^1   }_{2n-2j-2}^2
+ \norm{\dt^j  G^2  }_{2n-2j-1}^2
 \\  &\qquad+\as{\nabla\dt^{j} \eta    }_{ 2n-2j-3/2} +
\as{\dt^{j}\eta_3   }_{2n-2j-3/2} + \as{ \dt^j G^3   }_{2n-2j-3/2}
\\&\quad\ls   \norm{\dt^{j+1} u   }_{2n-2(j+1) }^2 +\norm{   \dt^{j} \eta}_{2n-2j  }^2   +  \x_n.
\end{split}
\end{equation}
A simple induction on \eqref{ffes3} yields, since $\dt\eta=u$,
\begin{equation}\label{ffes4}
\begin{split}
&\sum_{j=0}^{n}\norm{ \dt^j u  }_{2n-2j }^2+ \sum_{j=0}^{n-1}\norm{\nabla \dt^j  p  }_{2n-2j-2}^2+ \sum_{j=0}^{n-1}\as{  \jump{\dt^j  p } }_{2n-2j-3/2}
\\ & \quad \ls \ns{\dt^{n} u}_{0} +\sum_{j=0}^{n-1}\norm{ \dt^j \eta  }_{2n-2j }^2 +  \x_n
\\&  \quad \le \ns{\dt^{n} u}_{0} +\norm{  \eta  }_{2n  }^2 +\sum_{j=1}^{n-1}\norm{ \dt^{j-1} u  }_{2n-2j }^2   + \x_n
\\& \quad \ls  \ns{\dt^{n} u}_{0}+\norm{  \eta  }_{2n  }^2  +\sum_{j=0}^{n-2}\norm{ \dt^{j} u  }_{2n-2j-2}^2      +  \x_n.
\end{split}
\end{equation}
Using the Sobolev interpolation and Young's inequality, we may improve \eqref{ffes4} to be
\begin{equation}\label{claim2'}
\begin{split}
&  \sum_{j=0}^{n}\norm{ \dt^j u  }_{2n-2j }^2+   \sum_{j=0}^{n-1}\norm{\nabla \dt^j  p  }_{2n-2j-2}^2+ \sum_{j=0}^{n-1}\as{ \jump{ \dt^j  p } }_{2n-2j-3/2}
\\&\quad \ls \ns{\dt^{n} u}_{0}+\norm{  \eta  }_{2n  }^2  + \sum_{j=0}^{n-2}\norm{ \dt^{j} u  }_{0}^2  +  \x_n
\ls   \bar{\mathcal{E}}_n   +\mathrm{E}_n +  \x_n.
\end{split}
\end{equation}

Adding $\bar{\mathcal{E}}_n$ to both sides of \eqref{claim2'} implies that
\begin{equation}\label{claim12}
{\mathcal{E}}_{n}  \lesssim
\bar{\mathcal{E}}_n+ \mathrm{E}_{n} +  \x_n.
\end{equation}
Using \eqref{p_G_e_0} to bound $\mathcal{X}_{n} \lesssim  \se{N+2}\se{n}$, we then conclude \eqref{e2n}.
\end{proof}
\begin{remark}
Note that it is crucial that $\mathrm{E}_{n}$ has been controlled in the improvement of dissipation estimates so that we can improve the energy estimates of $(u,p)$ without time derivatives as done in Proposition \ref{e2nic}.
\end{remark}

\subsection{Synthesis}

We now chain all the estimates derived previously to conclude the following.
\begin{prop}
For $n=N+2$ or $2N$, there exists an  energy $\tilde{ \mathcal{E}}_{n}\simeq\mathcal{E}_{n}$ such that
\begin{equation}\label{sys2n}
\frac{d}{dt}\tilde{ \mathcal{E}}_{2N}+ {\mathcal{D}}_{2N}  \ls     \sqrt{ \se{N+2}  }(  \mathcal{J}_{2N} +\f)
\end{equation}
and
\begin{equation}\label{sysn+2}
\frac{d}{dt}\tilde{ \mathcal{E}}_{N+2}+ {\mathcal{D}}_{N+2}  \le 0.
\end{equation}
\end{prop}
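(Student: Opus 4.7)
The plan is to form the combined functional
\[
\tilde{\mathcal{E}}_n := \bar{\mathcal{E}}_n - \mathfrak{B}_n + \lambda\, \mathrm{E}_n
\]
for a small universal constant $\lambda>0$ to be chosen, and then chain Proposition \ref{conclusion}, Proposition \ref{p_upper_evolution N'} and Proposition \ref{e2nic}. Adding \eqref{conclusion2N} (resp.\ \eqref{conclusionN+2}) to $\lambda$ times \eqref{d2n} (resp.\ \eqref{d2n+2}) yields
\[
\frac{d}{dt}\tilde{\mathcal{E}}_n + \bar{\mathcal{D}}_n + \lambda\,\mathcal{D}_n \ls \mathcal{Z}_n + \lambda\, \bar{\mathcal{D}}_n,
\]
where $\mathcal{Z}_{2N}=\sqrt{\se{N+2}}(\sd{2N}+\mathcal{J}_{2N}+\f)$ and $\mathcal{Z}_{N+2}=\sqrt{\se{2N}}\sd{N+2}$, as defined in \eqref{zn}. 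Choosing $\lambda$ sufficiently small (universally) lets us absorb $\lambda\bar{\mathcal{D}}_n$ into the $\bar{\mathcal{D}}_n$ term on the left, leaving $\mathcal{D}_n$ on the left-hand side of the resulting inequality.

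Next I would separate the right-hand contributions according to $n$. For $n=N+2$, the entire right-hand side is $\sqrt{\se{2N}}\sd{N+2}$; under the standing hypothesis $\se{2N}\le \mathcal{G}_{2N}(T)\le \delta^2\ll 1$, this term can be absorbed into $\mathcal{D}_{N+2}$ on the left, giving \eqref{sysn+2}. For $n=2N$, the right-hand side is $\sqrt{\se{N+2}}(\sd{2N}+\mathcal{J}_{2N}+\f)$; the piece $\sqrt{\se{N+2}}\sd{2N}$ is likewise absorbed into $\mathcal{D}_{2N}$, producing \eqref{sys2n} with the genuinely non-absorbable $\sqrt{\se{N+2}}(\mathcal{J}_{2N}+\f)$ remaining on the right.

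The main step to verify is the equivalence $\tilde{\mathcal{E}}_n\simeq \mathcal{E}_n$. The comparison in Proposition \ref{e2nic} already gives $\mathcal{E}_n\ls \bar{\mathcal{E}}_n+\mathrm{E}_n+\se{N+2}\se{n}$, and conversely $\bar{\mathcal{E}}_n+\mathrm{E}_n\ls \mathcal{E}_n$ by the very definitions of these functionals. It therefore suffices to treat the cross term $\mathfrak{B}_n$ as a small perturbation. Using Cauchy--Schwarz, the trace theory, and the estimate \eqref{p_F_e_02} from Lemma \ref{p_F_estimates}, one finds
\[
|\mathfrak{B}_n|\ls \norm{\nabla\dt^{n-1}p}_0 \norm{Q^{2,n}}_0 + \big|\jump{\dt^{n-1}p}\big|_{1/2}\,\big|Q^{2,n}_3\big|_{-1/2}\ls \sqrt{\se{n}}\,\sqrt{\se{N+2}\,\se{n}}\ls \sqrt{\se{N+2}}\,\se{n}.
\]
Under the smallness hypothesis $\se{N+2}\le\delta^2$, this bound gives $|\mathfrak{B}_n|\le C\delta\,\se{n}$, so that
\[
(1-C\delta)\,\mathcal{E}_n \ls \tilde{\mathcal{E}}_n \ls (1+C\delta)\,\mathcal{E}_n
\]
after shrinking $\delta$ further if needed. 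The only subtle point in the whole scheme is making sure that all absorption constants $\lambda$ and $\delta$ can be fixed universally (independent of $t$, of the initial data, and of the final time $T$); this is manifest from the argument since $\lambda$ depends only on the implicit constants in Propositions \ref{conclusion}--\ref{e2nic} and $\delta$ is the standing smallness parameter of Theorem \ref{Ap}.
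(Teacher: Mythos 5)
Your proof is correct and follows essentially the same route as the paper: form $\tilde{\mathcal{E}}_n=\bar{\mathcal{E}}_n-\mathfrak{B}_n+\lambda\,\mathrm{E}_n$, add the evolution estimate of Proposition \ref{conclusion} to $\lambda$ times the dissipation improvement of Proposition \ref{p_upper_evolution  N'}, absorb the $\lambda\bar{\mathcal{D}}_n$ term and the small-coefficient $\sqrt{\se{N+2}}\sd{2N}$ (resp.\ $\sqrt{\se{2N}}\sd{N+2}$) contributions using the standing smallness $\mathcal{G}_{2N}(T)\le\delta^2$, and control $\mathfrak{B}_n$ via \eqref{p_F_e_02} to conclude the equivalence $\tilde{\mathcal{E}}_n\simeq\mathcal{E}_n$ through Proposition \ref{e2nic}. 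The only cosmetic difference is that you pair $\jump{\dt^{n-1}p}$ with $Q^{2,n}_3$ in $H^{1/2}\times H^{-1/2}$ rather than $L^2\times L^2$, and you spell out the reverse bound $\bar{\mathcal{E}}_n+\mathrm{E}_n\ls\mathcal{E}_n$ explicitly, but these are inessential variations on the paper's argument.
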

\begin{proof}
We let $n$ denote either $2N$ or $N+2$ through the proof, and we use the compact notation
\begin{equation}
\mathcal{Z}_n \text{ with }\mathcal{Z}_{2N}:=\sqrt{ \se{N+2}  }(\sd{ 2N}+  \mathcal{J}_{2N} +\f)\text{ and }\mathcal{Z}_{N+2}:=\sqrt{ \se{2N}  }\sd{ N+2} .
\end{equation}

We deduce from Propositions \ref{conclusion} and \ref{p_upper_evolution  N'} that for $0<\varepsilon\ll 1$,
\begin{equation}
 \frac{d}{dt}\left( \bar{\mathcal{E}}_{n}-\mathfrak{B}_n +\varepsilon \mathrm{E}_{n}\right)
  +  \bar{\mathcal{D}}_{n} +\varepsilon {\mathcal{D}}_{n}
 \ls     \mathcal{Z}_n +\varepsilon  \bar{\mathcal{D}}_{n},
\end{equation}
which implies
\begin{equation} \label{i_te_02n}
 \frac{d}{dt}\left( \bar{\mathcal{E}}_{n}-\mathfrak{B}_n +\varepsilon \mathrm{E}_{n}\right)
  +  \bar{\mathcal{D}}_{n} +\varepsilon {\mathcal{D}}_{n}
 \ls     \mathcal{Z}_n  .
\end{equation}

We now define
\begin{equation}
\tilde{ \mathcal{E}}_{n}:=\bar{\mathcal{E}}_{n}-\mathfrak{B}_n +\varepsilon \mathrm{E}_{n}.
\end{equation}
Recalling $\mathfrak{B}_n$ from \eqref{bnbn}, by \eqref{p_F_e_02}, we have
\begin{equation}
\abs{\mathfrak{B}_n}\ls\norm{\nabla \dt^{n-1} p}_0\norm{Q^{2,n}}_0+\abs{\jump{\dt^{n-1} p}}_0\abs{ Q^{2,n}_3}_0\ls \sqrt{\mathcal{E}_n}\sqrt{\mathcal{E}_{N+2}\mathcal{E}_n}=\sqrt{\mathcal{E}_{N+2}}\mathcal{E}_n.
\end{equation}
This together with Proposition \ref{e2nic} yields
\begin{equation}
\mathcal{E}_{n}\ls \bar{\mathcal{E}}_n  +\mathrm{E}_{n}+ \se{N+2}\se{n}= \tilde{ \mathcal{E}}_{n}+\mathfrak{B}_n+ \se{N+2}\se{n} \ls \tilde{ \mathcal{E}}_{n}+\sqrt{\mathcal{E}_{N+2}}\mathcal{E}_n,
\end{equation}
which implies that $\tilde{ \mathcal{E}}_{n}\simeq \mathcal{E}_{n}$ since $\sqrt{{\mathcal{E}}_{2N}}\le  \delta$ is small. We thus deduce \eqref{sys2n} and \eqref{sysn+2} from \eqref{i_te_02n} by recalling the notation $\mathcal{Z}_n$ and using again the smallness of $\sqrt{{\mathcal{E}}_{2N}}\le \delta$.
\end{proof}

\subsection{Global energy estimates}

In this subsection, we shall conclude our global energy estimates of the solution to \eqref{reformulationic}.

We begin with the estimates of $\f $ and $\mathcal{J}_{2N}$.

\begin{prop}\label{p_f_bound}
There exists a universal constant $0<\delta<1$ so that if $\mathcal{G}_{2N}(T)\le\delta$, then
\begin{equation}\label{grow1}
\f(t)
  \ls  \f(0)+  \sup_{0\le r\le t}\mathcal{E}_{2N}(r)+ \int_0^t \mathcal{D}_{2N} \text{ for all
}0\le t\le T
\end{equation}
and for any $\vartheta>0$,
\begin{equation}\label{grow2}
  \int_0^t \frac{\f+\mathcal{J}_{2N}}{(1+r)^{1+\vartheta}}dr
  \ls  \f(0)+  \sup_{0\le r\le t}\mathcal{E}_{2N}(r)+ \int_0^t \mathcal{D}_{2N} \text{ for all
}0\le t\le T.
\end{equation}
\end{prop}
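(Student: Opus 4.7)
The plan is to derive a single differential inequality of the form
\begin{equation*}
\dtt \f + \f + \mathcal{J}_{2N} \ls \ns{\eta}_{1,4N} + \sd{2N} + \mathcal{N}_{2N}
\end{equation*}
at a regularity one level higher than what Proposition \ref{p_upper_evolution  N'} yields, and then integrate it in two different ways to produce \eqref{grow1} and \eqref{grow2}. The derivation itself parallels the proof of Proposition \ref{p_upper_evolution  N'} with every derivative count raised by one. For $j=1,\dots,2N$ I would apply the two-phase Stokes regularity \eqref{cSresult} to the system \eqref{stokesp1} with $r=4N-2j+1$, run the same induction on $j$ as there, and absorb the leftover piece $\ns{u}_{4N-1}$ by Sobolev interpolation into $\sd{2N}$. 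For the purely spatial contribution I would again use the modified unknown $w=u+(\bar B_3^2/\mu)\eta$ solving the one-phase Dirichlet Stokes system \eqref{stokesp2} in $\Omega_\pm$ separately, now applying Lemma \ref{i_linear_elliptic2} at one order higher than before. Because $\dt\eta=u$, the identity $\ns{w}_{4N+1}\simeq\dtt\ns{\eta}_{4N+1}+\ns{\eta}_{4N+1}+\ns{u}_{4N+1}$ produces the terms $\dtt\f$, $\f$, and $\ns{u}_{4N+1}$ on the left-hand side; combining with the top-order pressure estimates $\ns{\nabla p}_{4N-1}$ (from the same Stokes step) and $\as{\jump{p}}_{4N-1/2}$ (extracted from \eqref{pb2} as in \eqref{pp2}) yields the full $\dtt\f+\f+\mathcal{J}_{2N}$ on the left.

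I would then control the nonlinear remainder $\mathcal{N}_{2N}$, which collects top-order estimates of $G^1$, $G^2$, $G^3$ and $\Phi$, via Lemmas \ref{p_G_estimates} and \ref{p_G_estimates''}; the output has the schematic form $\mathcal{N}_{2N}\ls \sqrt{\se{N+2}}(\f+\mathcal{J}_{2N}+\sd{2N})$. For $\delta$ small enough that $\sqrt{\se{N+2}}\le C\delta\ll 1$, the first two terms are absorbed into the left-hand side and the last is dominated by $\sd{2N}$. Using $\ns{\eta}_{1,4N}\ls \se{2N}$, the reduced inequality becomes
\begin{equation*}
\dtt \f + \f + \mathcal{J}_{2N} \ls \se{2N} + \sd{2N}.
\end{equation*}

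For \eqref{grow1} I would read this as the scalar ODE inequality $\dtt\f+\f\le(\mathrm{rhs})$, multiply by $e^t$ and integrate to obtain $\f(t)\ls e^{-t}\f(0)+\int_0^t e^{-(t-r)}(\se{2N}(r)+\sd{2N}(r))\,dr$, then replace $\se{2N}(r)$ by $\sup_{[0,t]}\se{2N}$ and drop the bounded exponential weight in front of $\sd{2N}$. For \eqref{grow2}, I would divide by $(1+t)^{1+\vartheta}$ and rewrite the inequality as
\begin{equation*}
\dtt\left[\frac{\f(t)}{(1+t)^{1+\vartheta}}\right]+\frac{\f+\mathcal{J}_{2N}}{(1+t)^{1+\vartheta}}\ls \frac{\se{2N}+\sd{2N}}{(1+t)^{1+\vartheta}},
\end{equation*}
then integrate in $r\in[0,t]$ and use $\int_0^\infty (1+r)^{-1-\vartheta}\,dr<\infty$ for $\vartheta>0$ to bound the $\se{2N}$ contribution by a multiple of $\sup_{[0,t]}\se{2N}$.

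The main obstacle is the top-order Stokes regularity for $w$. In Proposition \ref{p_upper_evolution  N'} the structural identity \eqref{eses2290} successfully reabsorbs the leftover $\ns{\eta}_{1,2n-1}$ into $\sdb{n}$. At one regularity higher, the corresponding remainder is $\ns{\eta}_{1,4N}$, whose top pure horizontal derivative of $\eta$ is no longer in $\sdb{2N}$ and is only controlled by $\se{2N}$. This is why $\f$ cannot live inside an integrable-in-time dissipation, and hence why $\sup_{[0,t]}\se{2N}$, rather than merely $\int_0^t \sd{2N}$, must appear on the right of \eqref{grow1} and \eqref{grow2}, and why the time weight in \eqref{grow2} must be strictly integrable.
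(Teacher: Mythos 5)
Your approach is essentially identical to the paper's: re-run the $w$-based one-phase Dirichlet Stokes argument of Proposition~\ref{p_upper_evolution  N'} at one higher spatial order to obtain the differential inequality $\dtt\tilde{\mathcal{F}}_{2N}+\f+\mathcal{J}_{2N}\ls\se{2N}+\sd{2N}$ (with $\tilde{\mathcal{F}}_{2N}\simeq\f$), then integrate once with an exponential weight for \eqref{grow1} and once after multiplying by $(1+t)^{-1-\vartheta}$ for \eqref{grow2}. The extra two-phase Stokes pass through the time-differentiated system \eqref{stokesp1} is redundant (the paper simply bounds $\ns{\dt u}_{4N-1}$ directly by $\sd{2N}$) but harmless, and your observation that the leftover $\ns{\eta}_{1,4N}$ forces $\sup_{[0,t]}\se{2N}$ rather than a time-integral onto the right-hand side is exactly the structural point.
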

\begin{proof}
Following the arguments lead to \eqref{eses22} (basically, starting with replacing $\norm{\cdot}_{j,2n-j}^2$ by $\norm{\cdot}_{j,4N+1-j}^2$ in \eqref{hihoh} with $j=2,\dots,4N+1$), we deduce that there exists an energy $\tilde{\mathcal{F}}_{2N}\simeq\f$ such that
\begin{equation}
\begin{split}
 \dtt\tilde{\mathcal{F}}_{2N}
+  \f +\mathcal{J}_{2N}  \ls & \norm{\eta}_{1,4N}^2+ \norm{\p_t u}_{4N-1}^2+\as{u}_{  4N+1/2} +\as{\eta}_{  4N+1/2}
 \\&+ \norm{G^1}_{4N-1}^2+\norm{G^2}_{4N}^2+\norm{\Phi}_{4N}^2.
 \end{split}
\end{equation}
Note that
\begin{equation}
\as{u}_{  4N+1/2}\ls \ns{u}_{1,4N}\ls \bar{\mathcal{D}}_{2N} \text{ and }\as{\eta}_{  4N+1/2}\ls \ns{\eta}_{1,4N}\ls \bar{\mathcal{E}}_{2N}.
\end{equation}
On the other hand, we use \eqref{p_G_e_001} and \eqref{p_G_e_001''} to estimate
\begin{equation}
\norm{G^1}_{4N-1}^2+\norm{G^2}_{4N}^2+\norm{\Phi}_{4N}^2\lesssim  { \se{N+2}  }(\sd{2N} +  \mathcal{J}_{2N} +\f).
\end{equation}
Then we have
\begin{equation}\label{109}
 \dtt\tilde{\mathcal{F}}_{2N}
+  \f +\mathcal{J}_{2N}  \ls  \bar{\mathcal{E}}_{2N}+\bar{\mathcal{D}}_{2N}+ \norm{\p_t u}_{4N-1}^2
 + { \se{N+2}  } (\sd{2N} +  \mathcal{J}_{2N} +\f) .
\end{equation}
This further implies, since $\se{N+2}(t) \le \delta^2$ is small,
\begin{equation}\label{1091}
 \dtt\tilde{\mathcal{F}}_{2N}
+  \f +\mathcal{J}_{2N}  \ls   \mathcal{E}_{2N}+ {\mathcal{D}}_{2N}   .
\end{equation}

We now employ the time weighted analysis on \eqref{1091}. First, applying the Gronwall type analysis on \eqref{1091} yields that
\begin{equation}\label{1001}
\begin{split}
  \f(t)  & \ls  \f(0) e^{-Ct}+\int_0^t e^{-C(t-r)}\left(  \se{2N}(r)+ \sd{2N}(r) \right)dr
  \\& \ls \f(0) e^{-Ct}+\sup_{0\le r\le t}\mathcal{E}_{2N}(r) \int_0^t e^{-C(t-r)}dr+  \int_0^t \sd{2N} ,
  \end{split}
\end{equation}
which in particular yields \eqref{grow1}.

On the other hand, multiplying \eqref{1091} by $(1+t)^{-1-\vartheta}$ for any $\vartheta>0$, we obtain
\begin{equation}\label{1110}
 \dtt \left( \frac{\tilde{\mathcal{F}}_{2N}}{(1+t)^{1+\vartheta}}\right)+(1+\vartheta)\frac{\tilde{\mathcal{F}}_{2N}}{(1+t)^{2+\vartheta}}
+  \frac{\f+\mathcal{J}_{2N}}{(1+t)^{1+\vartheta}} \ls   \frac{\se{2N}}{(1+t)^{1+\vartheta}}+  \frac{\sd{2N}}{(1+t)^{1+\vartheta}} .
\end{equation}
 Integrating \eqref{1110} directly in time yields in particular \eqref{grow2}.
\end{proof}

 Now we show the boundedness of $\mathcal{E}_{2N} +\int_0^t\mathcal{D}_{2N} $.

\begin{prop} \label{Dgle}
There exists a universal constant $0<\delta<1$ so that if $\mathcal{G}_{2N}(T)\le\delta$, then
\begin{equation}\label{Dg}
\mathcal{E}_{2N} (t)+\int_0^t\mathcal{D}_{2N} \lesssim
\mathcal{E}_{2N} (0) + \mathcal{F}_{2N}(0)  \text{ for all
}0\le t\le T.
\end{equation}
\end{prop}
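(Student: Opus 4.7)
The plan is to time-integrate the synthesized energy inequality \eqref{sys2n} and then control the nonlinear forcing on the right-hand side by combining the built-in decay of $\se{N+2}$ with the weighted bound on $\mathcal{J}_{2N}+\f$ provided by Proposition \ref{p_f_bound}. Specifically, since $\tilde{\mathcal{E}}_{2N} \simeq \se{2N}$, integrating \eqref{sys2n} from $0$ to $t$ gives
\begin{equation}\nonumber
\se{2N}(t) + \int_0^t \sd{2N}(r)\,dr \lesssim \se{2N}(0) + \int_0^t \sqrt{\se{N+2}(r)}\bigl(\mathcal{J}_{2N}(r)+\f(r)\bigr)\,dr.
\end{equation}

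The key observation is that, by the assumption $\mathcal{G}_{2N}(T) \le \delta^2$, the very definition \eqref{G_def} of $\mathcal{G}_{2N}$ already bakes in the algebraic decay $(1+r)^{2N-4}\se{N+2}(r) \le \delta^2$, and hence $\sqrt{\se{N+2}(r)} \le \delta (1+r)^{-(N-2)}$. Since $N \ge 4$ allows us to fix $\vartheta = N-3 > 0$ in Proposition \ref{p_f_bound} so that $1+\vartheta = N-2$, the nonlinear integral is controlled by
\begin{equation}\nonumber
\int_0^t \sqrt{\se{N+2}(r)}\bigl(\mathcal{J}_{2N}(r)+\f(r)\bigr)\,dr \lesssim \delta \int_0^t \frac{\mathcal{J}_{2N}(r)+\f(r)}{(1+r)^{1+\vartheta}}\,dr.
\end{equation}
Now Proposition \ref{p_f_bound}, specifically \eqref{grow2}, bounds this last integral by $\f(0) + \sup_{0\le r\le t}\se{2N}(r) + \int_0^t \sd{2N}$.

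Chaining these estimates yields
\begin{equation}\nonumber
\se{2N}(t) + \int_0^t \sd{2N} \lesssim \se{2N}(0) + \delta\Bigl(\f(0) + \sup_{0\le r\le t}\se{2N}(r) + \int_0^t \sd{2N}\Bigr).
\end{equation}
Taking the supremum in $t$ on the left and choosing $\delta$ sufficiently small to absorb the $\delta \sup_{0\le r\le t}\se{2N}(r)$ and $\delta \int_0^t \sd{2N}$ terms into the left-hand side produces exactly \eqref{Dg}. No step should present a substantive obstacle: the energy–dissipation machinery \eqref{sys2n}, \eqref{sysn+2} and the weighted $\mathcal{F}$/$\mathcal{J}$ bound \eqref{grow2} have already done the heavy lifting; the only subtlety is aligning the decay exponent $N-2$ coming from $\se{N+2}$ with the admissible range $0<\vartheta \le N-3$ in the weighted integrability of $\mathcal{J}_{2N}+\f$, which is precisely the reason the paper requires $N \ge 4$. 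The main (minor) obstacle is simply bookkeeping the smallness constants so that the final absorption into the left-hand side is legitimate.
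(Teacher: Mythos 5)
Your proposal is correct and follows essentially the same route as the paper's proof: integrate \eqref{sys2n} in time, use the decay $\sqrt{\se{N+2}(r)}\ls\delta(1+r)^{-(N-2)}$ built into $\mathcal{G}_{2N}$, invoke \eqref{grow2} with $\vartheta\le N-3$, and absorb the $\delta$-small terms. The only difference is that you spell out the supremum/absorption step a bit more explicitly than the paper does, which is harmless.
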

\begin{proof}
Integrating \eqref{sys2n} directly in time, we find that
\begin{equation}
  {\mathcal{E}}_{2N}(t)
+ \int_0^t{\mathcal{D}}_{2N}
\ls {\mathcal{E}}_{2N}(0)+\int_0^t \sqrt{ \se{N+2}  }(  \mathcal{J}_{2N} +\f)
\end{equation}
Since
\begin{equation}
\sqrt{ \se{N+2}(t)}\le \mathcal{G}_{2N}(T) (1+t)^{-N+2}\le \delta (1+t)^{-N+2},
\end{equation}
by the estimates \eqref{grow2} of Proposition \ref{p_f_bound}, we deduce
\begin{equation}
\begin{split}
  {\mathcal{E}}_{2N}(t)
+ \int_0^t{\mathcal{D}}_{2N}
&\ls {\mathcal{E}}_{2N}(0)+ \int_0^t  {\delta}(1+r)^{-N+2}(  \mathcal{J}_{2N}(r) +\f(r))dr
\\& \ls {\mathcal{E}}_{2N}(0)+  { \delta } \left(\f(0)+  \sup_{0\le r\le t}\mathcal{E}_{2N}(r)+ \int_0^t \mathcal{D}_{2N}\right)
.
 \end{split}
\end{equation}
Here we have used the fact that $N-2\ge 1+\vartheta$ by choosing $0<\theta\le N-3$ since $N\ge 4$. This proves \eqref{Dg} since $\delta$ is small.
\end{proof}

It remains to show the decay estimates of
$\mathcal{E}_{N+2}$.

\begin{prop} \label{decaylm}
There exists a universal constant $0<\delta<1$ so that if $\mathcal{G}_{2N}(T)\le\delta$, then
\begin{equation}\label{n+2}
(1+t)^{2N-4} \mathcal{E}_{N+2} (t)\lesssim
\mathcal{E}_{2N} (0)+ \mathcal{F}_{2N}(0) \ \text{for all
}0\le t\le T.
\end{equation}
\end{prop}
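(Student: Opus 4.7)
The plan is to combine the differential inequality \eqref{sysn+2}---namely $\dtt \tilde{\mathcal{E}}_{N+2} + \mathcal{D}_{N+2}\le 0$ with $\tilde{\mathcal{E}}_{N+2}\simeq \mathcal{E}_{N+2}$---with the uniform bound $\mathcal{E}_{2N}(t)\le M:=C(\mathcal{E}_{2N}(0)+\mathcal{F}_{2N}(0))$ from Proposition \ref{Dgle}, by way of a Sobolev interpolation that sandwiches $\mathcal{E}_{N+2}$ between $\mathcal{D}_{N+2}$ and $\mathcal{E}_{2N}$. This converts \eqref{sysn+2} into a Bernoulli-type ODE inequality whose solution decays algebraically at rate $(1+t)^{-(2N-4)}$.

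The first and main step is to establish the interpolation
\[
\mathcal{E}_{N+2} \lesssim (\mathcal{E}_{2N})^{1-\theta}(\mathcal{D}_{N+2})^{\theta},\qquad \theta = \frac{2N-4}{2N-3}.
\]
A term-by-term inspection shows that almost every summand of $\mathcal{E}_{N+2}$ is already dominated by $\mathcal{D}_{N+2}$: the dissipation carries one extra derivative on $\dt^j u$, $\nabla\dt^j p$ and $\jump{\dt^j p}$ for $j\ge 1$, while the $j=0$ pieces $\|u\|_{2N+4}^2$, $\|\nabla p\|_{2N+2}^2$, $|\jump{p}|_{2N+1/2}^2$, $\|\eta\|_{2N+4}^2$ appear verbatim in both functionals; each such piece is simultaneously embedded into $\mathcal{E}_{2N}$ by a trivial Sobolev inequality, so the elementary identity $z = z^\theta z^{1-\theta}$ suffices for every $\theta\in[0,1]$. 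The single obstruction is the horizontal contribution $\|\nabla_\ast\eta\|_{0,2N+4}^2$ hidden inside $\|\eta\|_{1,2N+4}^2$, which requires one horizontal derivative of $\eta$ beyond what $\mathcal{D}_{N+2}$ controls. For this term I would invoke horizontal Gagliardo--Nirenberg interpolation applied to $\nabla_\ast\eta$ between $\|\cdot\|_{0,2N+3}$ (dominated by $\|\eta\|_{2N+4}^2\subset\mathcal{D}_{N+2}$) and $\|\cdot\|_{0,4N}$ (dominated by $\|\eta\|_{1,4N}^2\subset\mathcal{E}_{2N}$); the balance $2N+4 = (2N+3)\theta + 4N(1-\theta)$ forces precisely $\theta=(2N-4)/(2N-3)$, which fixes the exponent throughout.

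With the interpolation in hand, using $\mathcal{E}_{2N}(t)\le M$ and noting $(1-\theta)/\theta = 1/(2N-4)$ and $1/\theta = 1+1/(2N-4)$ yields
\[
\mathcal{D}_{N+2} \gtrsim M^{-\frac{1}{2N-4}}\,\mathcal{E}_{N+2}^{\,1+\frac{1}{2N-4}}.
\]
Plugging this into \eqref{sysn+2} and writing $Y = \tilde{\mathcal{E}}_{N+2}$, $a = 1/(2N-4)$, I obtain the Bernoulli-type inequality $Y' + c\,M^{-a}Y^{1+a}\le 0$, which rearranges to $(Y^{-a})'\ge ac\,M^{-a}$. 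Integrating in time using the initial bound $Y(0)\lesssim M$ gives $Y(t)^{-a}\gtrsim M^{-a}(1+t)$, equivalently $\mathcal{E}_{N+2}(t)\lesssim M(1+t)^{-(2N-4)}$, which is exactly \eqref{n+2}. The principal obstacle is pinning down the exponent $\theta=(2N-4)/(2N-3)$ with equality rather than with either inequality: any larger $\theta$ would kill the strict superlinearity $Y^{1+a}$ of the ODE and destroy all algebraic decay, while any smaller $\theta$ would yield decay too slow to close the bootstrap of Proposition \ref{Dgle}, where $\sqrt{\mathcal{E}_{N+2}}$ must beat the weight $(1+t)^{-(1+\vartheta)}$ with $\vartheta\le N-3$ appearing in Proposition \ref{p_f_bound}; the hypothesis $N\ge 4$ is exactly the sharp condition reconciling these two constraints.
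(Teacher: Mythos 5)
Your argument is correct and is essentially the paper's own proof: you isolate the single summand $\|\eta\|_{1,2(N+2)}^2$ of $\mathcal{E}_{N+2}$ not already controlled by $\mathcal{D}_{N+2}$, interpolate it between $\mathcal{D}_{N+2}$ and $\mathcal{E}_{2N}$ with the paper's exponent $\theta=(2N-4)/(2N-3)$ (the paper does the interpolation directly on $\norm{\eta}_{1,\cdot}$ via \eqref{eses2290}, you recast it on $\nabla_\ast\eta$ in $\norm{\cdot}_{0,\cdot}$ norms, which is the same computation), and integrate the resulting Bernoulli inequality. One small inaccuracy in your closing remarks: a larger $\theta$ would \emph{accelerate}, not destroy, the decay (indeed $\theta\to1$ would yield exponential decay via Gronwall), and $\theta$ is not a free parameter to be pinned down with two-sided constraints but is forced by the derivative count, so the only requirement from this step is $\theta<1$, i.e.\ $N\ge 3$; the hypothesis $N\ge 4$ enters elsewhere, namely in choosing $0<\vartheta\le N-3$ so that $(1+t)^{-N+2}\le(1+t)^{-1-\vartheta}$ in the argument of Proposition \ref{Dgle}.
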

\begin{proof}
We will use \eqref{sysn+2} to derive the decay estimates. For this, we shall estimate $\mathcal{E}_{N+2} $ in terms of $\mathcal{D}_{N+2} $. Notice that ${\mathcal{D}}_{N+2} $ can control every term in ${\mathcal{E}}_{N+2} $ except $ \ns{\eta}_{1,2(N+2)}$. The key point is to use the Sobolev interpolation as \cite{RG,GT_per,GT_inf}. Indeed, we first have that, by \eqref{eses2290},
\begin{equation} \label{intep0c}
\begin{split}
\ns{ \eta}_{1,2(N+2)}&\le  \norm{ \eta}_{1,2(N+2)-1} ^{2\theta} \norm{ \eta}_{1,4N}^{2(1-\theta)}
\\&\le ( {\mathcal{D}_{N+2} })^\theta({\mathcal{E}_{2N} })^{1-\theta},\text{
where }\theta=\frac{2N-4}{2N-3}.
\end{split}
\end{equation}
Hence, we may deduce
\begin{equation} \label{intepc}
{\mathcal{E}}_{N+2} \ls({\mathcal{D}}_{N+2} )^\theta({\mathcal{E}}_{2N} )^{1-\theta}+{\mathcal{D}}_{N+2} \ls({\mathcal{D}}_{N+2} )^\theta({\mathcal{E}}_{2N} )^{1-\theta} .
\end{equation}

Now since by Proposition \ref{Dgle},
\begin{equation}
\sup_{0\le r\le t}\mathcal{E}_{2N} (r)\lesssim
\mathcal{E}_{2N} (0) +\f(0):=\mathcal{M}_0,
\end{equation}
we obtain from  \eqref{intepc} that
\begin{equation} \label{u2c}
\tilde{\mathcal{E}}_{N+2} \lesssim\mathcal{E}_{N+2} \lesssim\mathcal{M}_0^{1-\theta}
(\mathcal{D}_{N+2} )^\theta.
\end{equation}
Hence by \eqref{sysn+2} and \eqref{u2c}, there exists some constant $C>0$
such that
\begin{equation}
\frac{d}{dt} \tilde{\mathcal{E}}_{N+2} +\frac{C}{\mathcal{M}_0^s}
(\tilde{\mathcal{E}}_{N+2} )^{1+s}\le 0,\ \text{ where } s =
\frac{1}{\theta}-1 = \frac{1}{2N-4}.
\end{equation}
Solving this differential inequality directly, we obtain
\begin{equation} \label{u3c}
\mathcal{E}_{N+2} (t)\ls \frac{\mathcal{M}_0}{(\mathcal{M}_0^s
+ s C( \mathcal{E}_{N+2} (0))^s t)^{1/s} }
{\mathcal{E}}_{N+2} (0).
\end{equation}
Using that ${\mathcal{E}}_{N+2} (0)\lesssim\mathcal{M}_0 $ and
the fact $1/s=2N-4>1$, we obtain from \eqref{u3c} that
\begin{equation}
{\mathcal{E}}_{N+2} (t)\lesssim
\frac{\mathcal{M}_0}{(1+sCt)^{1/s} }\lesssim
\frac{\mathcal{M}_0}{(1+t^{1/s}) } =
\frac{\mathcal{M}_0}{(1+t^{2N-4}) }.
\end{equation}
This directly implies \eqref{n+2}.
\end{proof}

Now we can arrive at our ultimate energy estimates for
$\mathcal{G}_{2N} $, that is, we present the

\begin{proof}[Proof of Theorem \ref{Ap}]
Theorem \ref{Ap} follows directly from the definition of
$\mathcal{G}_{2N} $ and Propositions
\ref{p_f_bound}--\ref{decaylm}.
\end{proof}

\section{Nonlinear instability for $\abs{\bar B_3}<\mathcal{M}_c$}\label{nonlinear instability}

In this section, we will prove Theorem \ref{maintheorem} for $\abs{\bar B_3}<\mathcal{M}_c$.
Since the linear instability has been established in Section \ref{linear theory}, the remaining in the proof of Theorem
\ref{maintheorem} is the passage from linear instability to nonlinear instability.

\subsection{Growth of solutions to the linear inhomogeneous equations}\label{growth}

In this subsection, we will show that $\lam$ given in Theorem \ref{growingmode} is the sharp growth rate of arbitrary solutions to the linearized problem \eqref{perturb_linear}. Since the spectrum of the linear operator is complicated, it is hard to obtain the largest growth rate of the solution operator in ``$L^2\rightarrow L^2$'' in the usual way. Instead, motivated by \cite{3GT2}, we can use careful energy estimates to show that $e^{\Lam t}$ is the sharp growth rate in a slightly weaker sense, say, for instance ``$H^2\rightarrow L^2$''. However, this will be done for strong solutions to the problem, and it may be difficult to apply directly to the nonlinear problem due to the issue of compatibility conditions of the initial and boundary data since the problem is defined in a domain with boundary.  We overcome this obstacle by proving the estimates for the growth in time of arbitrary solutions to the linear inhomogeneous equations:
\begin{equation}\label{linear ho}
\begin{cases}
 \dt\eta=u&\hbox{in }\Omega
\\\rho\partial_t u-\mu\Delta u+\nabla p- (\bar B\cdot\nabla)^2 \eta=G^1\quad&\hbox{in }\Omega
\\ \diverge u=G^2&\hbox{in }\Omega
\\ \Lbrack u\Rbrack=0,\quad \Lbrack pI-\mu\mathbb{D}u\Rbrack e_3-\jump{\bar B_3(\bar B\cdot\nabla)\eta}= \rj g\eta_3  e_3+G^3&\hbox{on }\Sigma
\\ u  =0 &\hbox{on }\Sigma_{m,\ell},
\end{cases}
\end{equation}
where $G^1,G^2$ and $G^3$ are given functions. Note that by the boundary conditions of $u$,
\begin{equation}\label{qstru1}
\int_\Omega G^2=\int_\Omega \diverge u=0.
\end{equation}
This allows us to adjust the divergence of $u$ to reduce the problem \eqref{linear ho} to be a divergence-free problem. Indeed, by Lemma \ref{div11}, for $G^2\in {H}^{r-1}(\Omega), r\ge 1$, thanks to \eqref{qstru1}, there exists $\bar u\in H_0^1(\Omega)\cap {H}^{r}(\Omega)$ so that $\diverge{\bar u}=G^2$ in $\Omega$ and
\begin{equation}\label{dives}
\norm{\bar u}_{r}\lesssim\norm{G^2}_{r-1}.
\end{equation}
Hence, defining $v=u-\bar{u}$, we have $\diverge v=0$. Furthermore, differentiating the second equation with respect to time and then eliminating
the $\eta$ terms by using the first equation, we may then switch the problem \eqref{linear ho} to  the following second-order formulation for $(v,\dt p)$:
\begin{equation}\label{second_order}
\begin{cases}
\rho \dt^2 v-\mu\Delta \dt v+\nabla \dt p- (\bar B\cdot\nabla)^2 v=\mathfrak{G}^1\quad&\hbox{in }\Omega
\\ \diverge \dt v=0&\hbox{in }\Omega
\\ \Lbrack \dt v\Rbrack=0,\quad \Lbrack \dt pI-\mu\mathbb{D}\dt v\Rbrack e_3-\jump{\bar B_3(\bar B\cdot\nabla)v}= \rj gv_3  e_3+\mathfrak{G}^3&\hbox{on }\Sigma
\\ \dt v  =0 &\hbox{on }\Sigma_{m,\ell},
\end{cases}
\end{equation}
where
\begin{equation}\label{vG1_def}
\mathfrak{G}^1=\dt G^1
- \rho \dt^2 \bar u+\mu\Delta \dt \bar u + (\bar B\cdot\nabla)^2 \bar u
\end{equation}
and
\begin{equation}\label{vG3_def}
\mathfrak{G}^3=\dt G^3+\Lbrack \mu\mathbb{D}\dt \bar u\Rbrack e_3+\jump{\bar B_3(\bar B\cdot\nabla)\bar u}+\rj g\bar u_3  e_3 .
\end{equation}

Our first result gives an energy and its evolution equation for solutions to \eqref{second_order}.
\begin{lem}\label{lin_en_evolve le}
Let $ v $ solve \eqref{second_order}. Then
\begin{equation}\label{energyidentity}
\begin{split}
 &\hal \frac{d}{dt}\left(  \int_\Omega \left( \rho \abs{ \dt v }^2  +\abs{(\bar B\cdot \nabla)   v }^2\right)- \int_{\Sigma} \rj g \abs{   v_3}^2\right)+ \int_\Omega \frac{\mu}{2}  \abs{\sg   \dt v }^2
 \\&\quad= \int_\Omega   \dt v \cdot   \mathfrak{G}^1 + \int_\Sigma \dt v \cdot \dt \mathfrak{G}^3 .
\end{split}
\end{equation}
\end{lem}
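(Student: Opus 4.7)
The plan is a direct energy estimate: multiply the momentum equation in \eqref{second_order} by $\dt v$ and integrate over $\Omega = \Omega_+ \cup \Omega_-$, then handle each term by integration by parts. The inertial contribution is immediately $\hal\dtt\int_\Omega\rho|\dt v|^2$. For the pressure $\nabla\dt p\cdot\dt v$ I would integrate by parts phase-by-phase: the bulk piece vanishes by $\diverge\dt v=0$, the $\Sigma_{m,\ell}$ pieces vanish because $\dt v$ does, and only the interface contribution $-\int_\Sigma\jump{\dt p}\,e_3\cdot\dt v$ survives. For the viscous term, I rewrite $-\mu\Delta\dt v=-\diverge(\mu\sg\dt v)$ (valid since $\diverge\dt v=0$) and integrate by parts on each phase; the symmetry of $\sg\dt v$ produces the dissipation $\int_\Omega\frac{\mu}{2}|\sg\dt v|^2$ along with the boundary piece $\int_\Sigma\jump{\mu(\sg\dt v)e_3}\cdot\dt v$.

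The magnetic term $-(\bar B\cdot\nabla)^2 v\cdot\dt v$ is the most delicate piece. I would integrate by parts once in the direction $\bar B$ on each $\Omega_\pm$, using $\bar B\cdot n_\pm=\mp\bar B_3$ on $\Sigma$ and $\dt v|_{\Sigma_{m,\ell}}=0$ to reduce the boundary contribution to $\int_\Sigma\jump{\bar B_3(\bar B\cdot\nabla)v}\cdot\dt v$. The bulk piece $\int_\Omega(\bar B\cdot\nabla)v\cdot(\bar B\cdot\nabla)\dt v$ is a perfect time derivative $\hal\dtt\int_\Omega|(\bar B\cdot\nabla)v|^2$, since $\bar B$ is constant and $(\bar B\cdot\nabla)$ commutes with $\dt$.

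The three interface contributions then assemble into $\int_\Sigma(\jump{\mu(\sg\dt v)e_3}-\jump{\dt p}\,e_3+\jump{\bar B_3(\bar B\cdot\nabla)v})\cdot\dt v$, which by the dynamic jump condition in \eqref{second_order} equals $-\int_\Sigma(\rj g\,v_3\,e_3+\mathfrak{G}^3)\cdot\dt v$. The gravitational surface contribution rearranges as $-\hal\dtt\int_\Sigma\rj g|v_3|^2$, and balancing against the forcing $\int_\Omega\mathfrak{G}^1\cdot\dt v$ from the right-hand side of the momentum equation delivers \eqref{energyidentity}.

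The hard part is not analytical but notational: one must track the sign flip of the outward normal between $\Omega_+$ and $\Omega_-$ at $\Sigma$ so that the three interface integrals assemble into the correct jumps $\jump{\cdot}$, and must invoke $\diverge\dt v=0$ and $\dt v|_{\Sigma_{m,\ell}}=0$ at the appropriate moments to discard bulk and outer-boundary pieces. Everything is legitimate because $v$ is assumed to solve \eqref{second_order} in a strong sense, so the relevant integrations by parts and boundary traces are justified.
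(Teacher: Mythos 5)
Your proposal is correct and is exactly the paper's argument: multiply the momentum equation in \eqref{second_order} by $\dt v$, integrate by parts phase-by-phase using $\diverge\dt v=0$, $\dt v|_{\Sigma_{m,\ell}}=0$, the symmetry of $\sg$, and the jump conditions to collect the interface terms. Note that your derivation (correctly) produces $\int_\Sigma\mathfrak{G}^3\cdot\dt v$ rather than $\int_\Sigma\dt\mathfrak{G}^3\cdot\dt v$ on the right-hand side; the extra $\dt$ in \eqref{energyidentity} is a typographical slip in the paper, as confirmed by its subsequent use in \eqref{j111}.
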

\begin{proof}
We multiply the first equation of $\eqref{second_order}$ by $\partial_{t} v$ and  then integrate by parts over $\Omega$. By using the other equations in $\eqref{second_order}$, we obtain \eqref{energyidentity}.
\end{proof}

The variational characterization of $\lam$ gives rise to the next result.
\begin{lem}\label{lin_en_bound}
Let $v\in \Hs$. Then
\begin{equation}\label{variation}
\hal\left(\int_\Omega\abs{(\bar B\cdot \nabla)   v }^2- \int_{\Sigma} \rj g \abs{   v_3}^2\right)
\ge -\frac{\lam^2}{2}\int_\Omega \rho \abs{ v }^2
- \frac{\lam}{2} \int_\Omega \frac{\mu}{2}  \abs{\sg    v }^2 .
\end{equation}
 \end{lem}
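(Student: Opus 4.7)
The statement is essentially the variational characterization of $\lam$ rewritten in unnormalized form, so the plan is short and direct.

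First I would recall what $\lam$ is: by Lemma \ref{lambdas}, the parameter $s$ in the modified family $\alpha(s)=\inf_{w\in\mathfrak{S}}E(w;s)$ is chosen to satisfy the fixed point relation $s=\sqrt{-\alpha(s)}$, so writing $\lam=s$ yields
\begin{equation*}
-\lam^2=\alpha(\lam)=\inf_{w\in\mathfrak{S}}\bigl(E_0(w)+\lam E_1(w)\bigr),
\end{equation*}
with $\mathfrak{S}=\{w\in\Hs\,:\,J(w)=1\}$. Equivalently, for every $w\in\mathfrak{S}$,
\begin{equation*}
E_0(w)+\lam E_1(w)\ge -\lam^{2}=-\lam^{2}J(w).
\end{equation*}

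Next I would promote this normalized inequality to an inequality valid on all of $\Hs$ by using that $E_0$, $E_1$, and $J$ are each homogeneous of degree two. For $v\in\Hs$ with $v\not\equiv 0$, set $w:=v/\sqrt{J(v)}\in\mathfrak{S}$; substituting and multiplying through by $J(v)$ gives
\begin{equation*}
E_0(v)+\lam E_1(v)\ge -\lam^{2}J(v)\quad\text{for every }v\in\Hs,
\end{equation*}
the case $v\equiv 0$ being trivial. Rearranging and unpacking the definitions of $E_0$, $E_1$ and $J$ from \eqref{E0_def}, \eqref{E1_def} and the definition of $\mathfrak{S}$ produces exactly
\begin{equation*}
\hal\!\left(\int_\Omega\abs{(\bar B\cdot\nabla)v}^{2}-\int_\Sigma \rj g\,v_3^{2}\right)
\ge -\frac{\lam^{2}}{2}\int_\Omega \rho\abs{v}^{2}-\frac{\lam}{2}\int_\Omega \frac{\mu}{2}\abs{\sg v}^{2},
\end{equation*}
which is \eqref{variation}.

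There is essentially no obstacle: the only facts needed are the definition of $\lam$ via Lemma \ref{lambdas}, the quadratic homogeneity of $E_0$, $E_1$, $J$, and the divergence-free condition $v\in\Hs$ (so that $w=v/\sqrt{J(v)}$ is an admissible competitor in $\mathfrak{S}$). One minor check I would make explicit is that the inequality \eqref{variation} uses only the \emph{definition} of $\alpha(\lam)$ as an infimum and does not require $v$ to be the minimizer produced by Proposition \ref{propro1}; in particular, no regularity beyond $v\in\Hs$ is used, which is exactly what makes this lemma applicable to the linear inhomogeneous problem \eqref{second_order} in the subsequent growth estimates.
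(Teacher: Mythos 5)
Your proposal is correct and follows essentially the same route as the paper: rewrite the left-hand side as $E_0(v)=E(v;\lam)-\lam E_1(v)$ and invoke the variational characterization $-\lam^2=\alpha(\lam)=\inf_{w\in\mathfrak{S}}E(w;\lam)$ coming from Lemma \ref{lambdas}. The only difference is that you spell out the degree-two homogeneity argument (normalizing $v$ by $\sqrt{J(v)}$) that extends the infimum bound from $\mathfrak{S}$ to all of $\Hs$, a step the paper leaves implicit in writing $E(v;\lam)\ge-\lam^2 J(v)$.
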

 \begin{proof}
Recalling the notations \eqref{E_def}--\eqref{E1_def} with $s=\lam$, we may rewrite
 \begin{equation}
\hal\left(\int_\Omega\abs{(\bar B\cdot \nabla)   v }^2- \int_{\Sigma} \rj g \abs{   v_3}^2\right)
=E_0(v)=E(v;\lam)-\lam E_1(v).
\end{equation}
Since $v\in \Hs$, by the variational characterization for $\lam$ of \eqref{mu_def} with $s=\lam$, we have
 \begin{equation}
E(v;\lam)\ge -\lam^2J(v).
\end{equation}
These two yield \eqref{variation}.
 \end{proof}

We may first show the estimates for the growth of solutions to \eqref{second_order}.
\begin{lem} \label{lineargrownth0}
Let  $v$ solve \eqref{second_order}. Then
\begin{equation}\label{result10}
\norm{v(t)}_{1}
  \ls  e^{ \Lam
t}\left(\norm{ v(0)}_1 +\norm{\partial_t v(0)}_0   \right)
+ \mathfrak{N}_\mathfrak{G}(t),
\end{equation}
where
\begin{equation}\label{result20}
\begin{split}
\mathfrak{N}_\mathfrak{G}(t) &=  e^{ \Lam
t}\left(\norm{ \mathfrak{G}^1(0)}_0 + \abs{\mathfrak{G}^3(0)}_{0} \right)
+ \sup_{0\le s\le t}  \left(\norm{ \mathfrak{G}^1(s)}_0 + \abs{\mathfrak{G}^3(s)}_{0} \right)
\\&\quad+\sqrt{ \int_0^t e^{2\Lam
(t-s)}   \left(\norm{\mathfrak{G}^1(s)}_0+\norm{\dt \mathfrak{G}^1(s)}_0+  \abs{\mathfrak{G}^3(s)}_{0}+\abs{\dt \mathfrak{G}^3(s)}_{0}\right)\norm{v(s)}_{1}}.
\end{split}
\end{equation}
\end{lem}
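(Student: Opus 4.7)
The plan is to construct an energy functional $\mathcal{E}(t)$ equivalent to $\ns{v(t)}_1 + \ns{\dt v(t)}_0$ satisfying a Gronwall-type inequality $\dtt \mathcal{E} \le 2\Lam \mathcal{E} + R(t)$, where $R(t)$ denotes the forcing, and then to integrate by parts in time in the forcing to recover the precise form of $\mathfrak{N}_\mathfrak{G}$.

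First I would introduce
\begin{equation*}
\mathcal{E}(t) := \hal\int_\Omega \rho\abs{\dt v}^2 + \hal\int_\Omega \abs{(\bar B\cdot\nab)v}^2 - \hal\int_\Sigma \rj g v_3^2 + c_1\Lam \int_\Omega \tfrac{\mu}{2}\abs{\sg v}^2 + c_2\Lam^2 \int_\Omega \rho\abs{v}^2
\end{equation*}
for suitable positive constants $c_1,c_2$. Applying Lemma \ref{lin_en_bound} pointwise in $t$ to $v(t)\in\Hs$, together with Korn's and Poincar\'e's inequalities, one obtains $\mathcal{E}(t)\gss \ns{v(t)}_1 + \ns{\dt v(t)}_0$.

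Next I differentiate $\mathcal{E}$ in time and apply Lemma \ref{lin_en_evolve le} to the first three summands, producing $-\int_\Omega \tfrac{\mu}{2}\abs{\sg \dt v}^2 + R(t)$, where $R(t)=\int_\Omega \dt v\cdot\mathfrak{G}^1 + \int_\Sigma \dt v\cdot\mathfrak{G}^3$. For the remaining two summands I use the Cauchy--Schwarz and Young inequalities, together with Lemma \ref{lin_en_bound}, to complete squares involving $\sg(\dt v-\Lam v)$ and $\dt v-\Lam v$; these naturally arise from the linearized growing-mode ansatz and absorb the viscous dissipation together with the cross terms, yielding
\begin{equation*}
\dtt \mathcal{E} \le 2\Lam\mathcal{E}(t)+R(t).
\end{equation*}
Gronwall's inequality then gives $\mathcal{E}(t)\le e^{2\Lam t}\mathcal{E}(0)+\int_0^t e^{2\Lam(t-s)}R(s)\,ds$.

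To rewrite the forcing integral in the form of $\mathfrak{N}_\mathfrak{G}$, I integrate by parts in time using the identity
\begin{equation*}
R(s)=\dtt\Bigl(\int_\Omega \mathfrak{G}^1\cdot v+\int_\Sigma \mathfrak{G}^3\cdot v\Bigr)-\int_\Omega \dt\mathfrak{G}^1\cdot v-\int_\Sigma \dt\mathfrak{G}^3\cdot v,
\end{equation*}
together with $\int_0^t e^{2\Lam(t-s)}f'(s)\,ds = f(t)-e^{2\Lam t}f(0)+2\Lam\int_0^t e^{2\Lam(t-s)}f(s)\,ds$. The boundary contribution at $s=0$, multiplied by the surviving $e^{2\Lam t}$, produces the $e^{\Lam t}(\norm{\mathfrak{G}^1(0)}_0+\abs{\mathfrak{G}^3(0)}_0)$ term; the boundary contribution at $s=t$ is absorbed into $\mathcal{E}(t)$ via Young's inequality with small parameter, leaving the $\sup_{0\le s\le t}(\norm{\mathfrak{G}^1}_0+\abs{\mathfrak{G}^3}_0)$ term; and the remaining volume and surface integrals, paired with $\norm{v(s)}_{1}$ via Cauchy--Schwarz and trace theory, give the square-root term in $\mathfrak{N}_\mathfrak{G}$. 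Taking square roots of $\mathcal{E}(t)\gss\ns{v(t)}_1$ then yields \eqref{result10}--\eqref{result20}.

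The main obstacle is verifying the sharp coefficient $2\Lam$ in the differential inequality for $\mathcal{E}$. This ultimately relies on the fixed-point identity $\Lam=\sqrt{-\al(\Lam)}$ from Lemma \ref{lambdas}: only at that precise value of $s$ does the variational bound in Lemma \ref{lin_en_bound} align with the completed-square decomposition so that the positive and negative contributions cancel to leading order. A generic application of Young's inequality would otherwise produce a strictly larger exponential growth rate in the Gronwall step, and fail to identify $\Lam$ as the sharp linear rate required to close the passage from linear to nonlinear instability via the bootstrap argument of Section \ref{nonlinear instability}.
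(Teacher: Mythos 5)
Your proposal is correct, and it takes a genuinely different route from the paper's own proof, so it is worth comparing. The paper does not construct a pointwise-in-time Lyapunov functional; rather it starts from the \emph{time-integrated} energy identity, applies Lemma~\ref{lin_en_bound} only at the final time $t$, and then, via the auxiliary bounds $\Lam\norm{v(t)}_{\star\star}^2 \le \Lam\norm{v(0)}_{\star\star}^2 + \int_0^t\norm{\partial_t v}_{\star\star}^2 + \Lam^2\int_0^t\norm{v}_{\star\star}^2$ and $\Lam\,\partial_t\norm{v(t)}_\star^2 \le \norm{\partial_t v(t)}_\star^2 + \Lam^2\norm{v(t)}_\star^2$, massages this into a Gronwall inequality for the running-integral quantity $y(t) = \norm{v(t)}_\star^2 + \int_0^t\norm{v(s)}_{\star\star}^2\,ds$, after which it substitutes back to control $\norm{v(t)}_{\star\star}^2$ and $\norm{\partial_t v(t)}_\star^2$. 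You instead package everything into the single functional $\mathcal{E}(t)=\hal\norm{\partial_t v}_\star^2 + E_0(v) + c_1\Lam\norm{v}_{\star\star}^2 + c_2\Lam^2\norm{v}_\star^2$ and prove a clean ODE-type bound $\partial_t\mathcal{E}\le 2\Lam\mathcal{E}+R$; checking the algebra, the completed squares $\norm{\partial_t v-\Lam v}_{\star\star}^2$ and $\norm{\partial_t v-\Lam v}_\star^2$ indeed appear, and the leftover is exactly $-2\Lam E_0(v)$ together with $-\Lam^2\norm{v}_{\star\star}^2-\Lam^3\norm{v}_\star^2$, which Lemma~\ref{lin_en_bound} makes nonpositive precisely because $\Lam$ is the fixed point of Lemma~\ref{lambdas}. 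Two minor remarks: the constants are not merely ``suitable'' but forced to be $c_1=c_2=1$ — with any other choice the completed square either fails to absorb the cross term or leaves a positive remainder that Lemma~\ref{lin_en_bound} cannot cancel — and it is worth noting explicitly that $v(t)\in\Hs$ (needed both for coercivity and for the completion-of-squares step) because $\diverge v = \diverge u - \diverge\bar u = G^2-G^2=0$ and $v\in H^1_0(\Omega)$, not merely from the constraint $\diverge\partial_t v=0$ appearing in~\eqref{second_order}. What your version buys is that the Lyapunov structure is explicit and the role of the fixed-point identity is isolated in a single step; what the paper's buys is that one never has to guess the correct weights and everything follows mechanically from Cauchy. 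The final integration by parts in $s$ and the Young-absorption of the $s=t$ boundary term into $\mathcal{E}(t)$ reproduce the three pieces of $\mathfrak{N}_\mathfrak{G}$ exactly as in the paper.
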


\begin{proof}
Integrating the result of Lemma \ref{lin_en_evolve le} in time, and then applying Lemma \ref{lin_en_bound}, we find that
\begin{equation}\label{j111}
\begin{split}
  & \hal\int_\Omega   \rho  \abs{\dt v(t)}^2+ \int_0^t\int_\Omega \frac{\mu}{2}  \abs{\sg   \dt v }^2
\\ &\quad\le K_0+\int_0^t\left( \int_\Omega \mathfrak{G}^1\cdot\dt v+ \int_\Sigma \mathfrak{G}^3\cdot \dt v\right)
-\hal\left(\int_\Omega\abs{(\bar B\cdot \nabla)   v }^2- \int_{\Sigma} \rj g \abs{   v_3}^2\right)
\\ &\quad\le K_0+ \int_0^t\left( \int_\Omega \mathfrak{G}^1\cdot\dt v+ \int_\Sigma \mathfrak{G}^3\cdot \dt v\right)
+\frac{\lam^2}{2}\int_\Omega \rho \abs{ v }^2
+ \frac{\lam}{2} \int_\Omega \frac{\mu}{2}  \abs{\sg    v }^2 ,
\end{split}
\end{equation}
where
\begin{equation}
  K_0 =  \hal\left(  \int_\Omega\left(  \rho \abs{ \dt v(0) }^2  +\abs{(\bar B\cdot \nabla)   v(0) }^2\right)- \int_{\Sigma} \rj g \abs{   v_3(0)}^2\right).
\end{equation}
For notational simplicity we introduce the norms
\begin{equation}
\norm{v}_\star^2:=\int_\Omega\bar\rho | v|^2\hbox{ and }\norm{v}_{\star\star}^2:= \int_\Omega \frac{\mu}{2}  \abs{\sg    v }^2
\end{equation}
and the corresponding inner-products given by  $\langle \cdot,\cdot\rangle_{\star}$ and $\langle \cdot,\cdot\rangle_{\star\star}$, respectively.  We may then compactly rewrite the inequality \eqref{j111} as
\begin{equation}\label{j4}
\frac{1}{2}\norm{\partial_t  v(t)}_\star^2 + \int_0^t\norm{\partial_t  v(s)}_{\star\star}^2 ds \le K_0
+\frac{\Lam^2}{2}\norm{ v(t) }_\star^2 +  \frac{\Lam}{2}\norm{ v(t)}_{\star\star}^2 + \mathfrak{H}(t),
\end{equation}
where
\begin{equation}
\mathfrak{H}(t)=\int_0^t \left( \int_\Omega \mathfrak{G}^1\cdot\dt v+ \int_\Sigma \mathfrak{G}^3\cdot \dt v\right).
\end{equation}

Integrating in time and using Cauchy's inequality, we may bound
\begin{equation}\label{j222}
\begin{split}
\Lam\norm{v(t)}_{\star\star}^2
&=\Lam\norm{v(0)}_{\star\star}^2+ \Lam\int_0^t 2 \langle
v(s),\partial_t v(s) \rangle_{\star\star}\,ds
\\&\le
\Lam\norm{v(0)}_{\star\star}^2+\int_0^t\norm{\partial_tv(s)}_{\star\star}^2\,ds
+\Lam^2\int_0^t\norm{v(s)}_{\star\star}^2\,ds.
\end{split}
\end{equation}
On the other hand,
\begin{equation}\label{j333}
\Lam\partial_t\norm{v(t)}_{\star}^2=2 \Lam \langle
v(t),\partial_t v(t) \rangle_{\star}\le \norm{\partial_tv(t)}_{\star}^2
+\Lam^2\norm{v(t)}_{\star}^2.
\end{equation}
We may combine \eqref{j222} and \eqref{j333} with \eqref{j4} to derive the differential inequality
\begin{equation}\label{j5}
\partial_t\norm{v(t)}_\star^2+\norm{v(t)}_{\star\star}^2\le K_1+2\Lam \left( \norm{v(t)}_\star^2+\int_0^t\norm{v(s)}_{\star\star}^2 \,ds \right)+\frac{2}{\Lam}\mathfrak{H}(t),
\end{equation}
where
\begin{equation}
K_1=\frac{2 K_0}{\Lam} +2\norm{v(0)}_{\star\star}^2.
\end{equation}
An application of Gronwall's theorem then shows that
\begin{equation}\label{j6}
\norm{v(t)}_\star^2+\int_0^t\norm{v(s)}_{\star\star}^2
\le e^{2\Lam t}\norm{v(0)}_\star^2+\frac{K_1}{2\Lam}( e^{2\Lam
t}-1)+\frac{2}{\Lam}\int_0^t e^{2\Lam
(t-s)}\mathfrak{H}(s)ds.
\end{equation}
Now plugging \eqref{j6} and \eqref{j222} into \eqref{j4}, we find that
\begin{align}\label{j000}
\nonumber\frac{1}{\Lam}\norm{\partial_tv(t)}_\star^2+\norm{v(t)}_{\star\star}^2
&\le K_1+\Lam \norm{v(t)}_\star^2+2\Lam \int_0^t
\norm{v(s)}_{\star\star}^2\,ds+\frac{2}{\Lam}\mathfrak{H}(t)
\\ &\le  e^{2\Lam
t}(2\Lam\norm{v(0)}_\star^2+K_1)+\frac{2}{\Lam}\mathfrak{H}(t)+4\int_0^t e^{2\Lam
(t-s)}\mathfrak{H}(s)ds.
\end{align}

Notice that by integrating by parts in time,
\begin{align}
\nonumber 4\int_0^t e^{2\Lam
(t-s)}\mathfrak{H}(s)ds
&=-\frac{2}{\Lam}\int_0^t \p_t\left(e^{2\Lam
(t-s)}\right)\mathfrak{H}(s)ds\\&=-\frac{2}{\Lam}\mathfrak{H}(t)+e^{2\Lam
 t }\mathfrak{H}(0)+\frac{2}{\Lam}\int_0^t e^{2\Lam
(t-s)} \p_t \mathfrak{H}(s)ds
\end{align}
and
\begin{equation}
\mathfrak{H}(0)=0 \text{ and }\p_t \mathfrak{H} =\int_\Omega \mathfrak{G}^1\cdot\dt v+\int_\Sigma \mathfrak{G}^3\cdot \dt v.
\end{equation}
We then have
\begin{equation}\label{j0001}
\frac{2}{\Lam}\mathfrak{H}(t)+4\int_0^t e^{2\Lam
(t-s)}\mathfrak{H}(s)ds
=\frac{2}{\Lam}\int_0^t  e^{2\Lam
(t-s)} \left(\int_\Omega \mathfrak{G}^1(s)\cdot\dt v (s)+\int_\Sigma \mathfrak{G}^3(s)\cdot \dt v(s) \right) ds.
\end{equation}
Integrating by parts in time again, by the trace theory, we obtain
\begin{equation}\label{j0002}
\begin{split}
&\int_0^t  e^{2\Lam
(t-s)} \left(\int_\Omega \mathfrak{G}^1(s)\cdot\dt v (s)+\int_\Sigma \mathfrak{G}^3(s)\cdot \dt v(s) \right) ds
\\&\quad= \int_\Omega  \mathfrak{G}^1(t)\cdot v (t) - e^{2\Lam
t}  \int_\Omega\mathfrak{G}^1(0)\cdot v (0)+\int_\Sigma \mathfrak{G}^3(t)\cdot v (t)-e^{2\Lam
t}   \int_\Sigma \mathfrak{G}^3(0)\cdot v (0)
\\&\qquad -\int_0^t e^{2\Lam
(t-s)} \left( \int_\Omega\left(\dt \mathfrak{G}^1(s)-2\Lam \mathfrak{G}^1(s)\right)\cdot  v (s)+\int_\Sigma\left(\dt \mathfrak{G}^3(s)-2\Lam \mathfrak{G}^3(s)\right)\cdot  v (s)\right)  ds
\\ &\quad\ls \norm{ \mathfrak{G}^1(t)}_0\norm{v(t)}_0 +\abs{ \mathfrak{G}^3(t)}_{0}\abs{v(t)}_{0}
+e^{2\Lam
t}\left(\norm{ \mathfrak{G}^1(0)}_0\norm{v(0)}_0+\abs{ \mathfrak{G}^3(0)}_{0}\abs{v(0)}_{0}\right)
\\&\qquad + \int_0^t   e^{2\Lam
(t-s)}   \left(\left(\norm{\dt \mathfrak{G}^1(s)}_0+ \norm{\mathfrak{G}^1(s)}_0\right) \norm{v(s)}_0+  \left(\abs{\dt \mathfrak{G}^3(s)}_{0}+ \abs{\mathfrak{G}^3(s)}_{0}\right) \abs{v(s)}_{0}\right)
\\ &\quad\ls \left(\norm{ \mathfrak{G}^1(t)}_0  +\abs{ \mathfrak{G}^3(t)}_{0}\right)\norm{v(t)}_{1}
+e^{2\Lam
t} \left(\norm{ \mathfrak{G}^1(0)}_0 +\abs{ \mathfrak{G}^3(0)}_{0}\right)\norm{v(0)}_{1}
\\&\qquad +  \int_0^t   e^{2\Lam
(t-s)}    \left(\norm{\dt \mathfrak{G}^1(s)}_0+  \norm{\mathfrak{G}^1(s)}_0+\abs{\dt \mathfrak{G}^3(s)}_{0}+  \abs{\mathfrak{G}^3(s)}_{0}\right)\norm{v(s)}_{1} ds .
\end{split}
\end{equation}
Recalling the definitions of $K_0$ and $K_1$, by the trace theory, we have
\begin{equation}\label{j0003}
K_1\ls
\norm{ v(0)}_1^2+\norm{\partial_t v(0)}_0^2 .
\end{equation}
Thus, plugging \eqref{j0001}--\eqref{j0003} into \eqref{j000} and by Korn's and Cauchy's inequalities, we deduce
\begin{equation}
\begin{split}
\norm{v(t)}_{1}^2
  &\ls  e^{2\Lam
t}\left(\norm{ v(0)}_1^2+\norm{\partial_t v(0)}_0^2 +\norm{ \mathfrak{G}^1(0)}_0^2+ \abs{\mathfrak{G}^3(0)}_0^2\right)+ \sup_{0\le s\le t}  \left(\norm{ \mathfrak{G}^1(s)}_0^2+ \abs{\mathfrak{G}^3(s)}_0^2\right)
\\&\quad+ \int_0^t e^{2\Lam
(t-s)}   \left(\norm{\dt \mathfrak{G}^1(s)}_0+\norm{\mathfrak{G}^1(s)}_0+\abs{\dt \mathfrak{G}^3(s)}_{0}+ \abs{\mathfrak{G}^3(s)}_{0}\right)\norm{v(s)}_{1} .
\end{split}
\end{equation}
This yields \eqref{result10} by taking the square root.
\end{proof}

We can now show the estimates for the growth of solutions to  \eqref{linear ho}, which is the main result of this subsection.
\begin{thm} \label{lineargrownth}
Let  $u$ solve \eqref{linear ho}. Then
\begin{equation}\label{result1}
\begin{split}
\norm{u(t)}_{1}
  \ls&  e^{ \Lam
t}\left(\norm{ u(0)}_1 +\norm{\partial_t u(0)}_0 +\mathfrak{N}_G(0)\right)
\\&+ \sup_{0\le s\le t}\mathfrak{N}_G(s)+\sqrt{\int_0^t e^{2\Lam
(t-s)}\mathfrak{N}_G(s)(\norm{u(s)}_{1}+\mathfrak{N}_G(s))ds}
\end{split}
\end{equation}
and
\begin{equation}\label{result123}
\begin{split}
\norm{\eta(t)}_{1}
  \ls & \norm{ \eta(0)}_1 +e^{ \Lam
t}\left(\norm{ u(0)}_1 +\norm{\partial_t u(0)}_0 +\mathfrak{N}_G(0)\right)
\\&+\int_0^t  \sup_{0\le s\le \tau}\mathfrak{N}_G(s)d\tau+\int_0^t\sqrt{\int_0^\tau e^{2\Lam
(\tau-s)}\mathfrak{N}_G(s)(\norm{u(s)}_{1}+\mathfrak{N}_G(s)) ds}d\tau,
\end{split}
\end{equation}
where
\begin{equation}\label{result2}
\mathfrak{N}_G  =  \norm{\dt^2 G^1}_0+\norm{\dt G^1}_0+\norm{
 \dt^3 G^2 }_0+\norm{  \dt^2 G^2 }_1 +\norm{\dt G^2 }_1+\norm{  G^2 }_1+\abs{\dt^2 G^3}_0+\abs{  \dt  G^3}_0.
\end{equation}
\end{thm}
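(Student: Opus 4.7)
The plan is to reduce the inhomogeneous problem \eqref{linear ho} to the divergence-free second-order formulation \eqref{second_order} and invoke Lemma \ref{lineargrownth0}. Since $u$ vanishes on $\Sigma_{m,\ell}$, we have $\int_\Omega G^2=\int_\Omega\diverge u=0$, so Lemma \ref{div11} furnishes, for each $t\ge 0$, a lift $\bar u(t)\in H_0^1(\Omega)$ depending linearly on $G^2(t)$ with $\diverge\bar u = G^2$ and $\norm{\bar u}_r\ls\norm{G^2}_{r-1}$. This linear dependence has the bonus that $\dt^k\bar u$ is the lift of $\dt^k G^2$, so $\norm{\dt^k\bar u}_r\ls\norm{\dt^k G^2}_{r-1}$ as well. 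Setting $v=u-\bar u$ yields $\diverge v=0$, and then differentiating the momentum equation in \eqref{linear ho} in $t$ and using $\dt\eta=u=v+\bar u$ to eliminate $\eta$ produces \eqref{second_order} for $(v,\dt p)$ with forcing \eqref{vG1_def}--\eqref{vG3_def}.

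The second step is to show $\mathfrak{N}_\mathfrak{G}(t)\ls\mathfrak{N}_G(t)$ for all $t\ge 0$, including at $t=0$. Each summand in $\mathfrak{G}^1=\dt G^1-\rho\dt^2\bar u+\mu\Delta\dt\bar u+(\bar B\cdot\nabla)^2\bar u$ is either $\dt G^1$ itself or carries at most two space and two time derivatives of $\bar u$; applying \eqref{dives} gives
\begin{equation}\nonumber
\norm{\mathfrak{G}^1}_0\ls\norm{\dt G^1}_0+\norm{\dt^2 G^2}_1+\norm{\dt G^2}_1+\norm{G^2}_1,
\end{equation}
and analogously $\norm{\dt\mathfrak{G}^1}_0\ls\norm{\dt^2 G^1}_0+\norm{\dt^3 G^2}_0+\norm{\dt^2 G^2}_1+\norm{\dt G^2}_1$. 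For the boundary term $\mathfrak{G}^3$, the trace estimate $\abs{\sg\dt^k\bar u}_0\ls\norm{\dt^k\bar u}_2\ls\norm{\dt^k G^2}_1$ together with $\abs{\dt^k\bar u}_0\ls\norm{\dt^k\bar u}_1\ls\norm{\dt^k G^2}_0$ yields $\abs{\mathfrak{G}^3}_0+\abs{\dt\mathfrak{G}^3}_0\ls\abs{\dt G^3}_0+\abs{\dt^2 G^3}_0+\norm{\dt^2 G^2}_1+\norm{\dt G^2}_1+\norm{G^2}_1$. Collecting everything and comparing with \eqref{result2} gives $\mathfrak{N}_\mathfrak{G}\ls\mathfrak{N}_G$; in particular, $\norm{v(0)}_1+\norm{\dt v(0)}_0\ls\norm{u(0)}_1+\norm{\dt u(0)}_0+\mathfrak{N}_G(0)$.

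With these bounds, Lemma \ref{lineargrownth0} delivers
\begin{equation}\nonumber
\norm{v(t)}_1\ls e^{\Lam t}\bigl(\norm{u(0)}_1+\norm{\dt u(0)}_0+\mathfrak{N}_G(0)\bigr)+\sup_{0\le s\le t}\mathfrak{N}_G(s)+\sqrt{\int_0^t e^{2\Lam(t-s)}\mathfrak{N}_G(s)\,\norm{v(s)}_1\,ds}.
\end{equation}
Converting back via $\norm{u(t)}_1\le\norm{v(t)}_1+\norm{\bar u(t)}_1\le\norm{v(t)}_1+C\mathfrak{N}_G(t)$ and $\norm{v(s)}_1\le\norm{u(s)}_1+C\mathfrak{N}_G(s)$ in the time integral produces \eqref{result1}. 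Finally, the first equation of \eqref{linear ho} gives $\eta(t)=\eta(0)+\int_0^t u(s)\,ds$, so
\begin{equation}\nonumber
\norm{\eta(t)}_1\le\norm{\eta(0)}_1+\int_0^t\norm{u(\tau)}_1\,d\tau,
\end{equation}
and substituting the bound \eqref{result1} for $\norm{u(\tau)}_1$ under the $\tau$-integral yields \eqref{result123}.

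The main technical obstacle will be the bookkeeping in step two: one must check that every derivative of $\bar u$ appearing in $\mathfrak{G}^1,\dt\mathfrak{G}^1,\mathfrak{G}^3,\dt\mathfrak{G}^3$, including the evaluation at $t=0$, can be traded for a norm of $\dt^k G^2$ that actually appears in the list \eqref{result2}, with no ``missing" regularity (for instance, the appearance of $\norm{\dt^3 G^2}_0$ rather than $\norm{\dt^3 G^2}_{-1}$ in \eqref{result2} is used here). Once this accounting is done, the proof is a clean application of Lemma \ref{lineargrownth0} followed by an elementary time integration.
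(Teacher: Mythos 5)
Your proposal is correct and follows essentially the same route as the paper's proof: reduce to the divergence-free second-order formulation \eqref{second_order} via the lift $\bar u$ of Lemma~\ref{div11}, bound $\mathfrak{N}_\mathfrak{G}$ by $\mathfrak{N}_G$, invoke Lemma~\ref{lineargrownth0}, and convert back from $v$ to $u$ and then to $\eta$ via $\dt\eta = u$. Your explicit observation that the lift commutes with $\dt$ (so $\norm{\dt^k\bar u}_r\ls\norm{\dt^k G^2}_{r-1}$) is used implicitly in the paper but is a worthwhile point to make explicit, and your bookkeeping of which $\dt^k G^2$ norms land in \eqref{result2} matches the paper's.
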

\begin{proof}
Since $u= \bar u+v$, by \eqref{result20}, we have
\begin{equation}\label{result1'}
\norm{u(t)}_{1}\le \norm{\bar u(t)}_{1}+\norm{v(t)}_{1}
  \ls  \norm{\bar u(t)}_{1}+e^{ \Lam
t}\left(\norm{ u(0)}_1 +\norm{\partial_t u(0)}_0+\norm{\bar u(0)}_1 +\norm{\partial_t \bar u(0)}_0      \right)
+ \mathfrak{N}_\mathfrak{G}(t),
\end{equation}
where $\mathfrak{N}_\mathfrak{G}(t)$ is defined by \eqref{result20}.
We then estimate $\mathfrak{N}_\mathfrak{G}(t)$. By \eqref{vG1_def}, \eqref{vG3_def} and \eqref{dives}, we have
\begin{equation}
\norm{\mathfrak{G}^1}_0\ls \norm{\dt G^1}_0+\norm{
 \dt^2 \bar u}+\norm{  \dt \bar u}_0 +\norm{ \bar u}_2
 \ls \norm{\dt G^1}_0+\norm{
 \dt^2 G^2 }_0+\norm{  \dt G^2 }_0 +\norm{G^2 }_1,
\end{equation}
and by using additionally the trace theory,
\begin{equation}
\begin{split}
\abs{\mathfrak{G}^3}_{0}&\ls\abs{\dt G^3}_0+\abs{\nabla \dt \bar u}_0+\abs{\nabla \bar u}_0+\abs{\bar u_3}_0.
\\&\ls \abs{\dt G^3}_0+\norm{  \dt \bar u}_2+\norm{  \bar u}_2
\ls \abs{\dt G^3}_0+\norm{  \dt G^2}_1+\norm{  G^2}_1.
\end{split}
\end{equation}
Similarly,
\begin{equation}
\norm{\dt \mathfrak{G}^1}_0\ls \norm{\dt^2 G^1}_0+\norm{
 \dt^3 G^2 }_0+\norm{  \dt^2 G^2 }_0 +\norm{\dt G^2 }_1
\end{equation}
and
\begin{equation}
\abs{\dt\mathfrak{G}^3}_{0}  \ls \abs{\dt^2 G^3}_0+\norm{  \dt^2 G^2}_1+\norm{ \dt G^2}_1.
\end{equation}
This implies
\begin{equation}
\norm{\mathfrak{G}^1 }_0+ \norm{\dt \mathfrak{G}^1 }_0+  \abs{\mathfrak{G}^3 }_{0}+\abs{\dt \mathfrak{G}^3 }_{0}
  \ls \mathfrak{N}_G,
\end{equation}
where $\mathfrak{N}_G$ is defined by \eqref{result2}. To save up the notations, we deduce  from \eqref{result1'} that
\begin{equation}\label{hhjj}
\norm{u(t)}_{1}
  \ls  e^{ \Lam
t}\left(\norm{ u(0)}_1 +\norm{\partial_t u(0)}_0 +\mathfrak{N}_G(0)\right)
+ \sup_{0\le s\le t}\mathfrak{N}_G(s)+\int_0^t e^{2\Lam
(t-s)}\mathfrak{N}_G(s)\norm{v(s)}_{1} ds
\end{equation}
Note that
\begin{equation}
\norm{v}_{1}\le \norm{u}_{1}+\norm{\bar u}_{1}\ls \norm{u}_{1}+\norm{G^2}_{0}\le \norm{u}_{1}+\mathfrak{N}_G,
\end{equation}
then \eqref{result1} follows from \eqref{hhjj}. Note that \eqref{result123} follows by using $\dt\eta=u$ and \eqref{result1}.
\end{proof}

\subsection{Nonlinear energy estimates}

This subsection, the most technical part in the proof of Theorem \ref{maintheorem}, is devoted to the nonlinear energy estimates for the system \eqref{reformulationic} when $\abs{\bar B_3}<\mc$. We recall again the structure of $\eta$ from \eqref{imp1}, which follows from the assumption \eqref{eta00} of $\eta_0$. The analysis here is similar to that of the stable regime when $\abs{\bar B_3}>\mc$ in Section \ref{nonlinear stability}.  The primary difference is that we will use  slightly modified versions of the energy and dissipation functionals in order to handle the fact that the internal interface makes a negative contribution to the original energy and dissipation. For the integer $N\ge 4$, we define the modified energy as
\begin{equation}\label{p_energy_defi}
 \fe{2N} := \sum_{j=0}^{2N}  \ns{\dt^j u}_{4N-2j} + \sum_{j=0}^{2N-1}  \ns{\nabla \dt^j p}_{4N-2j-2}+ \sum_{j=0}^{2N-1}  \as{  \jump{\dt^j p}}_{2n-2j-3/2}
+\ns{ \eta}_{1,4N}
\end{equation}
and the dissipation as
\begin{equation}\label{p_dissipation_defi}
\begin{split}
 \fd{2N} :=&  \sum_{j=0}^{2N}   \ns{\dt^j u}_{4N-2j+1}
+ \sum_{j=0}^{2N-1}  \ns{\nabla \dt^j p}_{4N-2j-1}
 +  \sum_{j=0}^{2N-1}  \as{  \jump{\dt^j p}}_{4N-2j-1/2}+\ns{(\bar B\cdot\nabla)\eta}_{0,4N}.
\end{split}
\end{equation}
Note that
\begin{equation}\label{equivalent}
 \fe{2N}+\f \simeq \se{2N}+\f\text{ and }\fd{2N}+\f \simeq \sd{2N}+\mathcal{J}_{2N}+\f
 \end{equation}

We will derive a priori estimates for solutions $(\eta,u,p)$ to \eqref{reformulationic} in our functional framework, i.e. for solutions satisfying $\fe{2N}$, $\fd{2N}$, $\f<\infty$.  Throughout this section we will assume that
\begin{equation}
\fe{2N}(t)+\f(t)\le \delta^2\le 1
\end{equation}
for some sufficiently small $\delta>0$ and for all $t \in [0,T]$ where $T>0$ is given. We will implicitly allow $\delta$ to be made smaller in each result, but we will reiterate the smallness of $\delta$ in our main result. The main result for the case $\bar B_3\neq 0$ is stated in Theorem \ref{engver}, and the main result for the case $\bar B_3= 0$ is stated in Theorem \ref{enghor}.

\subsubsection{Energy evolution}
In this subsection we derive energy evolution estimates for temporal and horizontal spatial derivatives by using the energy-dissipation structure of the system \eqref{reformulationic}. It follows from modified versions of those in Section \ref{stability evolution} by shifting the negative interface energy onto the right hand side of the estimates. Note that the estimates derived in this subsection holds for any $\bar B$.

 We first estimate the energy evolution of the pure temporal derivatives.
\begin{prop}\label{i_temporal_evolution  Ni}
It holds that
\begin{equation} \label{tem en 2Ni}
\begin{split}
&   \sum_{j=0}^{2N}\left(\norm{\dt^j u(t)}_0^2+\norm{(\bar B\cdot\nabla)\dt^j\eta(t)}_0^2\right)
+ \int_0^t\sum_{j=0}^{2N}\norm{    \dt^j u}_1^2
\\&\quad\ls \fe{2N}(0)+(\fe{2N}(t)+\f(t))^{3/2}+\int_0^t \sqrt{\fe{2N}+\f} (\fd{2N}+\f)+\int_0^t \abs{
 \eta_3}_{0}^2.
\end{split}
\end{equation}
\end{prop}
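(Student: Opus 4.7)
The strategy is to follow the blueprint of Proposition \ref{i_temporal_evolution  N} from the stable regime, since the energy identity \eqref{identity1} itself is valid irrespective of whether $\abs{\bar B_3}$ lies above or below $\mc$. The essential departure is that Lemma \ref{signsign}(1) is no longer available to absorb the indefinite interfacial energy $\int_\Sigma \rj g \abs{\dt^j \eta_3}^2$ into the viscous dissipation; that term must instead be shifted to the right hand side after time integration and estimated by hand, with the $j=0$ contribution kept as the $\int_0^t\abs{\eta_3}_0^2$ term on the right and the $j \ge 1$ contributions absorbed back into the left hand side using the smallness of $\delta$.

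I would first apply $\dt^j$ for $j = 0, 1, \dotsc, 2N$ to \eqref{reformulationic}, test against $\dt^j u$, and integrate by parts exactly as in the derivation of \eqref{identity1}. The pressure--divergence coupling $\int_\Omega \dt^j p\, F^{2,j}$ is handled via the structural identity \eqref{Qstr}--\eqref{Qstr1}, with the top-order case $j = 2N$ requiring an additional integration by parts in time which generates the boundary term $\mathfrak{B}_{2N}(t)$ defined in \eqref{bnbn}. The nonlinear forcing terms $F^{1,j},\ F^{2,j},\ F^{3,j}$ are controlled by Lemma \ref{p_F_estimates} combined with the equivalences \eqref{equivalent}, giving contributions bounded by $\int_0^t \sqrt{\fe{2N}+\f}(\fd{2N}+\f)$. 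Replacing $\abs{\sg_\a \dt^j u}^2$ by $\abs{\sg \dt^j u}^2$ via \eqref{i_te_8}--\eqref{i_te_9}, together with Korn's inequality, converts the viscous term into the dissipation $\int_0^t \sum_j \norm{\dt^j u}_1^2$ on the left. Estimate \eqref{p_F_e_02} yields $\abs{\mathfrak{B}_{2N}(t)} \ls \sqrt{\fe{2N}(t)+\f(t)}\bigl(\fe{2N}(t)+\f(t)\bigr) = (\fe{2N}(t)+\f(t))^{3/2}$, which accounts for the corresponding term on the right hand side of \eqref{tem en 2Ni}.

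The remaining task is to transfer the surface quantities $\int_\Sigma \rj g \abs{\dt^j \eta_3(t)}^2$ that appear after time integration. For $j = 0$, I would use the kinematic identity $\dt \eta_3 = u_3$ on $\Sigma$ together with the fundamental theorem of calculus and Young's inequality with a small parameter $\varepsilon$:
\begin{equation*}
\abs{\eta_3(t)}_0^2 = \abs{\eta_3(0)}_0^2 + 2\int_0^t \int_\Sigma \eta_3 u_3 \le \abs{\eta_3(0)}_0^2 + \frac{C}{\varepsilon}\int_0^t \abs{\eta_3}_0^2 + \varepsilon\int_0^t\norm{u}_1^2,
\end{equation*}
so that small $\varepsilon$ absorbs the last term into the left hand side dissipation and leaves the target $\int_0^t \abs{\eta_3}_0^2$ contribution on the right. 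For $j \ge 1$, I would exploit $\dt^j \eta_3 = \dt^{j-1}u_3$ on $\Sigma$ together with the sharp trace--interpolation bound $\abs{\dt^{j-1}u_3}_0^2 \ls \norm{\dt^{j-1}u}_0 \norm{\dt^{j-1}u}_1$ and the a priori smallness $\norm{\dt^{j-1}u(t)}_1 \ls \sqrt{\fe{2N}(t)+\f(t)} \le \delta$ furnished by the working hypothesis; Young's inequality then absorbs a small multiple of $\norm{\dt^{j-1}u(t)}_0^2$ back into the left hand side and the residual piece merges with $\abs{\mathfrak{B}_{2N}(t)}$ at the $(\fe{2N}(t)+\f(t))^{3/2}$ scale.

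The principal obstacle is precisely this $j \ge 1$ absorption: the pointwise-in-time quantity $\abs{\dt^j \eta_3(t)}_0^2$ sits naturally at the $\fe{2N}(t)+\f(t)$ scale, whereas the right hand side of \eqref{tem en 2Ni} only supplies the strictly smaller $(\fe{2N}(t)+\f(t))^{3/2}$ scale. The smallness $\sqrt{\fe{2N}(t)+\f(t)} \le \delta$ must therefore be used to convert one of the two interpolating factors into the small factor needed to down-scale the estimate, while simultaneously tracking the constants from the coefficient $\rj g$ in \eqref{identity1}, from Korn's inequality, and from the trace interpolation, so that the net absorption coefficient on the left hand side is strictly less than $1$. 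Once this is achieved, summation over $j$ and collection of all contributions yields \eqref{tem en 2Ni}.
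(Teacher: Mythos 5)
Your opening moves --- the energy identity \eqref{identity1}, the pressure--divergence handling via $Q^{2,j}$ and \eqref{Qstr}--\eqref{Qstr1}, the top-order time integration by parts generating $\mathfrak{B}_{2N}$, the nonlinear estimates via Lemma \ref{p_F_estimates} combined with \eqref{equivalent}, and the viscous-symbol replacement \eqref{i_te_8}--\eqref{i_te_9} with Korn's inequality --- all match the paper's proof. Your treatment of $j=0$ is likewise sound and is equivalent to the paper's: by writing the interfacial contribution as the time-integrated coupling $\int_0^t\int_\Sigma \rj g\,\eta_3 u_3$ (which is exactly what your fundamental-theorem-of-calculus identity produces), Cauchy's inequality gives $C_\varepsilon \abs{\eta_3}_0^2 + \varepsilon\norm{u}_1^2$ \emph{inside} the time integral; the $\varepsilon$-term is then absorbed by the dissipation $\int_0^t\norm{u}_1^2$ on the left, precisely as in \eqref{i_te_1m'}.

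The gap is at $j\ge 1$. You propose to control the \emph{pointwise-in-time} quantity $\rj g\abs{\dt^j\eta_3(t)}_0^2 = \rj g\abs{\dt^{j-1}u_3(t)}_0^2$ via trace interpolation $\abs{\dt^{j-1}u_3}_0^2 \ls \norm{\dt^{j-1}u}_0\norm{\dt^{j-1}u}_1$, absorb a small multiple of $\norm{\dt^{j-1}u(t)}_0^2$ into the left-hand side, and claim the residual falls at the $(\fe{2N}(t)+\f(t))^{3/2}$ scale. This does not go through. Young's inequality with any parameter $\epsilon$ leaves a residual of order $\epsilon^{-1}\norm{\dt^{j-1}u(t)}_1^2$, and the only available bound on this pointwise-in-time $H^1$ norm is $\norm{\dt^{j-1}u(t)}_1^2 \ls \fe{2N}(t)+\f(t)$; the dissipation controls it only after time integration, not at fixed $t$. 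Since $\fe{2N}(t)+\f(t) \ge (\fe{2N}(t)+\f(t))^{3/2}$ in the small-energy regime $\fe{2N}+\f \le \delta^2 < 1$, the residual is strictly \emph{larger} than the claimed $(\fe{2N}+\f)^{3/2}$. Substituting the a priori smallness $\norm{\dt^{j-1}u(t)}_1 \le \delta$ first instead yields a bound of order $\delta^2$, a fixed constant independent of the initial data, which would destroy the bootstrap that drives the instability argument. The root obstruction is that the coefficient $\rj g$ is a fixed positive universal constant: any pointwise absorption against $\rho\norm{\dt^{j-1}u(t)}_0^2$ trades against a large multiple of $\norm{\dt^{j-1}u(t)}_1^2$, which simply is not on the left-hand side at fixed time $t$.

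The repair is to treat $j\ge 1$ exactly as you treated $j=0$: stay at the level of the time-integrated coupling $\int_0^t\int_\Sigma\rj g\,\dt^j\eta_3\,\dt^j u_3$ rather than converting to $\frac{d}{dt}\bigl(-\int_\Sigma\rj g\abs{\dt^j\eta_3}^2\bigr)$, which is what the paper does via \eqref{i_ge_ev_0}. Estimating the coupling as in \eqref{i_te_1m'} and absorbing the $\varepsilon\int_0^t\norm{\dt^j u}_1^2$ piece into the dissipation leaves $C_\varepsilon\int_0^t\abs{\dt^j\eta_3}_0^2 = C_\varepsilon\int_0^t\abs{\dt^{j-1}u_3}_0^2 \ls \int_0^t\norm{\dt^{j-1}u}_1^2$, a dissipation integral at the lower order $j-1$. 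Chaining from $j=0$ upward --- substituting the $j=0$ output $\int_0^t\norm{u}_1^2 \ls \text{RHS}$ into the extra term of the $j=1$ estimate, and so on --- yields \eqref{tem en 2Ni}.
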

\begin{proof}
Following the proof of \eqref{i_te_0} in Proposition \ref{i_temporal_evolution N}, in light of \eqref{equivalent}, we can deduce that for $j=0,\dots,2N$,
\begin{equation}\label{iies1}
\begin{split}
&  \int_\Omega \left( \rho\abs{\dt^j u(t)}^2+\abs{(\bar B\cdot\nabla)\dt^j\eta(t)}^2\right)
+ \int_0^t\int_\Omega  \mu \abs{ \sg  \dt^j u}^2
\\&\quad\ls \fe{2N}(0)+(\fe{2N}(t)+\f(t))^{3/2}+\int_0^t \sqrt{\fe{2N}+\f} (\fd{2N}+\f)+\int_0^t\int_{\Sigma} \rj g\dt^j\eta_{3 }  \cdot \dt^j u_{3}.
\end{split}
\end{equation}
For the last term, by the trace theory and Cauchy's inequality, we have
 \begin{equation}\label{i_te_1m'}
 \int_{\Sigma}\rj g
  \partial_t^j \eta_3    \partial_t^j u_3
 \ls\abs{
\partial_t^j \eta_3}_{0} \abs{\partial_t^j u }_{0}
\ls C_\varepsilon  \abs{
\partial_t^j \eta_3}_{0}^2 +\varepsilon\norm{\partial_t^j u}_1^2
 \end{equation}
for any $\varepsilon>0$. Hence, employing Korn's inequality in  together with the estimates \eqref{i_te_1m'}, taking  $\varepsilon$  sufficiently small, we deduce from \eqref{iies1} that
\begin{equation}\label{tempor5}
\begin{split}
&  \norm{\dt^j u(t)}_0^2+\norm{(\bar B\cdot\nabla)\dt^j\eta(t)}_0^2
+ \int_0^t\norm{    \dt^j u}_1^2
\\&\quad\ls \fe{2N}(0)+(\fe{2N}(t)+\f(t))^{3/2}+\int_0^t \sqrt{\fe{2N}+\f} (\fd{2N}+\f)+\int_0^t \abs{
\partial_t^j \eta_3}_{0}^2.
\end{split}
\end{equation}

Now taking $j=0$ in \eqref{tempor5}, we have
\begin{equation}\label{tempor7}
\begin{split}
&  \norm{  u(t)}_0^2+\norm{(\bar B\cdot\nabla) \eta(t)}_0^2
+ \int_0^t\norm{    u}_1^2
\\&\quad\ls \fe{2N}(0)+(\fe{2N}(t)+\f(t))^{3/2}+\int_0^t \sqrt{\fe{2N}+\f} (\fd{2N}+\f)+\int_0^t \abs{
 \eta_3}_{0}^2.
\end{split}
\end{equation}
For $j=1,\dots,2N$, since $\dt\eta=u$, the trace theory shows that
\begin{equation}\label{tempor8}
\abs{
\partial_t^j \eta_3}_{0}^2=\norm{
\partial_t^{j-1} u_3}_{0}^2 \ls \norm{
\partial_t^{j-1} u_3}_{ 1}^2.
\end{equation}
Plugging \eqref{tempor8} into \eqref{tempor5}, we obtain
\begin{equation}\label{tempor9}
\begin{split}
&  \norm{\dt^j u(t)}_0^2+\norm{(\bar B\cdot\nabla)\dt^j\eta(t)}_0^2
+ \int_0^t\norm{    \dt^j u}_1^2
\\&\quad\ls \fe{2N}(0)+(\fe{2N}(t)+\f(t))^{3/2}+\int_0^t \sqrt{\fe{2N}+\f} (\fd{2N}+\f)+\int_0^t \norm{
\partial_t^{j-1} u}_{ 1}^2.
\end{split}
\end{equation}
Hence, by chaining together \eqref{tempor7} and \eqref{tempor9}, we get \eqref{tem en 2Ni}.
\end{proof}

Next, we estimate the energy evolution of the horizontal spatial derivatives.

\begin{prop}\label{p_upper_evolution  N12i}
It holds that
\begin{equation}\label{energy 2Ni}
\begin{split}
&
 \norm{\nabla_\ast u(t)}_{0,4N-1 }^2+\norm{(\bar B\cdot\nabla) \nabla_\ast \eta(t)}_{0,4N-1 }^2
  +\int_0^t \norm{\nabla_\ast u }_{1,4N-1 }^2
   \\
&\quad\lesssim  \fe{2N}(0) +\int_0^t \sqrt{\fe{2N}+\f} (\fd{2N}+\f)+
\int_0^t \abs{ \eta_3}_{4N- 1/2}^2 .
  \end{split}
  \end{equation}
 \end{prop}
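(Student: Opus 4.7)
The plan is to mimic the argument for Proposition \ref{p_upper_evolution  N12} from the stable regime, keeping track of the destabilizing interface contribution instead of trying to absorb it. First, for each $\alpha\in\mathbb{N}^2$ with $\alpha_1+\alpha_2\ge 1$ and $|\alpha|\le 4N$, I apply $\partial^\alpha$ to \eqref{perturb} and take the $L^2$-inner product with $\partial^\alpha u$. The boundary conditions on $\Sigma_{m,\ell}$ and the flatness of $\Sigma$ allow me to integrate by parts just as in \eqref{p1p11110}, yielding the identity
\begin{equation*}
\tfrac{1}{2}\frac{d}{dt}\left(\int_\Omega\rho|\partial^\alpha u|^2+|(\bar B\cdot\nabla)\partial^\alpha\eta|^2-\int_{\Sigma}\rj g|\partial^\alpha\eta_3|^2\right)+\int_\Omega\tfrac{\mu}{2}|\sg\partial^\alpha u|^2=\mathcal{R}_\alpha,
\end{equation*}
where $\mathcal{R}_\alpha$ is the usual nonlinear remainder from $G^1,G^2,G^3$.

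The nonlinear remainder is handled by the same derivative-transfer trick as in \eqref{i_de_14}--\eqref{i_de_15}: one horizontal derivative is moved off the highest-order factor, then Lemma \ref{p_G_estimates} (the $n=2N$ bounds) combined with the equivalence \eqref{equivalent} is applied. After summing over $\alpha$, this produces $|\sum_\alpha\mathcal{R}_\alpha|\ls\sqrt{\fe{2N}+\f}\,(\fd{2N}+\f)$, which integrated in time is exactly the nonlinear term on the right of \eqref{energy 2Ni}.

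The key point of departure from the stable analysis is the interface term $-\int_\Sigma\rj g|\partial^\alpha\eta_3|^2$: when $|\bar B_3|<\mc$, Lemma \ref{signsign}~(1) fails, so this term cannot be absorbed into the magnetic energy. Instead I integrate the energy identity in time and rewrite the value at time $t$ using $\dt\eta=u$:
\begin{equation*}
\rj g|\partial^\alpha\eta_3(t)|_0^2=\rj g|\partial^\alpha\eta_3(0)|_0^2+2\rj g\int_0^t\int_\Sigma\partial^\alpha\eta_3\,\partial^\alpha u_3.
\end{equation*}
Since $\alpha_1+\alpha_2\ge 1$, I split $\alpha=\gamma+(\alpha-\gamma)$ with $|\gamma|=1$ and integrate by parts tangentially on $\Sigma$ to transfer $\partial^\gamma$ onto $u_3$. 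The $H^{1/2}$--$H^{-1/2}$ duality and trace theory then give
\begin{equation*}
\Big|\int_\Sigma\partial^\alpha\eta_3\,\partial^\alpha u_3\Big|\le|\partial^{\alpha-\gamma}\eta_3|_{1/2}\,|\partial^{\alpha+\gamma}u_3|_{-1/2},
\end{equation*}
and after summation over $\alpha$ and Cauchy--Schwarz this is controlled by $|\eta_3|_{4N-1/2}\,\norm{\nabla_\ast u}_{1,4N-1}$.

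Summing the energy identities over $\alpha$ and using Korn's inequality to bound the viscous integral below by $\norm{\nabla_\ast u}_{1,4N-1}^2$, I apply Young's inequality to the boundary cross-term: the $\varepsilon\int_0^t\norm{\nabla_\ast u}_{1,4N-1}^2$ piece is absorbed into the dissipation on the left, while the surviving $C_\varepsilon\int_0^t|\eta_3|_{4N-1/2}^2$ is exactly the interface term in \eqref{energy 2Ni}; the remaining initial contributions are bounded by $\fe{2N}(0)$. The main obstacle I anticipate is the half-derivative bookkeeping at the top order: one must verify that the transferred $|\partial^{\alpha+\gamma}u_3|_{-1/2}$ is genuinely covered by $\norm{\nabla_\ast u}_{1,4N-1}$ (rather than requiring an extra horizontal derivative of $u$) and that no $|\eta_3|_{4N}$ or $|\eta_3|_{4N+1/2}$ is spuriously generated on the right --- this is exactly why the natural boundary norm appearing in the statement is $|\eta_3|_{4N-1/2}$.
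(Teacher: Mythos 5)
Your proposal is correct and essentially mirrors the paper's proof: the paper likewise derives the intermediate estimate \eqref{es_00i} keeping the interface product $\int_0^t\int_\Sigma\rj g\,\partial^\alpha\eta_3\,\partial^\alpha u_3$ on the right, bounds it by $C_\varepsilon\as{\eta_3}_{4N-1/2}+\varepsilon\ns{\partial^\alpha u}_1$ using trace theory and Cauchy, absorbs the $\varepsilon$-term via Korn, and sums over $\alpha$. The only (harmless, slightly roundabout) variation is at the interface term: you integrate by parts tangentially to shift $\partial^\gamma$ onto $u_3$, pair $\abs{\partial^{\alpha-\gamma}\eta_3}_{1/2}\abs{\partial^{\alpha+\gamma}u_3}_{-1/2}$, and then must shift one derivative back off $u_3$ to land in $\norm{\nabla_\ast u}_{1,4N-1}$, whereas the paper in \eqref{m6} simply pairs $\abs{\partial^\alpha\eta_3}_{-1/2}\abs{\partial^\alpha u_3}_{1/2}$ directly with $\eta_3$ on the negative-Sobolev side, which yields the same bound $\as{\eta_3}_{4N-1/2}$ without the detour.
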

\begin{proof}
Following the proof of \eqref{p_u_e_00} in Proposition \ref{p_upper_evolution  N12}, we can deduce that for $\al\in \mathbb{N}^{2}$ so that $\al_1+\al_2\ge 1$ and $|\al|\le 4N$,
\begin{equation}\label{es_00i}
\begin{split}
 &  \int_\Omega \rho \abs{\p^\al  u(t) }^2  +\abs{(\bar B\cdot \nabla)\partial^\alpha  \eta(t) }^2 + \int_0^t\int_\Omega \mu  \abs{\sg \p^\al  u }^2
 \\&\quad\ls \fe{2N}(0) +\int_0^t\sqrt{\fe{2N}+\f} (\fd{2N}+\f)+\int_0^t\int_{\Sigma} \rj g  \partial^\alpha  \eta_3 \partial^\alpha  u_3.
\end{split}
\end{equation}
For the last term in \eqref{es_00i}, by the trace theory and Cauchy's inequality, we have
\begin{equation}\label{m6}
\int_{\Sigma} \rj g\pa^\al \eta_3 \pa^\al u_3\ls\abs{\partial^\alpha\eta_3}_{-1/2}\abs{\partial^\alpha u_{3}}_{1/2}
\ls
C_\varepsilon \abs{ \eta_3}_{4N- 1/2}^2+\varepsilon\norm{
\partial^\alpha u }_{1}^2
\end{equation}
for any $\varepsilon>0$. Hence, employing Korn's inequality   in  together with the estimates \eqref{m6}, taking  $\varepsilon$  sufficiently small, we deduce from \eqref{es_00i} that
\begin{align}\label{ls0}
 \nonumber& \norm{\partial^\alpha   u (t)}_0^2+\norm{(\bar B\cdot \nabla)\partial^\alpha  \eta(t)}_0^2
  +\int_0^t  \norm{\partial^\alpha   u}_1^2\,ds
 \\
&\quad\lesssim  \fe{2N}(0) +\int_0^t\sqrt{\fe{2N}+\f} (\fd{2N}+\f)+
  \int_0^t \abs{ \eta_3}_{4N -1/2}^2\,ds.
 \end{align}
Hence, summing over such $\al$, we get \eqref{energy 2Ni}.
\end{proof}

We now estimate the energy evolution that recovers the estimates of $\eta$.

\begin{prop}\label{p_upper_evolution  N'132i}
It holds that
\begin{equation}\label{p_u_e_00'132i}
\begin{split}
& \norm{\eta(t)}_{1,4N}^2+\int_0^t\norm{(\bar B\cdot \nabla)  \eta }_{0,4N}^2
\\&\quad\ls \fe{2N}(0) +\norm{u(t)}_{0,4N}^2+\int_0^t\sqrt{\fe{2N}+\f} (\fd{2N}+\f)+
  \int_0^t\left(\norm{u}_{0,4N}^2+\abs{\eta_3 }_{ 4N }^2\right).
\end{split}
\end{equation}
\end{prop}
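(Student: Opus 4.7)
The proof will closely parallel that of Proposition \ref{p_upper_evolution  N'132} in the stable regime, with the difference that the interfacial gravity term, which no longer contributes to the energy with a favorable sign when $|\bar B_3| < \mc$, must be moved to the right-hand side and absorbed into a $\int_0^t |\eta_3|_{4N}^2$ integral. I would start by applying $\partial^\alpha$ with $\alpha \in \mathbb{N}^2$, $|\alpha| \le 4N$, to the second equation of the perturbed formulation \eqref{perturb}, taking the $L^2$ inner product with $\partial^\alpha \eta$, and integrating by parts. The boundary conditions $\eta = 0$ on $\Sigma_{m,\ell}$ and $\jump{\eta}=0$ on $\Sigma$ are preserved by the horizontal derivative $\partial^\alpha$, which legitimizes the integration by parts and (later) a Korn-type bound. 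The divergence identity $\diverge \eta = \Phi$ with $\Phi = \diverge \Psi$ from \eqref{phhicon}--\eqref{Qstr2} and the structural boundary conditions \eqref{Qstr111} furnish the mechanism for handling the pressure contributions at this regularity.

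Using $\dt \eta = u$, the time-derivative-of-velocity factor in the resulting identity yields a total derivative plus a residue, giving
\begin{equation*}
\frac{d}{dt}\!\left(\int_\Omega\rho\, \partial^\alpha u\cdot\partial^\alpha\eta + \tfrac12\int_\Omega\tfrac{\mu}{2}|\sg\partial^\alpha\eta|^2\right) + \int_\Omega|(\bar B\cdot\nabla)\partial^\alpha\eta|^2 = \int_\Omega\rho|\partial^\alpha u|^2 + \int_\Sigma \rj g|\partial^\alpha\eta_3|^2 + \mathcal{R}^\alpha,
\end{equation*}
where $\mathcal{R}^\alpha$ collects the nonlinear contributions from $G^1,G^2,G^3$, and $\Phi$. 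Integrating in time, the boundary term at time $t$ in the cross product is handled by Cauchy's inequality, $\int_\Omega\rho\,\partial^\alpha u(t)\cdot\partial^\alpha\eta(t) \le C\|u(t)\|_{0,4N}^2 + \varepsilon\|\eta(t)\|_{0,4N}^2$, and the small $\eta$-term is absorbed into the left-hand side via Korn's inequality (which controls $\|\partial^\alpha\eta\|_1$ by $\|\sg\partial^\alpha\eta\|_0$ thanks to the preserved boundary conditions). The $\int_\Omega\rho|\partial^\alpha u|^2$ term contributes $\int_0^t\|u\|_{0,4N}^2$, and the destabilizing surface integral becomes the admissible $\int_0^t|\eta_3|_{4N}^2$.

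The nonlinear residues $\mathcal{R}^\alpha$ are handled exactly as in the stable case: for $\alpha \ne 0$ one writes $\alpha = \gamma+(\alpha-\gamma)$ with $|\gamma|=1$ and integrates by parts one horizontal derivative to redistribute regularity, then invokes Lemmas \ref{p_G_estimates} and \ref{p_G_estimates''} (specifically \eqref{p_G_e_001} and \eqref{p_G_e_001''}) to bound the products by $\sqrt{\fe{2N}+\f}\,(\fd{2N}+\f)$ in view of the equivalence \eqref{equivalent}; for $\alpha = 0$, the pressure term $\int_\Omega p\,\Phi$ is treated by the structural identity $\Phi=\diverge\Psi$ together with \eqref{Qstr111}, producing $-\int_\Sigma \jump{p}\Psi_3 - \int_\Omega \nabla p\cdot\Psi$, both of which are estimated via low-regularity norms and Lemma \ref{p_G_estimates''}. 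Summing over $\alpha$ with $|\alpha|\le 4N$ and again invoking Korn to upgrade $\sum_\alpha \|\sg\partial^\alpha\eta(t)\|_0^2$ to $\|\eta(t)\|_{1,4N}^2$ delivers \eqref{p_u_e_00'132i}. The main technical obstacle is ensuring the top-order pressure contribution $\int_\Omega\partial^\alpha p\,\partial^\alpha \Phi$ does not spoil the small-energy closure: the key is that one horizontal derivative can always be shifted off the pressure, so the surviving pressure norm is $\|\nabla_\ast p\|_{4N-1}$, which is controlled by $\mathcal{J}_{2N} \ls \fd{2N}$, while the remaining $\Phi$ factor, being quadratic in $\eta$, contributes a small prefactor from $\sqrt{\fe{2N}+\f}$.
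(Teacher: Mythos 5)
Your proposal is correct and follows essentially the same route as the paper: apply $\partial^\alpha$ for $|\alpha|\le 4N$, test against $\partial^\alpha\eta$ as in Proposition \ref{p_upper_evolution  N'132} (using $\Phi=\diverge\Psi$ and the preserved boundary conditions for the pressure term), integrate in time, move the destabilizing $\int_\Sigma\rj g|\partial^\alpha\eta_3|^2$ and $\int_\Omega\rho|\partial^\alpha u|^2$ contributions to the right, treat the boundary-in-time cross term $\int_\Omega\rho\,\partial^\alpha u(t)\cdot\partial^\alpha\eta(t)$ by Cauchy and absorb the small $\eta$-term via Korn, and sum over $\alpha$. The paper's own proof is terser—it cites \eqref{p1p1111224} and then invokes Korn and Cauchy—but your elaboration (including the observation that one horizontal derivative can always be shifted off $p$ so that only $\|\nabla_\ast p\|_{4N-1}\ls\fd{2N}$ survives) reconstructs exactly the intended argument.
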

\begin{proof}
Following the proof of \eqref{p_u_e_00'132} in Proposition \ref{p_upper_evolution  N'132}, we can deduce that for $\al\in \mathbb{N}^{2}$ so that $ |\al|\le 4N$,
\begin{equation} \label{p1p111123}
\begin{split}
  & \int_\Omega \mu   \abs{\sg  \p^\al  \eta(t)}^2 +\int_0^t\int_{\Omega}  \abs{(\bar B\cdot \nabla)\partial^\alpha  \eta }^2
\\&\quad  \lesssim  \fe{2N}(0)
   +\int_0^t \sqrt{\fe{2N}+\f} (\fd{2N}+\f)
   \\&\qquad-\int_\Omega  \rho  \partial^\alpha  u (t)\cdot \partial^\alpha  \eta(t)+
  \int_0^t\int_\Omega  \rho  \abs{\partial^\alpha  u}^2+ \int_0^t\int_{\Sigma} \rj g \abs{\partial^\alpha  \eta_3}^2   .
\end{split}
\end{equation}
Employing Korn's and Cauchy's inequalities, and then summing over such $\al$, we obtain \eqref{p_u_e_00'132i}.
\end{proof}

To conclude the energy evolution estimates, we define the horizontal energy
\begin{equation}
 \feb{2N}:= \sum_{j=0}^{2N}\left(\norm{\dt^j u}_{0}^2+\norm{(\bar B\cdot\nabla)\dt^j\eta}_{0}^2\right)
+\norm{\nabla_\ast u}_{0,4N-1 }^2+\norm{(\bar B\cdot\nabla) \nabla_\ast \eta}_{0,4N-1 }^2+ \norm{\eta}_{1,4N}^2
\end{equation}
and the corresponding dissipation
\begin{equation}
 \fdb{2N}:=\sum_{j=0}^{2N}\norm{    \dt^j u}_{1}^2+\norm{\nabla_\ast u}_{1,4N-1 }^2+\norm{(\bar B\cdot \nabla)  \eta }_{0,4N}^2.
\end{equation}
Note that
\begin{equation}\label{equivalent12}
 \feb{2N} \simeq \seb{2N} \text{ and }\fdb{2N}=\sdb{2N}.
 \end{equation}
We have the following.
\begin{prop}\label{conclusioni}
It holds that
\begin{equation}\label{ededed}
\begin{split}
  \feb{2N}(t)+\int_0^t\fdb{2N}
 \ls &\fe{2N}(0) +(\fe{2N}(t)+\f(t))^{3/2}
 \\&+\int_0^t \sqrt{\fe{2N}+\f} (\fd{2N}+\f) +
  \int_0^t \abs{\eta_3 }_{4N}^2 .
  \end{split}
\end{equation}
\end{prop}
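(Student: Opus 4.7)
The plan is to obtain \eqref{ededed} as a direct linear combination of the three energy evolution estimates already established, namely Propositions \ref{i_temporal_evolution  Ni}, \ref{p_upper_evolution  N12i}, and \ref{p_upper_evolution  N'132i}. I would add Proposition \ref{i_temporal_evolution  Ni}, Proposition \ref{p_upper_evolution  N12i}, and $\epsilon$ times Proposition \ref{p_upper_evolution  N'132i}, where $\epsilon>0$ is a single universal constant chosen sufficiently small at the end, so that the left-hand side assembles into a positive multiple of $\feb{2N}(t)+\int_0^t\fdb{2N}$ and the one problematic term $\norm{u(t)}_{0,4N}^2$ (with its time integral) on the right of Proposition \ref{p_upper_evolution  N'132i} can be absorbed back into the left-hand side.

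For the left-hand side bookkeeping, the temporal contributions $\sum_{j=0}^{2N}(\norm{\dt^j u(t)}_0^2+\norm{(\bar B\cdot\nabla)\dt^j\eta(t)}_0^2)+\int_0^t\sum_{j=0}^{2N}\norm{\dt^j u}_1^2$ come from Proposition \ref{i_temporal_evolution  Ni}, the horizontal-spatial contributions $\norm{\nabla_\ast u(t)}_{0,4N-1}^2+\norm{(\bar B\cdot\nabla)\nabla_\ast\eta(t)}_{0,4N-1}^2+\int_0^t\norm{\nabla_\ast u}_{1,4N-1}^2$ come from Proposition \ref{p_upper_evolution  N12i}, and the $\epsilon$-weighted contribution $\epsilon\norm{\eta(t)}_{1,4N}^2+\epsilon\int_0^t\norm{(\bar B\cdot\nabla)\eta}_{0,4N}^2$ comes from Proposition \ref{p_upper_evolution  N'132i}. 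Together these match, term-by-term, the definitions of $\feb{2N}(t)$ and $\int_0^t\fdb{2N}$.

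The step requiring any care is the absorption of $\norm{u(t)}_{0,4N}^2$ and $\int_0^t\norm{u}_{0,4N}^2$. From the definitions one has $\norm{u}_{0,4N}^2\ls\norm{u}_0^2+\norm{\nabla_\ast u}_{0,4N-1}^2$, which is controlled by $\feb{2N}(t)$; similarly $\norm{u}_{0,4N}^2\ls\norm{u}_1^2+\norm{\nabla_\ast u}_{1,4N-1}^2\ls\fdb{2N}$. The $\epsilon$-multiplied copies of these two terms are therefore each bounded by $\epsilon$ times the left-hand side already produced, and for $\epsilon$ fixed once and for all sufficiently small, they move to the left-hand side at the cost of shrinking the implicit constant.

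Finally, the three interface terms $\int_0^t\abs{\eta_3}_0^2$, $\int_0^t\abs{\eta_3}_{4N-1/2}^2$, and $\epsilon\int_0^t\abs{\eta_3}_{4N}^2$ coming from the three propositions are each dominated by $\int_0^t\abs{\eta_3}_{4N}^2$ via monotonicity of the Sobolev scale on $\Sigma$, yielding the single surface term on the right of \eqref{ededed}. The remaining right-hand side contributions $\fe{2N}(0)$, $(\fe{2N}(t)+\f(t))^{3/2}$ (which appears only in Proposition \ref{i_temporal_evolution  Ni}), and $\int_0^t\sqrt{\fe{2N}+\f}(\fd{2N}+\f)$ pass through the sum unchanged and produce the remaining pieces of \eqref{ededed}. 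No serious obstacle is expected; this proposition is essentially a synthesis step, and the only genuine technical point is the one-time choice of a small $\epsilon$ that simultaneously absorbs both the pointwise and integrated copies of the $\norm{u}_{0,4N}^2$ excess.
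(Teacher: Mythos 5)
Your proof is correct and fills in precisely what the paper omits: the paper's own proof of this proposition is a one-line remark that it follows from "a suitable linear combination" of Propositions \ref{i_temporal_evolution  Ni}--\ref{p_upper_evolution  N'132i}, and your write-up spells out that linear combination, the $\epsilon$-weighting of the third proposition, the absorption of the excess $\norm{u}_{0,4N}^2$ and $\int_0^t\norm{u}_{0,4N}^2$ terms into $\feb{2N}(t)$ and $\int_0^t\fdb{2N}$ respectively, and the unification of the three surface terms under $\int_0^t\abs{\eta_3}_{4N}^2$ — all consistent with the intended argument.
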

\begin{proof}
A suitable linear combination of the results of Propositions \ref{i_temporal_evolution Ni}--\ref{p_upper_evolution  N'132i} gives the desired estimates.
\end{proof}

\subsubsection{Full energy estimates when $\bar B_3\neq0$}

We now improve the energy evolution estimates to derive the energy-dissipation estimates for the case $\bar B_3\neq0$. The conclusion is the following.
\begin{thm}\label{engver}
Assume $\bar B_3\neq 0$. For any $\varepsilon>0$, there exists $C_\varepsilon>0$ so that
\begin{equation}\label{non-horizontal es}
\begin{split}
&\feb{2N}(t)
+ \varepsilon( \fe{2N}(t)+ \f(t))+  \int_0^t\left( \fdb{2N}
+ \varepsilon( \fd{2N}+ \f)\right)
 \\&\quad\le C_\varepsilon \left( \fe{2N}(0) +  \f(0)\right) + \varepsilon \int_0^t \feb{2N} +
 C_\varepsilon \int_0^t\as{\eta_3 }_{0} .
  \end{split}
\end{equation}
\end{thm}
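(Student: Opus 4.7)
The strategy is to combine Proposition \ref{conclusioni} with an $\varepsilon$-weighted Stokes-regularity upgrade, and then close the remaining interface trace $\int_0^t\as{\eta_3}_{4N}$ by a Sobolev interpolation. Proposition \ref{conclusioni} already supplies
\begin{equation*}
\feb{2N}(t)+\int_0^t\fdb{2N}\ls \fe{2N}(0)+(\fe{2N}+\f)^{3/2}(t)+\int_0^t\sqrt{\fe{2N}+\f}(\fd{2N}+\f)+\int_0^t\as{\eta_3}_{4N}.
\end{equation*}
Under the standing smallness $\fe{2N}+\f\le \delta^2$, the two nonlinear terms above reduce to $\delta(\fe{2N}+\f)(t)$ and $\delta\int_0^t(\fd{2N}+\f)$, both of which will be absorbable once the LHS contains $\varepsilon$-weighted copies of $\fe{2N}+\f$ and $\fd{2N}+\f$, provided $\delta=\delta(\varepsilon)$ is chosen small relative to $\varepsilon$.

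To produce those $\varepsilon$-weighted terms, I would re-run the Stokes-regularity scheme of Section \ref{nonlinear stability} applied to the shifted variable $w=u+(\bar B_3^2/\mu)\eta$. Since $\bar B_3\neq 0$, the one-phase Dirichlet estimate on each $\Omega_\pm$ used in Propositions \ref{p_upper_evolution  N'}, \ref{e2nic} and \ref{p_f_bound} applies essentially verbatim, because the only boundary traces it invokes, $\as{u}_{4N-1/2}$ and $\as{\eta}_{4N-1/2}$ (cf.\ \eqref{boure1}-\eqref{boure2}), remain controlled by $\fdb{2N}$ via the Poincar\'e-type inequality from the proof of Lemma \ref{criticalb} applied to pure horizontal derivatives of $\eta$. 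Using the equivalence \eqref{equivalent}, the scheme delivers dissipation, energy, and top-order bounds for $\fd{2N}$, $\fe{2N}$ and $\f$ in terms of $\fdb{2N}$, $\feb{2N}$ and initial data, modulo $\delta$-small nonlinear remainders. Weighting these three inequalities by a small $\varepsilon$, integrating in time, and adding to the Proposition \ref{conclusioni} estimate produces the full LHS of \eqref{non-horizontal es} together with an $\varepsilon\int_0^t\fdb{2N}$ contribution that is absorbed back into $\int_0^t\fdb{2N}$ on the left and an initial-data remainder $C_\varepsilon(\fe{2N}(0)+\f(0))$ on the right.

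The last and most delicate step is to convert $\int_0^t\as{\eta_3}_{4N}$ into the two right-hand-side pieces of \eqref{non-horizontal es}. Because $\ns{\eta_3}_{1,4N}$ is a summand of $\feb{2N}$ and the trace embedding yields $\as{\eta_3}_{4N+1/2}\ls \ns{\eta_3}_{1,4N}$, Sobolev interpolation on $\Sigma\simeq \mathbb{R}^2$ gives, for any $\epsilon>0$,
\begin{equation*}
\as{\eta_3}_{4N}\le \epsilon\as{\eta_3}_{4N+1/2}+C_\epsilon\as{\eta_3}_0\ls \epsilon\feb{2N}+C_\epsilon\as{\eta_3}_0.
\end{equation*}
Taking $\epsilon$ proportional to $\varepsilon$ then turns $\int_0^t\as{\eta_3}_{4N}$ into exactly the terms $\varepsilon\int_0^t\feb{2N}$ and $C_\varepsilon\int_0^t\as{\eta_3}_0$ appearing on the right of \eqref{non-horizontal es}.

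The main obstacle I anticipate is bookkeeping rather than novelty: the Stokes chain must be weighted with nested powers of $\varepsilon$ so that the $\fe{2N}$-estimate and the $\f$-estimate do not re-absorb each other (the $\f$-bound involves $\feb{2N}$ on its right, which itself is tied to $\fdb{2N}$), and the interpolation must be executed at precisely the regularity $4N+1/2$ so that the trace of $\eta_3$ lands cleanly inside $\feb{2N}$. Incidentally, this is also the step that fails when $\bar B_3=0$, since there is no Stokes-for-$w$ identity to produce $\norm{\eta}_{4N}$, which is why the $\bar B_3=0$ case must be handled separately in Theorem \ref{enghor} via the interweaving argument outlined in the introduction.
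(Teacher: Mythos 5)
Your proposal is correct and follows essentially the same route as the paper: start from Proposition \ref{conclusioni}, reuse the Stokes-regularity chain of Section \ref{nonlinear stability} (the bounds \eqref{109}, \eqref{eses11}, Proposition \ref{e2nic}) for the shifted variable $w=u+(\bar B_3^2/\mu)\eta$ to upgrade to $\fe{2N}$, $\fd{2N}$ and $\f$, take an $\varepsilon$-weighted combination, and close the interface trace via the interpolation $\as{\eta_3}_{4N}\ls\varepsilon\feb{2N}+C_\varepsilon\as{\eta_3}_0$, exactly as in the paper's \eqref{e3bound}. The only minor slips are citations (the Poincar\'e-type trace control behind \eqref{boure2} is Lemma \ref{app pb}/\ref{tracethb}, not Lemma \ref{criticalb}) and a stray $4N-1/2$ where $4N+1/2$ is meant, but neither affects the argument.
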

\begin{proof}
Following the proof of the estimates \eqref{109} in Proposition \ref{p_f_bound}, we can deduce that
\begin{equation}\label{109i}
\begin{split}
  \f(t)
+  \int_0^t\left(\f +\mathcal{J}_{2N}\right)  \ls  \f(0)+\int_0^t& \left(\feb{2N}+ \fdb{2N}+\norm{\p_t u}_{4N-1}^2\right.
 \\&\ \ + {( \fe{2N}+\f  )}(\fd{2N} +\f)\Big).
 \end{split}
\end{equation}
Combining \eqref{109i} and the estimates \eqref{eses11} with $n=2N$ in Proposition \ref{p_upper_evolution  N'}, yields that
\begin{equation}\label{109i2}
\begin{split}
  \f(t)
+  \int_0^t\left(\fd{2N}+\f\right) \ls  \f(0)+\int_0^t &\left(\feb{2N}   +\bar{\mathcal{D}}_{2N}+  \norm{  u   }_{4N-1 }^2\right.
 \\&\ \ +  {( \fe{2N}+\f  )}(\fd{2N} +\f)\Big).
 \end{split}
\end{equation}
Using the Sobolev interpolation and Young's inequality, and since $\fe{2N}+\f \le \delta^2$ is small, we may improve \eqref{109i2} to be
\begin{equation}\label{109i222}
\begin{split}
  \f(t)
+  \int_0^t\left(\fd{2N}+\f\right)  &\ls  \f(0)+\int_0^t \left(\feb{2N}   +\bar{\mathcal{D}}_{2N}+  \norm{  u   }_{0}^2 \right)
\\&\ls  \f(0)+\int_0^t \left(\feb{2N}+ \bar{\mathcal{D}}_{2N}\right) .
\end{split}
\end{equation}

By \eqref{109i222}, we may deduce from the estimates \eqref{ededed} in Proposition \ref{conclusioni} that
\begin{equation}
\begin{split}
  \feb{2N}(t)+\int_0^t\fdb{2N}
 \ls &\fe{2N}(0) +\delta(\fe{2N}(t)+\f(t))
 \\&+\delta \left(\f(0)+\int_0^t \left(\feb{2N}+ \bar{\mathcal{D}}_{2N}\right) \right) +
  \int_0^t \as{\eta_3 }_{ 4N } ,
\end{split}
\end{equation}
which implies
\begin{equation}\label{ver3}
  \feb{2N}(t)+\int_0^t\fdb{2N}
 \ls \fe{2N}(0) +  \f(0)+\delta(\fe{2N}(t)+\f(t)) +\delta\int_0^t \feb{2N}  +
  \int_0^t \as{\eta_3 }_{ 4N} .
\end{equation}

A suitable linear combination of \eqref{109i222} and \eqref{ver3} yields that, since $\as{\eta_3 }_{ 4N}\ls \ns{\eta_3 }_{1, 4N} \ls \feb{2N}$,
\begin{equation}\label{109i23}
 \feb{2N}(t)+ \f(t)
+  \int_0^t\left(\fd{2N} +\f\right)   \ls  \fe{2N}(0) +\f(0)+\delta(\fe{2N}(t)+\f(t)) +\int_0^t \feb{2N}   .
\end{equation}
On the other hand, by Proposition \ref{e2nic}, we have
\begin{equation}\label{e2ni}
\fe{2N}  \lesssim
\feb{2N}  +\f+ \delta(\fe{2N}+\f),
\end{equation}
which implies
\begin{equation}\label{e2ni2}
\fe{2N}  \lesssim
\feb{2N}  +\f .
\end{equation}
We then deduce from \eqref{109i23} that
\begin{equation}\label{109i2345}
 \fe{2N}(t)+ \f(t)
+  \int_0^t\left(\fd{2N}+\f \right)  \ls  \fe{2N}(0) +\f(0)+\int_0^t \feb{2N} .
\end{equation}

Consequently, \eqref{ver3}+\eqref{109i2345}$\times\varepsilon$ implies that
\begin{equation}\label{hhqq}
\begin{split}
&\seb{2N}(t)
+ \varepsilon( \fe{2N}(t)+ \f(t))+  \int_0^t\left( \fdb{2N}
+ \varepsilon( \fd{2N}+ \f)\right)
 \\&\quad\ls \fe{2N}(0) +  \f(0)+\delta(\fe{2N}(t)+\f(t))+(\delta+\varepsilon)\int_0^t \feb{2N} +
  \int_0^t \as{\eta_3 }_{ 4N} .
  \end{split}
\end{equation}
By the Sobolev interpolation, the trace theory and Young's inequality, we have
\begin{equation}\label{e3bound}
\as{\eta_3 }_{ 4N}  \ls \varepsilon \as{\eta_3 }_{4N+1/2}   +
  {\varepsilon^{-1}} \as{\eta_3 }_{0}  \ls \varepsilon \norm{\eta_3 }_{1,4N}^2  +
  {\varepsilon^{-1}} \as{\eta_3 }_{0}\ls \varepsilon \feb{2N}  +
  {\varepsilon^{-1}} \as{\eta_3 }_{0}.
\end{equation}
We can then think $\delta\ll \varepsilon$, by \eqref{e3bound}, to deduce from \eqref{hhqq} that
\begin{equation}\label{non-horizontal es22}
\begin{split}
&\seb{2N}(t)
+ \varepsilon( \fe{2N}(t)+ \f(t))+  \int_0^t\left( \fdb{2N}
+ \varepsilon( \fd{2N}+ \f)\right)
 \\&\quad\ls \fe{2N}(0) +  \f(0) + \varepsilon \int_0^t \feb{2N}  +
 {\varepsilon^{-1}} \int_0^t\as{\eta_3 }_{0} .
  \end{split}
\end{equation}
Thus,  \eqref{non-horizontal es} follows from \eqref{non-horizontal es22}.
\end{proof}

\subsubsection{Full energy estimates when $\bar B_3=0$}
We now improve the energy evolution estimates to derive the energy-dissipation estimates for the case $\bar B_3=0$. The conclusion is the following.
\begin{thm}\label{enghor}
Assume $\bar B_3=0$. For any $\varepsilon>0$, there exist $C_\varepsilon>0$ and an energy $\mathcal{F}_{2N}^\varepsilon$ with $\f\le \mathcal{F}_{2N}^\varepsilon\le  C_\varepsilon \f$ such that
\begin{equation}\label{horizontal es}
\begin{split}
 &{\mathfrak{E}}_{2N}(t)  +\mathcal{F}_{2N}^\varepsilon(t)+ \varepsilon\int_0^t\fd{2N}
       \\&\quad\le C_\varepsilon \left(\fe{2N}(0)+\mathcal{F}_{2N}^\varepsilon(0)\right)
       + \varepsilon \int_0^t \left(\fe{2N}  +\mathcal{F}_{2N}^\varepsilon\right)
  + C_\varepsilon
  \int_0^t \as{\eta_3 }_{0}  .
   \end{split}
\end{equation}
\end{thm}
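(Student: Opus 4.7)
The plan is to mirror the structure of Theorem \ref{engver} --- start from the horizontal energy evolution of Proposition \ref{conclusioni}, strengthen the dissipation via a Stokes regularity argument, upgrade the horizontal energy to the full energy, and control $\f$ --- but the fundamental obstacle is that the auxiliary variable $w = u + \frac{\bar B_3^2}{\mu}\eta$ driving Proposition \ref{p_upper_evolution  N'} degenerates when $\bar B_3 = 0$. The substitute idea, outlined in the introduction, is to apply the Stokes regularity for $(u,p)$ directly and interwind the horizontal and vertical derivatives of $\eta$ via $\dt \eta = u$, trading a vertical derivative for a horizontal one at each step.

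First I would invoke Proposition \ref{conclusioni} to obtain $\feb{2N}(t) + \int_0^t \fdb{2N} \ls \fe{2N}(0) + (\fe{2N}+\f)^{3/2} + \int_0^t \sqrt{\fe{2N}+\f}(\fd{2N}+\f) + \int_0^t \as{\eta_3}_{4N}$, and split the boundary term by Sobolev interpolation and the trace theory as $\as{\eta_3}_{4N} \ls \varepsilon \ns{\eta_3}_{1,4N} + C_\varepsilon \as{\eta_3}_0 \ls \varepsilon \feb{2N} + C_\varepsilon \as{\eta_3}_0$, the first piece being absorbed. Next, the pressure evolution follows as in the derivation of \eqref{109i222} in Proposition \ref{p_f_bound}, since the time-derivative Stokes estimates \eqref{eses11} based on \eqref{stokesp1} do not require $\bar B_3 \neq 0$; this produces a modified functional $\mathcal{F}_{2N}^\varepsilon$ with $\f \le \mathcal{F}_{2N}^\varepsilon \le C_\varepsilon \f$ and
\[
\mathcal{F}_{2N}^\varepsilon(t) + \int_0^t (\fd{2N} + \f) \ls \mathcal{F}_{2N}^\varepsilon(0) + \int_0^t (\feb{2N} + \fdb{2N}),
\]
after absorbing the nonlinear $\sqrt{\fe{2N}+\f}(\fd{2N}+\f)$ terms by the smallness of $\delta$.

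The hard part will be the dissipation estimate for $\norm{u}_{4N}$, $\norm{\nabla p}_{4N-2}$, $\abs{\jump{p}}_{4N-3/2}$, and the mixed-regularity $\eta$-contributions. With $\bar B_3 = 0$, I would apply the two-phase Stokes regularity (Lemma \ref{cStheorem}) directly to
\[
-\mu \Delta u + \nabla p = -\rho \dt u + (\bar B_\ast \cdot \nabla_\ast)^2 \eta + G^1, \quad \diverge u = G^2,
\]
with $\jump{pI - \mu\sg u}e_3 = \rj g \eta_3 e_3 + G^3$ on $\Sigma$ and $u = 0$ on $\Sigma_{m,\ell}$. Applying $\pa_\ast^\al$ with $\abs{\al} \le 4N - j$ and using Stokes regularity to gain $j$ derivatives yields
\[
\ns{u}_{j,4N-j} + \ns{\nabla p}_{j-2, 4N-j} \ls \ns{(\bar B_\ast\cdot\nabla_\ast)^2\eta}_{j-2,4N-j} + \ns{\dt u}_{j-2, 4N-j} + \as{\eta_3}_{4N-3/2} + (\text{nonlinear}).
\]
Since $\ns{(\bar B_\ast \cdot \nabla_\ast)^2 \eta}_{j-2, 4N-j} \le \ns{\eta}_{j-1, 4N-j+1}$ --- one vertical index traded for a horizontal one --- an induction on $j = 2, \dotsc, 4N$, together with $\dt\eta = u$ to recycle lower-order $\eta$-estimates, closes the dissipation estimate modulo a residual $\ns{\eta}_{1, 4N-1}$ term. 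The latter is only bounded by $\fe{2N}$ (not by $\fd{2N}$), which is precisely the origin of the $\varepsilon \int_0^t \fe{2N}$ term on the right of \eqref{horizontal es}. The boundary jump $\as{\jump{p}}_{4N-3/2}$ is recovered from the third component of the jump condition, $\jump{p} = 2\mu \jump{\pa_3 u_3} + \rj g \eta_3 + G^3_3$ on $\Sigma$, together with the trace theory.

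Combining the horizontal energy evolution, the $\mathcal{F}_{2N}^\varepsilon$ evolution, and the improved dissipation, and using a Proposition \ref{e2nic}-type recovery argument to upgrade $\feb{2N}$ to $\fe{2N}$, these ingredients assemble (after repeated Young's inequality absorptions and the smallness $\fe{2N}+\f \le \delta^2$) into a differential inequality of the form
\[
\dtt(\fe{2N} + \mathcal{F}_{2N}^\varepsilon) + \varepsilon \fd{2N} \ls \varepsilon(\fe{2N} + \mathcal{F}_{2N}^\varepsilon) + C_\varepsilon \as{\eta_3}_0.
\]
Integrating in time then yields \eqref{horizontal es}.
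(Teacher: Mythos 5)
Your proposal correctly identifies the central idea --- that the auxiliary variable $w = u + \frac{\bar B_3^2}{\mu}\eta$ degenerates, so one must apply Stokes regularity directly to $(u,p)$ via the system \eqref{stokesp12} and interwind vertical and horizontal derivatives of $\eta$ through $\dt\eta = u$, trading a vertical derivative for a horizontal one at each level. That is exactly the strategy of the paper's Step 3. However, there is a genuine gap in the middle of the argument.

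Your claim that the $\mathcal{F}_{2N}^\varepsilon$ estimate ``follows as in the derivation of \eqref{109i222}'' is wrong. The derivation of \eqref{109i222} in the non-horizontal case passes through \eqref{109i}, which is obtained by ``following the arguments leading to \eqref{eses22},'' and \eqref{eses22} is precisely the $w$-trick estimate \eqref{hihoh} that fails when $\bar B_3=0$. The time-derivative Stokes estimates \eqref{eses11} (which you correctly note survive) control $\dt^j u$ for $j\ge1$; they do not by themselves control the purely spatial norm $\f=\ns{\eta}_{4N+1}$. This is the crux of the horizontal case, and your proposal hand-waves over it.

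What you are missing is the explicit construction of the weighted functional: the paper sets $\mathfrak{F}_{2N}^\varepsilon := \sum_{j=0}^{2N}\varepsilon^{2j}\norm{\eta}_{2j+1,4N-2j}^2$ and $\mathcal{F}_{2N}^\varepsilon := \varepsilon^{-4N}\mathfrak{F}_{2N}^\varepsilon$. The geometric weights $\varepsilon^{2j}$ are not cosmetic: Cauchy's inequality in $\dt\norm{\eta}_{2j+1,4N-2j}^2\ls\varepsilon\norm{\eta}_{2j+1,4N-2j}^2+\varepsilon^{-1}\norm{u}_{2j+1,4N-2j}^2$ produces a factor $\varepsilon^{-1}$, the Stokes estimate replaces $\norm{u}_{2j+1,4N-2j}^2$ by $\norm{\eta}_{2j-1,4N-2(j-1)}^2+\dots$, and the weight $\varepsilon^{2j}$ is precisely what is needed so that $\varepsilon^{2j}\cdot\varepsilon^{-1}\norm{\eta}_{2j-1,\cdot}^2=\varepsilon\cdot\varepsilon^{2(j-1)}\norm{\eta}_{2j-1,\cdot}^2$ telescopes against the $(j-1)$-term. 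Without this weighting the induction does not close. Your description of the leftover as a single term $\ns{\eta}_{1,4N-1}$ also oversimplifies: after telescoping the actual residuals in \eqref{jmj} are $\varepsilon^{-1}\fdb{2N}$, $\varepsilon\norm{u}_{4N-1}^2$, and $\varepsilon\delta^2\f$, and these require several further rounds of Sobolev interpolation ($\norm{u}_{4N-1}^2\ls\varepsilon^{4N}\fe{2N}+\varepsilon^{-4N}\fdb{2N}$) and absorptions using $\delta\le\varepsilon^{8N}$ before the final differential inequality emerges.
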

\begin{proof}
We divide the proof into several steps.

{\bf Step 1: control $\fd{2N}$.}

Since $\bar B_3=0$, the equations \eqref{perturb} yields
\begin{equation}\label{stokesp12}
\begin{cases}
-\mu\Delta  u+\nabla   p =-\rho\dt  u+(\bar B_\ast\cdot\nabla_\ast)^2  \eta+ G^1& \text{in }
\Omega
\\ \diverge  u=  G^2 & \text{in }
\Omega
\\ \Lbrack \dt^j u\Rbrack =0,\ \Lbrack pI-\mu\mathbb{D}  u \Rbrack e_3= \rj g  \eta_3 e_3+  G^3 &\hbox{on }\Sigma
\\  u=0 &\text{on }\Sigma_{m,\ell}.
\end{cases}
\end{equation}
Applying the time derivatives $\dt^{j},\ j=1,\dots,2N-1$ to \eqref{stokesp12} and then applying the elliptic estimates \eqref{cSresult} of Lemma \ref{cStheorem} with $r=4N-2j+1 \ge 3$, following the proof of the estimates \eqref{eses11} in Proposition \ref{p_upper_evolution  N'}, we may deduce
\begin{equation}\label{eses00i}
\begin{split}
 & \sum_{j=1}^{2N}\norm{ \dt^j u  }_{4N-2j+1}^2+  \sum_{j=1}^{2N-1}\norm{\nab \dt^j  p  }_{4N-2j-1}^2+\sum_{j=1}^{2N-1}\abs{  \jump{\dt^j p}  }_{4N-2j-1/2}^2
 \\&\quad \ls \norm{  u   }_{4N- 1 }^2    +\fdb{2N}+ {( \fe{2N}+\f  )}(\fd{2N} +\f).
\end{split}
\end{equation}
Also, applying the elliptic estimates \eqref{cSresult} of Lemma \ref{cStheorem} with $r=4N- 1 $ to \eqref{stokesp12}, we obtain
\begin{equation}\label{eses00i22}
\begin{split}
&\norm{ u  }_{ 4N+1}^2+   \norm{\nab   p  }_{4N-1}^2+\abs{  \jump{ p}  }_{4N -1/2}^2
 \\&\quad \ls \norm{   \eta }_{4N-1,2}^2+ \norm{\p_t u}_{4N-1}^2
 +  {( \fe{2N}+\f  )}(\fd{2N} +\f) ,
   \end{split}
\end{equation}
Combining \eqref{eses00i} and \eqref{eses00i22} yields
\begin{equation}
\fd{2N} \ls \norm{  u   }_{4N- 1 }^2    +\norm{   \eta }_{4N-1,2}^2+ \fdb{2N}
+ {( \fe{2N}+\f  )}(\fd{2N} +\f),
\end{equation}
which implies, since $ \fe{2N}+\f \le \delta^2$ is small,
\begin{equation}\label{109i223}
\fd{2N} \ls \norm{  u   }_{4N- 1 }^2    +\norm{   \eta }_{4N-1,2}^2+ \fdb{2N}
+ \delta^2  \f
 \ls \norm{  u   }_{4N- 1 }^2    + \fdb{2N}
+   \f.
\end{equation}
Using the Sobolev interpolation and Young's inequality, we may improve \eqref{109i223} to be
\begin{equation}\label{deses}
\fd{2N} \ls \norm{  u   }_{0 }^2       + \fdb{2N}
+   \f\ls   \fdb{2N}
+   \f.
\end{equation}

{\bf Step 2: improve the energy evolution estimates.}

It follows from the estimates \eqref{ededed} in Proposition \ref{conclusioni} and \eqref{deses} that
\begin{equation}
  \feb{2N}(t)+\int_0^t\fdb{2N}
 \ls \fe{2N}(0) +\delta(\fe{2N}(t)+\f(t)) +\delta\int_0^t   (\fdb{2N}+\f) +
  \int_0^t \abs{\eta_3 }_{4N}^2,
\end{equation}
which implies
\begin{equation} \label{bareseses}
  \feb{2N}(t)+\int_0^t\fdb{2N} \ls \fe{2N}(0) +\delta(\fe{2N}(t)+\f(t)) +\delta\int_0^t    \f  +
  \int_0^t \abs{\eta_3 }_{4N}^2.
\end{equation}

{\bf Step 3: control $\eta$.}

Note that we can not hope to improve estimates of vertical derivatives of $\eta$ in either dissipation or energy as the non-horizontal case by employing the Stokes regularity for the quantity $(w,p)$. We will need to use mutually the Stokes system for $(u,p)$ and the equation $\dt\eta=u$. First, we directly obtain, by Cauchy's inequality,
\begin{equation}\label{eses11i20-}
\dt \norm{ \eta  }_{1,4N}^2\ls \norm{\eta}_{1 ,4N}\norm{ u  }_{1,4N}\ls \varepsilon\norm{\eta}_{1,4N}^2+ {\varepsilon^{-1}}
 \fdb{2N},
\end{equation}
and for $j=1,\dots,2N$,
\begin{equation}\label{eses11i20}
\dt \norm{ \eta  }_{2j+1,4N-2j}^2\ls \norm{ u  }_{2j+1,4N-2j}\norm{\eta}_{2j+1,4N-2j}\ls \varepsilon\norm{\eta}_{2j+1,4N-2j}^2+ {\varepsilon^{-1}}\norm{ u  }_{2j+1,4N-2j}^2.
\end{equation}
On the other hand, for $j=1,\dots,2N$, we apply the elliptic estimates \eqref{cSresult} of Lemma \ref{cStheorem} to \eqref{stokesp12} to find that
\begin{equation}\label{eses11i2}
\begin{split}
&\norm{ u  }_{2j+1,4N-2j}^2+   \norm{\nab   p  }_{2j-1,4N-2j}^2+\abs{  \jump{ p}  }_{4N -1/2}^2
 \\&\quad\ls \norm{(\bar B_\ast\cdot\nabla_\ast)^2 \eta }_{2j-1,4N-2j}^2+ \norm{\p_t u}_{4N-1}^2
 + \abs{  \eta_3 }_{2j-1/2+4N-2j}^2+  {( \fe{2N}+\f  )}(\fd{2N} +\f)
      \\&\quad\ls \norm{   \eta }_{2j-1,4N-2(j-1)}^2+ \norm{  u   }_{4N- 1 }^2   +\fdb{2N}+  \delta^2 \f.
   \end{split}
\end{equation}
Here in the last inequality we have used \eqref{eses00i} and \eqref{deses}. In light of \eqref{eses11i2},  \eqref{eses11i20} implies that
\begin{equation}\label{eio}
\begin{split}
\dt \norm{ \eta  }_{2j+1,4N-2j}^2& \ls\varepsilon\norm{ \eta  }_{2j+1,4N-2j}^2+ {\varepsilon^{-1}}\norm{   \eta }_{2j-1,4N-2(j-1)}^2 \\&\quad+{\varepsilon^{-1}} \left(\norm{  u}_{4N-1}^2
   +\fdb{2N}+  \delta^2 \f\right) .
   \end{split}
\end{equation}
Multiplying \eqref{eio} by $\varepsilon^{2j}$ and summing over $j=1,\dots,2N$, we get
\begin{equation}\label{eses11i4}
\begin{split}
\dt \sum_{j=1}^{2N}\varepsilon^{2j}  \norm{ \eta  }_{2j+1,4N-2j}^2 &\ls \varepsilon \sum_{j=1}^{2N}\varepsilon^{2j} \norm{ \eta  }_{2j+1,4N-2j}^2+  \sum_{j=1}^{2N} \varepsilon^{2j-1} \norm{   \eta }_{2j-1,4N-2(j-1)}^2
\\&\quad+ \sum_{j=1}^{2N} \varepsilon^{2j-1} \left(\norm{  u}_{4N-1}^2
   +\fdb{2N}+  \delta^2 \f\right)
   \\&\ls \varepsilon \sum_{j=1}^{2N}\varepsilon^{2j} \norm{ \eta  }_{2j+1,4N-2j}^2+  \varepsilon\sum_{j=1}^{2N} \varepsilon^{2(j-1)} \norm{   \eta }_{2j-1,4N-2(j-1)}^2
\\&\quad+  \varepsilon  \left(\norm{  u}_{4N-1}^2
   +\fdb{2N}+  \delta^2 \f\right)
   \\&\ls \varepsilon \sum_{j=0}^{2N}\varepsilon^{2j} \norm{ \eta  }_{2j+1,4N-2j}^2 +   \varepsilon  \left(\norm{  u}_{4N-1}^2
   +\fdb{2N}+  \delta^2 \f\right).
   \end{split}
\end{equation}
We now define
\begin{equation}
 \mathfrak{F}_{2N}^\varepsilon:= \sum_{j=0}^{2N}\varepsilon^{2 j  }  \norm{ \eta  }_{2j+1,4N-2j}^2.
\end{equation}
Then \eqref{eses11i20-} and \eqref{eses11i4} yields
\begin{equation}\label{jmj}
\dt \mathfrak{F}_{2N}^\varepsilon\ls \varepsilon \mathfrak{F}_{2N}^\varepsilon+ {\varepsilon^{-1}}
 \fdb{2N} +  \varepsilon   \left(\norm{  u}_{4N-1}^2
 +  \delta^2 \f\right).
\end{equation}
Integrating \eqref{jmj} in time, by \eqref{bareseses}, we obtain
\begin{equation}\label{bareseses2}
\begin{split}
 \mathfrak{F}_{2N}^\varepsilon(t) &\ls\mathfrak{F}_{2N}^\varepsilon(0)+ \varepsilon \int_0^t \mathfrak{F}_{2N}^\varepsilon+ {\varepsilon^{-1}}\fe{2N}(0)+ \frac{\delta}{\varepsilon}(\fe{2N}(t)+\f(t))
  \\&\quad+  \frac{\delta}{\varepsilon}
 \int_0^t  \f +  {\varepsilon^{-1}}
  \int_0^t \abs{\eta_3 }_{4N}^2  +  \varepsilon \int_0^t   \norm{ u}_{4N-1}^2 .
   \end{split}
\end{equation}

{\bf Step 4: control $\fe{2N}$.}

By Proposition \ref{e2nic}, we have
\begin{equation}
\fe{2N}  \lesssim
\feb{2N}  +\f+ \delta(\fe{2N}+\f) ,
\end{equation}
which implies
\begin{equation}\label{bareseses3}
\fe{2N}  \lesssim
\feb{2N}  +\f .
\end{equation}

{\bf Step 5: chaining estimates.}

Summing up \eqref{bareseses} and \eqref{bareseses2}, by \eqref{bareseses3} and noting that $\f\le \varepsilon^{-4N} \mathfrak{F}_{2N}$, we deduce that
\begin{equation}\label{eweq}
\begin{split}
 &\mathfrak{F}_{2N}^\varepsilon(t)+\varepsilon^{4N}\fe{2N}(t) + \feb{2N}(t)+ \int_0^t\fdb{2N}
\\&\quad\ls\mathfrak{F}_{2N}^\varepsilon(0)+ \varepsilon \int_0^t \mathfrak{F}_{2N}^\varepsilon+ {\varepsilon^{-1}}\fe{2N}(0)+ \frac{\delta}{\varepsilon^{4N+1}}\left(\mathfrak{F}_{2N}^\varepsilon(t)+\varepsilon^{4N}\fe{2N}(t)\right)
  \\&\qquad+  \frac{\delta}{\varepsilon^{4N+1}}
 \int_0^t  \mathfrak{F}_{2N}^\varepsilon +  {\varepsilon^{-1}}
  \int_0^t \abs{\eta_3 }_{4N}^2  +  \varepsilon \int_0^t   \norm{ u}_{4N-1}^2 .
   \end{split}
\end{equation}
We can think $ \delta\le \varepsilon^{4N+2}$ and hence \eqref{eweq} reduces to
\begin{equation}\label{pppewe}
\begin{split}
 &\mathfrak{F}_{2N}^\varepsilon(t)+\varepsilon^{4N}\fe{2N}(t) + \feb{2N}(t)+ \int_0^t\fdb{2N}
     \\&\quad\ls\mathfrak{F}_{2N}^\varepsilon(0)+ {\varepsilon^{-1}}\fe{2N}(0)+ \varepsilon \int_0^t \mathfrak{F}_{2N}^\varepsilon
   +  {\varepsilon^{-1}}
  \int_0^t \abs{\eta_3 }_{4N}^2  +  \varepsilon \int_0^t   \norm{ u}_{4N-1}^2 .
   \end{split}
\end{equation}
Using the Sobolev interpolation and Young's inequality, we have
\begin{equation}
\norm{ u}_{4N-1}^2\ls \varepsilon^{4N}\norm{u}_{4N}^2+\varepsilon^{-4N}\norm{u}_{0}^2\ls \varepsilon^{4N}\fe{2N}+\varepsilon^{-4N}\fdb{2N}.
\end{equation}
We may then improve \eqref{pppewe} to be, by using \eqref{bareseses} again,
\begin{equation}\label{eiieo}
\begin{split}
 &\mathfrak{F}_{2N}^\varepsilon(t)+\varepsilon^{4N}\fe{2N}(t) + \feb{2N}(t)+ \int_0^t\fdb{2N}
     \\&\quad\ls\mathfrak{F}_{2N}^\varepsilon(0)+ {\varepsilon^{-1}}\fe{2N}(0)+ \varepsilon \int_0^t \left(\mathfrak{F}_{2N}^\varepsilon
     +\varepsilon^{4N}\fe{2N}\right)
  +  {\varepsilon^{-1}}
  \int_0^t \as{\eta_3 }_{ 4N}  +  \varepsilon^{1-4N}\int_0^t \fdb{2N}
  \\&\quad\ls\mathfrak{F}_{2N}^\varepsilon(0)+\varepsilon^{1-4N}\fe{2N}(0)+ \varepsilon \int_0^t \left(\mathfrak{F}_{2N}^\varepsilon
     +\varepsilon^{4N}\fe{2N}\right)
  \\&\qquad+\frac{\delta}{\varepsilon^{8N-1}}\left(\mathfrak{F}_{2N}^\varepsilon(t)+\varepsilon^{4N}\fe{2N}(t)\right) +  \frac{\delta}{\varepsilon^{8N-1}}\int_0^t  \mathfrak{F}_{2N}^\varepsilon
  + \varepsilon^{1-4N}
  \int_0^t \as{\eta_3 }_{ 4N}  .
   \end{split}
\end{equation}
We can think further $\delta\le \varepsilon^{8N}$ and hence \eqref{eiieo} reduces to
\begin{equation}\label{lfaf}
\begin{split}
 &\mathfrak{F}_{2N}^\varepsilon(t)+\varepsilon^{4N}\fe{2N}(t) + \feb{2N}(t)+ \int_0^t\fdb{2N}
       \\&\quad\ls\mathfrak{F}_{2N}^\varepsilon(0)+\varepsilon^{1-4N}\fe{2N}(0)+ \varepsilon \int_0^t \left(\mathfrak{F}_{2N}^\varepsilon
     +\varepsilon^{4N}\fe{2N}\right)
  + \varepsilon^{1-4N}
  \int_0^t \as{\eta_3 }_{ 4N}   .
   \end{split}
\end{equation}
By the Sobolev interpolation, the trace theory and Young's inequality, we have
\begin{equation}
\begin{split}
\as{\eta_3 }_{ 4N} &\le \varepsilon^{-8N}\as{\eta_3 }_{0} +\varepsilon^{8N}\as{\eta_3 }_{ 4N+1/2}
\\&\le \varepsilon^{-8N}\as{\eta_3 }_{0} +\varepsilon^{8N}\norm{\eta_3 }_{4N+1}^2
\le \varepsilon^{-8N}\as{\eta_3 }_{0} +\varepsilon^{4N}\mathfrak{F}_{2N}^\varepsilon.
\end{split}
\end{equation}
Hence, we deduce from \eqref{lfaf} that
\begin{equation}\label{gesr}
\begin{split}
 &\mathfrak{F}_{2N}^\varepsilon(t)+\varepsilon^{4N}\fe{2N}(t) + \feb{2N}(t)+ \int_0^t\fdb{2N}
       \\&\quad\ls\mathfrak{F}_{2N}^\varepsilon(0)+\varepsilon^{1-4N}\fe{2N}(0)+ \varepsilon \int_0^t \left(\mathfrak{F}_{2N}^\varepsilon
     +\varepsilon^{4N}\fe{2N}\right)
  + \varepsilon^{1-12N}
  \int_0^t \as{\eta_3 }_{0}  .
   \end{split}
\end{equation}
Defining $\mathcal{F}^\varepsilon_{2N}:=\varepsilon^{-4N}\mathfrak{F}_{2N}^\varepsilon(t)$, we know that $\f\le \mathcal{F}^\varepsilon_{2N}\le C_\varepsilon \f$ and we deduce \eqref{horizontal es} from \eqref{gesr} by using \eqref{deses} again.
\end{proof}

\subsection{Bootstrap argument}\label{proof}
In this subsection, we will combine the estimates derived in the previous sections to prove Theorem \ref{maintheorem}, by employing the Guo-Strauss bootstrap argument \cite{GS}.

For notational convenience, we denote
\begin{equation}\label{Uvector}
U:= ( \eta,  u,  p),
\end{equation}
and we define the triple norm $\Lvert3\cdot\Rvert3_{00}$  by
\begin{equation}\label{norm3}
 \Lvert3  U \Rvert3_{00} := \sqrt{\mathcal{E}_{2N} +\mathcal{F}_{2N}}\simeq \sqrt{\fe{2N} +\mathcal{F}_{2N}}
\end{equation}
for an integer $N\ge 4$.

We now restate the main results of the previous sections in our new notation.
\begin{prop}\label{prop1}
Assume $\abs{\bar B_3}<\mc$.
Let the norm $\Lvert3\cdot\Rvert3_{00}$ be given by \eqref{norm3}. Then we have the following.
\begin{enumerate}
\item There is a growing mode $U^\star:=(\eta^\star, u^\star, p^\star)$
 satisfying $\abs{ \eta^\star_3}_0= 1$,
$\Lvert3 U^\star \Rvert3_{00}=C_1<\infty$, and
$e^{\lam t}U^\star$ is the solution to \eqref{perturb_linear}.

\item Suppose that $U(t)$ is the solution to \eqref{reformulationic}. There exists $C_2> 0$ so that for any $\iota>0$,
\begin{align}\label{bbbf}
\nonumber \abs{ \eta_3(t)-\iota e^{\lam t}\eta^\star_3}_{0} & \le
C_2 e^{ \Lam t}\left(\Lvert3U(0)-\iota U^\star  \Rvert3_{00}+\Lvert3U(0)  \Rvert3_{00}^2\right) +C_2\int_0^t  \sup_{0\le s\le \tau}\Lvert3U(s)  \Rvert3_{00}^2 d\tau
 \\ &\quad+C_2\int_0^t\sqrt{\int_0^\tau  e^{2\Lam
(\tau-s)} \Lvert3 U (s)  \Rvert3_{00}^2 (\Lvert3U (s)-\iota e^{\lam s} U^\star   \Rvert3_{00}+\Lvert3 U (s)  \Rvert3_{00}^2)ds}d\tau.
\end{align}

\item  Suppose that $U(t)$ is the solution to \eqref{reformulationic}. There exists a small constant $\delta$ such that  if $\Lvert3U(t) \Rvert3_{00}\le\delta$ for all $t \in [0,T]$, then there exists $C_\delta> 0$ so
that the following inequality holds for $t \in [0,T]$:
\begin{equation}\label{energyes}
  \Lvert3 U(t) \Rvert3_{00}^2 \le  C_\delta\Lvert3 U(0)  \Rvert3_{00}^2
   +  \lambda\int_0^t\Lvert3U(s)  \Rvert3_{00}^2\,ds
   + C_\delta\int_0^t \abs{  \eta(s)}_{0}^2\,ds.
 \end{equation}
\end{enumerate}
 \end{prop}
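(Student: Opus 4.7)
The plan is to assemble the three assertions directly from theorems already established earlier in the paper, with only short glue arguments between them.

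For assertion (1), I would invoke Theorem \ref{growingmode}: there exists a solution $(\tilde\eta, \tilde u, \tilde p)(t,x) = e^{\Lam t}(\eta^\star, u^\star, p^\star)(x)$ to the linearized problem \eqref{perturb_linear}, where $(\eta^\star, u^\star, p^\star)$ is obtained from the variational solution of Proposition \ref{w_soln_2}. That proposition also guarantees the profile is smooth when restricted to each of $\Omega_\pm$, so in particular $\fe{2N}(U^\star) + \f(U^\star) < \infty$ and hence $\Lvert3 U^\star \Rvert3_{00} =: C_1 < \infty$. The remark after Lemma \ref{negativein} gives $\eta_3^\star \not\equiv 0$ on $\Sigma$, so after dividing $(\eta^\star, u^\star, p^\star)$ by $\abs{\eta_3^\star}_0$ the normalization $\abs{\eta_3^\star}_0 = 1$ holds.

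For assertion (2), I would set $V(t) := U(t) - \iota e^{\Lam t}U^\star$ and observe that, since $\iota e^{\Lam t}U^\star$ solves the homogeneous linear problem \eqref{perturb_linear}, the difference $V$ satisfies exactly the inhomogeneous linear system \eqref{linear ho} with source terms $(G^1,G^2,G^3)$ given by \eqref{G1_def}--\eqref{G3_def} evaluated on $U$. Applying estimate \eqref{result123} of Theorem \ref{lineargrownth} to $V$ and using the trace bound $\abs{V_3}_0 \ls \norm{V_{\eta}}_1$ yields \eqref{bbbf}, provided we control $\mathfrak{N}_G(t) \ls \Lvert3 U(t)\Rvert3_{00}^2$. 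The latter is the main technical point, but it is routine: each $G^i$ is at least quadratic in $(\eta, u, p)$ and their derivatives, so every summand of $\mathfrak{N}_G$ (see \eqref{result2}) splits as $XY$ with $X$ estimated in a low-derivative norm by $\sqrt{\fe{2N}+\f}$ and $Y$ in a norm at or below the level controlled by $\sqrt{\fe{2N}+\f}$; this is the same calculation performed in Lemma \ref{p_G_estimates} (with the slight adjustment that we need $\dt^2 G^2$ and $\dt^3 G^2$, which still fall within $\fe{2N}$ for $N\ge 4$).

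For assertion (3), I would branch on the sign of $\bar B_3$. If $\bar B_3 \ne 0$, apply Theorem \ref{engver}; if $\bar B_3 = 0$, apply Theorem \ref{enghor}. In both cases the estimate reads, after using $\feb{2N} \le C(\fe{2N}+\f)$ (resp.\ $\mathcal{F}_{2N}^\varepsilon \simeq \f$),
\begin{equation*}
\varepsilon(\fe{2N}(t)+\f(t)) \le C_\varepsilon(\fe{2N}(0)+\f(0)) + \varepsilon C_\ast\int_0^t(\fe{2N}+\f)\,ds + C_\varepsilon\int_0^t \as{\eta_3(s)}_0\,ds,
\end{equation*}
with a universal constant $C_\ast$ independent of $\varepsilon$. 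Dividing by $\varepsilon$ and recalling $\Lvert3 U\Rvert3_{00}^2 \simeq \fe{2N}+\f$ gives \eqref{energyes} with $C_\delta := C_\varepsilon/\varepsilon$ for a small fixed $\varepsilon$. The constant in front of $\int_0^t \Lvert3 U\Rvert3_{00}^2\,ds$ is a universal $C_\ast$; this is what appears as $\lambda$ in \eqref{energyes}, since in the bootstrap scheme of Section \ref{proof} only the fact that this constant is a fixed positive number determined by the problem parameters (and in particular comparable to $\lambda$) is used.

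The main obstacle, which is already handled by the earlier sections, is the passage from Theorems \ref{engver} and \ref{enghor} (which are themselves delicate, especially in the purely horizontal case where improved estimates without time derivatives are not directly accessible through the Stokes system for $(w,p)$) to a clean inequality in the single triple norm $\Lvert3\cdot\Rvert3_{00}$; in the present proposition this is essentially bookkeeping, the serious analytic work having been done upstream.
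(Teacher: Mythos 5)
Parts (1) and (2) of your proposal are correct and follow the paper's own proof: (1) is Theorem \ref{growingmode} with the normalization you describe, and (2) is the application of \eqref{result123} of Theorem \ref{lineargrownth} to $V=U-\iota e^{\lam t}U^\star$ together with the quadratic bound $\mathfrak{N}_G\ls\Lvert3 U\Rvert3_{00}^2$, which indeed follows by the product estimates of Lemma \ref{sobolev} as you indicate.

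Your treatment of part (3), however, has a genuine gap. After taking $\varepsilon$ small and fixed you enlarge the right-hand integrand by $\feb{2N}\le C(\fe{2N}+\f)$ and thus conclude an estimate with a \emph{universal} coefficient $C_\ast$ in place of $\lambda$, claiming that this is harmless because the bootstrap ``only uses that this constant is fixed and comparable to $\lambda$.'' That is not so. In the bootstrap argument of Section \ref{proof}, one inserts $\abs{\eta^\iota(s)}_0\le 2\iota e^{\lam s}$ into \eqref{energyes} to obtain $\Lvert3 U^\iota(t)\Rvert3_{00}^2\le \lam\int_0^t\Lvert3 U^\iota(s)\Rvert3_{00}^2\,ds + C_5\iota^2 e^{2\lam t}$, and Gronwall's lemma then returns a bound of the same order $\iota^2 e^{2\lam t}$ only because the Gronwall rate $\lam$ is strictly below the rate $2\lam$ of the inhomogeneity. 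If the Gronwall rate were a universal $C_\ast\ge 2\lam$, the convolution $\int_0^t e^{C_\ast(t-s)}e^{2\lam s}\,ds$ no longer yields $O(e^{2\lam t})$, and the resulting bound $\iota e^{(C_\ast/2)t}$ becomes, at the escape time $T^\iota=\lam^{-1}\log(\theta_0/\iota)$, of order $\iota^{1-C_\ast/(2\lam)}$, which blows up as $\iota\to 0$. Since $\lam$ is not bounded below (it tends to $0$ as $\abs{\bar B_3}\uparrow\mc$), one cannot guarantee $C_\ast<2\lam$.

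The point you missed is the specific structure of \eqref{non-horizontal es} and \eqref{horizontal es}: the integrand on the right, namely $\feb{2N}$ (resp.\ $\fe{2N}+\mathcal{F}^\varepsilon_{2N}$), is already dominated by the energy functional appearing on the \emph{left}, $\feb{2N}(t)+\varepsilon(\fe{2N}(t)+\f(t))$ (resp.\ $\fe{2N}(t)+\mathcal{F}^\varepsilon_{2N}(t)$), so one should not enlarge it via $\feb{2N}\le C(\fe{2N}+\f)$. Setting $E(t):=\feb{2N}(t)+\varepsilon(\fe{2N}(t)+\f(t))$ (resp.\ $E(t):=\fe{2N}(t)+\mathcal{F}^\varepsilon_{2N}(t)$) and taking $\varepsilon=\lam$, one reads off directly $E(t)\le C_\lam E(0)+\lam\int_0^t E(s)\,ds+C_\lam\int_0^t\abs{\eta_3(s)}_0^2\,ds$ with coefficient \emph{exactly} $\lam$, and since $E\simeq\Lvert3 U\Rvert3_{00}^2$ (with $\lam$-dependent but fixed constants) this is the form used in the bootstrap. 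That is precisely what the paper does by ``taking $\varepsilon=\lam$.''
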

\begin{proof}
Statement $(1)$ follows from Theorem \ref{growingmode}.

We next prove Statement $(2)$ by using Theorem \ref{lineargrownth}. We observe that $U(t)-\iota e^{\lam t}U^\star$ solves the problem \eqref{linear ho} with initial data $U(0)-\iota U^\star$ and the force terms $G^i$ given by \eqref{G1_def}--\eqref{G3_def}. Then Statement $(2)$ follows from \eqref{result123} by noticing that $
\mathfrak{N}_{G}\ls  \Lvert3U  \Rvert3_{00}^2.$

We now prove Statement $(3)$. For the case $\bar B_3\neq 0$, taking $\varepsilon=\lam $ in the estimate \eqref{non-horizontal es} of Theorem \ref{engver}, since $\Lvert3  U \Rvert3_{00} \simeq \feb{2N}
+ \lam ( \fe{2N}+ \f)$, we obtain \eqref{energyes}; for the case $\bar B_3= 0$, taking $\varepsilon=\lam $ in the estimate \eqref{horizontal es} of Theorem \ref{enghor}, since $\Lvert3  U \Rvert3_{00} \simeq   \fe{2N}+ \f^{\lam }$, we obtain \eqref{energyes}.
\end{proof}

We now  construct a curve of small initial data that satisfy the compatibility conditions for the nonlinear problem \eqref{reformulationic}, which are close to the linear growing mode in Proposition \ref{prop1}. Moreover, the initial data satisfy the structure condition \eqref{eta00}, which was used in the derivation of the nonlinear energy estimates.

\begin{prop}\label{intialle}
  Let $U^\star$ be the linear growing mode stated in Proposition \ref{prop1}. Then there exists  a number $\iota_0>0$ and a family of initial data
  \begin{equation}\label{initial0}
U_0^\iota
=\iota U^\star+\iota^2
\tilde{U}(\iota)
\end{equation}
for $\iota\in [0,\iota_0)$ so that the followings  hold.

1. $U_0^\iota$ satisfy the nonlinear compatibility conditions required for a solution to the nonlinear problem \eqref{reformulationic} to exist in the norm $\Lvert3\cdot\Rvert3_{00}$. Moreover, $\eta_0^\iota$ satisfy \eqref{eta00}.

2. There exist $C_3, C_4>0$ independent of $\iota$ so that
\begin{equation}\label{initial1}
\Lvert3\tilde{U}(\iota) \Rvert3_{00} \le
C_3
\end{equation}
and
\begin{equation}\label{initial2}
\Lvert3U_0^\iota\Rvert3_{00}^2\le C_4\iota^2.
\end{equation}
 \end{prop}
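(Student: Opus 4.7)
I plan to follow the scheme introduced by Jang--Tice \cite{JT} of constructing nonlinearly compatible data close to a linear growing mode. The key observation is that the growing mode $U^\star=(\eta^\star,u^\star,p^\star)$ of Theorem \ref{growingmode} is built from a minimizer $w\in\Hs$, so that $\diverge\eta^\star=0$ in $\Omega$, $\eta^\star=0$ on $\Sigma_{m,\ell}$, and $\jump{\eta^\star}=0$ on $\Sigma$. Hence the scaled datum $\iota U^\star$ already satisfies every part of \eqref{eta00} except the full Jacobian identity $\det(I+\nabla\eta_0)=1$, and it satisfies the linear compatibility conditions associated with \eqref{perturb_linear}. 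Since the nonlinear system \eqref{reformulationic} differs from its linearization by terms at least quadratic in $U$, the discrepancy in both the Jacobian identity and the nonlinear compatibility conditions is $O(\iota^2)$, which is precisely the scale of the correction $\iota^2\tilde U(\iota)$.

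\textbf{Construction.} I would set the construction up as a fixed-point problem in the closed ball $\{\tilde U:\Lvert3\tilde U\Rvert3_{00}\le C_3\}$, parametrized by $\iota$. First, to enforce \eqref{eta00}, I would use \eqref{phhicon} to rewrite $\det(I+\nabla\eta_0)=1$ as $\diverge\eta_0=\Phi(\eta_0)$ with $\Phi$ quadratic--cubic in $\nabla\eta_0$. Substituting $\eta_0=\iota\eta^\star+\iota^2\tilde\eta$ and using $\diverge\eta^\star=0$ yields
\begin{equation}\nonumber
\diverge\tilde\eta=\iota^{-2}\Phi(\iota\eta^\star+\iota^2\tilde\eta),
\end{equation}
whose right-hand side is $O(1)$ uniformly in $\iota$ and has a vanishing integral over $\Omega$, inherited from the structural identity $\Phi=\diverge\Psi$ in \eqref{Qstr2}--\eqref{Qstr111} together with $\Psi=0$ on $\Sigma_{m,\ell}$ and $\jump{\Psi_3}=0$ on $\Sigma$. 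The Bogovskii-type operator of Lemma \ref{div11} then produces a correction $\tilde\eta$ of size $O(1)$ with the required boundary and jump traces. Next, given $\eta_0^\iota$, the higher-order data $\{\dt^j u(0),\dt^j p(0)\}_{j\le 2N}$ are reconstructed inductively by applying $\dt^j$ to \eqref{reformulationic} at $t=0$ and solving the resulting Stokes problems (controlled via Lemma \ref{cStheorem}). At each level the nonlinearity introduces an $O(\iota^2)$ discrepancy from the linear data, which is absorbed into $\tilde u$ (and, through the equations, into the computed pressures), preserving $\jump{\dt^j u}=0$ on $\Sigma$ and $\dt^j u=0$ on $\Sigma_{m,\ell}$.

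\textbf{Bounds and main obstacle.} Assembling these adjustments defines a self-map $\mathcal{T}_\iota$ on the ball of radius $C_3$ in the $\Lvert3\cdot\Rvert3_{00}$ norm. Because every piece of the construction is driven by at least quadratic nonlinearities acting on an $O(\iota)$ reference profile, the prefactor $\iota^2$ makes $\mathcal{T}_\iota$ a contraction for all $\iota\in[0,\iota_0)$ with $\iota_0$ sufficiently small. The unique fixed point $\tilde U(\iota)$ then satisfies \eqref{initial1}, and the triangle inequality
\begin{equation}\nonumber
\Lvert3 U_0^\iota\Rvert3_{00}\le \iota\Lvert3 U^\star\Rvert3_{00}+\iota^2\Lvert3\tilde U(\iota)\Rvert3_{00}\le(C_1+\iota_0 C_3)\iota
\end{equation}
yields \eqref{initial2} with $C_4:=(C_1+\iota_0 C_3)^2$. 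The main obstacle is the coupling of constraints: one must simultaneously enforce the Jacobian identity \eqref{eta00}, the correct divergence and jump structure of $\dt^j u(0)$ and $\dt^j p(0)$ at every order $j\le 2N$ (which encode the nonlinear compatibility conditions), and the no-slip condition on $\Sigma_{m,\ell}$, without allowing enforcement of one to destroy another. It is precisely this interlocking that forces a genuinely coupled fixed-point formulation on $\tilde U$ rather than a sequence of independent corrections.
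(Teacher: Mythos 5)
Your proposal takes essentially the same route as the paper: the paper simply cites the implicit-function-theorem argument before Lemma 5.3 of Jang--Tice \cite{JT}, noting (as you do) that the Jacobian constraint \eqref{eta00} and the nonlinear compatibility conditions are each $O(\iota^2)$ perturbations of the corresponding linear constraints that the growing mode already satisfies. Your contraction-mapping formulation --- using the Bogovskii operator of Lemma \ref{div11} to absorb the $O(\iota^2)$ divergence defect and inductively reconstructing $\dt^j u(0),\dt^j p(0)$ from the Stokes problems --- is a concrete spelling-out of that cited abstract argument, so the two proofs are in substance identical.
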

 \begin{proof}
Note that the nonlinear problem is slightly
perturbed from the linearized problem and so their compatibility conditions for the small initial data should be close to each other. On the other hand, by \eqref{phhicon}, the structure condition \eqref{eta00} for the nonlinear problem is also close to the divergence free condition that is satisfied by the linear growing mode. We may employ the abstract argument before  Lemma 5.3 of \cite{JT} that uses the implicit function theorem to have our conclusion.
\end{proof}

With Propositions \ref{prop1} and \ref{intialle} in hand, we can now present the
\begin{proof}[Proof of Theorem \ref{maintheorem}]
Recall the notation \eqref{Uvector}. First, we restrict to $0<\iota<\iota_0\le \theta_0$, where $\iota_0$ is as small as in Proposition \ref{intialle}  and  the value of $\theta_0$ is   sufficiently small to be determined later. For $0<\iota\le \iota_0$, we let $U_0^\iota$ be the initial data given in Proposition \ref{intialle}.  By further restricting $\iota$ the local well-posedness allows us find  $U^\iota(t)$,  solutions to \eqref{reformulationic} with
 \begin{equation}
 \left.U^\iota \right|_{t=0}=
U_0^\iota
=\iota U^\star+\iota^2
\tilde{U}(\iota).
\end{equation}

Fix $\delta>0$ as small as in Proposition \ref{prop1}, and let $C_\delta>0$ be the constant appearing in Proposition \ref{prop1} for this fixed choice of $\delta$.   We then define $\tilde{\delta}=\min\{\delta,\frac{\lam}{2C_\delta}\}$.  Denote
 \begin{equation}
 T^\ast=\sup\left\{ s : \Lvert3U^\iota(t) \Rvert3_{00}   \le\tilde{\delta}, \text{ for } 0\le t\le s\right\}
 \end{equation}
 and
 \begin{equation}
 T^{\ast\ast}=\sup\left\{ s: \abs{ \eta^\iota(t)}_{0}\le 2 \iota e^{\lam t}, \text{ for } 0\le t\le s\right\}.
 \end{equation}
 With $\iota_0$ small enough, \eqref{initial2} and the local well-posedness guarantee that $T^\ast$ and $T^{\ast\ast}>0$.  Recall that $T^\iota$ is defined by \eqref{escape_time}.   Then for all $t\le \min\{T^\iota,T^\ast,T^{\ast\ast}\}$, we deduce from the estimate \eqref{energyes} of Proposition \ref{prop1}, the definitions of $T^\ast$ and $T^{\ast\ast}$, and \eqref{initial2} that
 \begin{equation}\label{ins1}
 \begin{split}\Lvert3U^\iota(t)\Rvert3_{00}^2
&\le  C_\delta\Lvert3U^\iota_0 \Rvert3_{00}^2
   +  \lam\int_0^t\Lvert3U^\iota(s) \Rvert3_{00}^2\,ds
   + C_\delta\int_0^t \abs{ \eta^\iota(s)}_{0}^2\,ds
 \\&\le \lam \int_0^t\Lvert3 U^\iota(s) \Rvert3_{00}^2\,ds+C_\delta C_4\iota^2+\frac{C_\delta(2\iota)^2}{2\lam}e^{2\lam t}
  \\&\le \lam\int_0^t\Lvert3U^\iota(s) \Rvert3_{00}^2\,ds+C_5 \iota^2e^{2\lam t}.
\end{split}
\end{equation}
for some constant $C_5>0$ independent of $\iota$. We may view \eqref{ins1} as a differential inequality.  Then Gronwall's theorem implies that
\begin{equation}\label{ins11}
 \begin{split}\Lvert3U^\iota(t)\Rvert3_{00}^2
&\le   C_5 \iota^2e^{2\lam t}+C_5\iota^2e^{\lam t}\int_0^t \lam e^{\lam s}\,ds
 \\&\le   C_5 \iota^2e^{2\lam t}+ C_5\iota^2 e^{2\lam t}  =  2C_5\iota^2e^{2\lam t}.
\end{split}
\end{equation}
We then deduce from the estimates \eqref{bbbf} of Proposition  \ref{prop1} and \eqref{ins11} that
\begin{equation}\label{ins12}
 \begin{split}
\abs{\eta_3^\iota(t)-\iota e^{\lam t}\eta^\star_3}_{0}  &\le
C_2 e^{ \Lam t}\left(\Lvert3\iota^2
\tilde{U}(\iota) \Rvert3_{00}+\Lvert3U^\iota(0)  \Rvert3_{00}^2\right) +C_2\int_0^t  \Lvert3U^\iota(s)  \Rvert3_{00}^2 ds
 \\&\quad+C_2\int_0^t\sqrt{\int_0^\tau  e^{2\Lam
(\tau-s)} \Lvert3 U^\iota(s)  \Rvert3_{00}^2 (\Lvert3U^\iota(s)-\iota e^{\lam s} U^\star   \Rvert3_{00}+\Lvert3 U^\iota(s)  \Rvert3_{00}^2)ds}d\tau
 \\&\le
C_2 e^{ \Lam t}(C_3\iota^2+C_4\iota^2)+C_2\int_0^t  2C_5\iota^2e^{2\lam s}
  \\&\quad+C_2\int_0^t\sqrt{\int_0^\tau  e^{2\Lam
(\tau-s)} 2C_5\iota^2e^{2\lam s} (\sqrt{2C_5}\iota e^{ \lam s}+C_1\iota e^{ \lam s}+2C_5\iota^2e^{2\lam s})ds}d\tau
 \\&\le
C_6 e^{ \Lam t}\iota^2+ C_6\iota^2 e^{2\lam t}+C_6\iota^2 e^{2\lam t}+C_6\iota^\frac{3}{2} e^{\frac{3}{2}\lam t}
 \\&\le 3C_6  \iota^2 e^{2\lam t}+C_6\iota^\frac{3}{2} e^{\frac{3}{2}\lam t}.
\end{split}
\end{equation}

Now we claim that
\begin{equation}\label{Tmin}
T^\iota= \min\{T^\iota,T^\ast,T^{\ast\ast}\}
\end{equation}
by fixing $\theta_0$ small enough, namely, setting
 \begin{equation}
 \theta_0=\min\left\{\frac{\tilde{\delta}}{2\sqrt{2C_5}},\frac{1}{12C_6},\frac{1}{16C_6^2}\right\}.
 \end{equation}
 Indeed, if $T^\ast= \min\{T^\iota,T^\ast,T^{\ast\ast}\}$, then by \eqref{ins11}, we have
  \begin{equation}
  \begin{split}
  \Lvert3U^\iota(T^\ast)\Rvert3_{00}
\le    \sqrt{2C_5}\iota e^{ \lam T^\ast} \le   \sqrt{2C_5}\iota e^{ \lam
T^\iota}=\sqrt{2C_5}\theta_0\le \frac{\tilde{\delta}}{2} < \tilde{\delta},
\end{split}
\end{equation}
which contradicts to the definition of $T^\ast$.  If $T^{\ast\ast} =\min\{T^\iota,T^\ast,T^{\ast\ast}\}$, then by \eqref{ins12}, we have
\begin{equation}
\begin{split}
\abs{\eta^\iota_3(T^{\ast\ast})}_{0}   &\le\iota e^{\lam T^{\ast\ast} }\abs{\eta^\star_3}_{0}
  +\abs{ \eta_3^\iota(T^{\ast\ast})-\iota e^{\lam T^{\ast\ast}}\eta^\star_3}_{0}\\&\le \iota e^{\lam T^{\ast\ast} }\abs{\eta^\star_3}_{0}
+ 2C_6\iota^2 e^{2\lam T^{\ast\ast}}+C_6\iota^\frac{3}{2} e^{\frac{3}{2}\lam t}
\\
& \le  \iota e^{\lam T^{\ast\ast}} (1 +3C_6\iota e^{ \lam T^{\iota}} +C_6\sqrt{\iota}  e^{\hal \lam T^\iota})
 \\&\le \iota e^{\lam T^{\ast\ast}}(1 +3C_6\theta_0
+C_6\sqrt{\theta_0})<2\iota e^{\lam T^{\ast\ast}},
\end{split}
\end{equation}
which contradicts to the definition of $T^{\ast\ast}$.  Hence \eqref{Tmin} must hold, proving the claim.

Now we use \eqref{ins12} again  to find that
\begin{equation}
\begin{split}
\abs{\eta^\iota_3(T^\iota)}_{0}   &\ge \iota e^{\lam T^\iota }\abs{\eta^\star_3}_{0}
  -\abs{ \eta_3^\iota(T^\iota)-\iota e^{\lam T^\iota}\eta^\star_3}_{0}
  \\&\ge\iota e^{\lam T^\iota } -3C_6\iota^2 e^{ 2\lam T^\iota}-C_6\iota^\frac{3}{2} e^{\frac{3}{2}\lam t}
  \\&\ge\theta_0- 3C_6\theta_0^2- C_6\theta_0^{\frac{3}{2}} \ge
 \frac{\theta_0}{2}.
\end{split}
\end{equation}
This completes the proof of Theorem \ref{maintheorem}.
\end{proof}

\appendix

\section{Analytic tools}\label{section_appendix}

\subsection{Analytic inequalities}\label{app_po}

We will need some estimates of the product of functions in Sobolev spaces.

\begin{lem}\label{sobolev}
Let $U$ denote a domain either of the form $\Omega_\pm$ or of the form $\Sigma$.
\begin{enumerate}
 \item Let $0\le r \le s_1 \le s_2$ be such that  $s_1 > n/2$.  Let $f\in H^{s_1}(U)$, $g\in H^{s_2}(U)$.  Then $fg \in H^r(U)$ and
\begin{equation}\label{i_s_p_01}
 \norm{fg}_{H^r} \lesssim \norm{f}_{H^{s_1}} \norm{g}_{H^{s_2}}.
\end{equation}

\item Let $0\le r \le s_1 \le s_2$ be such that  $s_2 >r+ n/2$.  Let $f\in H^{s_1}(U)$, $g\in H^{s_2}(U)$.  Then $fg \in H^r(U)$ and
\begin{equation}\label{i_s_p_02}
 \norm{fg}_{H^r} \lesssim \norm{f}_{H^{s_1}} \norm{g}_{H^{s_2}}.
\end{equation}
\end{enumerate}
\end{lem}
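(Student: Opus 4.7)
The plan is to prove both parts by first reducing to the whole-space model problem and then applying a paraproduct / Leibniz decomposition. Since $\Omega_\pm$ are finite slabs with Lipschitz (in fact flat) boundary and $\Sigma$ is a hyperplane, Stein's universal extension operator produces bounded linear maps $E:H^s(U)\to H^s(\mathbb{R}^n)$ for all $s\ge 0$ that preserve products on $U$. Thus it suffices to establish \eqref{i_s_p_01}--\eqref{i_s_p_02} on $\mathbb{R}^n$, after which the domain version follows from $\norm{fg}_{H^r(U)}\le \norm{(Ef)(Eg)}_{H^r(\mathbb{R}^n)}\ls \norm{Ef}_{H^{s_1}(\mathbb{R}^n)}\norm{Eg}_{H^{s_2}(\mathbb{R}^n)}\ls \norm{f}_{H^{s_1}(U)}\norm{g}_{H^{s_2}(U)}$.

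For part $(1)$, the hypothesis $s_1>n/2$ gives $H^{s_1}(\mathbb{R}^n)\hookrightarrow L^\infty(\mathbb{R}^n)$, and in particular $H^{s_1}$ is a Banach algebra. I will use Bony's paraproduct decomposition $fg = T_f g + T_g f + R(f,g)$, where the low--high paraproduct $T_fg$ obeys $\norm{T_fg}_{H^r}\ls \norm{f}_{L^\infty}\norm{g}_{H^r}$, the high--low term satisfies $\norm{T_gf}_{H^r}\ls \norm{g}_{L^\infty}\norm{f}_{H^r}$ (valid since $s_2\ge s_1>n/2$ gives $g\in L^\infty$), and the remainder $R(f,g)$ is handled by the standard estimate $\norm{R(f,g)}_{H^r}\ls \norm{f}_{H^{s_1}}\norm{g}_{H^{s_2-s_1+r}}$ once $s_1+s_2-s_1+r>n/2+r$, which reduces to $s_2>n/2$. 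Combining and using $r\le s_1\le s_2$, all three pieces are bounded by $\norm{f}_{H^{s_1}}\norm{g}_{H^{s_2}}$.

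For part $(2)$, the new feature is that we drop the assumption $s_1>n/2$ and instead ask $s_2>r+n/2$, so the roles of $f$ and $g$ are asymmetric. Here I would run the same Bony decomposition but exploit that $H^{s_2}\hookrightarrow C^{r}_b$ (bounded functions with $r$ bounded continuous derivatives, in the appropriate Hölder sense for non-integer $r$) when $s_2>r+n/2$. Concretely, the low--high piece $T_gf$ obeys $\norm{T_gf}_{H^r}\ls \norm{g}_{L^\infty}\norm{f}_{H^r}\ls \norm{g}_{H^{s_2}}\norm{f}_{H^{s_1}}$; the piece $T_fg$ is controlled by $\norm{f}_{H^{s_1}}\norm{g}_{H^{r+n/2+\ep}}\ls \norm{f}_{H^{s_1}}\norm{g}_{H^{s_2}}$ after using that high frequencies of $g$ dominate and Bernstein's inequality absorbs the $L^\infty$ loss into the extra $n/2$ regularity from $s_2>r+n/2$; the remainder is handled analogously.

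The main obstacle I anticipate is not the algebra but the bookkeeping of indices in the paraproduct / remainder estimates when $r$ is non-integer, and the careful verification that the Bony decomposition estimates give precisely the claimed dependencies $s_1,s_2$. An alternative, slightly less sharp but entirely elementary route, is to prove the integer-$r$ version by the Leibniz rule
\[
 \norm{\p^\al(fg)}_0 \le \sum_{\beta\le\al}\binom{\al}{\beta}\norm{\p^\beta f\,\p^{\al-\beta}g}_0,
\]
bounding each factor by Gagliardo--Nirenberg interpolation between $L^2$, $H^{s_1}$ and $H^{s_2}$, then interpolating in $r$ to cover the fractional case; for part $(2)$ one places the factor with fewer derivatives in $L^\infty$ using the embedding $H^{s_2-|\al-\beta|}\hookrightarrow L^\infty$, which is available precisely because $s_2-r>n/2$. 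This Leibniz/Gagliardo--Nirenberg route is what I would actually write up, since it avoids developing paraproduct theory and it is the standard tool used throughout the rest of the paper.
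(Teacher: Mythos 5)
The paper's own ``proof'' is a one-line pointer: ``These results are standard and may be derived, for example, by use of the Fourier characterization of the $H^s$ spaces and extensions.'' Your approach --- extend to $\mathbb{R}^n$ and then run Littlewood--Paley/paraproduct or Leibniz/Sobolev arguments --- is precisely what that pointer is gesturing at, so you are taking essentially the same route, just filling in the details the paper omits. Two small things are worth cleaning up before you write it out. First, Stein's extension operator does \emph{not} ``preserve products'' in the sense of being an algebra homomorphism, nor do you need it to: what you actually use is only that $(Ef)(Eg)$ restricts to $fg$ on $U$, so it is \emph{some} extension of $fg$, and hence $\norm{fg}_{H^r(U)} \le \norm{(Ef)(Eg)}_{H^r(\mathbb{R}^n)}$ by the definition of the restriction norm; state it that way to avoid a false claim. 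Second, in part $(2)$ the paraproduct bound for $T_f g$ as you phrase it (with the $\varepsilon$ loss ``absorbed by Bernstein'') is vaguer than it needs to be --- the clean estimate is $\norm{T_f g}_{H^r}^2 \lesssim \sum_j 2^{2rj}\norm{S_{j-1}f}_{L^2}^2\norm{\Delta_j g}_{L^\infty}^2 \lesssim \norm{f}_{L^2}^2\sum_j 2^{j(2r+n)}\norm{\Delta_j g}_{L^2}^2 \lesssim \norm{f}_{H^{s_1}}^2\norm{g}_{H^{s_2}}^2$, where the last step uses $2r+n < 2s_2$; no $\varepsilon$ is required. The Leibniz/Gagliardo--Nirenberg alternative you sketch at the end is correct for part $(2)$ exactly as written (put $\p^\beta f$ in $L^2$, $\p^{\alpha-\beta}g$ in $L^\infty$ via $s_2 - r > n/2$), but for part $(1)$ a naive $L^2 \times L^\infty$ splitting fails when $|\beta|$ is large; there you must interpolate with H\"older exponents $1/p + 1/q = 1/2$ chosen per term, using $s_1 + s_2 - |\alpha| > n/2$, which is doable but bulkier than the paraproduct version.
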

\begin{proof}
These results are standard and may be derived, for example, by use of the Fourier characterization of the $H^s$ spaces and extensions.
\end{proof}

We will need the following version of Korn's inequality.

\begin{lem}\label{korn}
It holds that for all $w\in H_0^1(\Omega)$,
\begin{equation}\label{korneq}
\norm{w}_{1} \lesssim \norm{\mathbb{D}w }_{0}.
\end{equation}
\end{lem}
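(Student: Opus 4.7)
The plan is to prove Korn's inequality via the classical identity relating $\abs{\mathbb{D}w}^2$ to $\abs{\nabla w}^2$ and $\abs{\diverge w}^2$, combined with Poincaré's inequality in the vertical direction (available because $w$ vanishes on $\Sigma_{m,\ell}$). Since elements of $H^1_0(\Omega)$ are continuous across the internal interface $\Sigma$, the integration by parts goes through on the full slab without picking up jump contributions.

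First, by density it suffices to establish the inequality for $w\in C_c^\infty(\overline{\Omega})$ with $w=0$ on $\Sigma_{m,\ell}$ (and compact support in the horizontal directions). Expanding the symmetric tensor, I would write
\begin{equation*}
\abs{\mathbb{D} w}^2=\sum_{i,j}(\partial_i w_j+\partial_j w_i)^2=2\abs{\nabla w}^2+2\sum_{i,j}\partial_j w_i\,\partial_i w_j.
\end{equation*}
Integrating the cross term by parts twice, using $w=0$ on $\Sigma_{m,\ell}$ and the compact horizontal support, yields
\begin{equation*}
\int_\Omega\sum_{i,j}\partial_j w_i\,\partial_i w_j=\int_\Omega(\diverge w)^2\ge 0,
\end{equation*}
so that $\norm{\mathbb{D} w}_0^2\ge 2\norm{\nabla w}_0^2$.

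Next, because $w$ vanishes on the flat bottom and top boundaries $\Sigma_{m,\ell}$, the one-dimensional Poincaré inequality applied in $x_3\in(-m,\ell)$ at each horizontal point, followed by integration in $x_\ast$, gives
\begin{equation*}
\norm{w}_0^2\le (m+\ell)^2\,\norm{\partial_3 w}_0^2\le (m+\ell)^2\,\norm{\nabla w}_0^2.
\end{equation*}
Combining the two displays above yields $\norm{w}_1^2\ls \norm{\mathbb{D} w}_0^2$, and passing to the limit via density recovers the bound for general $w\in H_0^1(\Omega)$.

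The only conceptual subtlety is the status of $\Sigma$ in the integration by parts. Since $H^1_0(\Omega)$ here is interpreted as the completion of $C_c^\infty(\Omega)$ in $H^1$, its elements are genuinely in $H^1$ on the whole slab, with continuous trace across $\Sigma$, so no interfacial jump appears and the computation above is valid globally. The horizontal unboundedness of $\Omega$ is handled by the density/truncation step at the beginning.
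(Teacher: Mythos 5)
Your proof is correct and self-contained. The paper itself does not give an argument for this lemma; it simply cites Lemma 2.7 of Beale \cite{B1}. The route you take --- expanding $\abs{\mathbb{D}w}^2 = 2\abs{\nabla w}^2 + 2\sum_{i,j}\partial_j w_i\,\partial_i w_j$, integrating the cross term by parts twice to identify it with $\int_\Omega(\diverge w)^2\ge 0$, and then using the one-dimensional Poincar\'e inequality in $x_3$ across the slab to bound $\norm{w}_0$ by $\norm{\partial_3 w}_0$ --- is the standard short proof of Korn's inequality for $H^1_0$ vector fields on a slab, and it applies here because $H^1_0(\Omega)$ in this lemma refers to the usual Sobolev space on the connected slab $\Omega=\mathbb{R}^2\times(-m,\ell)$ (as the paper states just before Proposition 3.1), so there are no interfacial jumps and the integration by parts produces no boundary contributions beyond $\Sigma_{m,\ell}$, where $w$ vanishes. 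A minor stylistic simplification is available: since $H^1_0(\Omega)$ is the closure of $C_c^\infty(\Omega)$, you could work directly with test functions compactly supported in the open slab, which makes every boundary term vanish trivially and avoids any discussion of traces on $\Sigma$ or $\Sigma_{m,\ell}$.
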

\begin{proof}
 See Lemma 2.7 of \cite{B1}.
\end{proof}

The following provides the $H^{-1/2}(\Sigma)$ boundary estimates of $w$ knowing $w,\diverge w\in L^2(\Omega)$.
\begin{lem}\label{h-1/2}
If $w,\diverge w\in L^2(\Omega)$, then
\begin{equation}
\abs{w}_{-1/2} \lesssim \norm{w}_{0}+\norm{\diverge w }_{0}.
\end{equation}
\end{lem}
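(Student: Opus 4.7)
The plan is to proceed by duality, bounding $w_3|_\Sigma$ as a distribution on $\Sigma$ through its pairing against arbitrary test functions in $H^{1/2}(\Sigma)$. By definition of the dual norm,
\[
\abs{w}_{-1/2} = \sup_{\phi\in H^{1/2}(\Sigma),\ \abs{\phi}_{1/2}\le 1} \int_\Sigma w_3\, \phi,
\]
so it suffices to establish
\[
\left| \int_\Sigma w_3\, \phi \right| \ls (\norm{w}_0 + \norm{\diverge w}_0)\,\abs{\phi}_{1/2}
\]
for every such $\phi$.

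First, given $\phi \in H^{1/2}(\Sigma)$, I will invoke the standard inverse trace theorem to produce an extension $\Phi \in H^1(\Omega_+)$ with $\Phi|_\Sigma = \phi$, $\Phi|_{\Sigma_\ell} = 0$, and $\norm{\Phi}_{H^1(\Omega_+)} \ls \abs{\phi}_{1/2}$. Since $w \in L^2(\Omega_+)$ with $\diverge w \in L^2(\Omega_+)$, the divergence theorem applied to the field $w\Phi$ on $\Omega_+$ gives
\[
-\int_\Sigma w_3\, \phi = \int_{\Omega_+} \diverge(w\Phi) = \int_{\Omega_+} \Phi\, \diverge w + \int_{\Omega_+} w \cdot \nabla \Phi,
\]
where the minus sign records that the outward normal to $\Omega_+$ on $\Sigma$ is $-e_3$ and the boundary term on $\Sigma_\ell$ vanishes because $\Phi|_{\Sigma_\ell}=0$. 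Cauchy--Schwarz then bounds the right hand side by $(\norm{\diverge w}_0 + \norm{w}_0)\norm{\Phi}_{H^1(\Omega_+)} \ls (\norm{w}_0+\norm{\diverge w}_0)\abs{\phi}_{1/2}$, and the supremum over $\phi$ yields the claim. If $\abs{w}_{-1/2}$ is instead interpreted as the jump or the trace from $\Omega_-$, the identical argument runs on $\Omega_-$.

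The only point requiring care is the justification of the divergence theorem for vector fields of such low regularity, since $w$ is merely $L^2$ with $L^2$ divergence. This is classical: mollify $w$ inside $\Omega_+$ to obtain smooth approximants $w^\epsilon$ with $w^\epsilon \to w$ in $L^2(\Omega_+)$ and $\diverge w^\epsilon \to \diverge w$ in $L^2(\Omega_+)$; for each smooth $w^\epsilon$ the divergence theorem is standard, and passing to the limit is immediate because $\Phi \in H^1(\Omega_+)$. No difficulty stems from the two-phase structure of $\Omega$, since we work on a single component $\Omega_+$ for which $\Sigma$ is simply one piece of $\partial\Omega_+$.
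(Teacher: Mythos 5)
Your proof is correct and follows the same route the paper indicates: the paper's own proof is a one-line remark citing the result as classical (Temam) and suggesting ``duality and extension,'' which is precisely what you carry out with the inverse trace theorem, the divergence theorem, and the $H^{1/2}$--$H^{-1/2}$ pairing. The only slight imprecision is in the final justification: interior mollification of $w$ alone produces approximants that are not defined up to $\partial\Omega_+$, so one should either first translate $w$ slightly into the slab before mollifying, or invoke the density of $C^\infty(\overline{\Omega}_+)$ in $H(\diverge;\Omega_+)$; this is a standard technicality and does not affect the substance of your argument.
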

\begin{proof}
These result is classical \cite{T} and can be derived by use of the duality and extension.
\end{proof}

We have the following Poincar\'e type inequality.
\begin{lem}\label{app pb}
For any constant vector $\bar B \in \mathbb{R}^3$ with $\bar B_3\neq 0$, it holds that for all $w\in H_0^1(\Omega)$,
\begin{equation}\label{poB}
\ns{ w}_0  \le \frac{m^2+\ell^2}{\bar B_3^2}\ns{(\bar B\cdot\nabla)w }_0 .
\end{equation}
\end{lem}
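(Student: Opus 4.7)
The plan is to reduce the problem to a one-dimensional Poincar\'e inequality along characteristic rays of the first-order operator $\bar B\cdot\nabla$, using the fact that $w\in H^1_0(\Omega)$ vanishes on both $\Sigma_\ell$ and $\Sigma_{-m}$.

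First, I would handle $\Omega_+$. For any $(x_\ast,x_3)\in\Omega_+$, parameterize the line segment
\begin{equation*}
\gamma(t)=\Bigl(x_\ast-\tfrac{\bar B_\ast}{\bar B_3}(\ell-x_3)t,\;\ell-(\ell-x_3)t\Bigr),\qquad t\in[0,1],
\end{equation*}
whose tangent $\gamma'(t)=-\tfrac{\ell-x_3}{\bar B_3}\bar B$ is parallel to $\bar B$, and whose endpoints are $\gamma(0)\in\Sigma_\ell$ (where $w$ vanishes by the trace sense of $H^1_0(\Omega)$) and $\gamma(1)=(x_\ast,x_3)$. The fundamental theorem of calculus gives
\begin{equation*}
w(x_\ast,x_3)=-\tfrac{\ell-x_3}{\bar B_3}\int_0^1 [(\bar B\cdot\nabla)w](\gamma(t))\,dt,
\end{equation*}
so Cauchy--Schwarz followed by the change $s=\ell-(\ell-x_3)t$ yields
\begin{equation*}
|w(x_\ast,x_3)|^2\le\tfrac{\ell-x_3}{\bar B_3^2}\int_{x_3}^\ell\bigl|[(\bar B\cdot\nabla)w]\bigl(x_\ast-\tfrac{\bar B_\ast}{\bar B_3}(\ell-s),s\bigr)\bigr|^2\,ds.
\end{equation*}

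Next, integrate over $(x_\ast,x_3)\in\Omega_+$ and apply Fubini together with the volume-preserving shift $y_\ast=x_\ast-\tfrac{\bar B_\ast}{\bar B_3}(\ell-s)$ in the $x_\ast$-variable. The resulting $x_3$-integral is bounded crudely by $\ell\cdot\ell=\ell^2$, producing
\begin{equation*}
\int_{\Omega_+}|w|^2\,dx\le\tfrac{\ell^2}{\bar B_3^2}\int_{\Omega_+}|(\bar B\cdot\nabla)w|^2\,dx.
\end{equation*}
An entirely analogous argument on $\Omega_-$, using rays emanating from $\Sigma_{-m}$, gives $\int_{\Omega_-}|w|^2\le\tfrac{m^2}{\bar B_3^2}\int_{\Omega_-}|(\bar B\cdot\nabla)w|^2$. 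Adding the two estimates and bounding $\max(m^2,\ell^2)\le m^2+\ell^2$ yields~\eqref{poB}.

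There is no real obstacle here: the only subtle point is ensuring that the shear parameterization stays inside $\Omega_+$ (respectively $\Omega_-$) so that $w$ is evaluated where it is defined, and that the characteristic direction along $\bar B$ reaches the boundary $\Sigma_\ell$ (respectively $\Sigma_{-m}$) where $w$ vanishes. Both are guaranteed by $\bar B_3\neq 0$, which makes $\bar B$ transverse to the flat boundaries. The same argument, evaluated at $x_3=0$ instead of integrating in $x_3$, produces the trace bound \eqref{pbbb} used in the proof of Lemma~\ref{criticalb}.
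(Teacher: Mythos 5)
Your proof takes essentially the same approach as the paper's: a one-dimensional Poincar\'e inequality along characteristic rays of $\bar B\cdot\nabla$ reaching the flat boundary where $w$ vanishes, followed by Cauchy--Schwarz, Fubini, and the horizontal translation invariance of the Lebesgue measure. One minor parameterization slip: with your stated $\gamma$, one has $\gamma(1)=\bigl(x_\ast-\tfrac{\bar B_\ast}{\bar B_3}(\ell-x_3),\,x_3\bigr)$ rather than $(x_\ast,x_3)$, so the pointwise inequality you write actually controls $|w(\gamma(1))|^2$; this does not affect the conclusion, because the subsequent shift $y_\ast=x_\ast-\tfrac{\bar B_\ast}{\bar B_3}(\ell-s)$ absorbs the offset once you integrate over $x_\ast\in\mathbb{R}^2$.
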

\begin{proof}
For any $w\in H^1_0(\Omega)$, by the fundamental theory of calculous and the Cauchy-Schwartz inequality, since $\bar B_3\neq 0$, we deduce that for any  $x=(x_\ast,x_3)\in \Omega_+$,
\begin{equation}\label{aineq1}
\begin{split}
 w(x_\ast,x_3)&=w\left(x_\ast+\frac{\ell-x_3 }{\bar B_3}\bar B_\ast,\ell\right)-\int_0^{\frac{\ell-x_3}{\bar B_3}}\frac{d}{ds}\left(w(x_\ast+s\bar B_\ast, s\bar B_3)\right)ds
 \\&=-\int_0^{\frac{\ell-x_3}{\bar B_3}}(\bar B\cdot\nabla)w(x_\ast+s\bar B_\ast, s\bar B_3)ds
 \\&\le \left(\frac{\ell-x_3}{\bar B_3}\right)^\hal\left(\int_0^{\frac{\ell-x_3}{\bar B_3}}\abs{(\bar B\cdot\nabla)w(x_\ast+s\bar B_\ast, s\bar B_3)}^2ds\right)^\hal
 \\&\le \left(\frac{\ell}{\bar B_3}\right)^\hal\left(\int_0^{\frac{\ell}{\bar B_3}}\abs{(\bar B\cdot\nabla)w(x_\ast+s\bar B_\ast, s\bar B_3)}^2ds\right)^\hal.
\end{split}
\end{equation}
By taking the square of \eqref{aineq1} and then integrating over $x_\ast\in \mathbb{R}^2$, using the Fubini theorem and the change of variables, we have
\begin{equation}\label{pbbb}
\begin{split}
\int_{\mathbb{R}^2}  w^2(x_\ast,x_3)dx_\ast& \le \frac{\ell}{\bar B_3}\int_{\mathbb{R}^2} \int_0^{\frac{\ell}{\bar B_3}}\abs{(\bar B\cdot\nabla)w(x_\ast+s\bar B_\ast, s\bar B_3)}^2ds
\\&=\frac{\ell}{\bar B_3}  \int_0^{\frac{\ell}{\bar B_3}}\int_{\mathbb{R}^2}\abs{(\bar B\cdot\nabla)w(x_\ast+s\bar B_\ast, s\bar B_3)}^2ds   \\&=\frac{\ell}{\bar B_3}  \int_0^{\frac{\ell}{\bar B_3}}\int_{\mathbb{R}^2}\abs{(\bar B\cdot\nabla)w(x_\ast , s\bar B_3)}^2ds dx_\ast
=\frac{\ell}{\bar B_3^2} \int_{\Omega_+} \abs{(\bar B\cdot\nabla)w }^2.
\end{split}
\end{equation}
Integrating \eqref{pbbb} in $x_3$ over $(0,\ell)$ yields
\begin{equation}\label{pbbb2}
\int_{\Omega_+}  w^2\le \frac{\ell^2}{\bar B_3^2} \int_{\Omega_+} \abs{(\bar B\cdot\nabla)w }^2.
\end{equation}
Similarly, we have
\begin{equation}
\int_{\Omega_-}  w^2  \le \frac{m^2}{\bar B_3^2} \int_{\Omega_-} \abs{(\bar B\cdot\nabla)w }^2.
\end{equation}
Combining these two shows \eqref{poB}.
\end{proof}

We have the following trace estimates.
\begin{lem}\label{tracethb}
 For any constant vector $\bar B \in \mathbb{R}^3$ with $\bar B_3\neq 0$, it holds that for all $w\in H^1_0(\Omega)$,
\begin{equation}\label{tra2b}
\abs{w}_{0} \lesssim \frac{1}{\sqrt{\abs{\bar B_3}}}\norm{(\bar B\cdot\nabla)w}_{0}^{1/2}\norm{w }_{0}^{1/2}.
\end{equation}
\end{lem}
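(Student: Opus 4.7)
The plan is to adapt the characteristic-integration idea already used in the proof of Lemma \ref{app pb}, but this time integrating $w^2$ (rather than $w$) along the field lines of $\bar B$, so that the resulting identity is bilinear in $w$ and $(\bar B\cdot\nabla)w$ and can be closed by a single application of Cauchy--Schwarz.

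First, I would assume without loss of generality that $\bar B_3>0$ (the case $\bar B_3<0$ is identical after reversing the orientation of the characteristic) and fix $(x_\ast,0)\in\Sigma$. Since $w\in H^1_0(\Omega)$ vanishes on $\Sigma_\ell=\{x_3=\ell\}$, the fundamental theorem of calculus applied along the characteristic $s\mapsto(x_\ast+s\bar B_\ast,s\bar B_3)$ gives
\begin{equation}\nonumber
w(x_\ast,0)^2 \;=\; -\int_{0}^{\ell/\bar B_3}\frac{d}{ds}\bigl[w(x_\ast+s\bar B_\ast,s\bar B_3)^{2}\bigr]\,ds \;=\; -2\int_{0}^{\ell/\bar B_3} w\,(\bar B\cdot\nabla)w\,ds,
\end{equation}
where the integrand is evaluated at $(x_\ast+s\bar B_\ast,s\bar B_3)$. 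Next, I would integrate this identity over $x_\ast\in\mathbb{R}^2$, apply Fubini, perform the horizontal translation $y_\ast=x_\ast+s\bar B_\ast$ (Jacobian $1$), and then the rescaling $x_3=s\bar B_3$, exactly as in the derivation of \eqref{pbbb}. The upshot is the identity
\begin{equation}\nonumber
\int_{\mathbb{R}^2} w(x_\ast,0)^2\,dx_\ast \;=\; -\frac{2}{\bar B_3}\int_{\Omega_+} w\,(\bar B\cdot\nabla)w,
\end{equation}
and Cauchy--Schwarz then yields $\abs{w}_0^{2}\le \tfrac{2}{|\bar B_3|}\|w\|_{L^2(\Omega_+)}\|(\bar B\cdot\nabla)w\|_{L^2(\Omega_+)}$.

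Running the same argument from below, using the vanishing of $w$ on $\Sigma_m$ and the characteristic $s\in[\ell/\bar B_3$ replaced by the appropriate interval down to $-m/\bar B_3]$, gives the symmetric bound with $\Omega_-$ in place of $\Omega_+$. Adding the two estimates and using $\|\cdot\|_{L^2(\Omega_\pm)}\le\|\cdot\|_0$ (recalling our convention $\|f\|_0^2=\|f_+\|_{L^2(\Omega_+)}^2+\|f_-\|_{L^2(\Omega_-)}^2$) produces
\begin{equation}\nonumber
\abs{w}_0^{2}\;\lesssim\;\frac{1}{|\bar B_3|}\,\|w\|_0\,\|(\bar B\cdot\nabla)w\|_0,
\end{equation}
from which \eqref{tra2b} follows by taking square roots. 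There is no genuine obstacle here; the only thing to watch is the sign/orientation of the characteristic parameter when $\bar B_3<0$, which affects the Jacobian $1/\bar B_3$ only through its absolute value, so the final estimate is unchanged. The factor $|\bar B_3|^{-1/2}$ on the right-hand side is precisely the scaling produced by the single change of variable $x_3=s\bar B_3$, which is what makes the estimate degenerate as $\bar B_3\to 0$, consistent with the hypothesis $\bar B_3\neq 0$.
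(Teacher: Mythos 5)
Your proof is correct and follows essentially the same route as the paper: the fundamental theorem of calculus applied to $w^2$ along the characteristics of $\bar B$, followed by Fubini and the change of variables $x_3=s\bar B_3$. The only cosmetic difference is that you derive the exact identity $\int_{\mathbb{R}^2}w(x_\ast,0)^2\,dx_\ast=-\tfrac{2}{\bar B_3}\int_{\Omega_+}w\,(\bar B\cdot\nabla)w$ and apply Cauchy--Schwarz once at the end, whereas the paper applies Cauchy--Schwarz pointwise in $x_\ast$ first and then integrates; these are interchangeable.
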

\begin{proof}
Note that for any $w\in H^1_0(\Omega)$,  since $\bar B_3\neq 0$, we deduce that for any  $ x_\ast \in  \mathbb{R}^2$,
\begin{equation}\label{aineq122}
\begin{split}
 w^2(x_\ast,0)&=w^2\left(x_\ast+\frac{\ell}{\bar B_3}\bar B_\ast,\ell\right)-\int_0^{\frac{\ell}{\bar B_3}}\frac{d}{ds}\left(w^2(x_\ast+s\bar B_\ast, s\bar B_3)\right)ds
 \\&=-2\int_0^{\frac{\ell }{\bar B_3}}((\bar B\cdot\nabla)w w)(x_\ast+s\bar B_\ast, s\bar B_3)ds
 \\&\ls \left(\int_0^{\frac{\ell }{\bar B_3}}\abs{(\bar B\cdot\nabla)w(x_\ast+s\bar B_\ast, s\bar B_3)}^2ds\right)^\hal
  \left(\int_0^{\frac{\ell }{\bar B_3}}\abs{ w(x_\ast+s\bar B_\ast, s\bar B_3)}^2ds\right)^\hal.
\end{split}
\end{equation}
The estimates \eqref{tra2b} follows by integrating \eqref{aineq122} over $x_\ast\in \mathbb{R}^2$, using the Cauchy-Schwarz inequality and then employing the change of variables as in Lemma \ref{app pb}.
\end{proof}

\begin{remark}\label{traceth}
It follows similarly as Lemma \ref{tracethb} that for all $w\in H^1(\Omega)$,
\begin{equation}\label{tra2}
\abs{w}_{0} \lesssim \norm{w}_{0}^{1/2}\norm{w}_{1}^{1/2}.
\end{equation}
\end{remark}
\subsection{Stokes regularity}\label{app stokes}

Let $U$ denote a domain of the form $\Omega_\pm$.
We first recall the classical regularity theory for the one-phase Stokes system with Dirichlet boundary conditions:
\begin{equation}\label{stokes eq}
\begin{cases}
-\mu\Delta u +\nabla p =f  \quad &\hbox{in }U
\\\diverge{u} =g  \quad  &\hbox{in }U
\\u=h\quad &\hbox{on }\pa U.
\end{cases}
\end{equation}
\begin{lem}\label{i_linear_elliptic2}
Let $r\ge 2$. If $f\in H^{r-2}(U)$, $g\in H^{r-1}(U)$ and $h\in H^{r-1/2}(\pa U)$ be given such that
\begin{equation}
\int_U g =\int_{\pa U} h\cdot \nu,
\end{equation}
where $\nu$ is the outer-normal to $\pa U$, then there exists unique $u\in H^r(U),  \nabla p\in H^{r-2}(U)$ solving \eqref{stokes eq}. Moreover,
\begin{equation}\label{stokes es}
\norm{u}_{r}+\norm{\nabla p}_{r-2}\lesssim\norm{f}_{r-2}+\norm{g}_{r-1}+\abs{h}_{r-1/2}.
\end{equation}
\end{lem}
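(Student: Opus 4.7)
The plan is to follow the classical three-step strategy for Stokes regularity on a horizontal slab $U$ (one of the two connected components of $\Omega$): lift the inhomogeneous boundary and divergence data, solve the reduced homogeneous problem by a weak formulation, and then bootstrap regularity using the flatness of $\partial U$.

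First, I would dispose of the inhomogeneous data. Using trace theory and an analogue of Lemma \ref{div11} on $U$ (Bogovski\u{\i}-type solvability of the divergence equation with prescribed trace), I would construct $w\in H^r(U)$ with $w|_{\partial U}=h$ and $\diverge w=g$, satisfying
$\norm{w}_r \lesssim \abs{h}_{r-1/2} + \norm{g}_{r-1}$. The compatibility assumption $\int_U g=\int_{\partial U} h\cdot\nu$ is precisely what makes this correction step solvable. Setting $v=u-w$, the problem reduces to finding divergence-free $v\in H^r(U)\cap H_0^1(U)$ and $\nabla p\in H^{r-2}(U)$ with $-\mu\Delta v+\nabla p=\tilde f$, where $\tilde f=f+\mu\Delta w\in H^{r-2}(U)$ with norm controlled by the right-hand side of \eqref{stokes es}.

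For existence and uniqueness of a weak solution I would apply Lax--Milgram to the bilinear form $\mu\int_U \mathbb{D}v:\mathbb{D}\phi$ on the divergence-free subspace of $H_0^1(U)$. Korn's inequality (Lemma \ref{korn}) combined with the vertical Poincar\'e inequality on the slab provides coercivity; this yields unique $v\in H_0^1(U)$ with $\diverge v=0$ and $\norm{v}_1\lesssim \norm{\tilde f}_{-1}$. The pressure $p$ with $\nabla p\in H^{-1}(U)$ is then recovered from de Rham's theorem, as in Lemma \ref{Pressure}.

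The regularity bootstrap is the substantive step. Because $\partial U$ is flat and lies in the hyperplanes $\{x_3=\mathrm{const}\}$, horizontal difference quotients $D_\ast^s v$ preserve the Dirichlet boundary condition; a standard Nirenberg-type argument (testing against horizontal difference quotients of $v$) gives $\nabla_\ast v\in H^1(U)$ with the corresponding estimate, and iteration yields horizontal regularity up to any order. The main obstacle is then recovering vertical regularity, since differentiating in $x_3$ does not preserve the boundary condition. To handle this I would use the equation itself: the constraint $\partial_3 v_3=-\partial_1 v_1-\partial_2 v_2$ gives vertical regularity of $v_3$ from horizontal regularity of $v_\ast$; the third momentum equation $\partial_3 p=\mu\Delta v_3+\tilde f_3$ then yields $\partial_3 p$ in the required space; finally the first two momentum equations solve for $\mu\partial_3^2 v_i=\partial_i p-\mu\Delta_\ast v_i-\tilde f_i$ ($i=1,2$), recovering the missing second vertical derivative of $v_\ast$. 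Induction on $r$ closes the estimate \eqref{stokes es}. The horizontal unboundedness of the slab poses no additional difficulty because the data lie in translation-invariant $H^s$ spaces and the coefficients are constant, so all difference-quotient estimates are uniform in horizontal shifts.
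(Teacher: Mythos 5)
The paper does not actually supply a proof of this lemma; it simply writes ``See \cite{L,T}'', deferring to the classical Stokes regularity theory of Ladyzhenskaya and Temam. Your sketch is therefore not being compared against an in-paper argument, but rather against the standard proof one would extract from those references, and your outline is essentially that proof: lift the data, solve the reduced problem weakly via Lax--Milgram on the solenoidal subspace of $H_0^1(U)$ with Korn plus the vertical Poincar\'e inequality for coercivity, recover $p$ by de~Rham, then bootstrap regularity using the flatness of $\partial U$ (horizontal difference quotients for tangential regularity, then the divergence constraint and the momentum equations in the order $\partial_3 v_3 \to \partial_3 p \to \partial_3^2 v_\ast$ for the normal direction). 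The translation-invariance of the slab and constancy of coefficients indeed make the difference-quotient bounds uniform. Two points you should make explicit to close the argument cleanly: (i) the difference-quotient step must be run for the pair $(v,p)$, not just $v$, since the final substitution $\mu\partial_3^2 v_i = \partial_i p - \mu\Delta_\ast v_i - \tilde f_i$ requires the \emph{horizontal} derivatives $\partial_i p$ ($i=1,2$), which come from the tangential regularity of $p$ produced alongside that of $v$; and (ii) the induction on $r$ should be organized so that at each level you first establish tangential $H^{r-1}$ regularity of $(\nabla_\ast v,\nabla_\ast p)$ and only then sweep up the normal derivatives, as the normal bootstrap at level $k$ consumes tangential derivatives of $p$ and $v$ at the same level. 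With these two clarifications the proposal is a faithful rendering of the classical argument the paper cites.
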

\begin{proof}
 See \cite{L,T}.
\end{proof}

Now considering the two-phase stationary Stokes problem
\begin{equation}\label{cS}
\begin{cases}
-\mu \Delta u +\nabla {p} =F^1  &\hbox{ in }\Omega
\\ \diverge{u} =F^2    &\hbox{ in}\ \Omega
\\
 \Lbrack u\Rbrack=0,\quad \Lbrack(pI-\mu\mathbb{D}(u))e_3\Rbrack=F^3   &\hbox{ on }\Sigma
\\  u =0 &\hbox{ on }\Sigma_{m,\ell},
\end{cases}
\end{equation}
we have the following elliptic regularity theory.
\begin{lem}\label{cStheorem}
Let $r\ge 2$.  If $F^1\in  {H}^{r-2}(\Omega),\ F^2\in
 {H}^{r-1}(\Omega)$ and $F^3\in  {H}^{r-3/2}(\Sigma)$, then there exists unique $u\in H^r(\Omega),  \nabla p\in H^{r-2}(\Omega)$ (and $\jump{p}\in H^{r-3/2}(\Sigma)$) solving \eqref{cS}.
 Moreover,
 \begin{equation} \label{cSresult}
 \norm{u}_{r}+\norm{\nabla p}_{r-2}+\abs{\jump{ p}}_{r-3/2} \lesssim \norm{F^1}_{r-2}+\norm{F^2}_{r-1}+\abs{F^3}_{r-3/2}.
 \end{equation}
\end{lem}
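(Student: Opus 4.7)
The plan is to establish existence at the weak ($H^1$) level and then bootstrap to the full $H^r$ regularity by applying the one-phase Dirichlet Stokes theory (Lemma \ref{i_linear_elliptic2}) in $\Omega_\pm$ separately, once the trace of $u$ on the interface $\Sigma$ has been upgraded via a horizontal difference-quotient argument.

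First, I would reduce to the divergence-free case by producing $\bar u \in \H \cap H^r(\Omega)$ with $\diverge \bar u = F^2$ and $\norm{\bar u}_k \ls \norm{F^2}_{k-1}$ for $1 \le k \le r$ (via a Bogovskii-type construction on $\Omega_\pm$ separately, after subtracting off the mean of $F^2$ on each component if necessary). Setting $\tilde u = u - \bar u$, we see $\tilde u \in \Hs$ because $\diverge \tilde u = 0$, $\tilde u$ vanishes on $\Sigma_{m,\ell}$, and $\jump{\tilde u}=0$ on $\Sigma$. The natural variational identity on $\Hs$, together with Korn's inequality (Lemma \ref{korn}) and Lax--Milgram, yields a unique weak $\tilde u$; the pressure $p$ is then introduced as a Lagrange multiplier exactly as in the proof of Proposition \ref{propro1}. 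Uniqueness at this level follows by testing with $u$ itself: zero data forces $\sg u = 0$, hence $u = 0$ by Korn, and then $\nabla p = 0$ together with $\jump{p} = 0$.

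Second, since $\Sigma$ and $\Sigma_{m,\ell}$ are flat horizontal planes, horizontal translations commute with all the boundary and jump conditions. I would therefore apply horizontal difference quotients of order up to $r-1$ to the weak formulation and test against the correspondingly differenced $\tilde u$. Using Korn, trace theory, and the interpolation $\abs{\varphi}_{-1/2} \ls \norm{\varphi}_0^{1/2}\norm{\varphi}_1^{1/2}$ to absorb the $F^3$-contribution into the dissipation, this delivers horizontal-derivative bounds for $u$ up to order $r-1$ in $H^1(\Omega)$; in particular it yields $\abs{u|_\Sigma}_{r-1/2} \ls \norm{F^1}_{r-2} + \norm{F^2}_{r-1} + \abs{F^3}_{r-3/2}$. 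With this trace bound in hand, I would apply Lemma \ref{i_linear_elliptic2} to $(u_\pm, p_\pm)$ on $\Omega_\pm$ with Dirichlet data $u$ on $\pa\Omega_\pm = \Sigma \cup \Sigma_{m,\ell}$ and sources $F^1_\pm$, $F^2_\pm$; the compatibility $\int_{\Omega_\pm} F^2 = \int_{\pa\Omega_\pm} u\cdot \nu$ holds pointwise because $\diverge u = F^2$. This delivers the claimed $H^r$ estimate for $u$ and $H^{r-2}$ estimate for $\nabla p$. The bound on $\jump{p}$ is then read off from the third component of the interface condition, $\jump{p} = 2\jump{\mu \pa_3 u_3} + F^3_3$, combined with the trace of $\pa_3 u_3 \in H^{r-1}(\Omega_\pm)$.

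The main obstacle is the horizontal-regularity step. The transmission source $F^3$ enters only through the interface integral $\int_\Sigma F^3 \cdot \varphi$, and after horizontal differentiation I must absorb this into the dissipation via the trace interpolation without losing derivatives, which is delicate because $F^3$ lives exactly at the critical regularity level $H^{r-3/2}$. A second subtlety is that $u$ is a single global $H^1_0$ function (since $\jump{u} = 0$), so the problem truly couples across $\Sigma$ at the tangential level; only after the global horizontal-regularity estimate is established does the one-phase Dirichlet Stokes theory close the argument separately in $\Omega_\pm$. One also has to carry out the bootstrap for non-integer $r$ by interpolation, which forces a careful choice of fractional-order difference-quotient operators in Step~2.
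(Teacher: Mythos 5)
Your proposal follows the same strategy the paper sketches: exploit the flatness of $\Sigma$ to gain horizontal regularity and hence trace estimates for $u$ on $\Sigma$ (via difference quotients applied to the weak formulation, with the $F^3$-term absorbed by trace interpolation), and then close the argument by invoking the one-phase Dirichlet Stokes theory of Lemma~\ref{i_linear_elliptic2} on $\Omega_\pm$ separately and reading $\jump{p}$ off the third interface condition. The paper's proof is merely a two-line pointer to Theorem~3.1 of \cite{WTK} ``with slight modifications,'' but the skeleton there is the one you describe, and your preliminary Lax--Milgram/Bogovskii step is the standard way to set up that reference's argument.
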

\begin{proof}
It follows by  using first the flatness of the interface $\Sigma$ to get the   estimates of $u$ on $\Sigma$ and then applying Lemma
\ref{i_linear_elliptic2} to $\Omega_\pm$, respectively. We may refer to Theorem 3.1 of \cite{WTK} for a proof with slight modifications.
\end{proof}

\subsection{Lemmas related to pressure}\label{app pressure}
The following lemma allows us to adjust the divergence of the velocity to avoid estimating the pressure term.
\begin{lem}\label{div11}
Let $ r\ge 1$. If $p\in {H}^{r-1}(\Omega)$ satisfies $\int_\Omega p =0$, then there exists $v\in H_0^1(\Omega)\cap {H}^{r}(\Omega)$ so that $\diverge{v}=p$ in $\Omega$ and
\begin{equation}\label{dives11}
\norm{v}_{r}\lesssim\norm{p}_{r-1}.
\end{equation}
\end{lem}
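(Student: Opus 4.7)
The plan is to recognize Lemma \ref{div11} as a classical Bogovskii-type right inverse for the divergence operator on the slab, and to exploit the two-phase Stokes regularity established earlier in the appendix. The argument splits naturally according to the regularity index $r$.

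For $r \ge 2$, the proof is an immediate application of Lemma \ref{cStheorem}. I would specialize the two-phase Stokes system \eqref{cS} to the data $F^1 = 0$, $F^2 = p$, and $F^3 = 0$. The compatibility requirement for \eqref{cS} — obtained by integrating $\diverge u = F^2$ over $\Omega$ and using $u|_{\Sigma_{m,\ell}} = 0$ together with $\jump{u} = 0$ on $\Sigma$ — reduces to the mean-zero hypothesis $\int_\Omega p = 0$. Lemma \ref{cStheorem} then produces $v \in H^r(\Omega)$ solving $\diverge v = p$; the boundary condition $v|_{\Sigma_{m,\ell}} = 0$ places $v$ in $H_0^1(\Omega)$, and the estimate $\norm{v}_r \lesssim \norm{p}_{r-1}$ follows directly from \eqref{cSresult}.

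For the boundary case $r = 1$, Lemma \ref{cStheorem} does not apply as stated, so I would invoke the classical Bogovskii construction adapted to the slab. A convenient route uses the horizontal Fourier transform: fix a smooth cutoff $\psi : [-m, \ell] \to \mathbb{R}$ with $\psi(-m) = 0$, $\psi(\ell) = 1$, and $\psi'(-m) = \psi'(\ell) = 0$, and set
\[
\hat v_3(\xi, x_3) = \int_{-m}^{x_3} \hat p(\xi, t)\,dt - \psi(x_3) \hat P(\xi), \qquad \hat v_\ast(\xi, x_3) = -\frac{i\xi}{|\xi|^2} \psi'(x_3) \hat P(\xi),
\]
where $\hat P(\xi) := \int_{-m}^\ell \hat p(\xi, t)\,dt$. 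A direct computation then gives $\diverge \hat v = \hat p$ together with vanishing Dirichlet data at $x_3 = -m, \ell$, and one verifies the bound $\norm{v}_1 \lesssim \norm{p}_0$.

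The main obstacle is the low-frequency behavior in the $r = 1$ case: the factor $1/|\xi|^2$ in $\hat v_\ast$ produces a singularity at $\xi = 0$ that is only partially compensated by the vanishing $\hat P(0) = 0$ implied by $\int_\Omega p = 0$. Overcoming this requires either a careful low-frequency decomposition of the Fourier construction or, more robustly, appeal to the classical Bogovskii integral-kernel construction, which yields a bounded right inverse of $\diverge$ mapping mean-zero $L^2$-functions to $H_0^1$ on uniformly Lipschitz domains of slab type, without any Fourier machinery. In either case the result is classical, and the higher-regularity cases $r \ge 2$ are already contained in the Stokes argument above.
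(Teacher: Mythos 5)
Your proposal is correct in substance, but it follows a genuinely different route from the paper. The paper's own proof of Lemma~\ref{div11} is a pure citation: for $r=1$ it points to the classical literature [L, T, SS], and for $r\ge 2$ it points to Lemma~3.2 of [WTK], whose proof is a horizontal Fourier-series construction in the periodic slab. You instead derive the $r\ge 2$ case from the two-phase Stokes regularity of Lemma~\ref{cStheorem}: taking $F^1=0$, $F^2=p$, $F^3=0$ and discarding the auxiliary pressure, the resulting velocity field is piecewise $H^r$, continuous across $\Sigma$, vanishes on $\Sigma_{m,\ell}$, satisfies $\diverge v=p$, and obeys \eqref{cSresult}. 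This is a clean and legitimate alternative; the one thing worth being aware of is that the existence part of Lemma~\ref{cStheorem} (or of the one-phase Lemma~\ref{i_linear_elliptic2} it is built on) is itself obtained in the classical presentations by first correcting the divergence, so your Stokes argument is really a bootstrap from the $r=1$ case rather than a replacement for it. That is fine logically, since you handle $r=1$ separately, but it is worth saying explicitly that the two cases are not independent.

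For $r=1$ you honestly identify the real difficulty: the factor $1/\abs{\xi}^2$ in $\hat v_\ast$ produces a low-frequency singularity that the single scalar condition $\hat P(0)=0$ does not, by itself, cancel, so the bound $\norm{v}_1\lesssim\norm{p}_0$ does not follow from the Fourier formula as written. (In the periodic slab of [WTK] this issue disappears because the zero mode is isolated, which is exactly the ``slight modifications'' the paper alludes to.) Your fallback is to cite ``the classical Bogovskii integral-kernel construction on uniformly Lipschitz domains of slab type.'' This is a little loose: Bogovskii's kernel is built for bounded star-shaped (or finite unions thereof) domains, and extending the $L^2\to H^1_0$ bound to the infinite slab $\mathbb{R}^2\times(-m,\ell)$ requires an additional argument (e.g.\ the channel/strip results of Ladyzhenskaya, Temam, and Solonnikov--Skadilov, which are precisely the references the paper invokes, or a decomposition into overlapping bounded pieces with a partition of unity). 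So the $r=1$ case remains a citation for you exactly as it is for the paper, just with a slightly imprecise attribution. No genuine gap, but the Bogovskii attribution for the unbounded slab should be replaced by the more specific channel-domain references.
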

\begin{proof}
For $r=1$, the result is classical \cite{L,T,SS}; for $r\ge 2$, we may refer to  Lemma 3.2 of \cite{WTK} for a proof with slight modifications.
\end{proof}

The following lemma allows us to introduce the pressure as a Lagrange multiplier.

\begin{lem}\label{Pressure}
If $\Lambda \in (H_0^{1}(\Omega))^\ast$ is such that $\Lambda(\varphi) = 0$ for all $\varphi \in  H_{0,\sigma}^1(\Omega)$, then there exists a unique $p \in  L^2_{loc}(\Omega)$ (up to constants) so that
\begin{equation}
\int_\Omega p \diverge \varphi= \Lambda(\varphi) \text{ for all } \varphi\in  H_0^{1}(\Omega).
\end{equation}
\end{lem}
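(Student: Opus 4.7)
The plan is to reduce to the classical De Rham / Ne\v{c}as pressure recovery result on bounded Lipschitz subdomains and then glue the local pressures into a single element of $L^2_{loc}(\Omega)$, exploiting the fact that two pressures compatible on an overlap must differ by a constant there.

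First, I would take an exhaustion of $\Omega$ by bounded Lipschitz open sets, for instance $\Omega^{(k)} = \Omega \cap \{|x_\ast|<k\}$, so that $\overline{\Omega^{(k)}} \subset \Omega^{(k+1)}$ and $\bigcup_k \Omega^{(k)} = \Omega$. For any test function $\varphi \in H^1_0(\Omega^{(k)})$ (i.e.\ vanishing on the whole boundary of $\Omega^{(k)}$) I extend by zero to an element of $H^1_0(\Omega)$, and I denote this extension map by $\iota_k$. The restricted functional $\Lambda_k := \Lambda\circ\iota_k \in (H^1_0(\Omega^{(k)}))^\ast$ still annihilates all divergence-free test functions in $H^1_0(\Omega^{(k)})$, since these extend by zero to elements of $H^1_{0,\sigma}(\Omega)$. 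At this point the problem is local on a bounded Lipschitz domain and I may invoke the classical Bogovskii/Ne\v{c}as fact that $\diverge:H^1_0(\Omega^{(k)})\to L^2_0(\Omega^{(k)})$ is surjective with closed range. Together with the Hahn--Banach theorem, this yields a unique $p_k\in L^2(\Omega^{(k)})$ with $\int_{\Omega^{(k)}} p_k = 0$ satisfying $\int_{\Omega^{(k)}} p_k \diverge\varphi = \Lambda(\iota_k\varphi)$ for every $\varphi\in H^1_0(\Omega^{(k)})$.

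Next comes the gluing step. For $k<k'$ both $p_k$ and $p_{k'}|_{\Omega^{(k)}}$ satisfy the same equation $\int_{\Omega^{(k)}} q \diverge\varphi = \Lambda(\iota_k\varphi)$ for all $\varphi\in H^1_0(\Omega^{(k)})$, so by the uniqueness-up-to-constants part of the local statement there is a constant $c_{k,k'}$ such that $p_{k'} - p_k = c_{k,k'}$ on $\Omega^{(k)}$. Setting $\tilde p_1 := p_1$ and inductively $\tilde p_{k+1} := p_{k+1} - c_{k,k+1}$ with $c_{k,k+1}$ chosen so that $\tilde p_{k+1} = \tilde p_k$ on $\Omega^{(k)}$, I obtain a well-defined function $p\in L^2_{loc}(\Omega)$ with $p|_{\Omega^{(k)}} = \tilde p_k$. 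For any $\varphi\in H^1_0(\Omega)$ I approximate by a sequence of compactly supported $\varphi^{(k)}\in H^1_0(\Omega^{(k)})$ with $\varphi^{(k)}\to \varphi$ in $H^1_0(\Omega)$ (using a cutoff in $x_\ast$); the identity $\int p \diverge \varphi^{(k)} = \Lambda(\varphi^{(k)})$ then passes to the limit to give the desired weak formulation. Uniqueness up to constants globally follows because if two such $p$'s satisfy the identity then their difference is locally constant on the connected set $\Omega$, hence constant.

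The only genuine obstacle is the bounded-domain ingredient itself, namely the surjectivity of $\diverge: H^1_0(\Omega^{(k)})\to L^2_0(\Omega^{(k)})$ (equivalently, Ne\v{c}as' inequality $\|q\|_{L^2}\lesssim \|q\|_{H^{-1}}+\|\nabla q\|_{H^{-1}}$); this I will simply quote from the standard references already cited in the paper (cf. \cite{L,T,SS}), since the $\Omega^{(k)}$ have Lipschitz boundary. Everything else -- the Hahn--Banach construction of $p_k$, the uniqueness-up-to-constants, the compatibility of the $p_k$'s, and the density of compactly supported elements of $H^1_0(\Omega^{(k)})$ in $H^1_0(\Omega)$ -- is routine and carries no surprises.
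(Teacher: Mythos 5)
The paper's own ``proof'' of this lemma is nothing more than a citation to Ladyzhenskaya, Temam, and Solonnikov--Skadilov, so there is no detailed argument in the text to compare yours against. What you have written is the standard localization-and-gluing construction by which those bounded-domain pressure-recovery results are transported to the horizontally infinite slab, and the argument is correct: the extension-by-zero, the closed-range/Hahn--Banach production of the local $p_k$, the uniqueness-up-to-constants gluing, and the global uniqueness via connectedness are all in order.

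The one step that deserves an explicit sentence is the final passage to the limit. With $p$ known only to lie in $L^2_{loc}(\Omega)$, the pairing $\int_\Omega p\,\diverge\varphi$ for a general $\varphi\in H^1_0(\Omega)$ is not a priori an absolutely convergent Lebesgue integral. What your construction really produces is that the truncated pairings $\int_\Omega p\,\diverge(\chi_k\varphi)$ converge because each equals $\Lambda(\chi_k\varphi)$ and $\Lambda$ is continuous on $H^1_0(\Omega)$ with $\chi_k\varphi\to\varphi$ there; the displayed identity is therefore to be read as the unique continuous extension of $\varphi\mapsto\int_\Omega p\,\diverge\varphi$ from compactly supported test functions. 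This is the usual convention for $L^2_{loc}$ pressures on unbounded domains and does not undermine anything, but stating it closes the only point where a careful reader might pause. As a tiny aside, your exhaustion does not satisfy $\overline{\Omega^{(k)}}\subset\Omega^{(k+1)}$, since the top and bottom faces of $\Omega^{(k)}$ already lie on $\partial\Omega$; that relation is never actually used, so no harm is done.
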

\begin{proof}
 See \cite{L,T,SS}.
\end{proof}



    \end{document}